\documentclass[12pt,reqno]{amsart}

\usepackage{amsmath, amsthm, amssymb, stmaryrd}

\usepackage{enumerate}
\usepackage{url}
\usepackage{xcolor}  

\usepackage{parskip}

\usepackage{graphicx}




\makeatletter
\@namedef{subjclassname@2020}{%
  \textup{2020} Mathematics Subject Classification}
\makeatother

\let\OLDthebibliography\thebibliography
\renewcommand\thebibliography[1]{
  \OLDthebibliography{#1}
  \setlength{\itemsep}{0pt plus 0.3ex}
}

\usepackage{mathtools}

\usepackage{multirow, array}
\usepackage{placeins}
\usepackage{mathrsfs}

\usepackage{lipsum}
\usepackage{setspace}

\usepackage{hyperref}


  

\topmargin 0cm
\advance \topmargin by -\headheight
\advance \topmargin by -\headsep

\setlength{\parskip}{5mm}
\setlength{\parindent}{0mm}
  
\setlength{\paperheight}{270mm}%
\setlength{\paperwidth}{192mm}%
\textheight 20.75cm
\oddsidemargin 0cm
\evensidemargin \oddsidemargin
\marginparwidth 1.25cm
\textwidth 14cm

\renewcommand{\baselinestretch}{1}

\newtheorem{thm}{Theorem}[section]
\newtheorem{lemma}[thm]{Lemma}
\newtheorem{prop}[thm]{Proposition}
\newtheorem{cor}[thm]{Corollary}
\newtheorem{conj}[thm]{Conjecture}
\newtheorem{problem}[thm]{Problem}
\newtheorem{qn}[thm]{Question}

\theoremstyle{definition}
\newtheorem{defn}[thm]{Definition}

\theoremstyle{remark}
\newtheorem{remark}[thm]{Remark}

\numberwithin{equation}{section}

\newcommand{\mmod}[1]{{{\,\,\mathrm{mod}\,\,#1}}}

\newcommand*\wrapletters[1]{\wr@pletters#1\@nil}
\def\wr@pletters#1#2\@nil{#1\allowbreak\if&#2&\else\wr@pletters#2\@nil\fi}

\def\set{\mathcal} 
\def\even{}

\def\numgam{c_t}
\def\demgam{q_t'}
\def\grow{\xi}

\def\alp{{\alpha}} \def\bfalp{{\boldsymbol \alpha}}
\def\bet{{\beta}}  
\def\gam{{\gamma}} \def\Gam{{\Gamma}}
\def\del{{\delta}} \def\Del{{\Delta}}

\def\tet{{\theta}}  \def\Tet{{\Theta}}
\def\kap{{\kappa}}
\def\lam{{\lambda}} \def\Lam{{\Lambda}}

\def\sig{{\sigma}} \def\Sig{{\Sigma}}

\def\ome{{\omega}}  
\def\eps{\varepsilon}

\def\le{\leqslant} \def\ge{\geqslant}  
\def \leq {\leqslant} \def \geq {\geqslant}

\def\d{{\,{\rm d}}}

\def \sig{{\sigma}}

\def \bN {\mathbb N}

\def \bQ {\mathbb Q}
\def \bR {\mathbb R}
\def \bZ {\mathbb Z}

\def \ba {\mathbf a}
\def \bb {\mathbf b}

\def \bl {\boldsymbol \ell}

\def \bn {\mathbf n}

\def \bt {\mathbf t}

\def \bzero {\mathbf 0}

\def \balp {{\boldsymbol{\alp}}}

\def \bgam {{\boldsymbol{\gam}}}
\def \bdel {{\boldsymbol{\del}}}
\def \brho {{\boldsymbol{\rho}}}

\def \un {\hat{n}}
\def \um {\hat{m}}

\def \uM {\hat{M}}
\def \uN {\hat{N}}


\def \cA {\mathcal A}
\def \cB {\mathcal B}

\def \cD {\mathcal D}
\def \cE {\mathcal E}

\def \cG {\mathcal G}

\def \cI {\mathcal I}
\def \cJ {\mathcal J}

\def \cL {\mathcal L}

\def \cP {\mathcal P}

\def \cR {\mathcal R}
\def \cS {\mathcal S}
\def \cT {\mathcal T}
\def \cU {\mathcal U}
\def \cV {\mathcal V}
\def \cW {\mathcal W}
\def \cX {\mathcal X}

\def \Nm {\Pi}

\def \det {\mathrm{det}}

\def \diam {\diamond}

\def \dist {{\mathrm{dist}}}

\global\long\def\psio{\psi_{\grow}}%

\global\long\def\Rone{\set{W}(N)}%

\global\long\def\uN{\hat{N}}%

\global\long\def\uM{\hat{M}}%

\global\long\def\A{\mathcal{A}}%

\global\long\def\W{\mathcal{W}}%

\begin{document}

\title[Littlewood and Duffin--Schaeffer-type problems]
{Littlewood and Duffin--Schaeffer-type problems in diophantine approximation}

\author[Sam Chow]{Sam Chow}
\address{Mathematics Institute, Zeeman Building, University of Warwick, Coventry CV4 7AL, United Kingdom}
\email{sam.chow@warwick.ac.uk}

\author[Niclas Technau]{Niclas Technau}
\address{School of Mathematical Sciences, Tel Aviv University, Tel Aviv 69978, Israel}
\address{Department of Mathematics, 
University of Wisconsin, 480 Lincoln Drive, 
Madison, WI, 53706, USA}
\email{technau@math.wisc.edu}

\subjclass[2020]{11J83, 11J54, 11H06, 52C07, 11J70}
\keywords{Metric diophantine approximation, geometry of numbers, additive combinatorics, continued fractions}
\thanks{}
\date{}

\maketitle

{\centering\footnotesize \em{Dedicated to Andy Pollington} \par}

\begin{abstract} 
Gallagher's theorem describes the multiplicative diophantine approximation rate of a typical vector. We establish a fully-inhomogeneous version of Gallagher's theorem, a diophantine fibre refinement, and a sharp and unexpected threshold for Liouville fibres. Along the way, we prove an inhomogeneous version of the Duffin--Schaeffer conjecture for a class of non-monotonic approximation functions.
\end{abstract}

\renewcommand\contentsname{}

\begin{center}
Table of Contents
\end{center}
\renewcommand{\baselinestretch}{0.75}\normalsize
\tableofcontents
\renewcommand{\baselinestretch}{1.0}\normalsize

\section{Introduction}

This manuscript concerns 
two fundamental problems in diophantine approximation.
We introduce a method to tackle, in a general context, 
inhomogeneous versions of Littlewood's conjecture which 
are metric in at least one parameter. Further, we 
prove an inhomogeneous version of the Duffin--Schaeffer conjecture
for a relatively large class of functions.

Let us begin by explaining the link to Littlewood's conjecture, 
and defer elaborating on the inhomogeneous 
Duffin--Schaeffer conjecture to Section \ref{Sec: Outline}. Around 1930, Littlewood raised the question of whether planar 
badly approximable vectors exist in a multiplicative sense:
That is, if for all $(\alpha_1,\alpha_2) \in \bR^2$, we have
\begin{equation}\label{eq: Littlewood}
\liminf_{n\rightarrow\infty} n \Vert n\alpha_1 \Vert \cdot
\Vert n\alpha_2 \Vert = 0,
\end{equation}
where $\Vert \cdot \Vert$ denotes the distance to the nearest integer. Until the time of writing, finding non-trivial examples of $(\alpha_1, \alpha_2)$ satisfying \eqref{eq: Littlewood}, 
barring rather special cases, evades
the best efforts of the mathematical community. For instance, the problem remains open even for $(\alpha_1, \alpha_2) = (\sqrt{2},\sqrt{3})$. Here non-trivial means that $\alpha_1,\alpha_2$ lie in the set
$$
\mathrm{Bad} = \{ \beta \in \mathbb{R}: 
\exists_{c>0} \quad n\Vert n \beta \Vert > c \quad
\mathrm{for}\,\mathrm{all}\, n \in \bN
\}
$$
of {\em badly approximable numbers}. This set has Lebesgue measure zero, by the Borel--Cantelli lemma, but full Hausdorff dimension, by the Jarn\'{i}k--Besicovitch theorem \cite[Theorem 3.2]{BRV2016}.

\begin{remark}
By Dirichlet's approximation theorem, the inequality $n\Vert n \beta \Vert<1$ holds infinitely often for {\em any} $\beta\in \bR$.
So badly approximable numbers are precisely those numbers
for which Dirichlet's approximation theorem is optimal, up to a constant.
\end{remark}

The study of the measure theory and fractal geometry 
centering around \eqref{eq: Littlewood} has turned out to be a fruitful endeavour. Indeed, it has led to various exciting developments 
in homogeneous dynamics and in diophantine approximation. 
We presently expound upon this.

\subsubsection*{Homogeneous dynamics}

Let $k \in \bN$, define 
\[
G = \mathrm{SL}_{k+1}(\bR),
\qquad \Gam = \mathrm{SL}_{k+1}(\bZ),
\]
and let $D$ be the subgroup of diagonal matrices in $G$. There is a classical correspondence---the \emph{Dani correspondence} \cite{Dan1985}---between 
the diophantine properties of 
in $(\alp_1,\ldots,\alp_k) \in \bR^k$ and the dynamical properties of 
the orbit of
\[
\begin{pmatrix}
1 & 0 &  \cdots &  0 &  \alp_1 \\
& 1 &  &  &\alp_2 \\ 
&& \ddots &  & \vdots \\
&&& 1& \alp_k \\ &&&& 1
\end{pmatrix} \in G/\Gam
\]
under the action of $D$. 
By virtue of Dani's correspondence, Littlewood's conjecture is closely
linked to a conjecture of Margulis \cite[Conjecture 1]{Mar2000}. Below we state a special case, namely \cite[Conjecture 9]{Mar2000}.
\begin{conj} \label{MargulisSpecial} Let $k \ge 2$, and let $G, \Gam, D$ be as above. Let $z \in G/\Gam$, and assume that $Dz \subseteq G/\Gam$ has compact closure. Then $Dz$ is closed.
\end{conj}
If true, Conjecture \ref{MargulisSpecial} would imply Littlewood's conjecture. See \cite{Mar2000} for further details.

With this dynamical perspective, 
Einsiedler, Katok, and Lindenstrauss \cite{EKL2006}
established, \emph{inter alia}, the striking result 
that the set of putative counterexamples to Littlewood's
conjecture \eqref{eq: Littlewood} has Hausdorff dimension zero. 
A crucial ingredient was a deep result of Ratner.

From a similar departure point, Shapira \cite{Sha2011} established
a measure-theoretic, uniform version 
of an inhomogeneous Littlewood-type problem, solving an old problem of Cassels.
To state it, we stress that `for almost all' 
(and similarly for `almost every')
in this manuscript always means with respect to the Lebesgue measure
on the ambient space, expressing that the complement 
of the set under consideration has Lebesgue measure zero. Shapira proved that for almost all $(\alpha,\beta)\in \mathbb{R}^2$ the relation
$$
\liminf_{n\rightarrow \infty} n \Vert n\alpha - \gamma \Vert \cdot 
\Vert n\beta - \delta \Vert =0
$$ 
holds for any $\gamma,\delta \in \bR$.
Gorodnik and Vishe \cite{GP2018} improved this by a factor of 
$(\log \log \log \log \log n)^\lambda$, 
for some constant $\lambda >0$. 

While the above results suggest that Littlewood's conjecture 
might be correct, there is an indication against it:
Adiceam, Nesharim, and Lunnon \cite{ANL2018}
proved that a certain function field analogue
of the Littlewood conjecture is false.

In what follows, the word metric is used to mean `measure-theoretic', as is customary in this area. The goal of metric number theory is to classify behaviour up to exceptional sets of measure zero.

\subsubsection*{Metric multiplicative diophantine approximation}
Alongside the theory of homogeneous dynamics linked to 
Littlewood's conjecture, 
there have been significant advances  
towards the corresponding metric theory. The first systematic result in this direction
is a famous theorem of Gallagher \cite{Gal1962}:\footnote{The convergence part was already known, see \cite[Remark 1.2]{BV2015}. For this reason, \emph{Gallagher's theorem} sometimes refers to the divergence part alone.}
For any non-increasing $\psi: \bN \rightarrow \bR_{>0}$
and almost all $(\alpha_1,\ldots,\alpha_k)\in \bR^k$, we have
\begin{equation}\label{eq: Gallagher inequality}
\Vert n \alpha_1 \Vert \cdots \Vert n \alpha_k \Vert < \psi(n)
\end{equation}
infinitely often if the relevant series of measures diverges,
that is, if 
\begin{equation}\label{eq: sum of measure diverges}
\sum_{n\geq 1} \psi(n) (\log n)^{k-1} = \infty.
\end{equation}
Consequently \eqref{eq: Littlewood} holds true for almost every $(\alpha_1,\alpha_2)\in \bR^2$ by a $\log$-squared margin:
$$
\liminf_{n\rightarrow \infty}  
n (\log n)^2 \Vert n \alpha_1 \Vert \cdot \Vert n \alpha_2 \Vert = 0.
$$
Continuing this line of research, Pollington and Velani \cite{PV2000}
showed the following fibre statement, by exploiting the Fourier decay property of a certain fractal measure: If $\alpha_1 \in \mathrm{Bad}$ then 
there is a set of numbers $\alpha_2 \in \mathrm{Bad}$, 
of full Hausdorff dimension, such that 
$$
n \log n\Vert n \alpha_1 \Vert  \cdot \Vert n \alpha_2 \Vert < 1
$$
holds for infinitely many $n \in \bN$.

A further fibre statement concerning \eqref{eq: Littlewood}
was established by Beresnevich, Haynes, and Velani
in \cite[Theorem 2.4]{BHV2016}.
To state it, we recall that 
{\em Liouville numbers}
are irrational real numbers $\alpha$ such that
for any $w>0$ the inequality
$$
\Vert n \alpha \Vert <n^{-w}
$$
holds infinitely often. We denote the set of Liouville numbers by $\cL$.

\begin{thm} [{\cite[Theorem 2.4]{BHV2016}}]
Let $\alpha_1\in \bR\setminus (\bQ \cup \cL)$ and $\gamma \in \bR$. If the Duffin--Schaeffer conjecture is true, 
then for almost all $\alpha_2\in \bR$ we have 
$$
\liminf_{n\rightarrow \infty} n (\log n)^2 
\Vert n\alpha_1 -\gamma \Vert \cdot \Vert n\alpha_2 \Vert = 0.
$$
\end{thm}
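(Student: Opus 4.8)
Write $\Vert\cdot\Vert$ for the distance to the nearest integer and let $(q_k)_{k\ge 0}$ be the denominators of the convergents of $\alpha_1$, an infinite sequence since $\alpha_1\notin\bQ$. The finitely many $n$ (if any) with $\Vert n\alpha_1-\gamma\Vert=0$ only help, so discard them; for the remaining $n\ge 2$ put
\[
\psi(n):=\min\Bigl\{\tfrac12,\ \bigl(n(\log n)^2\Vert n\alpha_1-\gamma\Vert\bigr)^{-1}\Bigr\}.
\]
The strategy is to feed $\psi$ and its scalar multiples into the Duffin--Schaeffer conjecture. If $\Vert n\alpha_2\Vert<\tfrac1j\psi(n)$ then, examining the two cases of the minimum, one gets $n(\log n)^2\Vert n\alpha_1-\gamma\Vert\Vert n\alpha_2\Vert<1/j$: when $\psi(n)=\tfrac12$ this uses $\Vert n\alpha_1-\gamma\Vert\le 2/(n(\log n)^2)$ and $\Vert n\alpha_2\Vert<1/(2j)$, and otherwise it is immediate. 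Since $\tfrac1j\psi$ is a legitimate approximating function, $\sum_n\tfrac{\varphi(n)}n\cdot\tfrac1j\psi(n)$ diverges exactly when $\sum_n\tfrac{\varphi(n)}n\psi(n)$ does, and omitting the coprimality constraint only enlarges the Duffin--Schaeffer limsup set. Thus, granting the conjecture, for each fixed $j$ and almost every $\alpha_2$ the inequality $\Vert n\alpha_2\Vert<\tfrac1j\psi(n)$ has infinitely many solutions, whence $\liminf_{n\to\infty}n(\log n)^2\Vert n\alpha_1-\gamma\Vert\Vert n\alpha_2\Vert\le 1/j$ for almost every $\alpha_2$; intersecting over $j\in\bN$ gives the theorem. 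Everything therefore reduces to proving
\[
\sum_{n\ge 2}\frac{\varphi(n)}{n}\,\psi(n)=\infty.
\]

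I would prove this by continued fractions together with Abel summation. Set $R(X):=\sum_{n\le X}\tfrac{\varphi(n)}{n}\Vert n\alpha_1-\gamma\Vert^{-1}$. For $q_k\le X<q_{k+1}$ the three-distance theorem shows that $\{n\alpha_1\bmod 1:n\le X\}$ is spread out at all scales down to $\asymp 1/q_k$, regardless of where $\gamma$ sits, so the number of $n\le X$ with $\Vert n\alpha_1-\gamma\Vert\asymp\delta$ is $\asymp\delta X$ for such $\delta$, and summing $\delta^{-1}$ over the dyadic scales in $[1/q_k,1]$ yields $\sum_{n\le X}\Vert n\alpha_1-\gamma\Vert^{-1}\gg X\log q_k$; here one invokes $\alpha_1\notin\cL$, which forces $\log q_{k+1}\asymp\log q_k$ and hence $\log q_k\asymp\log X$, giving $\gg X\log X$. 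To restore the arithmetic weight I would write $\varphi(n)/n=\sum_{d\mid n}\mu(d)/d$, interchange summation, apply the same count to each $d\alpha_1$ (which also lies outside $\cL$), and truncate at $d\le(\log X)^{C}$; the main term is $\tfrac{6}{\pi^2}X\log X$. Finally, feeding $R(X)\gg X\log X$ into Abel summation against the weight $(n(\log n)^2)^{-1}$ and telescoping over the blocks $[q_k,q_{k+1})$---on each of which $\log n\asymp\log q_k$, again because $\alpha_1\notin\cL$---produces a series whose partial sums are unbounded.

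The crux is exactly this divergence estimate. Making $R(X)\gg X\log X$ rigorous requires understanding, at every relevant scale simultaneously, the position of $\gamma$ relative to the $\alpha_1$-orbit: the Ostrowski expansion of $\gamma$ with respect to $\alpha_1$ is the natural bookkeeping device here, and the non-Liouville hypothesis is genuinely essential to this route (for Liouville $\alpha_1$ the block $[q_k,q_{k+1})$ spans so wide a range of $\log n$ that the weight $(\log n)^{-2}$ can make $\sum\psi(n)$ converge, and for suitably aligned $\gamma$ the conclusion itself can fail). A further technical point is the uniformity in $d$ after the Möbius split, i.e.\ controlling $\sum_{m\le M}\Vert m(d\alpha_1)-\gamma\Vert^{-1}$ with constants independent of $d$ for $d$ up to a power of $\log X$, which calls for care since the continued fraction of $d\alpha_1$ is not transparently related to that of $\alpha_1$; this is also where one checks that the weight $\varphi(n)/n$ does not conspire to be small precisely at the $n$ carrying the bulk of $R(X)$. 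The cap at $\tfrac12$ and the freedom to discard the $n$ with $\Vert n\alpha_1-\gamma\Vert$ atypically small are harmless: such $n$ only increase $\psi$, and after removing them $\Vert n\alpha_1-\gamma\Vert^{-1}$ is bounded on each dyadic block, keeping the sums elementary.
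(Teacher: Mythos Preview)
The paper does not prove this theorem; it is quoted from \cite{BHV2016} as motivation. Your overall reduction is precisely the BHV strategy: set $\Phi(n)\asymp (n(\log n)^2\|n\alpha_1-\gamma\|)^{-1}$, invoke the (then conjectural) Duffin--Schaeffer statement, and reduce everything to proving $\sum_n\frac{\varphi(n)}{n}\Phi(n)=\infty$. That part is fine.

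The genuine gap is in your handling of the totient weight. After the M\"obius split,
\[
R(X)=\sum_d\frac{\mu(d)}{d}\sum_{m\le X/d}\|m(d\alpha_1)-\gamma\|^{-1},
\]
and because $\mu$ changes sign, isolating a lower bound for $R(X)$ requires \emph{two-sided} estimates on the inner sums, uniformly in $d$. Your three-distance count delivers only a lower bound. The companion upper bound $\sum_{m\le M}\|m\beta-\gamma\|^{-1}\ll M\log M$ is simply false for generic irrational non-Liouville $\beta$---the paper itself flags this (see the remark citing \cite[Example 1.1]{BHV2016})---and the partial quotients of $d\alpha_1$ are not controlled by those of $\alpha_1$, so no uniformity in $d$ is available. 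Your claimed main term $\tfrac{6}{\pi^2}X\log X$ therefore cannot be extracted, and the same obstruction blocks the truncation at $d\le(\log X)^C$: bounding the tail also needs upper bounds on the inner sums.

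The route in \cite{BHV2016}, \cite{Cho2018}, and throughout the present paper avoids M\"obius entirely. One decomposes into localised Bohr sets $B=\{n\asymp N:\|n\alpha_1-\gamma\|\asymp\rho\}$, shows $\#B\asymp\rho N$ in the relevant range of $\rho$ (your three-distance heuristic is essentially this step), and then proves directly that the average of $\varphi(n)/n$ over $B$ is $\gg 1$ via AM--GM: one has $\frac{1}{|B|}\sum_{n\in B}\frac{\varphi(n)}{n}\ge\prod_p(1-1/p)^{\tau_p}$ with $\tau_p$ the density of $p\mid n$ in $B$, and the structure of $B$ forces $\sum_p\tau_p/p\ll 1$; compare Lemma~\ref{averagegood}. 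Summing over dyadic $\rho$ and dyadic $N$ then gives divergence. A cruder patch that also works for this particular statement is to forgo M\"obius and use the pointwise bound $\varphi(n)/n\gg 1/\log\log n$, which costs only a $\log\log$ and still leaves the series divergent after your Abel summation.
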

\noindent
When this theorem was proved, the Duffin--Schaeffer conjecture was 
still open. The former was then proved, 
without appealing to the Duffin--Schaeffer
conjecture, by the first named author \cite{Cho2018} in a stronger form:

\begin{thm} [{\cite{Cho2018}}]
Let $\alpha_1,\gamma \in \bR$ and assume that 
$\alpha_1\in \bR\setminus (\bQ \cup \cL)$. If $\psi:\bN \rightarrow \bR_{>0}$ 
is non-increasing and the series 
\begin{equation}\label{eq: divergence assumption plane}
\sum_{n\geq 1} \psi(n) \log n
\end{equation} 
diverges, then for almost all $\alpha_2\in \bR$ there exist infinitely many $n \in \bN$ such that
$$ 
\Vert n\alpha_1 -\gamma \Vert  \cdot \Vert n\alpha_2 \Vert < \psi(n).
$$
\end{thm}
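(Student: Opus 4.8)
The plan is to transfer the statement to the one-dimensional $\alpha_2$-fibre and run a second-moment (divergence Borel--Cantelli) argument, with the arithmetic of Bohr sets supplying the quasi-independence. Put $\delta_n := \|n\alpha_1-\gamma\|$ and $\Psi(n) := \min\{1/2,\ \psi(n)/\delta_n\}$. If $\delta_n < 2\psi(n)$ for infinitely many $n$ the conclusion is immediate, since then $\|n\alpha_1-\gamma\|\cdot\|n\alpha_2\| < 2\psi(n)\|n\alpha_2\| \le \psi(n)$ for every $\alpha_2$; so we may assume the contrary, and then for all large $n$ the inequality $\|n\alpha_1-\gamma\|\cdot\|n\alpha_2\| < \psi(n)$ is equivalent to $\|n\alpha_2\| < \Psi(n)$. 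By periodicity it suffices to prove that
\[
\W(\Psi) := \{\alpha_2 \in [0,1) : \|n\alpha_2\| < \Psi(n) \text{ for infinitely many } n\}
\]
has full Lebesgue measure, and by Gallagher's zero--one law (valid for arbitrary approximating functions) it is enough to show $|\W(\Psi)|>0$.

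First I would verify that $\sum_n \Psi(n) = \infty$; this is where the hypotheses on $\alpha_1$ and the extra logarithm in $\sum_n \psi(n)\log n$ enter. Heuristically $\delta_n$ is of order $1$ for a positive proportion of $n$ but takes all scales down to $1/n$, so $\sum_{n \le N} \delta_n^{-1} \asymp N\log N$; the precise input is the counting estimate $\#\{n \asymp N : \delta_n \asymp 2^{-i}\} \gg 2^{-i}N$, valid for $2^{-i} \gg N^{-1+\epsilon}$, which follows from the three-distance theorem (equivalently, a discrepancy bound driven by the continued fraction of $\alpha_1$). Feeding this in and summing against the monotone weight $\psi$ by partial summation over dyadic blocks recovers $\sum_n \Psi(n) \gg \sum_n \psi(n)\log n = \infty$. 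The assumption $\alpha_1 \notin \cL$ guarantees that $\delta_n$ is never pathologically small (so the discrepancy input is effective and the truncation in $\Psi$ costs nothing), while $\alpha_1 \notin \bQ$ is of course needed for $\{n\alpha_1\}$ to equidistribute at all.

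The crux is quasi-independence on average of the events $A_n := \{\alpha_2 \in [0,1) : \|n\alpha_2\| < \Psi(n)\}$, namely
\[
\sum_{m,n \le N} |A_m \cap A_n| \ll \Big(\sum_{n \le N} |A_n|\Big)^2 + \sum_{n \le N} |A_n|.
\]
Since $|A_m \cap A_n|$ is governed by a $\gcd(m,n)$-weighted expression, this is precisely an inhomogeneous Duffin--Schaeffer-type assertion, and the obstruction is that $\Psi$ is not monotonic: its fluctuations come from the Bohr weight $\delta_n^{-1}$. I would stratify the pairs $(m,n)$ by the dyadic sizes of $\delta_m$ and $\delta_n$, so that on each stratum $\Psi$ is comparable to $\psi$ restricted to a Bohr set $B_i = \{n : \delta_n \asymp 2^{-i}\}$; the additive structure of $B_i$ — it is, up to Freiman isomorphism, a well-behaved generalized arithmetic progression whose density and dimension one controls via the continued fraction of $\alpha_1$ — severely limits which values $\gcd(m,n)$ can take, and with what multiplicity, as $(m,n)$ ranges over $B_i \times B_j$. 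Turning this structural fact into an explicit bound on the GCD sums is the step I expect to be the main obstacle; it is exactly the point at which one must restrict to the admissible class of functions rather than prove the full inhomogeneous Duffin--Schaeffer conjecture.

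Finally, inserting the divergence of $\sum_n |A_n| \asymp \sum_n \Psi(n)$ and the overlap bound into the Chung--Erd\H{o}s form of the Borel--Cantelli lemma gives
\[
|\W(\Psi)| \ge \limsup_{N\to\infty} \frac{\big(\sum_{n \le N} |A_n|\big)^2}{\sum_{m,n \le N} |A_m \cap A_n|} > 0,
\]
and the zero--one law promotes this to $|\W(\Psi)| = 1$. Undoing the reduction yields $\|n\alpha_1-\gamma\|\cdot\|n\alpha_2\| < \psi(n)$ for infinitely many $n$, for almost all $\alpha_2$.
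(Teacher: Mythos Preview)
Your outline is essentially the approach of \cite{Cho2018}, as the paper itself describes: reduce to the one-dimensional problem $\|n\alpha_2\| < \Psi(n)$ with $\Psi(n) = \psi(n)/\|n\alpha_1-\gamma\|$, obtain divergence of $\sum_n \Psi(n)$ from Bohr-set cardinality estimates (which is where the non-Liouville hypothesis enters), and handle the overlap estimates by exploiting that the level sets $\{n:\|n\alpha_1-\gamma\|\asymp 2^{-i}\}$ carry the structure of generalised arithmetic progressions. Your use of Gallagher's zero--one law is a legitimate simplification available in this $\gamma_2=0$ case; the present paper's generalisation to arbitrary $\gamma_2$ must instead localise to subintervals $\cI\subseteq[0,1]$ and track a factor of $\mu(\cI)$ through the overlap estimates, since no zero--one law is known in the inhomogeneous setting.

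That said, your proposal is an outline rather than a proof: you explicitly flag the GCD-sum bound over $B_i\times B_j$ as ``the main obstacle'' and stop short of executing it. This is indeed the heart of the argument. In \cite{Cho2018} and in the present paper (Proposition~\ref{prop: overlap lemma} and the large-GCD analysis of \S3.4), one partitions pairs $(m,n)$ by $d=\gcd(m,n)$, counts $n$ in a Bohr set with $d\mid n$ via the GAP structure and a lattice-point count (Lemmas~\ref{lem: outer} and~\ref{lem: divisible numbers in GAPS}), and then controls the numerators $a,b$ via the congruence coming from the overlap inequality $|ma-nb|<\Delta$. The small-$d$ and large-$d$ regimes require genuinely different treatments, and the non-Liouville hypothesis on $\alpha_1$ is used again to bound the length parameters $N_i$ of the enveloping GAP from below. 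None of this is routine, so the sentence ``the additive structure of $B_i$ \ldots\ severely limits which values $\gcd(m,n)$ can take'' is where the real work begins, not ends.
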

The proof used the structural theory 
of Bohr sets (see Section \ref{Sec: Outline}), as well as
continued fractions and the geometry of numbers, in a crucial way.
This combinatorial--geometric method was further developed by the 
authors \cite{CT2019} to prove higher-dimensional results,
removing the reliance on continued fractions. Instead,
a more versatile framework from the geometry of numbers was brought to bear on the problem. Another interesting facet of the approach is the application of \emph{diophantine transference inequalities} \cite{BV2010, BL2005, BL2010, CGGMS, GM2019, Khi1926} to deal with the inhomogeneous shifts.

By fixing $\alp_1$ above, one considers pairs $(\alp_1, \alp_2)$ which lie on a vertical line in the plane. With Yang, the first named author showed in \cite{CY2020} that if $\cL_0$ is an arbitrary line in the plane then for almost all $(\alp_1,\alp_2) \in \cL_0$ we have
\[
\liminf_{n \to \infty} n(\log n)^2 \| n \alp_1 \| \cdot \| n \alp_2 \| = 0.
\]
Writing
\[
\cL_0 = \{ (\alp,\bet) \in \bR^2: \alp = a \bet + b \},
\]
and assuming that that pair $(a,b)$ satisfies the Lebesgue-generic condition
\begin{equation} \label{generic}
\sup \{ w \in \bR: \exists^\infty (x,y) \in \bZ^2 \quad \| x a + y b \| < (|x| + |y|)^{-w} \} < 5,
\end{equation}
it was also shown there that if $\psi: \bN \to [0,\infty)$ is non-increasing and 
\[
\displaystyle \sum_{n \ge 1} \psi(n) \log n < \infty
\]
then for almost all $(\alp_1, \alp_2) \in \cL_0$ the inequality 
\[
\| n \alp_1 \| \cdot \| n \alp_2 \| < \psi(n)
\]
has at most finitely many solutions $n \in \bN$. 
The divergence theory was attained via an effective asymptotic equidistribution theorem for unipotent orbits in $\mathrm{SL}_3(\bR) / \mathrm{SL}_3(\bZ)$, whilst the convergence statement involved the correspondence between Bohr sets and generalised arithmetic progressions. All of this sits within the broader context of \emph{metric diophantine approximation on manifolds}, for which there is a vast literature \cite{Ber2012, BDV2007, Hua2015, Hua2020, KM1998, VV2006}.

A common feature of these results is that they are homogeneous in the metric parameter, i.e. they involve $\Vert n \alpha_2 \Vert $
but not $\Vert n \alpha_2 - \gam \Vert $
with a general parameter $\gam$. Even a weak inhomogeneous version of
Gallagher's theorem, akin to Shapira's \cite[Theorem 1.2]{Sha2011},
remains completely open, despite numerous attempts. In light of this, Beresnevich, Haynes, 
and Velani \cite[Problem 2.3]{BHV2016} posed the following problem:

\begin{problem}[A fully-inhomogeneous version of Gallagher's theorem on vertical planar lines, weak form]\label{prob2}
Let $\alpha_1, \gamma_1, \gam_2 \in \bR$, and suppose that $\alpha_1 \not\in \cL \cup\bQ$.
Prove that
$$
\liminf_{n\rightarrow\infty}n\,(\log n)^2 
\|n\alpha_1-\gamma_1\| \cdot \|n\alpha_2-\gamma_2\|  
= 0  \quad \  \mbox{for almost all }  \alpha_2 \in \bR.
$$
\end{problem}

They write in the paragraph leading up to \cite[Problem 2.3]{BHV2016} concerning
this problem that it ``currently seems well out of reach''.  Nevertheless, a stronger conjecture was put forth by the first named 
author \cite[Conjecture 1.6]{Cho2018}:

\begin{conj}[A fully-inhomogeneous version of Gallagher's theorem on vertical planar lines, strong form]\label{conj: planar inhom. Gallagher}
Let $\alpha_1, \gamma_1, \gamma_2$ be as in Problem \ref{prob2}.
Suppose $\psi: \mathbb{N}\rightarrow \bR_{>0}$ is non-increasing 
and that the series \eqref{eq: divergence assumption plane} diverges.
Then for almost all $\alp_2 \in \bR$ there exist infinitely many $n \in \bN$ such that
\[
\|n\alpha_1-\gamma_1\|\cdot\|n\alpha_2-\gamma_2\|   < \psi(n).
\]
\end{conj}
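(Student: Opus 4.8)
The plan is to reduce Conjecture \ref{conj: planar inhom. Gallagher} to a one-dimensional inhomogeneous metric approximation problem in the free variable $\alpha_2$. Fixing $\alpha_1,\gamma_1$, set
\[
\Psi(n)\ :=\ \min\!\Bigl\{\tfrac12,\ \frac{\psi(n)}{\|n\alpha_1-\gamma_1\|}\Bigr\},
\]
so that $\|n\alpha_1-\gamma_1\|\cdot\|n\alpha_2-\gamma_2\|<\psi(n)$ is equivalent to $\|n\alpha_2-\gamma_2\|<\Psi(n)$ (truncating at $\tfrac12$ costs nothing, since $\|\cdot\|\le\tfrac12$). With $A_n:=\{\alpha_2\in[0,1):\|n\alpha_2-\gamma_2\|<\Psi(n)\}$, a union of at most $n$ intervals of total length $\asymp\Psi(n)$, the goal is $|\limsup_n A_n|=1$. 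This would follow from the divergence Borel--Cantelli lemma together with an inhomogeneous zero--one law (or a direct Lebesgue-density argument), provided (i) $\sum_n\Psi(n)=\infty$, and (ii) the $A_n$ are quasi-independent on average, $\sum_{m,n\le N}|A_m\cap A_n|\ll\bigl(\sum_{n\le N}|A_n|\bigr)^2$ along a sequence $N\to\infty$.

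\emph{Step 1 (divergence).} The function $\Psi$ involves only $\alpha_1,\gamma_1,\psi$, and the divergence of $\sum_n\Psi(n)$ is, in essence, the main lemma of \cite{Cho2018}, which I would reprove along the same lines. The hypothesis $\alpha_1\notin\bQ\cup\cL$ is what makes this work: if $p_k/q_k$ are the convergents of $\alpha_1$, being non-Liouville is equivalent to $\limsup_k(\log q_{k+1})/(\log q_k)<\infty$, so no continued-fraction block is too long on a logarithmic scale. Combining the Ostrowski expansion of $\gamma_1$ with respect to $\alpha_1$ with the three-distance theorem, one sees that within a block the values $\|n\alpha_1-\gamma_1\|$ occupy the scales $\asymp j/q_k$ with bounded multiplicity; the moderately small ones supply an extra factor $\log n$, so that $\sum_n\Psi(n)$ diverges whenever $\sum_n\psi(n)\log n$ does. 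A short counting argument, again via the three-distance theorem, shows that the rare $n$ for which $n\alpha_1$ is extremely close to $\gamma_1\bmod 1$ do not absorb this mass.

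\emph{Step 2 (quasi-independence).} This is where the inhomogeneity in the second coordinate, absent from \cite{Cho2018}, must be confronted. For $m\ne n$, if $\alpha_2,\alpha_2'\in A_m\cap A_n$ then $\alpha_2-\alpha_2'$ lies in the \emph{homogeneous} set $B_m\cap B_n$ with $B_n:=\{\delta:\|n\delta\|<2\Psi(n)\}$, so $|A_m\cap A_n|\le|B_m\cap B_n|\cdot\mathbf 1[A_m\cap A_n\ne\emptyset]$. The homogeneous overlap $|B_m\cap B_n|$ is governed by the structure of the Bohr set $\{\delta:\|m\delta\|,\|n\delta\|\ \text{small}\}$ and its associated generalised arithmetic progression, which is exactly the machinery of \cite{Cho2018,CT2019}; diophantine transference inequalities are used to pass between the shifted problem for $\alpha_2$ and this unshifted one. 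Summing the bound and comparing with $\bigl(\sum_{n\le N}|A_n|\bigr)^2$ amounts to an inhomogeneous Duffin--Schaeffer-type theorem valid for the class of non-monotonic functions to which $\Psi$ belongs (possibly invoking the Koukoulopoulos--Maynard theorem as an input). Together with Step 1 and the divergence Borel--Cantelli lemma, this yields $|\limsup_n A_n|=1$.

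\emph{Main obstacle.} The crux is Step 2: the fully-inhomogeneous Duffin--Schaeffer conjecture is \emph{false} for arbitrary approximating functions, so one cannot argue abstractly — one must isolate the exact class of functions for which it holds and verify that $\Psi(n)=\psi(n)/\|n\alpha_1-\gamma_1\|$ lies in it. The threat is an adversarial shift $\gamma_2$ chosen, in terms of $\alpha_1$ and $\gamma_1$, so that the $n$ on which $\Psi$ is large become linked, through the denominators of the best rational approximations to $\gamma_2$, into highly correlated pairs: the sets $A_m,A_n$ then become nearly nested (degenerating towards their homogeneous counterparts $B_m,B_n$) and quasi-independence can collapse in the manner of Duffin and Schaeffer's original counterexample. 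Ruling this out should require exploiting the precise shape of $\Psi$ — not merely the location of its support — together with the quantitative non-Liouville bound on $\alpha_1$, and dovetailing this with the admissibility hypotheses of the Duffin--Schaeffer-type theorem; by comparison, Step 1 is a relatively routine inhomogeneous refinement of known continued-fraction estimates.
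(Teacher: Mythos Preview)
Your Step 2 contains a genuine gap. The difference-set inequality $|A_m\cap A_n|\le |B_m\cap B_n|$ is valid, but the homogeneous sets $B_n=\{\delta:\|n\delta\|<2\Psi(n)\}$ are \emph{not} quasi-independent for this $\Psi$: this is precisely the Duffin--Schaeffer obstruction, and nothing about the specific shape of $\Psi$ saves it in the large-GCD regime. The machinery of \cite{Cho2018,CT2019} does not bound $|B_m\cap B_n|$; it bounds the \emph{reduced-fraction} overlaps $|\cA^{\mathrm{red}}_m\cap\cA^{\mathrm{red}}_n|$ where $\cA^{\mathrm{red}}_n=\{\alpha:\exists a,\,(a,n)=1,\,|n\alpha-a|<\Psi(n)\}$, and the coprimality is used essentially in the large-$\gcd(m,n)$ case. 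Your translation trick cannot recover coprimality: if $\alpha,\alpha'\in A_m\cap A_n$ arise from numerators $a,a'$ with $(a,n)=(a',n)=1$ (in some shifted sense), there is no reason for $(a-a',n)=1$. So after passing to $B_m\cap B_n$ you are left with exactly the unrestricted overlap that \cite{DS1941} shows can be too large, and the appeal to ``an inhomogeneous Duffin--Schaeffer-type theorem valid for this class'' is circular---that theorem is the content of the conjecture.

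The paper's resolution is to keep the shift $\gamma_2$ in the overlap analysis rather than translate it away. One works with \emph{shift-reduced} numerators, meaning $(q'_t a+c_t,n)=1$ where $c_t/q'_t$ is a convergent to $\gamma_2$ with $q'_t\le n^\eta$; a sieve shows these are as plentiful as reduced fractions. The overlap condition becomes $|(n-m)\gamma_2-(ma-nb)|<\Delta$, and after dividing by $d=\gcd(m,n)$ one needs to control $\|(x-y)\gamma_2\|$ with $x=n/d$, $y=m/d$. For small $d$ the Bohr-set/GAP machinery handles things uniformly in $\gamma_2$. For large $d$ the argument splits on the diophantine type of $\gamma_2$: if $\gamma_2$ is irrational non-Liouville, the repulsion $\|(x-y)\gamma_2\|\gg|x-y|^{-\lambda}$ caps $d$; if $\gamma_2\in\bQ\cup\cL$, shift-reduction forces $x(q'_t b+c_t)-y(q'_t a+c_t)\ne 0$, again capping $d$. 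A further device---replacing $n$ by $\hat n=4^{f(n)}n$ to guarantee $d\ge\mu(\cI)^{-1}$---is needed to localise the estimates and invoke the Lebesgue density argument, since no inhomogeneous zero--one law is available. None of this is visible from the homogeneous bound $|B_m\cap B_n|$, which is why your reduction stalls.
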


This manuscript resolves Problem \ref{prob2}
{\em a fortiori} by proving Conjecture \ref{conj: planar inhom. Gallagher}. 
Additionally, our methods 
are capable of deducing a higher-dimensional generalisation,
as conjectured by the authors in \cite[Conjecture 1.7]{CT2019}.
Furthermore, we resolve
Conjecture 2.1 of Beresnevich, Haynes, and Velani \cite{BHV2016}:

\begin{conj}[A fully-inhomogeneous version of Gallagher's theorem in the plane] \label{PlanarFIG}
Let $\gamma_1,\gamma_2, \in \bR$, and let $\psi:\bN\rightarrow \bR_{>0}$ 
be a non-increasing function such that the series 
\eqref{eq: divergence assumption plane} diverges.  
Then for almost all $(\alpha_1, \alpha_2) \in\bR^2$ the inequality 
\[
\Vert n \alp_1 - \gam_1\Vert \cdot \Vert n \alpha_2 - \gam_2 \Vert < \psi(n)
\]
holds infinitely often.
\end{conj}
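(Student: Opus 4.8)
The plan is to deduce Conjecture \ref{PlanarFIG} from the fibre statement, Conjecture \ref{conj: planar inhom. Gallagher} (already resolved above), by a slicing argument; no fresh diophantine input should be required. Since every quantity in play is $1$-periodic in each of $\alpha_1$ and $\alpha_2$, it suffices to work on the torus $\bT^2$. Fix $\gamma_1,\gamma_2 \in \bR$ and the non-increasing $\psi$ with divergent series, put
\[
A_n = \{(\alpha_1,\alpha_2) \in \bT^2 : \Vert n\alpha_1 - \gamma_1 \Vert \cdot \Vert n\alpha_2 - \gamma_2 \Vert < \psi(n)\},
\]
which is open, and let $E = \limsup_{n \to \infty} A_n = \bigcap_{N \ge 1} \bigcup_{n \ge N} A_n$ be the set of pairs for which the inequality holds for infinitely many $n$. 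Then $E$ is Borel, so Fubini's theorem applies to its indicator.

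The heart of the argument is the slicing. For $\alpha_1 \in \bT$ let $E_{\alpha_1} = \{\alpha_2 \in \bT : (\alpha_1,\alpha_2) \in E\}$ denote the vertical fibre. The exceptional set $\cL \cup \bQ$ is Lebesgue-null --- $\bQ$ is countable and $\cL$ has measure zero by the Borel--Cantelli lemma --- so for almost every $\alpha_1 \in \bT$ we have $\alpha_1 \notin \cL \cup \bQ$, and for each such $\alpha_1$ Conjecture \ref{conj: planar inhom. Gallagher}, invoked with this very $\alpha_1$ and the fixed shifts $\gamma_1,\gamma_2$, tells us that $E_{\alpha_1}$ is conull in $\bT$, i.e. $\meas(\bT \setminus E_{\alpha_1}) = 0$. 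Applying Fubini's theorem to the Borel set $\bT^2 \setminus E$ now gives
\[
\meas(\bT^2 \setminus E) = \int_{\bT} \meas(\bT \setminus E_{\alpha_1}) \, \d \alpha_1 \le \int_{\cL \cup \bQ} 1 \, \d \alpha_1 = 0,
\]
so $E$ is conull in $\bT^2$; undoing the periodic reduction yields that $\Vert n\alpha_1 - \gamma_1 \Vert \cdot \Vert n\alpha_2 - \gamma_2 \Vert < \psi(n)$ infinitely often for almost all $(\alpha_1,\alpha_2) \in \bR^2$, which is the claim. The same recipe, fed the higher-dimensional fibre result in place of Conjecture \ref{conj: planar inhom. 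Gallagher}, would deliver the corresponding statement in $\bR^k$.

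The genuine obstacle is thus not this reduction, which is soft, but Conjecture \ref{conj: planar inhom. Gallagher} itself: the fully-inhomogeneous Gallagher theorem on vertical lines, whose proof --- via the structure theory of Bohr sets, the geometry of numbers, diophantine transference inequalities, and an inhomogeneous Duffin--Schaeffer statement for a class of non-monotonic functions --- is the real content of the manuscript. I note that one could instead try to prove Conjecture \ref{PlanarFIG} head-on, with a genuinely two-dimensional $\limsup$ set and divergence Borel--Cantelli machinery, but this would, in effect, reprove the one-dimensional fibre statement en route; channelling everything through Conjecture \ref{conj: planar inhom. Gallagher} is both cleaner and immediate. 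The only point meriting a moment's care is that the fibre theorem must be quantified the right way round --- it holds for \emph{every} admissible $\alpha_1$ with the \emph{same} fixed $\gamma_1,\gamma_2$ --- which is precisely what the slicing consumes.
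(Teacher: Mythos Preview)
Your proposal is correct and matches the paper's own argument. The paper states in one line that ``Conjecture \ref{conj: planar inhom. Gallagher} implies Conjecture \ref{PlanarFIG}, since $\cL \cup \bQ$ has Lebesgue measure zero'' (see also Remark \ref{exceptional}); your Fubini slicing is precisely the unpacking of that sentence, with the fibre result supplied by Theorem \ref{thm2} in the case $k=2$.
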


Note that Conjecture \ref{conj: planar inhom. Gallagher} implies Conjecture \ref{PlanarFIG}, since $\cL \cup \bQ$ has Lebesgue measure zero.
We now formulate our results in greater detail.

\subsection{Main results}

Recall that the 
{\em multiplicative exponent} $\omega^{\times}(\bfalp)$
of a vector $\bfalp=(\alpha_1,\ldots,\alpha_d)\in \bR^d$
is the supremum of all $w > 0$ such that
$$
\Vert n \alpha_1 \Vert \cdots \Vert n \alpha_d \Vert < n^{-w}
$$
infinitely often. The property of $\omega^{\times}(\bfalp)$ being finite 
can be interpreted as a higher-dimensional generalisation
of being an irrational, non-Liouville number.

\begin{thm} [A fully-inhomogeneous version of Gallagher's theorem on vertical lines, strong form]\label{thm2} 
Let $k\geq 2$. Fix 
$\balp=(\alp_1,\ldots,\alp_{k-1})\in \bR^{k-1}$
and $\gam_1,\ldots,\gam_k \in \bR$.
For $k=2$, suppose that $\alp_1$ is an irrational, non-Liouville number,
and for $k\geq 3$ suppose that
\begin{equation}\label{eq: small mult. exponent}
\omega^{\times} (\balp) < \frac{k-1}{k-2}.
\end{equation}
If $\psi: \bN \to \bR_{>0}$ is non-increasing 
and satisfies \eqref{eq: sum of measure diverges},
then for almost all $\alpha_k \in \bR$ 
there are infinitely many $n \in \bN$ for which
\begin{equation}\label{eq: inhomog Gallagher ineq}
\| n \alp_1 - \gam_1\| \cdots \| n \alp_k - \gamma_k\| < \psi(n).
\end{equation}
\end{thm}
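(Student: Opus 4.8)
The plan is to fix $\balp$ and the shifts, let $\alpha_k$ vary over $[0,1)$ (the statement for almost all $\alpha_k\in\bR$ then follows by periodicity), and realise the solution set of \eqref{eq: inhomog Gallagher ineq} as a $\limsup$ set to which a divergence-type Borel--Cantelli lemma applies. Since \eqref{eq: small mult. exponent} (resp.\ the non-Liouville hypothesis for $k=2$) forces each $\alp_j$ with $j<k$ to be irrational, the product $P_n:=\prod_{j=1}^{k-1}\|n\alp_j-\gam_j\|$ vanishes for only finitely many $n$, which we discard; and if $\psi(n)/P_n\ge 1/2$ for infinitely many $n$ the conclusion is trivial, so we may assume otherwise. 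Put
\[
A_n:=\bigl\{\alpha_k\in[0,1):\ \|n\alpha_k-\gam_k\|<\psi(n)/P_n\bigr\},
\]
a union of $n$ equally spaced arcs with $|A_n|=2\psi(n)/P_n$. It suffices to prove $|\limsup_n A_n|=1$, and for this it is enough to establish $|I\cap\limsup_n A_n|\gg_I|I|$ for every interval $I\subseteq[0,1)$, since Lebesgue's density theorem then forces full measure. By the Chung--Erd\H{o}s inequality, applied on $I$, the latter follows from: (I) $\sum_n|A_n|=\infty$; and (II) a quasi-independence estimate $\sum_{m,n\le N}|A_m\cap A_n|\ll\bigl(\sum_{n\le N}|A_n|\bigr)^2$ with implied constant uniform in $I$.

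Part (I) is the heart of the matter; it is here that both the divergence hypothesis \eqref{eq: sum of measure diverges} and the diophantine hypothesis on $\balp$ are used. On a dyadic block $n\in(N,2N]$, monotonicity of $\psi$ gives $\sum_{N<n\le 2N}|A_n|\ge 2\psi(2N)\sum_{N<n\le 2N}P_n^{-1}$, and a layer-cake identity turns $\sum_{N<n\le 2N}P_n^{-1}$ into $\int_0^\infty\#\{n\in(N,2N]:P_n<1/t\}\,\mathrm{d}t$. Everything thus reduces to the counting bound
\[
\#\{n\in(N,2N]:P_n<\del\}\ \gg_{\balp,\bgam}\ N\del\,(\log(1/\del))^{k-2}\qquad\bigl(N^{-1}(\log N)^{O(1)}\le\del\le c_k\bigr),
\]
which yields $\sum_{N<n\le 2N}P_n^{-1}\gg N(\log N)^{k-1}$, hence (monotonicity again) $\sum_{N<n\le 2N}|A_n|\gg\sum_{N<n\le 2N}\psi(n)(\log n)^{k-1}$, and summing over dyadic $N$ gives $\sum_n|A_n|\gg\sum_n\psi(n)(\log n)^{k-1}=\infty$. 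I expect this counting bound --- a deterministic statement about the fixed vector $\balp$ --- to be the main obstacle. For $k=2$ it reads $\#\{n\in(N,2N]:\|n\alp_1-\gam_1\|<\del\}\gg N\del$ throughout the stated range, and I would prove it via the Ostrowski/continued-fraction expansion of $\alp_1$, with the non-Liouville hypothesis (equivalently $\omega^\times(\alp_1)<\infty$) bounding the partial quotients and so preventing the orbit $(n\alp_1)_n$ from leaving a long gap near $\gam_1$. For $k\ge 3$ I would decompose according to the dyadic profile $\bdel=(\del_1,\dots,\del_{k-1})$ of $(\|n\alp_j-\gam_j\|)_{j<k}$: for each admissible profile with $\prod_j\del_j\asymp\del$ the contributing $n$ fill out an inhomogeneous Bohr set, which via the Bohr-set/generalised-arithmetic-progression dictionary carries $\gg N\prod_j\del_j=N\del$ points provided it is non-degenerate, and there are $\gg(\log(1/\del))^{k-2}$ admissible profiles. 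The hypothesis $\omega^\times(\balp)<\frac{k-1}{k-2}$ is precisely what guarantees non-degeneracy of enough of these Bohr sets throughout the range of $\del$ demanded above; heuristically, $\frac{k-1}{k-2}$ is the largest multiplicative exponent compatible with the extreme (balanced) profiles at scale $\del\asymp N^{-1}$ remaining populated.

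Part (II) is where the non-monotonic Duffin--Schaeffer input flagged in the abstract comes in. Intersecting the arc decompositions of $A_m$ and $A_n$ shows that an arc of $A_n$ can meet an arc of $A_m$ only when $\gam_k(m-n)$ is close to a multiple of $\gcd(m,n)$, giving a bound of the form
\[
|A_m\cap A_n|\ \ll\ |A_m||A_n|\ +\ \gcd(m,n)\,\min\!\Bigl(\tfrac{\psi(m)}{mP_m},\tfrac{\psi(n)}{nP_n}\Bigr)\,\mathbf{1}\bigl[\,\dist(\gam_k(m-n),\gcd(m,n)\bZ)\ \text{small}\,\bigr].
\]
The first term sums to $\ll\bigl(\sum_{n\le N}|A_n|\bigr)^2$ at once. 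If $\gam_k\notin\bQ$ --- indeed whenever $\gam_k$ is not too well approximable --- the indicator kills the second term for all but a negligible set of pairs $(m,n)$, and (II) follows immediately. If $\gam_k\in\bQ$, the second term is a genuine Duffin--Schaeffer-type obstruction, and here I would exploit that $n\mapsto\psi(n)/P_n$ is far from arbitrary: it is large exactly on the Bohr sets analysed in Part (I), and the progression structure of those sets limits the number of pairs with large $\gcd(m,n)$ relative to $\sum_{n\le N}|A_n|$ --- this is the content of the inhomogeneous Duffin--Schaeffer theorem for this non-monotonic class. With (I) and (II) in hand, the Chung--Erd\H{o}s/density argument gives $|\limsup_n A_n|=1$, which is the theorem.
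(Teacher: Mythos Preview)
Your architecture is the same as the paper's---reduce to a $\limsup$ set, prove divergence via Bohr-set counting, prove quasi-independence of overlaps, then apply Chung--Erd\H{o}s plus Lebesgue density---and Part~(I) is essentially right: the paper carries this out by $C$-adic localisation of $\|n\alp_i-\gam_i\|$ and the GAP structure of Bohr sets, much as you sketch. However, you have mislocated the main obstacle. The divergence is not the heart of the matter; the overlap estimate~(II) is, and your treatment of it contains a genuine gap.

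Your dichotomy ``$\gam_k$ not too well approximable'' versus ``$\gam_k\in\bQ$'' omits the case $\gam_k$ Liouville, which is neither rational nor diophantine and is precisely where the paper's new idea---\emph{shift-reduced fractions}---is indispensable. For Liouville $\gam_k$ the repulsion $\|(x-y)\gam_k\|\gg|x-y|^{-\lambda}$ you implicitly invoke fails, yet no single rational approximation to $\gam_k$ works uniformly across all scales; the paper therefore restricts the numerators $a$ in $A_n$ to those with $(q_t'a+c_t,n)=1$, where $c_t/q_t'$ is the convergent to $\gam_k$ with $q_t'\le n^\eta$, and shows via the fundamental lemma of sieve theory that this costs only a factor $\varphi(n)/n$ in measure while forcing a nonzero integer in the large-GCD overlap inequality. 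Even in the rational case your phrase ``the progression structure limits the number of pairs with large $\gcd$'' hides the same mechanism. There is a second, independent gap in your localisation: to get the implied constant in~(II) uniform in $I$, the paper works not with $\psi(n)$ but with $\psi(4^{f(n)}n)$ for a slowly-growing $f$, so that $\gcd(m,n)\ge 4^{f(M)}>\mu(I)^{-1}$ automatically; without this device the small-GCD regime does not carry the factor $\mu(I)$ through, and your bound $|A_m\cap A_n|\ll|A_m||A_n|+\cdots$ is not available uniformly.
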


\begin{remark} \label{exceptional}
The set of $\balp \in \bR^{k-1}$ for which 
$$
\omega^{\times} (\balp) \geq \frac{k-1}{k-2}
$$
has Lebesgue measure zero and, stronger still, has Hausdorff dimension strictly less than $k-1$. The former follows from the Borel--Cantelli lemma
and the latter from the work of Hussain and Simmons \cite{HS2018}. The set of Liouville numbers has Lebesgue measure zero and, stronger still, has Hausdorff dimension 0.
\end{remark}

Theorem \ref{thm2} is precisely \cite[Conjecture 1.7]{CT2019}, and we have the following noteworthy special cases.

\begin{cor} 
Conjectures \ref{conj: planar inhom. Gallagher} and \ref{PlanarFIG} are true. 
\end{cor}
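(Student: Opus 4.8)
The plan is to derive both conjectures directly from Theorem \ref{thm2}; all of the substance lies in that theorem, and what remains is to unpack definitions and, for the planar statement, to pass from a fibrewise conclusion to a full-measure conclusion in $\bR^2$ via Fubini.

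First I would establish Conjecture \ref{conj: planar inhom. Gallagher} by specialising Theorem \ref{thm2} to $k=2$. In that case $\balp=(\alp_1)\in\bR^{1}$, and the hypothesis imposed on $\balp$ is precisely that $\alp_1$ is an irrational, non-Liouville number, i.e.\ $\alp_1\notin\cL\cup\bQ$; the divergence hypothesis \eqref{eq: sum of measure diverges} becomes $\sum_{n\geq1}\psi(n)\log n=\infty$, which is \eqref{eq: divergence assumption plane}; and the conclusion \eqref{eq: inhomog Gallagher ineq} becomes the assertion that for almost all $\alp_2\in\bR$ there are infinitely many $n\in\bN$ with $\|n\alp_1-\gam_1\|\cdot\|n\alp_2-\gam_2\|<\psi(n)$. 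This is verbatim Conjecture \ref{conj: planar inhom. Gallagher}.

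Next I would deduce Conjecture \ref{PlanarFIG}, as already indicated in the text, using that $\cL\cup\bQ$ is Lebesgue-null in $\bR$. Fix $\gam_1,\gam_2$ and a non-increasing $\psi$ with $\sum_{n\geq1}\psi(n)\log n=\infty$, and let
\[
E=\bR^2\setminus\limsup_{n\to\infty}\bigl\{(\alp_1,\alp_2)\in\bR^2:\ \|n\alp_1-\gam_1\|\cdot\|n\alp_2-\gam_2\|<\psi(n)\bigr\}
\]
be the set of $(\alp_1,\alp_2)$ for which the inequality in Conjecture \ref{PlanarFIG} has only finitely many solutions $n$. Each set in the $\limsup$ is open, so $E$ is Borel and Fubini--Tonelli applies. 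For every $\alp_1\notin\cL\cup\bQ$ the fibre $\{\alp_2\in\bR:(\alp_1,\alp_2)\in E\}$ has Lebesgue measure zero, by Conjecture \ref{conj: planar inhom. Gallagher}; since the exceptional set $\cL\cup\bQ$ has measure zero, almost every $\alp_1$ has a null fibre, and hence $E$ has two-dimensional Lebesgue measure zero. Thus for almost all $(\alp_1,\alp_2)\in\bR^2$ the inequality holds infinitely often, which is Conjecture \ref{PlanarFIG}. There is no genuine obstacle at the level of this corollary: the only point needing a moment's care is the measurability of $E$ (so that Fubini is legitimate), which is immediate since $E$ is the complement of a $\limsup$ of open sets, hence Borel; all of the difficulty is concentrated in Theorem \ref{thm2} itself.
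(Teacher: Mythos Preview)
Your proof is correct and follows the same approach as the paper: the paper simply notes that Conjecture \ref{conj: planar inhom. Gallagher} is the $k=2$ case of Theorem \ref{thm2}, and that Conjecture \ref{conj: planar inhom. Gallagher} implies Conjecture \ref{PlanarFIG} since $\cL\cup\bQ$ has Lebesgue measure zero. You have merely made the implicit Fubini step and the measurability verification explicit, which is fine but not required.
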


We have the following generalisation of Conjecture \ref{PlanarFIG}, which includes its complementary convergence theory.

\begin{cor}[A fully-inhomogeneous version of Gallagher's theorem]
\label{FIG}
Let $\gam_1,\ldots,\gam_k \in \bR$, and let $\psi: \bN \to (0,\infty)$ be a non-increasing function. Write $\cW^\times = \cW^\times(\psi, \gam_1,\ldots,\gam_k)$ for the set of $(\alp_1,\ldots,\alp_k) \in [0,1]^k$ such that \eqref{eq: inhomog Gallagher ineq} has infinitely many solutions $n \in \bN$. Then 
\[
\mu_k(\cW^\times) = 
\begin{cases}
1, &\text{if } \displaystyle \sum_{n=1}^\infty \psi(n) (\log n)^{k-1} = \infty\\
0, &\text{if } \displaystyle \sum_{n=1}^\infty \psi(n) (\log n)^{k-1} < \infty,
\end{cases}
\]
where $\mu_k$ denotes $k$-dimensional Lebesgue measure.
\end{cor}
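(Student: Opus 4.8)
The plan is to treat the divergence and convergence halves of Corollary~\ref{FIG} separately. The divergence half will be extracted from Theorem~\ref{thm2} by a slicing (Fubini) argument, with Remark~\ref{exceptional} supplying the crucial fact that the fibres excluded by the hypotheses of Theorem~\ref{thm2} form a Lebesgue-null set; the convergence half is an essentially classical first Borel--Cantelli computation that is completely insensitive to the inhomogeneous shifts $\gam_1,\dots,\gam_k$. In particular, all of the genuine content sits in Theorem~\ref{thm2}, and the deduction below is mostly bookkeeping.

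For the divergence direction, assume $\sum_{n\ge 1}\psi(n)(\log n)^{k-1}=\infty$. Write $E_n=\{\bfalp\in[0,1]^k:\|n\alp_1-\gam_1\|\cdots\|n\alp_k-\gam_k\|<\psi(n)\}$; each $E_n$ is open, so $\cW^\times=\limsup_{n\to\infty}E_n$ is Borel, and for fixed $\balp=(\alp_1,\dots,\alp_{k-1})$ the fibre $\cW^\times_\balp=\{\alp_k\in[0,1]:(\balp,\alp_k)\in\cW^\times\}$ is Borel. Let $A\subseteq[0,1]^{k-1}$ be the set of $\balp$ satisfying the hypothesis of Theorem~\ref{thm2}: for $k=2$, that $\alp_1\notin\bQ\cup\cL$; for $k\ge 3$, that $\omega^\times(\balp)<\tfrac{k-1}{k-2}$. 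By Remark~\ref{exceptional}, $\mu_{k-1}([0,1]^{k-1}\setminus A)=0$. For each $\balp\in A$, Theorem~\ref{thm2}, applied with the non-increasing $\psi$ (which satisfies \eqref{eq: sum of measure diverges} by assumption), yields that \eqref{eq: inhomog Gallagher ineq} has infinitely many solutions $n$ for Lebesgue-almost every $\alp_k\in\bR$; by $1$-periodicity in $\alp_k$ this gives $\mu_1(\cW^\times_\balp)=1$. Fubini's theorem then gives
\[
\mu_k(\cW^\times)=\int_{[0,1]^{k-1}}\mu_1(\cW^\times_\balp)\,d\balp\ \ge\ \mu_{k-1}(A)=1,
\]
so $\mu_k(\cW^\times)=1$.

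For the convergence direction, assume $\sum_{n\ge 1}\psi(n)(\log n)^{k-1}<\infty$, whence $\psi(n)\to 0$. For every $n$ the pushforward of Lebesgue measure on $[0,1]$ under $\alp_i\mapsto\|n\alp_i-\gam_i\|$ has density $2$ on $[0,\tfrac12]$, independently of $n$ and of $\gam_i$; integrating out $\alp_k$ first and using $\min(1,u)\le u^s$ for $0\le s\le 1$ gives, for $\psi(n)$ small and any $s\in(0,1)$,
\[
\mu_k(E_n)\le\frac{2^k\,\psi(n)^s}{(1-s)^{k-1}},
\]
and the choice $s=1-1/\log(1/\psi(n))$ produces $\mu_k(E_n)\ll_k\psi(n)\bigl(\log(1/\psi(n))\bigr)^{k-1}$. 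It then remains to see that $\sum_n\psi(n)(\log(1/\psi(n)))^{k-1}<\infty$, which one checks by splitting the tail according to whether $\psi(n)\ge n^{-2}$ or $\psi(n)<n^{-2}$: in the first case $\log(1/\psi(n))\le 2\log n$, so the term is $\ll_k\psi(n)(\log n)^{k-1}$; in the second case, since $\psi$ is non-increasing and $t\mapsto t(\log(1/t))^{k-1}$ is increasing near $0$, the term is $\le n^{-2}(2\log n)^{k-1}$ for all large $n$. Both of the resulting series converge, so $\sum_n\mu_k(E_n)<\infty$, and the first Borel--Cantelli lemma gives $\mu_k(\cW^\times)=\mu_k(\limsup_n E_n)=0$.

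The only step with any subtlety is really the series comparison $\sum\psi(n)(\log n)^{k-1}<\infty\Rightarrow\sum\psi(n)(\log(1/\psi(n)))^{k-1}<\infty$ used on the convergence side; this is precisely where monotonicity of $\psi$ is needed, and without it the naive measure bound $\mu_k(E_n)\asymp\psi(n)(\log(1/\psi(n)))^{k-1}$ would not suffice. The remaining points are formal: the measurability checks (openness of $E_n$, Borel-ness of $\cW^\times$ and of $A$) that legitimise the Fubini step, and the reduction from $\alp_k\in\bR$ to $\alp_k\in[0,1]$ by periodicity. There is no real obstacle in the deduction itself — the difficulty has been packaged into Theorem~\ref{thm2}.
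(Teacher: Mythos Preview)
Your divergence argument is exactly the paper's: Theorem~\ref{thm2} plus Remark~\ref{exceptional} and Fubini. Nothing to add there.

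Your convergence argument is correct but proceeds differently from the paper's in Section~\ref{ConvergencePart}. The paper first replaces $\psi(n)$ by $\max(\psi(n),n^{-2})$ upfront (harmless, since $\sum n^{-2}(\log n)^{k-1}<\infty$), then decomposes $\cA_n$ into the pieces $\cA_{n,\ba}$ indexed by numerator tuples $\ba$, and for each piece separates off a tiny ``bad'' region $\cR$ where some $|n\alp_i-\gam_i-a_i|\le n^{-k}$; on the complement a dyadic covering in each coordinate gives $O((\log n)^{k-1})$ boxes, each contributing $O(\psi(n)/n^{k-1})$ to the integral of $Q$. Summing over $\ba$ yields $\mu_k(\cA_n)\ll\psi(n)(\log n)^{k-1}$ directly, so Borel--Cantelli applies with no further series manipulation. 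Your route---the $\min(1,u)\le u^s$ interpolation with $s=1-1/\log(1/\psi(n))$---is slicker in that it avoids the $\ba$-decomposition and dyadic covering entirely, at the cost of landing on $\mu_k(E_n)\ll\psi(n)(\log(1/\psi(n)))^{k-1}$ and then needing the $\psi(n)\gtrless n^{-2}$ case split to compare series. Both are standard; yours is arguably more elegant, while the paper's is more direct in that the summand matches the hypothesis immediately.

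One small correction to your closing commentary: monotonicity of $\psi$ is \emph{not} actually used in your Case~2 (nor anywhere else in the convergence proof). All you need there is $\psi(n)<n^{-2}$ together with the fact that $t\mapsto t(\log(1/t))^{k-1}$ is increasing near $0$; the non-increasing hypothesis on $\psi$ plays no role. The paper's convergence proof likewise does not use monotonicity. So your remark that ``this is precisely where monotonicity of $\psi$ is needed'' should be dropped---monotonicity is only needed on the divergence side, inside Theorem~\ref{thm2}.
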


\noindent The divergence part of Corollary \ref{FIG} is a consequence of Theorem \ref{thm2} and Remark \ref{exceptional}. The convergence part requires only classical techniques, and will be proved in Section \ref{ConvergencePart}. Theorem \ref{thm2} also resolves Problem \ref{prob2} in the following stronger and more general form:

\begin{cor} 
Let $k\geq 2$. Fix a fibre vector 
$\balp=(\alp_1,\ldots,\alp_{k-1})\in \bR^{k-1}$,
and shifts $\gam_1,\ldots,\gam_k \in \bR$.
For $k=2$, suppose that $\alp_1$ is an irrational, non-Liouville number,
and for $k\geq 3$ assume
\eqref{eq: small mult. exponent}.
Then for almost all $\alpha_k \in \bR$ 
there are infinitely many $n \in \bN$ for which
\begin{equation*}
\| n \alp_1 - \gam_1\| \cdots \| n \alp_k - \gamma_k\|
< \frac{1}{n (\log n)^k \log \log n}.
\end{equation*}
\end{cor}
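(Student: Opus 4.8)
The plan is to deduce the corollary directly from Theorem~\ref{thm2} applied to the specific approximation function
\[
\psi(n) = \frac{1}{n (\log n)^k \log\log n},
\]
so that the only thing to check is that (a harmless modification of) $\psi$ satisfies the hypotheses of Theorem~\ref{thm2}: that it is non-increasing and that $\sum_{n\ge 1}\psi(n)(\log n)^{k-1}=\infty$. The diophantine conditions on $\balp$ --- that $\alp_1$ be irrational and non-Liouville when $k=2$, and that $\omega^\times(\balp)<(k-1)/(k-2)$ when $k\ge 3$ --- are verbatim the hypotheses of Theorem~\ref{thm2}, so nothing needs to be verified there.

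First I would dispose of the harmless technical point that the displayed formula is only meaningful, positive, and decreasing once $n$ is large (for small $n$ one has $\log n\le 0$ or $\log\log n\le 0$). So fix an integer $n_0\ge 16$; then $x\mapsto x(\log x)^k\log\log x$ is a product of positive increasing functions on $[n_0,\infty)$, hence positive and strictly increasing there. Define $\psi(n)=\bigl(n(\log n)^k\log\log n\bigr)^{-1}$ for $n\ge n_0$ and $\psi(n)=\psi(n_0)$ for $1\le n<n_0$, so that $\psi:\bN\to\bR_{>0}$ is non-increasing. Altering $\psi$ at finitely many places affects neither the convergence class of $\sum_n\psi(n)(\log n)^{k-1}$ nor whether \eqref{eq: inhomog Gallagher ineq} has infinitely many solutions $n$, so there is no loss in working with this $\psi$.

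Next I would check \eqref{eq: sum of measure diverges}. For $n\ge n_0$ one has
\[
\psi(n)(\log n)^{k-1}=\frac{1}{n\log n\,\log\log n},
\]
and $\sum_n \frac{1}{n\log n\log\log n}$ diverges, for instance by comparison with $\int \frac{\d x}{x\log x\log\log x}=\log\log\log x\to\infty$ (or by Cauchy condensation). Hence Theorem~\ref{thm2} applies and, for almost every $\alpha_k\in\bR$, the inequality $\|n\alp_1-\gam_1\|\cdots\|n\alp_k-\gam_k\|<\psi(n)$ holds for infinitely many $n\in\bN$; infinitely many of these satisfy $n\ge n_0$, for which the inequality is precisely the one asserted in the corollary.

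There is no substantive obstacle here: the corollary is simply a repackaging of Theorem~\ref{thm2} with a convenient explicit $\psi$, and the only (minimal) care required is the monotonicity bookkeeping above together with the classical divergence of $\sum 1/(n\log n\log\log n)$. The bound is sharp up to the iterated-logarithm factor: one could equally insert any further slowly growing factor (e.g. an extra $\log\log\log n$) keeping the series divergent, with an identical argument; we record only the cleanest formulation.
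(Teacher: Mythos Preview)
Your proposal is correct and is exactly the (implicit) argument the paper intends: the corollary is stated immediately after Theorem~\ref{thm2} as a direct specialisation, with no separate proof, so applying Theorem~\ref{thm2} to $\psi(n)=1/(n(\log n)^k\log\log n)$ and noting the divergence of $\sum 1/(n\log n\log\log n)$ is precisely what is required. Your monotonicity bookkeeping is in fact already handled by the paper's convention $\log x=\max(\ln x,1)$ from \S\ref{OrgNot}, but your explicit treatment does no harm.
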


When $\gamma_k = 0$, the results above follow from \cite{Cho2018}. 
In the planar case,
we go beyond the scope of
Problem \ref{prob2}.
Indeed, we also solve it on fibres 
$(\alpha_1,\bR)$ where $\alpha_1$ is a Liouville number:

\begin{thm}\label{thm: Liouville} Let $\alpha_1 \in \cL$, and let $\gam_1, \gam_2 \in \bR$. Then, for almost all $\alpha_2\in\bR$, we have
\[
\liminf_{n\rightarrow \infty} 
n(\log n)^{2} 
\| n \alp_1 - \gam_1\| \cdot
\| n \alpha_2 - \gam_2\|
 = 0.
\]
\end{thm}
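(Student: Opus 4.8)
The plan is to deduce Theorem \ref{thm: Liouville}, for each fixed $\varepsilon>0$ separately, from a one-dimensional inhomogeneous metrical statement in the variable $\alpha_2$, which in turn will follow from the inhomogeneous Duffin--Schaeffer theorem for non-monotonic approximation functions established in this paper. Since $\liminf_{n} n(\log n)^2 \|n\alpha_1-\gamma_1\| \cdot \|n\alpha_2 - \gamma_2\| = 0$ is equivalent to the assertion that for every $m \in \bN$ there are infinitely many $n$ with $n(\log n)^2 \|n\alpha_1-\gamma_1\| \cdot \|n\alpha_2-\gamma_2\| < 1/m$, and a countable intersection of conull sets is conull, it suffices to prove: for each fixed $\varepsilon>0$, for almost every $\alpha_2$ there are infinitely many $n$ with $n(\log n)^2 \|n\alpha_1-\gamma_1\| \cdot \|n\alpha_2-\gamma_2\| < \varepsilon$. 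Fix such an $\varepsilon$.

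The construction uses the Liouville property of $\alpha_1$ as follows. Since $\alpha_1\in\cL$, the convergents $p_j/q_j$ of $\alpha_1$ satisfy $q_{j+1}/q_j\to\infty$ along an infinite set $J$ of indices; for $j\in J$ write $\theta_j=\alpha_1-p_j/q_j$, so $0<|\theta_j|<(q_jq_{j+1})^{-1}$ and $\gcd(p_j,q_j)=1$. For $j\in J$ and $1\le a<q_j$ put $\rho_{j,a}=\|a\alpha_1-\gamma_1\|$; expanding $a\alpha_1=ap_j/q_j+a\theta_j$ shows $\rho_{j,a}=\|ap_j/q_j-\gamma_1\|+O(q_{j+1}^{-1})$, and as $a$ varies the quantity $q_j\rho_{j,a}$ runs, with bounded multiplicity, over essentially $\{1,2,\dots,\lfloor q_j/2\rfloor\}$ --- this harmonic spread over residues is the crux of the argument. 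For $b$ in a range of length $\ge q_j$ chosen so that $bq_j|\theta_j|\le\rho_{j,a}$ (compatible with $b<q_{j+1}/q_j$ since $q_{j+1}\ge q_j^2$ along $J$), set $n=a+bq_j$; then, as $bp_j\in\bZ$ in the expansion $n\alpha_1=a\alpha_1+bp_j+bq_j\theta_j$, we get $\|n\alpha_1-\gamma_1\|\le\rho_{j,a}+bq_j|\theta_j|=:\Phi(n)\asymp\rho_{j,a}$. I then set $\phi(n):=\min\{\tfrac12,\ \varepsilon/(n(\log n)^2\Phi(n))\}$ on the set $S_\varepsilon$ of the $n$ so obtained and $\phi(n):=0$ otherwise, so that $n\in S_\varepsilon$ together with $\|n\alpha_2-\gamma_2\|<\phi(n)$ forces $n(\log n)^2\|n\alpha_1-\gamma_1\|\cdot\|n\alpha_2-\gamma_2\|<\varepsilon$.

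Next I would establish the divergence of the relevant series of measures, $\sum_n\phi(n)=\infty$. Fixing $j\in J$ and an $a$ for which $\rho_{j,a}$ is not atypically small, comparison of $\sum_b(a+bq_j)^{-1}(\log(a+bq_j))^{-2}$ with $\int\d t/(t(\log t)^2)$ over the long range $[q_j,\rho_{j,a}q_{j+1}]$ gives $\asymp(q_j\log q_j)^{-1}$, so $\sum_b\phi(a+bq_j)\asymp\varepsilon/(\rho_{j,a}q_j\log q_j)$; summing over $a$ and using $\sum_a1/(q_j\rho_{j,a})\asymp\sum_{k\le q_j/2}1/k\asymp\log q_j$ shows that the $j$-th block contributes $\asymp\varepsilon$, whence $\sum_n\phi(n)=\infty$ since $J$ is infinite. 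It is essential here to track the \emph{true} size of $\|a\alpha_1-\gamma_1\|$ over all residues: replacing $\Phi(n)$ by the crude bound $\asymp1/q_j$ valid for a single well-chosen $a$ would give only $\asymp\varepsilon/\log q_j$ per block, and $\sum_{j\in J}1/\log q_j$ converges because $\log q_j$ grows super-exponentially along $J$.

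It then remains to apply the inhomogeneous Duffin--Schaeffer theorem of this paper to $\phi=\phi_\varepsilon$, concluding that for almost every $\alpha_2$ the inequality $\|n\alpha_2-\gamma_2\|<\phi(n)$ holds for infinitely many $n\in S_\varepsilon$, and then to intersect over $\varepsilon=1/m$. I expect the main obstacle to be precisely this last input: one must tune the construction --- by thinning $S_\varepsilon$, separating the scales $q_j$, imposing coprimality or gcd conditions on $a$ and $b$, and possibly truncating $\Phi$ from below --- so that $\phi_\varepsilon$ genuinely lies in the admissible class of that theorem, which amounts to controlling the overlaps of the sets $\{\alpha_2:\|n\alpha_2-\gamma_2\|<\phi(n)\}$ in the presence of the fixed shift $\gamma_2$; the delicate point is to carry out this thinning without destroying the divergence established above. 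By contrast, the bookkeeping around the correction term $bq_j\theta_j$, the truncation at $1/2$, the small-$a$ and small-$b$ edge cases, and any collisions in the parametrisation $(j,a,b)\mapsto n$ should all be routine.
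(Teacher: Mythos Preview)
Your divergence calculation and overall setup are sound, and in spirit your construction is close to the paper's: both build integers $n$ from the continued fraction data of the Liouville $\alpha_1$ so that $\|n\alpha_1-\gamma_1\|$ is under control, and both verify that the associated one-dimensional approximating function $\phi$ satisfies $\sum_n\phi(n)=\infty$ blockwise with each block contributing $\asymp\varepsilon$.

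The genuine gap is your final step. There is no black-box ``inhomogeneous Duffin--Schaeffer theorem of this paper'' that applies here: Theorems~\ref{thm: SpecialCaseWeak} and~\ref{thm: SpecialCase} are stated only for $\Psi=\Phi$ of the special shape~\eqref{PhiDef} with $\alpha_1$ \emph{irrational non-Liouville} when $k=2$, and Conjecture~\ref{WeakInhomogeneous} remains open in general. Your $\phi_\varepsilon$ lies outside that admissible class both because $\alpha_1\in\cL$ is explicitly excluded and because $\phi_\varepsilon$ is supported on a sparse set rather than arising from a monotone $\psi$ via~\eqref{PhiDef}. So the ``tuning'' you anticipate is not a minor adjustment but the entire substance of the proof: the overlap estimates for the sets $\{\alpha_2:\|n\alpha_2-\gamma_2\|<\phi(n)\}$ must be done from scratch.

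The paper carries this out directly in Section~\ref{Sec: Gallagher on Liouville fibres}, with a construction that differs from yours in ways that matter for the overlaps. It takes $n=b_t+q_{t-1}x+q_ty$ with $x,y$ in prescribed power ranges of $q_t$ (so the support is a genuine two-dimensional generalised arithmetic progression with coprime step directions $q_{t-1},q_t$, enabling Lemma~\ref{lem: divisible numbers in GAPS} to count congruence solutions), introduces artificial divisors $F(t)$ to handle the localisation needed for Lemma~\ref{lem: density}, and then splits into cases according to whether $\gamma_2$ is diophantine, Liouville, or rational. In the Liouville and rational cases for $\gamma_2$ the shift-reduced coprimality condition is imposed to rule out the degenerate large-gcd regime --- precisely the step you flag as ``delicate'', and which cannot be avoided by appeal to an existing theorem.
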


In view of Theorem \ref{thm2}, we see that this result holds for any irrational $\alp_1$, Liouville or not. To be clear, we obtain the following statement.

\begin{thm}\label{thm: definitive} Let $\alpha_1 \in \bR \setminus \bQ$, and let $\gam_1, \gam_2 \in \bR$. Then, for almost all $\alpha_2\in\bR$, we have
\[
\liminf_{n\rightarrow \infty} 
n(\log n)^{2} 
\| n \alp_1 - \gam_1\| \cdot
\| n \alpha_2 - \gam_2\|
 = 0.
\]
\end{thm}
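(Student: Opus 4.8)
The plan is to derive Theorem~\ref{thm: definitive} by splitting $\bR \setminus \bQ$ according to whether $\alpha_1$ is Liouville, and invoking the two preceding theorems on each piece. Since $\bR \setminus \bQ = \bigl(\bR \setminus (\bQ \cup \cL)\bigr) \cup \cL$, the two cases below are exhaustive, and in each the asserted conclusion holds for almost all $\alpha_2 \in \bR$; intersecting the two full-measure sets (in fact, only one case applies for a given $\alpha_1$) completes the argument.

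\textbf{Case 1: $\alpha_1 \in \bR \setminus (\bQ \cup \cL)$.} Here $\alpha_1$ is irrational and non-Liouville, so I would apply Theorem~\ref{thm2} with $k = 2$ to the approximation function $\psi(n) = \bigl(n(\log n)^2 \log\log n\bigr)^{-1}$ (modified on an initial segment so as to be positive and non-increasing on all of $\bN$; this does not affect anything below). One checks $\sum_{n} \psi(n)\log n = \sum_{n} \bigl(n \log n \log\log n\bigr)^{-1} = \infty$ by the integral test ($u = \log\log x$), which is hypothesis~\eqref{eq: sum of measure diverges} for $k=2$; indeed this is exactly the $k=2$ instance of the Corollary stated immediately after Theorem~\ref{thm2}. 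We obtain, for almost all $\alpha_2 \in \bR$, infinitely many $n \in \bN$ with
\[
\|n\alpha_1 - \gamma_1\| \cdot \|n\alpha_2 - \gamma_2\| < \frac{1}{n(\log n)^2 \log\log n},
\]
and along such $n$ we have $n(\log n)^2 \|n\alpha_1 - \gamma_1\| \cdot \|n\alpha_2 - \gamma_2\| < (\log\log n)^{-1} \to 0$; since the left-hand side is non-negative, its $\liminf$ is $0$.

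\textbf{Case 2: $\alpha_1 \in \cL$.} This is precisely the content of Theorem~\ref{thm: Liouville}, already proved in the excerpt.

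At the level of Theorem~\ref{thm: definitive} itself there is no real obstacle: it is a formal consequence of Theorems~\ref{thm2} and~\ref{thm: Liouville} together with the measure-zero-ness of $\bQ$. The substantive difficulty lies entirely in those two inputs, and especially in Theorem~\ref{thm: Liouville}: for Liouville $\alpha_1$ the structural Bohr-set and geometry-of-numbers machinery underpinning Theorem~\ref{thm2} degenerates, so a genuinely different argument — the source of the ``unexpected threshold'' advertised in the abstract — is required there. The present statement merely records that the two regimes, taken together, cover every irrational $\alpha_1$.
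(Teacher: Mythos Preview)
Your proposal is correct and matches the paper's own reasoning: the paper states Theorem~\ref{thm: definitive} immediately after Theorem~\ref{thm: Liouville} with the remark ``In view of Theorem~\ref{thm2}, we see that this result holds for any irrational $\alp_1$, Liouville or not,'' i.e.\ exactly the case split you describe. Your choice of $\psi(n) = (n(\log n)^2 \log\log n)^{-1}$ in Case~1 is precisely the $k=2$ instance of the Corollary following Theorem~\ref{thm2}.
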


However, if $\alp_1 \in \bQ$, then one can easily construct a counterexample by choosing any $\gamma_1 \notin \alp_1 \bZ$ and applying Sz\"usz's theorem \cite{Szu1958}, see Theorem \ref{Sz}. In this sense, and in the sense described in the next two paragraphs, Theorem \ref{thm: definitive} is definitive.

The analysis on Liouville fibres is delicate, owing to the erratic
behaviour of the arising sums of reciprocals of fractional parts \cite{BHV2016}. In light of our earlier discussion on
Problem \ref{prob2} and Conjecture \ref{conj: planar inhom. Gallagher}, 
one might expect Theorem \ref{thm: Liouville}
not to be sharp. Surprisingly, the result is sharp, as we now detail.

Let us ask for a strengthening of Theorem \ref{thm: Liouville}
by considering an approximation function with a faster decay, say
\begin{equation}
\psio(n)=\frac{1}{n(\log n)^{2}\grow(n)}\label{def: psi_1},
\end{equation}
where $\grow: \bN \to [1,\infty)$ is an unbounded and non-decreasing function. Then
the strengthened statement of Theorem \ref{thm: Liouville}, with $\psi(n) = \psi_\grow(n)$, is false:

\begin{thm}\label{thm: log square is sharp}
Let $\grow: \bN \to [1,\infty)$ be non-decreasing and unbounded. 
Then there are continuum many pairs $(\alpha_1, \gamma_1) \in \cL  \times \bR$ 
such that for any $\gamma_2\in\mathbb{R}$ and 
almost all $\alpha_2 \in \bR$ the inequality
\[
\| n \alp_1 - \gam_1\| \cdot
\| n \alpha_2 - \gam_2\|
< \psi_\xi(n)
\]
has at most finitely many solutions $n \in \bN$.
\end{thm}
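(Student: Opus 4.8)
The plan is to deduce the theorem from the convergence Borel--Cantelli lemma, thereby reducing it to the construction of suitable pairs. Fix a candidate pair $(\alpha_1,\gamma_1)$ with $\alpha_1$ irrational, and fix $\gamma_2\in\bR$. For each $n$ the set of $\alpha_2\in[0,1]$ with $\Vert n\alpha_2-\gamma_2\Vert<\psi_\grow(n)/\Vert n\alpha_1-\gamma_1\Vert$ is a union of intervals of total length at most $\min\bigl(2\psi_\grow(n)/\Vert n\alpha_1-\gamma_1\Vert,\,1\bigr)$. Hence, if
\begin{equation*}
\sum_{n\ge 1}\min\!\left(\frac{\psi_\grow(n)}{\Vert n\alpha_1-\gamma_1\Vert},\,1\right)<\infty,\tag{$\star$}
\end{equation*}
then for almost every $\alpha_2$ the inequality $\Vert n\alpha_1-\gamma_1\Vert\cdot\Vert n\alpha_2-\gamma_2\Vert<\psi_\grow(n)$ has only finitely many solutions, and the theorem follows (periodicity handling $\alpha_2\in\bR$). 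So it is enough to produce continuum many pairs $(\alpha_1,\gamma_1)\in\cL\times\bR$ obeying $(\star)$; in fact I would produce a single $\alpha_1\in\cL$ for which a full-measure set of shifts $\gamma_1\in[0,1)$ works.

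Build $\alpha_1=[0;a_1,a_2,\dots]$ recursively, at the $i$th stage choosing the partial quotient large enough that the convergent denominators $q_i$ satisfy, say, $\grow(q_i)\ge i^4$ (possible since $\grow$ is unbounded), $\log q_{i+1}\ge 2^{2i}$, and $q_{i+1}\ge q_i^{i}$; the last forces $\alpha_1\in\cL$, and the infinitely many admissible choices at each stage yield continuum many admissible $\alpha_1$. Fix such an $\alpha_1$, write $p_i/q_i$ for its convergents and $\theta_i=\alpha_1-p_i/q_i$, so that $|n\theta_i|<1/q_i$ for $n<q_{i+1}$. By direct Borel--Cantelli estimates, almost every $\gamma_1$ satisfies: \textup{(C1)} $\Vert n\alpha_1-\gamma_1\Vert\ge(\log n)^{9/10}\psi_\grow(n)$ for all large $n$; \textup{(C2)} $\min_{q_i\le n<q_{i+1}}\Vert n\alpha_1-\gamma_1\Vert\ge(q_{i+1}i^2)^{-1}$ for all large $i$; \textup{(C3)} $\Vert q_i\gamma_1\Vert\ge i^{-2}$ for all large $i$. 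I claim any $\gamma_1$ with \textup{(C1)}--\textup{(C3)} satisfies $(\star)$.

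To see this, split $\bN$ into the ranges $[q_i,q_{i+1})$ and each range into the $\asymp q_{i+1}/q_i$ subintervals of length $q_i$. On a subinterval, since $n\alpha_1=np_i/q_i+n\theta_i$ with the perturbation $n\theta_i$ varying by $O(1/q_{i+1})$, the numbers $\Vert n\alpha_1-\gamma_1\Vert$ are (up to that perturbation) the distances from a fixed translate of $\gamma_1$ to $q_i^{-1}\bZ$; all but the single smallest of them exceed $\tfrac1{2q_i}$. The resulting "bulk" contribution to $(\star)$, weighted by $\psi_\grow(n)\le\psi_\grow(q_i)$ and summed over subintervals, is $\ll\grow(q_i)^{-1}$ — crucially, for the subintervals with index $j>q_i$ the weight carries a factor $(\log jq_i)^{-2}\le(\log j)^{-2}$, so the relevant sum $\sum_j\bigl(j(\log jq_i)^2\grow(jq_i)\bigr)^{-1}$ converges and is $\ll(\grow(q_i)\log q_i)^{-1}$. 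For the single smallest term of each subinterval: the near-periodicity shows that $\Vert n\alpha_1-\gamma_1\Vert$ is governed throughout $[q_i,q_{i+1})$ by the one convergent $p_i/q_i$, so there is essentially a unique resonant subinterval $j^*$ with $\Vert n\alpha_1-\gamma_1\Vert$ small; by \textup{(C3)}, $j^*$ lies at distance $\gg q_{i+1}/(q_i i^2)$ from the low end of the range, and away from it $\Vert n\alpha_1-\gamma_1\Vert\gg|j-j^*|/q_{i+1}$. Thus the small-divisor terms with $j\ne j^*$ sum to $\ll i^2/(\grow(q_i)\log q_i)$ (the dangerous factor $q_{i+1}/q_i$ is cancelled because near $j^*$ the index is large, of size $\gg q_{i+1}/(q_i i^2)$, so $\psi_\grow$ there is tiny), while the $O(1)$ terms at $j^*$ itself are each $\ll(\log q_{i+1})^{-9/10}$ by \textup{(C1)}. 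Summing over the range gives $\ll i^2\grow(q_i)^{-1}+(\log q_{i+1})^{-9/10}\ll i^{-2}+2^{-2i}$, whence $(\star)$.

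The main obstacle is the tension between making the ranges $[q_i,q_{i+1})$ enormously long — forced both by the Liouville requirement on $\alpha_1$ and by the need to drag the slowly-growing function $\grow$ up to a usable level — and showing that such a long range contributes almost nothing to $(\star)$. The key that resolves it is that the bulk of a long range consists of large $n$, for which $\psi_\grow(n)$ is negligible, so that the only real threat is the $O(1)$ small-divisor ("resonant") terms, one per subinterval; these are tamed by the combination of (i) the near-periodicity, which isolates a single resonance per range, (ii) condition \textup{(C3)}, which keeps that resonance away from the small-$n$ end where $\psi_\grow$ is biggest (precisely the place where a naive estimate loses a factor $q_{i+1}/q_i$), and (iii) the mild strengthening \textup{(C1)} of the hypothesis that $\gamma_1$ is not abnormally well approximated by the orbit $\{n\alpha_1\}$, which kills the resonant term outright. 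The remaining labour — arranging the recursion so that each $q_{i+1}$ simultaneously respects the several bounds that refer to $q_{i+1}$ — is routine, since each of those merely demands that $q_{i+1}$ be large enough.
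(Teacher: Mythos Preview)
Your route differs from the paper's. The paper constructs $\gamma_1$ explicitly via its Ostrowski expansion with respect to $\alpha_1$, taking each Ostrowski digit $b_k\in\{a_k/4,a_k/2\}$; its lemma expressing $\|n\alpha-\gamma\|$ in Ostrowski terms then yields lower bounds indexed by the first digit at which $n$ and $\gamma_1$ disagree, and convergence of $(\star)$ follows by summing over cylinder sets. You instead fix a suitable Liouville $\alpha_1$ and argue by Borel--Cantelli that almost every $\gamma_1$ satisfies generic conditions (C1)--(C3) sufficient for $(\star)$. Your approach is non-constructive but delivers a full-measure set of admissible shifts for each $\alpha_1$, stronger than the paper's continuum-many; the paper's approach buys explicit examples.

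There is, however, a genuine gap in your bulk estimate. In each length-$q_i$ subinterval $J_j$ you note that all but the minimal value of $\|n\alpha_1-\gamma_1\|$ exceed $\tfrac1{2q_i}$, and then assert that the resulting bulk contribution over $[q_i,q_{i+1})$ is $\ll\xi(q_i)^{-1}$. Using only the crude bound $\|n\alpha_1-\gamma_1\|\ge\tfrac1{2q_i}$, the bulk from $J_j$ is at most $2q_i\sum_{n\in J_j}\psi_\xi(n)\asymp 2q_i^2\,\psi_\xi(jq_i)$; summed over $j$ via your own estimate $\sum_j\bigl(j(\log jq_i)^2\xi(jq_i)\bigr)^{-1}\ll(\xi(q_i)\log q_i)^{-1}$, this gives order $q_i/(\xi(q_i)\log q_i)$, which diverges as $i\to\infty$. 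The repair is to exploit the very structure you identified but did not use: the non-minimal distances in $J_j$ are not merely $\ge\tfrac1{2q_i}$ but run through essentially $\ell/q_i$ for $1\le\ell\ll q_i$, whence $\sum_{\text{non-min}}\|n\alpha_1-\gamma_1\|^{-1}\ll q_i\log q_i$ rather than $q_i^2$. With this harmonic refinement the bulk becomes $\ll q_i\log q_i\cdot\sum_j\psi_\xi(jq_i)\ll 1/\xi(q_i)$ as you claimed, and the remainder of your argument (the small-divisor analysis via (C1) and (C3)) then carries through.
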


\begin{remark}
\begin{enumerate}
\item One could regard this as a result `in the opposite direction' to Littlewood's conjecture, though there are several differences. A volume heuristic, and the works of Peck \cite{Peck1961} and Pollington--Velani \cite{PV2000}, suggest that \eqref{eq: Littlewood} can be strengthened by roughly a logarithm---see the discussion in \cite{BV2011}---and meanwhile Badziahin \cite{Bad2013} has shown that it cannot be strengthened much further than that. For this reason, we consider that results in the opposite direction to Littlewood's conjecture are worthy of further study.
\item In the course of our proof, we explicitly construct the pairs $(\alp_1,\gam_1)$. This feature is often not present in results of this flavour.
\end{enumerate}
\end{remark}

To conclude this discussion, Theorem \ref{thm: definitive} is sharp, and has no unnecessary restrictions on $\alp_1, \gam_1,$ and $\gam_2$.

\bigskip

By work of Beresnevich and Velani \cite[Section 1]{BV2015},
as well as Hussain and Simmons \cite{HS2018}, `fractal'
Hausdorff measures are known to be 
insensitive to the multiplicative nature of these types of problems. Fix $k\geq 2$. For $\psi: \bN \to \bR_{>0}$ non-increasing with $\lim_{n\to \infty}\psi(n) = 0$, and $\bgam = (\gam_1,\ldots,\gam_k) \in \bR^k$,
denote by $\cW_k^\times(\psi,\bgam)$ the set of 
$(\alp_1,\ldots,\alp_k) \in\bR^{k}$ satisfying \eqref{eq: inhomog Gallagher ineq}
for infinitely many $n$. Further,
denote by $\cW_k(\psi, \bgam)$ the set of 
$(\alp_1,\ldots,\alp_k) \in [0,1]^k$ 
for which 
\[
\max( \| n \alp_1 - \gam_1 \|, \ldots, \| n \alp_k - \gam_k \| ) < \psi(n)
\]
has infinitely many solutions $n \in \bN$. 
By
\cite[Corollary 1.4]{HS2018} and \cite[Theorem 6.1]{BRV2016}, for $\bgam \in \bR^k$
we have the Hausdorff measure identity
\begin{equation}\label{eq: Hausdorff measure identity}
H^s( {\set W}_k^\times (\psi, \bgam)) = H^{s- (k-1)} ({\set W}_1(\psi,\bgam)) 
\qquad (k-1 < s < k).
\end{equation}
We interpret from this that multiplicatively approximating 
$k$ reals using the same denominator is no different to approximating one of the $k$ numbers,
except possibly for a set of zero Hausdorff $s$-measure. 
This behaviour differs greatly from that of the Lebesgue case $s=k$, 
where there are extra logarithms for the multiplicative problem.
As explained in \cite{BV2015, HS2018}, 
in the remaining ranges for $s$ the Hausdorff theory 
trivialises: 
if $s > k$ then $H^s( {\set W}_k^\times (\psi),\bgam) = 0$, 
irrespective of $\psi$, whereas if $s \le k-1$ 
then $H^s( {\set W}_k^\times (\psi,\bgam)) = \infty$.

\subsection{Key ideas and further results}\label{Sec: Outline}

\subsubsection*{A fully-inhomogeneous fibre refinement of Gallagher's theorem}

Owing to the robustness of our method, much of the argument for Theorem \ref{thm2} is
transparent already in the planar case $k=2$. As it is simpler from a technical point of view,
we shall outline the proof only in this case, and indicate in passing how to generalise to higher dimensions.
To begin, let us isolate the metric parameter $\alpha =\alpha_2$
in \eqref{eq: inhomog Gallagher ineq}
on the left hand side of the inequality. 
As $\Vert \cdot\Vert$ is $1$-periodic,
it suffices if we show 
\begin{equation}\label{eq: Phi dim 2}
\Vert n \alpha - \gamma \Vert <
\Phi(n) :=\frac{ \psi(n)} { \| n \alp_1 - \gam_1 \| }, 
\qquad \gamma := \gamma_2,
\end{equation}
holds for almost every $\alpha \in [0,1]$ infinitely often.
If $\Phi$ were a non-increasing function, then we could utilise 
Sz\"usz's extension of Khintchine's theorem, which grants a sharp description of the inhomogeneous approximation rate of a generic real number: 
\begin{thm}[Sz\"usz \cite{Szu1958}]
\label{Sz}
If $\Psi: \bN \rightarrow \bR_{>0}$ is non-increasing
and $\gam \in \bR$, then the Lebesgue measure of the set of $\alp \in [0,1]$ for which
\begin{equation}\label{eq: inhom Khintchine}
\Vert n \alp -\gamma \Vert < \Psi(n)
\end{equation}
holds infinitely often is 1 (resp. 0) if 
\begin{equation}\label{eq: sum of measures in Khintchine div}
\sum_{n\geq 1} \Psi(n)
\end{equation}
diverges (resp. converges).
\end{thm}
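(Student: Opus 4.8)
The plan is to split the assertion into its convergence and divergence halves, the first being elementary and the second carrying the real content. For the convergence half, set $E_n = E_n(\gamma) = \{\alpha \in [0,1] : \|n\alpha - \gamma\| < \Psi(n)\}$; this is a union of at most $n+1$ intervals of total length $\min(2\Psi(n),1)$ (monotonicity of $\Psi$ is not needed here), so convergence of \eqref{eq: sum of measures in Khintchine div} forces $\sum_n |E_n| < \infty$, and the first Borel--Cantelli lemma shows that almost every $\alpha$ lies in only finitely many $E_n$, i.e. \eqref{eq: inhom Khintchine} fails for all sufficiently large $n$.

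For the divergence half one must show $|\limsup_n E_n| = 1$. I would first reduce to proving merely $|\limsup_n E_n| > 0$: the set $\limsup_n E_n$ obeys a zero--one law, which for $\limsup$ sets built from finite unions of equally spaced intervals can be extracted from a Lebesgue-density argument (in the spirit of the zero--one laws of Cassels and Gallagher). Discarding the finitely many $n$ with $\Psi(n) > 1/2$, we may assume $|E_n| = 2\Psi(n)$, so that divergence of \eqref{eq: sum of measures in Khintchine div} gives $\sum_n |E_n| = \infty$. To convert divergence of this first moment into positivity of $|\limsup_n E_n|$, the standard device is the quasi-independence form of the divergence Borel--Cantelli lemma: it suffices to show $\sum_{m,n \le N} |E_m \cap E_n| \ll \big(\sum_{n \le N} |E_n|\big)^2$ for infinitely many $N$.

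The crux, and what I expect to be the main obstacle, is the pair-correlation estimate for $|E_m \cap E_n|$ with $m < n$. A naive argument proceeds thus: if $\alpha \in E_m \cap E_n$ then for suitable integers $p, q$ the quantity $(m-n)\gamma + mq - np$ lies within $n\Psi(m) + m\Psi(n)$ of $0$, and since $mq - np$ ranges exactly over $\gcd(m,n)\bZ$, an elementary count yields $|E_m \cap E_n| \ll \Psi(m)\Psi(n) + \gcd(m,n)\Psi(n)/n$. Summed over $m < n \le N$, the first term is controlled by $\big(\sum_{n\le N}\Psi(n)\big)^2$, as required, but the second contributes $\ll \sum_{n \le N}\tau(n)\Psi(n)$, which --- precisely when $\Psi(n)$ is near the divergence threshold, say $\Psi(n) \asymp 1/(n\log n)$ --- dominates $\big(\sum_{n\le N}\Psi(n)\big)^2$; this is the regime in which the theorem is both hardest and most relevant to the applications above. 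Overcoming this common-factor loss is where genuine work is required. One route, following Sz\"usz's original treatment and Cassels's account, is to bring in the continued fraction expansion of $\alpha$ together with the Ostrowski representation of $\gamma$, which confines the solutions $n$ of \eqref{eq: inhom Khintchine} to blocks indexed by the convergent denominators of $\alpha$ and renders the surviving overlaps genuinely sparse; an alternative is to mimic the classical proof of Khintchine's theorem, using that the intervals centred at $(\gamma+j)/m$ and $(\gamma+k)/n$ are forced apart unless $n\Psi(m) + m\Psi(n)$ is large and treating that large regime separately via monotonicity of $\Psi$. Granting such a sharpened pair-correlation bound (or the corresponding count of solutions via continued fractions), the divergence Borel--Cantelli lemma yields $|\limsup_n E_n| > 0$, and the zero--one law then upgrades this to $|\limsup_n E_n| = 1$, completing the divergence half and hence the theorem.
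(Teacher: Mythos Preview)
The paper does not prove this theorem; it is quoted from Sz\"usz's 1958 paper as a known background result, so there is no ``paper's own proof'' to compare against.

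On the substance of your outline: the convergence half is correct and complete. For the divergence half your architecture is right --- divergence Borel--Cantelli via a second-moment bound --- and you correctly identify the GCD term in the overlap estimate as the obstruction. However, the proposal does not actually overcome it: you describe two possible routes (Ostrowski expansion, or a refinement using monotonicity) without carrying either one out, so as written this is a plan rather than a proof. The monotonicity route is the standard one and does work, but it requires more than ``treating that large regime separately'': one typically restructures the sum by grouping $n$ into dyadic blocks, so that $\Psi$ is essentially constant on each block, and then the GCD contribution from pairs in distinct blocks is genuinely $O(\Psi(m)\Psi(n))$ on average.

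One point deserves a concrete warning. You reduce full measure to positive measure by invoking ``a zero--one law \ldots\ in the spirit of the zero--one laws of Cassels and Gallagher''. Those zero--one laws are homogeneous; the paper explicitly remarks (Section~\ref{Sec: Outline}) that \emph{no zero--one law is available for inhomogeneous approximation}, and this is precisely why its own arguments localise the overlap estimates to an arbitrary subinterval $\cI$ and then appeal to the density lemma (Lemma~\ref{lem: density}). For monotone $\Psi$ one can in fact recover a zero--one principle by exploiting that divergence of $\sum\Psi(n)$ forces divergence of $\sum\Psi(kn)$ for each fixed $k$, so that the limsup set is invariant under rational translations; but this step needs to be supplied explicitly, not asserted by analogy with the homogeneous case.
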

Since $\Phi$ is very much not monotonic,
deducing \eqref{eq: Phi dim 2} infinitely often for almost all $\alp$,
from the divergence of $\sum_n \Phi(n)$,
is far more demanding. 
In fact, it is known that this naive condition is
insufficient, as Duffin and Schaeffer \cite{DS1941}
pointed out: There exists $\Psi: \bN \to \bR_{>0}$ such that 
\eqref{eq: sum of measures in Khintchine div} diverges but for $\gam = 0$ and almost all $\alpha \in [0,1]$ the inequality
\eqref{eq: inhom Khintchine} holds at most finitely often. This was generalised by Ram\'irez in \cite{Ram2016}.

To circumvent their counterexamples, Duffin and Schaeffer restricted attention to reduced fractions, and correspondingly imposed the condition that the series
\begin{equation}\label{eq: Duffin--Schaeffer condition}
\sum_{n\geq 1} \Psi(n) \frac{\varphi(n)}{n} 
\end{equation}
diverges, where $\varphi$ is Euler's totient function. The Duffin--Schaeffer conjecture 
was a major open problem in diophantine approximation
since the 1940s. Over the course of the nearly eight decades,
various partial results towards the Duffin--Schaeffer conjecture were obtained, by 
\begin{itemize}
\item Erd\H{o}s \cite{Erd1970} in 1970, Vaaler \cite{Vaa1978} in 1978
\item Pollington--Vaughan \cite{PV1990} in 1990
\item Harman \cite{Har1990} in 1990, Haynes--Pollington--Velani \cite{HPV2012} in 2012,
\end{itemize}
and several other authors \cite{Aist2014, Fufu, BHHV2013}. Recently, Koukoulopoulos and Maynard (2019) broke through with a complete proof of
the Duffin--Schaeffer conjecture:
 
\begin{thm}[Koukoulopoulos--Maynard \cite{KM2020}] \label{DS}
If $\Psi: \bN \rightarrow \bR_{>0}$ is such that
\eqref{eq: Duffin--Schaeffer condition} diverges then, for almost all $\alp \in [0,1]$, the inequality
\[
\vert n \alp  - a \vert < \Psi(n)
\]
holds for infinitely many coprime $a,n\in \bN$.
\end{thm}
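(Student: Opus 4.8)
The plan is to follow the now-standard route to metric results of this type: reduce the Duffin--Schaeffer conjecture to a quasi-independence estimate for overlaps, then to a purely combinatorial problem about greatest common divisors, and finally resolve that combinatorial problem, which is the genuinely new ingredient. First I would set, for each $n\in\bN$,
\[
A_n = \bigcup_{\substack{0\le a\le n \\ \gcd(a,n)=1}} \left( \frac{a-\Psi(n)}{n}, \frac{a+\Psi(n)}{n} \right),
\]
so that the set in the statement is $\limsup_n A_n$; we may assume $\Psi(n)\le 1/2$, whence $\mu(A_n)\asymp\Delta_n:=\Psi(n)\varphi(n)/n$ and $\sum_n\Delta_n=\infty$. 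By Gallagher's zero--one law $\mu(\limsup_n A_n)\in\{0,1\}$, so it suffices to show this measure is positive; by the divergence form of the Borel--Cantelli lemma that follows once one proves the quasi-independence estimate $\sum_{m,n\le N}\mu(A_m\cap A_n)\ll(\sum_{n\le N}\Delta_n)^2$ for all sufficiently large $N$.

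Next I would bring in the Pollington--Vaughan \cite{PV1990} bound for pairwise overlaps: $\mu(A_m\cap A_n)\ll\Delta_m\Delta_n$ times a product of local factors, one for each prime dividing $mn/\gcd(m,n)^2$ that is large in a sense depending on $\Psi(m)$ and $\Psi(n)$. Coprime and ``generic'' pairs contribute an acceptable amount, so after a dyadic decomposition of the integers $n\le N$ according to the size of $n$ and of $\Delta_n$, the task reduces to the following arithmetic assertion: for every finite set $\cA$ of integers in a single dyadic block, carrying weights $\Delta_n$ with $\sum_{n\in\cA}\Delta_n$ large, there is a subset $\cB\subseteq\cA$ with $\sum_{n\in\cB}\Delta_n\gg\sum_{n\in\cA}\Delta_n$ on which the gcd-weighted sum $\sum_{m,n\in\cB}\Delta_m\Delta_n\,f(\gcd(m,n))$, with $f$ recording the dangerous local factors above, is $\ll(\sum_{n\in\cB}\Delta_n)^2$.

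The heart of the matter is to prove this last assertion by the method of \emph{GCD graphs}. I would encode $\cA$ as a weighted graph $G$ whose vertices are the integers $n$ (with vertex weight $\Delta_n$) and whose edges join the pairs with a ``dangerous'' gcd, each edge carrying its local factor as weight, and then attach to $G$ a scalar \emph{quality} $q(G)$ that simultaneously measures how badly the desired near-independence fails and how much vertex weight remains. The crux is a dichotomy: either $G$ already exhibits the near-independence we want, or one can pass to a subgraph $G'$ obtained by a \emph{compression} --- fixing, for a well-chosen prime $p$, the exact power of $p$ dividing every surviving vertex and discarding a controlled amount of vertex weight --- with $q(G')>q(G)$ by a definite increment. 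Since anatomy-of-integers and sieve estimates (limiting how many prime factors the $n$ can have, and how concentrated they can be) force $q$ to be bounded above, iterating the compression must terminate; the terminal graph supplies the set $\cB$ required above, and unwinding the reduction proves the theorem.

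I expect this compression dichotomy to be the main obstacle --- it is where essentially all of the difficulty of the conjecture resides. One must engineer the quality functional so that it is provably bounded yet so that, whenever independence fails, a compression of strictly larger quality can \emph{always} be found, and establishing the latter demands a delicate extremal analysis of the local densities prime by prime. By contrast the first two steps are routine: the zero--one law is Gallagher's, the Borel--Cantelli reduction is classical, and the overlap bound together with the dyadic pigeonholing is due to Pollington and Vaughan.
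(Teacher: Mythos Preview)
The paper does not prove this statement: Theorem~\ref{DS} is simply quoted from \cite{KM2020} and used as a black box (for instance in Section~\ref{Sec: Gallagher on Liouville fibres}), so there is no ``paper's own proof'' to compare against. Your sketch is a faithful high-level outline of the Koukoulopoulos--Maynard argument --- Gallagher's zero--one law, the divergence Borel--Cantelli reduction, the Pollington--Vaughan overlap bound, dyadic pigeonholing, and then the GCD-graph compression with a monotone quality functional --- and you correctly identify the compression dichotomy as the decisive and difficult step.
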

A natural generalisation
would be an inhomogeneous version 
of the Duffin--Schaeffer conjecture:

\begin{conj}[Inhomogeneous Duffin--Schaeffer, see Ram\'irez \cite{Ram2016}]
\label{Felipe}
Let $\gamma \in \mathbb{R}$.
If $\Psi: \bN \rightarrow \bR_{>0}$ is such that
\eqref{eq: Duffin--Schaeffer condition} diverges, then for almost all $\alp \in [0,1]$ the inequality
\[
\vert n \alp  - a - \gamma \vert < \Psi(n)
\]
holds for infinitely many 
coprime $a,n\in \bN$.
\end{conj}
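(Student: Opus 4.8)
This is an open conjecture: the case $\gamma = 0$ is Theorem~\ref{DS} of Koukoulopoulos--Maynard, but the general case appears to be out of reach at present, so I will only describe the approach I would attempt and the point where I expect it to break down --- which is presumably why this paper proves an inhomogeneous Duffin--Schaeffer statement only for a restricted class of $\Psi$.

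The natural framework is a second-moment (quasi-independence) argument. Work on the circle $\bR/\bZ$ with Lebesgue measure $\mu$, and for $n \in \bN$ set
\[
E_n = \bigcup_{\substack{1 \le a \le n\\ \gcd(a,n) = 1}} \Bigl( \tfrac{a+\gamma}{n} - \tfrac{\Psi(n)}{n}, \ \tfrac{a+\gamma}{n} + \tfrac{\Psi(n)}{n} \Bigr),
\]
so that $\limsup_n E_n$ is exactly the set whose full measure we must establish (we may assume $\Psi(n) \le \tfrac12$, which changes neither this set nor \eqref{eq: Duffin--Schaeffer condition}). Then $\mu(E_n) \asymp \Psi(n)\varphi(n)/n$, so the divergence hypothesis gives $\sum_n \mu(E_n) = \infty$; invoking the Chung--Erd\H{o}s / divergence Borel--Cantelli inequality and then an inhomogeneous analogue of Gallagher's zero--one law (cf.\ \cite{Ram2016}), the problem reduces to the \emph{quasi-independence on average} bound
\[
\sum_{m,n \le N} \mu(E_m \cap E_n) \ll \Bigl( \sum_{n \le N} \mu(E_n) \Bigr)^{2},
\]
required for infinitely many $N$, or blockwise after a dyadic decomposition of the denominators, exactly as in \cite{KM2020}.

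Everything therefore rests on the pair correlations. A point of $E_m \cap E_n$ arises from reduced fractions $\tfrac{a+\gamma}{n}$, $\tfrac{b+\gamma}{m}$ with $\gcd(a,n) = \gcd(b,m) = 1$ and $|am - bn + \gamma(m-n)| < w$, where $w := m\Psi(n) + n\Psi(m)$. For $\gamma = 0$ the governing quantity is the integer $am - bn$, divisible by $g := \gcd(m,n)$ and (for $m \ne n$) never zero under the coprimality constraints, so overlaps occur only when $w > g$; the deep content of \cite{KM2020} is a GCD-graph / compression argument that controls the resulting sums by exploiting precisely this divisibility structure and the anatomy of reduced fractions. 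Turning on $\gamma$ replaces $am - bn$ by the coset $am - bn + \gamma(m - n)$ of $g\bZ$, translated by the real number $\gamma(m-n)$. The qualitative effect is twofold: when $w > g$ the count of admissible values of $am - bn$ is essentially unchanged, but when $w \le g$ --- where the homogeneous intersection vanishes --- the shifted window can now catch a multiple of $g$, namely exactly when $\bigl\| \tfrac{m-n}{g}\gamma \bigr\| < w/g$ (using $g \mid m - n$). Such \emph{$\gamma$-resonant} pairs $(m,n)$ produce overlaps with no homogeneous counterpart, and their contribution couples the combinatorics to the Diophantine type of $\gamma$.

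The main obstacle is to bound the total contribution of these $\gamma$-resonant pairs. If $\gamma$ is badly approximable they are sparse, and I would expect a modification of the Koukoulopoulos--Maynard machinery --- applied to the shifted quantity, with the resonant term carried as an extra, controllably small, weight --- to close the argument. If $\gamma$ is a Liouville number, however, the pairs $(m,n)$ for which $\tfrac{m-n}{g}$ is an excellent denominator of $\gamma$ form a large and arithmetically structured family, and along it the overlaps $\mu(E_m \cap E_n)$ can concentrate; handling these uniformly in $\gamma$ and in $\Psi$ is, to my knowledge, not known how to do. My concrete plan would be to split the pair sum dyadically according to the size of $\bigl\| \tfrac{m-n}{g} \gamma \bigr\|$: on ranges where this quantity is not small the resonant term is absent and a perturbation of \cite{KM2020} applies; on each resonant range one would try either to use the sparsity of the admissible values of $m - n$ (when $\gamma$ is not too well approximable) or to compare $E_m \cap E_n$ to the corresponding intersection in a \emph{homogeneous} Duffin--Schaeffer problem with a perturbed approximation function (when $\gamma$ lies very close to rationals with small denominators), and then patch the ranges together. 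Making this patching work for \emph{all} $\gamma \in \bR$ and all non-increasing $\Psi$ is exactly where I expect the argument to genuinely stall; for the Gallagher-type function classes $\Psi(n) = \psi(n)/\|n\alp_1 - \gam_1\|$, the Diophantine regularity of $\alp_1$ should suppress the dangerous resonances, which is the route I anticipate this paper takes.
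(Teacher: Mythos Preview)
You are right that this is an open conjecture; the paper does not prove it, only the special cases recorded in Theorems~\ref{thm: SpecialCaseWeak} and~\ref{thm: SpecialCase}. Your diagnosis of the obstacle --- the $\gamma$-resonant pairs where $\bigl\| \tfrac{m-n}{g}\gamma \bigr\|$ is small --- is accurate, and your case split according to the Diophantine type of $\gamma$ matches what the paper ultimately does for its restricted $\Psi$.

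Two points of divergence are worth flagging. First, you invoke ``an inhomogeneous analogue of Gallagher's zero--one law'' to upgrade positive measure to full measure. No such law is available here, and the paper explicitly says so: instead it \emph{localises} the entire overlap argument to an arbitrary subinterval $\cI \subseteq [0,1]$, tracking a factor of $\mu(\cI)$ through every estimate, and then applies the density lemma (Lemma~\ref{lem: density}). This is not cosmetic --- keeping the $\mu(\cI)$ factor in the small-GCD regime is genuinely delicate, and the paper introduces artificial divisibility by powers of $4$ precisely to avoid a range where it would otherwise be lost.

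Second, your anticipated mechanism for the special class $\Psi = \Phi$ is that the Diophantine regularity of $\alp_1$ suppresses the $\gamma$-resonances. That is not quite how the paper proceeds. The regularity of $\balp$ is used to control the \emph{Bohr-set structure} of the support of $\Phi$, but the resonance problem in the large-GCD regime is handled by a separate device depending on $\gamma = \gam_k$: when $\gamma$ is diophantine, a repulsion bound $\|q\gamma\| \gg q^{-\lambda/2}$ forces $d$ into a manageable range; when $\gamma$ is rational or Liouville, the paper replaces reduced fractions by \emph{shift-reduced} fractions (Definition~\ref{defn: shift-reduced}), imposing $(q'_t a + c_t, n) = 1$ where $c_t/q'_t$ is a convergent to $\gamma$. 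This coprimality forces $x(q'_t b + c_t) - y(q'_t a + c_t) \ne 0$ in the overlap inequality, restoring the non-vanishing that the homogeneous argument gets for free. Your proposal to ``compare to a homogeneous problem with a perturbed approximation function'' when $\gamma$ is near a rational is in the right spirit, but the shift-reduced idea is sharper and is what makes the Liouville-$\gamma$ case go through.
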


For us it is enough to establish a similar result for 
a concrete class of functions of the shape 
\eqref{eq: Phi dim 2}. 
We consider sets $\cA_n$ that are roughly of the form
$$
\{ \alpha \in [0,1]: 
\Vert n \alpha - \gamma \Vert < \Phi(n)\}.
$$
By standard probabilistic arguments, it suffices to show that the measures of the sets $\cA_n$
are not summable, and that the sets
are quasi-independent in an averaged sense.
The latter property is, as always, the crux of the matter,
and involves {\em overlap estimates} that quantify
how large $\mu({\set A}_n \cap {\set A}_m)$ is compared 
to $\mu({\set A}_n)\mu({\set A}_m)$ on average.

To this end, we may confine our analysis 
to a reasonably large set ${\set G}$ of `good' indices $n$. 
To simplify matters, we 
decompose $\bN$ into dyadic ranges, wherein $n\asymp N$,
and in addition the oscillating factor
$\| n \alp_1 - \gam_1 \|$ has a fixed order of magnitude.
Sets of such integers $n$ are essentially \emph{Bohr sets}
\begin{equation} \label{Bohr1}
{\set B} = {\set B}_{\alp_1}^{\gam_1} (N; \rho_1) := \{ n \in \bZ: |n| \le N, \| n \alp_1 - \gam_1 \| \le \rho_1 \},
\end{equation}
which appear in many areas of mathematics. A novelty of this paper is to show how to handle 
the overlap estimates via congruences in Bohr sets. Here the structural theory from our previous work \cite{CT2019}, constructing the correspondence between Bohr sets and generalised arithmetic progressions---a central pillar of additive combinatorics \cite{TV2006}---in the present context, plays a pivotal role. Previously there was progress made in this direction by Tao--Vu \cite{TV2008}, and by the first named author \cite{Cho2018}. Further, it will be helpful to group $m,n$ according to the size of the greatest common divisor $d$ of $m$ and $n$.

We handle the overlap estimates by averaging over indices 
$m, n$ from different Bohr sets of the shape \eqref{Bohr1}. 
After summing over different dyadic ranges, 
we can then infer the required quasi-independence on average. In the course of our analysis, we need to count solutions to congruences in generalised arithmetic progressions that are essentially Bohr sets. The range in which $d$ is large requires extra care: For $\gamma \notin \cL \cup \bQ$, a repulsion stemming from this diophantine assumption enables us to treat this challenging case. For $\gam \in \cL \cup \bQ$, an additional argument enables us to crack this devilish final case; the idea is to introduce a counterpart to reduced fractions, which we call `shift-reduced'. 

\begin{defn} \label{defn: shift-reduced}
Let $\gam \in \bR$, $\eta \in (0,1)$, and $n \in \bN$. Let $c_0/q'_0, c_1/q'_1, \ldots$ be the continued fraction convergents of $\gam$, see Subsection \ref{CFbit} for what these are.\footnote{In this paper, we will employ the continued fraction expansions of quantities denoted by $\alp$ and $\gam$. We reserve the more common notation $p_0/q_0, p_1/q_1, \ldots$ for the continued fraction convergents of a quantity denoted by $\alp$, see Sections 4 and 5.} Denote by $\numgam/\demgam$ the continued fraction convergent of $\gamma$ for which $t$ is maximal satisfying 
\begin{equation} \label{def: LiouvilleThing}
q_t' \leq n^{\eta}.
\end{equation}
The pair $(a,n) \in \bZ \times \bN$ is
\emph{$(\gam, \eta)$-shift-reduced} 
if $ (\demgam a + \numgam, n) = 1$.
\end{defn}

\noindent To our knowledge, this notion has not hitherto appeared in the diophantine 
approximation literature.

\begin{remark}
Note that $\gamma$ can be a rational in the above definition, in which case the sequence $(q_t')_t$ terminates. Indeed, in the case $\gam = 0 $, 
we recover the traditional notion of 
reduced fractions: Letting
$\numgam=0$ and $\demgam=1$,
the fraction $a/n$ is reduced if and only if the pair $(a,n)$ is 
$(0,1/2)$-shift-reduced. Moreover, if $\gam \in \bQ$ and $n^\eta$ is greater than or equal to the denominator of $\gam$, then $\gam = c_t/q'_t$. We provide some background on continued fractions in Section \ref{CFbit}.
\end{remark}

Our definition of shift-reduced fractions is sensitive to the shift $\gam$. This finesse slightly complicates matters, because we lose measure by not using all fractions. However, by sieve theory we are able to show that shift-reduced fractions are at least as prevalent as reduced fractions, and we are then able to establish the divergence of the relevant series. We suspect that the idea of using shift-reduced fractions will be useful for other arithmetic problems, including perhaps a version of the inhomogeneous Duffin--Schaeffer conjecture, as we will shortly discuss further.

Yet all of this combined yields only  
that \eqref{eq: inhomog Gallagher ineq} holds infinitely often on a set of positive measure.
In the absence of a zero--one law for inhomogeneous diophantine approximation,
we need to carefully `localise' the overlap estimates
to indeed deduce that \eqref{eq: inhomog Gallagher ineq}
holds for a set of $\alp_k$ of full measure; this machinery, though not confined to the realm of metric diophantine approximation, is well-explained in the monograph of Beresnevich, Dickinson, and Velani \cite{BDV2006}. This subtlety complicates the analysis non-trivially,
as it requires us to keep hold of a factor of $\mu(\cI)$ 
throughout. In the regime of $m,n$ in which $d$ is essentially constant 
it turns out to be surprisingly difficult to do so.
To avoid this scenario, we introduce 
artificial powers of $4$ as divisors, which guarantees us that $d$ is not too small. This is an unorthodox manoeuvre, but one that we found useful in practice, and one that may find other uses.

In the course of our proof, we establish the following weakened version of Conjecture \ref{Felipe} for a class of non-monotonic approximating functions generalising $\Phi$ as defined in \eqref{eq: Phi dim 2}.

\begin{conj} [Weak inhomogeneous Duffin--Schaeffer conjecture]
\label{WeakInhomogeneous}
Let $\gamma \in \mathbb{R}$.
If $\Psi: \bN \rightarrow \bR_{>0}$ is such that
\eqref{eq: Duffin--Schaeffer condition} diverges, then for almost all $\alp \in [0,1]$ the inequality
\[
\| n \alp  - \gamma \|< \Psi(n)
\]
has infinitely many solutions $n\in \bN$.
\end{conj}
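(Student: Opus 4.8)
The plan is to prove Conjecture \ref{WeakInhomogeneous} by carrying out an inhomogeneous version of the Koukoulopoulos--Maynard argument \cite{KM2020}, with \emph{shift-reduced} fractions (Definition \ref{defn: shift-reduced}) in the role that reduced fractions play in the homogeneous Duffin--Schaeffer conjecture. It is worth noting first why one cannot simply reduce to the coprime inhomogeneous statement, Conjecture \ref{Felipe}: in the homogeneous setting the non-coprime assertion ``$\|n\alpha\|<\Psi(n)$ infinitely often'' follows from the coprime one upon replacing $\Psi$ by $\widetilde\Psi(m)=\sup_{d\ge 1}\Psi(dm)/d$, since passing from $n$ to $n/\gcd(a,n)$ merely rescales the approximating function; but in the inhomogeneous setting, writing $d=\gcd(a,n)$ turns $|n\alpha-a-\gam|<\Psi(n)$ into $|(n/d)\alpha-(a/d)-\gam/d|<\Psi(n)/d$, so the \emph{shift itself} is divided by $d$ and no supremum over shifts is available. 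The contribution of pairs with $\gcd(a,n)\to\infty$ therefore has to be confronted directly, and this is the heart of the matter.

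First I would fix the limsup set. We may assume $\Psi(n)<1/2$ for all large $n$, since otherwise $\{\alpha\in[0,1]:\|n\alpha-\gam\|<\Psi(n)\}=[0,1]$ infinitely often and the conclusion is immediate. Fix a small parameter $\eta\in(0,1)$ and, for each $n$, let $A_n\subseteq[0,1]$ be the union of the arcs $\{\alpha:|n\alpha-a-\gam|<\Psi(n)\}$ over those $a$ for which $(a,n)$ is $(\gam,\eta)$-shift-reduced. Since $A_n$ is contained in $\{\alpha:\|n\alpha-\gam\|<\Psi(n)\}$, it suffices to prove $\mu(\limsup_n A_n)=1$, and by the localised divergence Borel--Cantelli lemma of \cite{BDV2006} this reduces to: (i) $\sum_n\mu(A_n)=\infty$; and (ii) a localised quasi-independence-on-average bound, essentially $\sum_{Q<m,n\le Q'}\mu(A_m\cap A_n\cap I)\ll\mu(I)\bigl(\sum_{Q<n\le Q'}\mu(A_n)\bigr)^2$ up to an acceptable error, uniformly over arcs $I\subseteq[0,1]$. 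For (i), a sieve argument shows that the number of residues $a\bmod n$ with $(a,n)$ shift-reduced is $\gg\varphi(n)$: the condition is that $\demgam a+\numgam$ avoid certain residues at the primes dividing $n$, and $(\numgam,\demgam)=1$ guarantees that no prime is entirely excluded, so shift-reduced fractions are at least as plentiful as reduced ones. Hence $\mu(A_n)\gg\Psi(n)\varphi(n)/n$, and (i) follows from the divergence of \eqref{eq: Duffin--Schaeffer condition}.

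The crux is (ii). Expanding the measure, $\mu(A_m\cap A_n)$ is, up to the arc density, a count of pairs of shift-reduced $(a,m)$ and $(b,n)$ with $|na-mb+\gam(n-m)|$ less than $n\Psi(m)+m\Psi(n)$; writing $d=\gcd(m,n)$, $m=dm'$, $n=dn'$, this becomes a count of $(a,b)$ with $n'a-m'b$ in a short interval and $a,b$ shift-reduced. I would split the double sum over $m,n$ according to $d$. For $d$ below a small power of $\min(m,n)$, elementary counting of solutions of the relevant linear congruences---in the spirit of the generalised-arithmetic-progression estimates of \cite{CT2019}---yields the expected bound $\mu(A_m\cap A_n)\ll\mu(A_m)\mu(A_n)+(\text{summable error})$, with the $\mu(I)$ localisation retained. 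For $d$ large, where this bound is false, I would transplant the Koukoulopoulos--Maynard GCD-graph machinery: build the bipartite ``GCD graph'' on dyadic blocks of denominators, with vertex weights $\asymp\Psi(n)\varphi(n)/n$ and edge weights controlled by the overlaps above, and run their compression and iteration scheme, verifying that each arithmetic input---Chebyshev and mean-value estimates for $\varphi$, a linear sieve, the relevant anatomy-of-integers lemmas---admits a shift-reduced analogue.

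\emph{The main obstacle} I anticipate is this last transplant, and it is structural rather than merely technical: the shift-reduced condition $(\demgam a+\numgam,n)=1$ refers to the continued-fraction convergent $\numgam/\demgam$ of $\gam$ selected \emph{at the scale} $n^\eta$, so denominators living at different scales are subject to different coprimality constraints, whereas the GCD-graph compression redistributes weight freely between vertices of very different sizes. When $\gam\in\bR\setminus(\bQ\cup\cL)$ the denominators $\demgam$ grow only polynomially, the reference convergent is essentially constant over boundedly many dyadic scales, and one can hope to synchronise the iteration; but when $\gam$ is a Liouville number the sequence $(\demgam)$ has enormous gaps, the reference convergent is frozen over long ranges of scales, and the sieve and anatomy inputs must be made uniform there. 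I expect this is exactly why the present paper establishes Conjecture \ref{WeakInhomogeneous} only for the functions of $\Phi$-shape arising in Theorem \ref{thm2}, where the accompanying Bohr-set/dyadic decomposition both confines $d$ and keeps the scales aligned; removing that crutch for a general $\Psi$ is the decisive difficulty. Finally, to upgrade positive measure to full measure---there being no inhomogeneous zero--one law---I would carry the factor $\mu(I)$ through every overlap estimate and insert artificial powers of $4$ as forced divisors to avoid the regime where $d$ is essentially constant, just as in the paper's treatment of Theorem \ref{thm2}.
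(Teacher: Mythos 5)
There is a genuine gap: what you have written is a programme, not a proof, and the decisive step is precisely the one you yourself flag as unresolved. Your reduction to the shift-reduced limsup set, the sieve lower bound $\mu(A_n)\gg \Psi(n)\varphi(n)/n$, and the small-GCD overlap estimates are all plausible and indeed parallel what the paper does; but item (ii) in the large-GCD regime — the ``transplant'' of the Koukoulopoulos--Maynard GCD-graph compression and iteration \cite{KM2020} to the shift-reduced setting — is never carried out, and you concede that the scale-dependence of the coprimality condition $(\demgam a+\numgam,n)=1$ (the convergent $\numgam/\demgam$ changes with $n^\eta$, catastrophically so for Liouville $\gam$) is a structural obstruction you do not know how to overcome. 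There is a second unaddressed layer: in the absence of an inhomogeneous zero--one law, the entire second-moment machinery would have to be run in the localised form of \cite{BDV2006}, carrying a factor $\mu(\cI)$ through every overlap bound, whereas \cite{KM2020} proves quasi-independence only up to constants and then invokes Gallagher's zero--one law; localising their iteration is itself a substantial open problem, not a routine modification.

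For comparison: the paper does not prove Conjecture \ref{WeakInhomogeneous} at all — it is stated as a conjecture — and proves only the special case Theorem \ref{thm: SpecialCaseWeak}, where $\Psi=\Phi$ has the shape \eqref{PhiDef} and $\balp$ satisfies a diophantine condition. In that special case the Bohr-set/generalised-arithmetic-progression structure of \cite{CT2019}, the dyadic localisation, the artificial powers of $4$, and the case analysis on the diophantine type of $\gam$ together confine and control the large-GCD regime without any GCD-graph argument, and the whole analysis is carried out in localised form so that Knopp's lemma yields full measure. You correctly diagnose why the general statement resists these methods, but diagnosing the obstruction is not the same as removing it; as it stands your proposal establishes nothing beyond (at best) a sketch of the special case the paper already proves.
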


The $\gam = 0$ case of this was coined the \emph{weak Duffin--Schaeffer conjecture} by Ram\'irez \cite[Conjecture 9.1]{Ram2017}. The word `weak' is used for the following reason: Euler's totient function is present in the divergence hypothesis despite there being no coprimality aspect.

\begin{thm} [Special case of weak inhomogeneous Duffin--Schaeffer]
\label{thm: SpecialCaseWeak}
Let $k\geq 2$. Fix 
$\balp=(\alp_1,\ldots,\alp_{k-1})\in \bR^{k-1}$
and $\gam_1,\ldots,\gam_k \in \bR$.
For $k=2$, suppose that $\alp_1$ is an irrational, non-Liouville number,
and for $k\geq 3$ assume \eqref{eq: small mult. exponent}.
Let $\psi: \bN \to \bR_{>0}$ be a non-increasing 
function satisfying \eqref{eq: sum of measure diverges}, and let 
\begin{equation} \label{PhiDef}
\Phi(n) = \frac{\psi(n)}{\| n \alp_1 - \gam_1\| \cdots \| n \alp_{k-1} - \gam_{k-1}\|} \qquad (n \in \bN).
\end{equation}
Then Conjecture \ref{WeakInhomogeneous} holds for $\Psi = \Phi$.
\end{thm}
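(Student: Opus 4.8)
The plan is to follow the route sketched in Section \ref{Sec: Outline}: isolate the metric variable, build a $\limsup$ set from Bohr sets restricted to shift-reduced fractions, prove localised quasi-independence on average, and conclude with a divergence Borel--Cantelli lemma. (This is the technical core of the paper: modulo the reduction it also yields Theorem \ref{thm2}.) Writing $\alpha = \alpha_k$ and $\gamma = \gamma_k$, by the $1$-periodicity of $\|\cdot\|$ it suffices to show that $\|n\alpha - \gamma\| < \Phi(n)$, with $\Phi$ as in \eqref{PhiDef}, holds infinitely often for almost every $\alpha \in [0,1]$. I would first replace $\Phi$ by $\min(\Phi, 1/2)$ (harmless once divergence is established), and then restrict to a set $\cG$ of \emph{good} indices: for a small fixed $\eta > 0$, discard the sparse $n$ for which some $\|n\alp_j - \gam_j\|$ ($1 \le j \le k-1$) is below $n^{-\eta}$ --- permissible precisely because $\alp_1$ is non-Liouville ($k = 2$) or $\omega^{\times}(\balp) < (k-1)/(k-2)$ ($k \ge 3$) --- and also require $n$ to be divisible by a power of $4$ of size $\asymp n^{\epsn}$, an artificial constraint costing only a constant factor but forcing $\gcd(m,n)$ to be bounded below on $\cG$. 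On a dyadic block $N < n \le 2N$, further subdivided so that each $\|n\alp_j - \gam_j\|$ has a fixed dyadic order $\rho_j$, one has $\Phi(n) \asymp \psi(N)/(\rho_1 \cdots \rho_{k-1})$ and $n$ runs over an intersection of Bohr sets as in \eqref{Bohr1}. I would then set $\cA_n$ to be the union, over $(\gamma,\eta)$-shift-reduced pairs $(a,n)$ with $0 \le a < n$ (Definition \ref{defn: shift-reduced}), of the intervals of radius $\Phi(n)/n$ about $(a + \gamma)/n$, so that $\cA_n \subseteq \{\alpha \in [0,1] : \|n\alpha - \gamma\| < \Phi(n)\}$ and it remains to prove that $\limsup_n \cA_n$ has full measure.

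Next I would establish divergence of $\sum_{n \in \cG} \mu(\cA_n)$. An elementary sieve count shows the number of $(\gamma,\eta)$-shift-reduced residues modulo $n$ equals $n \prod_{p \mid n,\, p \nmid \demgam}(1 - p^{-1}) \ge \varphi(n)$, so that $\mu(\cA_n) \gg \Phi(n)\varphi(n)/n$ on $\cG$ after the truncation; thus shift-reduced fractions are at least as abundant as reduced ones. Averaging $\prod_{j<k}\|n\alp_j - \gam_j\|^{-1}$ over a dyadic block recovers the factor $(\log N)^{k-1}$ lost in passing from $\psi$ to $\Phi$ --- the diophantine hypothesis on $\balp$ being exactly what allows the atypical $n$ to be discarded --- so $\sum_{n \in \cG}\mu(\cA_n) = \infty$ follows from \eqref{eq: sum of measure diverges}. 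In particular \eqref{eq: Duffin--Schaeffer condition} holds for $\Psi = \Phi$, so the statement is not vacuous.

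The main work, and the hardest part, is the localised overlap estimate: for every interval $\cI \subseteq [0,1]$,
\[
\sum_{\substack{m,n \in \cG \cap [1,Q] \\ m \ne n}} \mu(\cA_m \cap \cA_n \cap \cI) \ll \mu(\cI)\Bigg(\sum_{n \in \cG \cap [1,Q]} \mu(\cA_n)\Bigg)^{2} + E_Q,
\]
with $E_Q$ of lower order. Fixing two dyadic blocks and writing $d = \gcd(m,n)$, $m = dm'$, $n = dn'$, an interval of $\cA_m$ meets one of $\cA_n$ only when $|n(a+\gamma) - m(b+\gamma)|$ is small, i.e. $\|d\gamma(n' - m')\|$ is small and $n'a - m'b$ lies in a bounded set; I would count the pairs $(m,n)$ meeting these congruence conditions, with $m,n$ confined to the relevant Bohr sets, via the correspondence between Bohr sets and generalised arithmetic progressions from \cite{CT2019}, reducing to counting lattice points in a box subject to a congruence. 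This reproduces the expected main term $\asymp \mu(\cI)\mu(\cA_m)\mu(\cA_n)$ when $d$ is neither too small nor too large; the power-of-$4$ device disposes of the small-$d$ range while keeping $\mu(\cI)$. The large-$d$ range is the genuine obstacle. When $\gamma \notin \cL \cup \bQ$, largeness of $d$ forces $n' - m'$ small, so $\|\gamma(n'-m')\|$ is bounded below by the non-Liouville quality of $\gamma$ --- a repulsion making such pairs negligible. When $\gamma \in \cL \cup \bQ$ this repulsion fails, and here the shift-reduced condition $(\demgam a + \numgam, n) = 1$ takes over the role of coprimality in the classical Duffin--Schaeffer theorem (Theorem \ref{DS}): replacing $\gamma$ by its convergent $\numgam/\demgam$ at scale $n^\eta$ turns the overlap count into a genuine Duffin--Schaeffer-type count, which, together with a separate treatment of the near-constant-$d$ regime, closes the estimate.

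Finally, summing the localised bound over all dyadic blocks of $\cG \cap [1,Q]$ and over $d$, and combining with $\sum_{n \in \cG}\mu(\cA_n) = \infty$, the localised divergence Borel--Cantelli lemma (as developed in \cite{BDV2006}) gives that $\limsup_n \cA_n$ has full measure in every interval, hence in $[0,1]$. Since this $\limsup$ set lies inside the set of $\alpha$ for which $\|n\alpha - \gamma\| < \Phi(n)$ holds infinitely often --- equivalently, unwinding \eqref{PhiDef}, the set where \eqref{eq: inhomog Gallagher ineq} has infinitely many solutions --- this proves Conjecture \ref{WeakInhomogeneous} for $\Psi = \Phi$. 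I expect essentially all the difficulty to lie in the large-$d$ overlap estimate while retaining the $\mu(\cI)$ factor, which is exactly what the shift-reduced fractions and the powers-of-$4$ trick are designed to overcome.
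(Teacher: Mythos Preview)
Your proposal follows the paper's approach and captures all the key ingredients: the Bohr-set localisation, shift-reduced fractions, the artificial power-of-$4$ divisor, localised quasi-independence split by the size of $\gcd(m,n)$, and the three-way case analysis (diophantine, rational, Liouville) for the final shift $\gamma$ in the large-GCD regime. One correction: the power of $4$ is of size $(\log n)^{O(1)}$, not $n^{\epsn}$ --- the paper remaps $n \mapsto \hat n = 4^{f(n)}n$ via Lemma~\ref{lem: Fourier} (with $f(n) \ll \log\log n$) rather than restricting $n$ to a sparse set, and it is precisely this lemma that justifies the ``constant factor'' cost; a power of size $n^{\epsn}$ would thin the sum too severely for divergence to survive.
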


We wonder if there is a sharp dichotomy along the lines of Conjecture \ref{WeakInhomogeneous}.

\begin{qn} [Inhomogeneous Duffin--Schaeffer dichotomy]
\label{InhomogDichotomy}
Let $\gam \in \bR$ and $\Psi: \bN \to \bR_{>0}$. Does there exist $\eta \in (0,1)$ with the following property? Denote by $\cW(\Psi; \gamma, \eta)$ the set of $\alp \in [0,1]$ such that
\[
| n \alp - \gam - a| < \Psi(n), \qquad (a,n) \text{ is } (\gam,\eta)\text{-shift-reduced}
\]
has infinitely many solutions $(a,n) \in \bZ \times \bN$. Then
\[
\mu(\cW(\Psi; \gamma, \eta))
= \begin{cases} 1,&
\text{if }
\displaystyle \sum_{n \ge 1} \frac{\varphi_{\gam,\eta}(n)}n \Psi(n) = \infty \\
 0,&
\text{if }
\displaystyle \sum_{n \ge1} \frac{\varphi_{\gam,\eta}(n)}n \Psi(n) < \infty,
\end{cases}
\]
where
\[
\varphi_{\gam,\eta}(n) = \# \{ a \in \{1,2,\ldots,n\}: (a,n) \text{ is } (\gam,\eta)\text{-shift-reduced} \}.
\]
\end{qn}

\begin{remark} 
\label{rem: QuestionRemark}
It is not clear at this stage whether $\eta$ should need to depend on $\gam$ or $\Psi$. Moreover, it could be that the property holds for all $\eta \in (0,\eta_0)$, for some $\eta_0 \in (0,1]$. This question has the appeal of a matching convergence theory, {\em unlike} Conjecture  \ref{WeakInhomogeneous}.
\end{remark}

We are also able to answer Question \ref{InhomogDichotomy} positively for $\Psi = \Phi$, subject to natural assumptions:

\begin{thm} [Special case of inhomogeneous Duffin--Schaeffer dichotomy]
\label{thm: SpecialCase}
Let $k\geq 2$. Fix 
$\balp=(\alp_1,\ldots,\alp_{k-1})\in \bR^{k-1}$
and $\gam_1,\ldots,\gam_k \in \bR$.
For $k=2$, suppose that $\alp_1$ is an irrational, non-Liouville number,
and for $k\geq 3$ assume \eqref{eq: small mult. exponent}.
Let $\psi: \bN \to \bR_{>0}$ be a non-increasing 
function satisfying \eqref{eq: sum of measure diverges}, and let $\Phi$ be as in \eqref{PhiDef}. Then, with $\gam = \gam_k$ and the notation of Question \ref{InhomogDichotomy}, there exists $\eta \in (0,1)$ such that
\begin{equation} \label{eq: Positive1}
\sum_{n=1}^\infty \frac{\varphi_{\gam,\eta}(n)}{n} \Phi(n) = \infty
\end{equation}
and 
\begin{equation}
    \label{eq: Positive2}
\mu(\cW(\Phi;\gam,\eta))=1.
\end{equation}
In particular, Question \ref{InhomogDichotomy} has a positive answer for $\Psi = \Phi$.
\end{thm}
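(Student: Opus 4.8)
The plan is to prove Theorem~\ref{thm: SpecialCase} in the planar case $k=2$ and then indicate the passage to $k \ge 3$; write $\gam = \gam_k$ and $\Phi$ as in \eqref{PhiDef}. The two assertions \eqref{eq: Positive1} and \eqref{eq: Positive2} are proved in tandem with the machinery developed for Theorem~\ref{thm: SpecialCaseWeak}, so the first task is to set up the dyadic decomposition: fix $\eta \in (0,1)$ small (to be specified, depending only on the diophantine type of $\balp$), split $\bN$ into ranges $n \asymp N$ and, within each range, further decompose according to the order of magnitude $\rho_1 \asymp \| n\alp_1 - \gam_1\|$ (and, for $k\ge 3$, the orders of magnitude of the remaining oscillating factors). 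On each such piece the admissible $n$ form a Bohr set $\cB = \cB_{\alp_1}^{\gam_1}(N;\rho_1)$ of the shape \eqref{Bohr1}, and by the structure theory of \cite{CT2019} this is essentially a generalised arithmetic progression. The sets we track are $\cA_n = \{\alp \in [0,1] : \|n\alp - \gam\| < \Phi(n),\ (a,n)\ (\gam,\eta)\text{-shift-reduced for the relevant }a\}$, so $\mu(\cA_n) \asymp \varphi_{\gam,\eta}(n)\Phi(n)/n$ up to harmless factors.

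First I would establish \eqref{eq: Positive1}. Starting from the hypothesis \eqref{eq: sum of measure diverges} that $\sum_n \psi(n)(\log n)^{k-1} = \infty$, one summons the Bohr-set equidistribution input: on a dyadic block $n \asymp N$ the values $\|n\alp_i - \gam_i\|$ are, on average over $n$, distributed like independent uniform variables on a scale governed by $\rho_i$, so that $\sum_{n \asymp N} \Phi(n) \gg \psi(N)(\log N)^{k-1} \cdot (\text{bounded factors})$ after summing the dyadic decomposition in the $\rho_i$'s. The extra ingredient here, and the reason shift-reduced fractions enter, is that we must insert the arithmetic weight $\varphi_{\gam,\eta}(n)/n$ without destroying divergence. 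This is where I would invoke a sieve estimate showing $\varphi_{\gam,\eta}(n) \gg \varphi(n)$ on average over a Bohr set --- the point being that $(a,n) \mapsto (\demgam a + \numgam, n)$ is, for $n^\eta$ exceeding the relevant continued-fraction denominator $q_t'$ of $\gam$, an affine bijection modulo $n$, so shift-reduced residues are at least as numerous as reduced ones. Combined with the classical lower-bound-on-average for $\varphi(n)/n$, this yields $\sum_{n=1}^\infty \frac{\varphi_{\gam,\eta}(n)}{n}\Phi(n) = \infty$.

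Next comes \eqref{eq: Positive2}, the full-measure statement, which is the substantial half. Here I would run the Erd\H{o}s--type / Beresnevich--Velani divergence Borel--Cantelli framework in its \emph{localised} form \cite{BDV2006}: to upgrade ``positive measure'' to ``full measure'' in the absence of a zero--one law, one proves the quasi-independence-on-average estimate $\sum_{m,n \le Q} \mu(\cA_m \cap \cA_n \cap \cI) \ll (\sum_{n \le Q}\mu(\cA_n \cap \cI))^2$ with the factor $\mu(\cI)$ retained for an arbitrary interval $\cI$. The overlap $\mu(\cA_m \cap \cA_n)$ is controlled by counting solutions to the simultaneous congruence conditions defining $\cA_m$ and $\cA_n$; grouping $m,n$ by $d = (m,n)$ and working inside the GAP model of the Bohr sets reduces this to a lattice-point count in a box, handled by the geometry of numbers. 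The shift-reduced coprimality condition is exactly what kills the diagonal-type overlaps that would otherwise wreck quasi-independence --- this mirrors the role of reduced fractions in Koukoulopoulos--Maynard (Theorem~\ref{DS}). The delicate range is $d$ large: for $\gam \notin \cL \cup \bQ$ one exploits a diophantine repulsion (no two nearby multiples of a fixed non-Liouville $\gam$ can both be abnormally close to integers), while for $\gam \in \cL \cup \bQ$ the shift-reduced device together with the sieve bound saves the day; the regime where $d$ is nearly constant is stabilised by artificially inserting powers of $4$ as forced divisors to keep $d$ bounded below. Summing the localised overlap bound over dyadic ranges and applying the localised divergence Borel--Cantelli lemma gives $\mu(\cW(\Phi;\gam,\eta)) = 1$.

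I expect the main obstacle to be the large-$d$ overlap estimate in the localised setting: keeping the factor $\mu(\cI)$ throughout the congruence count inside the GAP model, while simultaneously exploiting the shift-reduced condition and the diophantine properties of $\alp_1$ (via \eqref{eq: small mult. exponent} when $k \ge 3$), is where the argument is genuinely tight. The passage from $k=2$ to general $k$ is then largely bookkeeping: the hypothesis \eqref{eq: small mult. exponent} guarantees $\Phi(n) = \psi(n)/\prod_{i<k}\|n\alp_i - \gam_i\|$ is not too large too often (so the series manipulations and the measure bounds $\mu(\cA_n) \le 1$ survive), and one replaces the single Bohr set by an intersection of $k-1$ Bohr sets, with the diophantine transference inequalities of \cite{BV2010, BL2005, CGGMS, GM2019} used to handle the inhomogeneous shifts $\gam_i$ as in \cite{CT2019}. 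Finally, since $\mu(\cW(\Phi;\gam,\eta)) = 1 > 0$ forces $\sum \frac{\varphi_{\gam,\eta}(n)}{n}\Phi(n) = \infty$ by the (easy) convergence direction of the Borel--Cantelli heuristic, the two displayed conclusions are consistent and Question~\ref{InhomogDichotomy} is answered affirmatively for $\Psi = \Phi$.
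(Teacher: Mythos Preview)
Your outline correctly reproduces the machinery behind Theorem~\ref{thm: SpecialCaseWeak}, and for the ``generic'' situation this is indeed the engine that drives Theorem~\ref{thm: SpecialCase}. However, you have a genuine gap precisely at the point that distinguishes the two theorems. Your claimed measure estimate $\mu(\cA_n) \asymp \varphi_{\gam,\eta}(n)\Phi(n)/n$ presupposes that the arcs $\{\alpha : |n\alpha - \gam - a| < \Phi(n)\}$ for distinct shift-reduced $a$ are disjoint, which requires $\Phi(n) \le 1/2$. Nothing in the hypotheses rules out $\Phi(n) > 1/2$ for infinitely many $n$; in that regime the arcs overlap, your measure formula fails, and the divergence/quasi-independence argument breaks down. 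For Theorem~\ref{thm: SpecialCaseWeak} this case is vacuous (if $\Psi(n) > 1/2$ infinitely often then $\|n\alpha - \gam\| < \Psi(n)$ holds for all $\alpha$, and the conclusion is trivial), but for Theorem~\ref{thm: SpecialCase} it is not: the shift-reduced restriction means $\cA_n$ is only a proportion $\varphi_{\gam,\eta}(n)/n$ of $[0,1]$ even when $\Phi(n)$ is huge, and this proportion can be $o(1)$.

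The paper handles this in Appendix~\ref{pathology} by an ad-hoc dichotomy. One first passes to the auxiliary function $\Psi$ supported on the good set $\cG$, then further restricts to $\cG^* = \{n \in \cG : \varphi(\hat n)/\hat n \ge c^*\}$ for a small constant $c^*$, yielding $\Psi^* = \Psi \cdot 1_{\cG^*}$. If $\Psi^*(n) \le 1/2$ for all large $n$, the machinery you describe runs as in Section~\ref{Sec: Inhomog. Gallagher}; the restriction to $\cG^*$ loses only a proportion $O(c^*)$ of the divergent sum, which is absorbed. If instead $\Psi^*(n) > 1/2$ infinitely often, one replaces $\Psi^*$ by the truncation $\Psi^\dagger$ equal to $1/2$ on this infinite set $\cS$ and zero elsewhere. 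On $\cS$ the density of shift-reduced residues is bounded below by $c^*$, so $\mu(\cE_n^\dagger) \asymp \mu(\cI)$ uniformly, and the overlap estimate $\mu(\cE_n^\dagger \cap \cE_m^\dagger) \le \mu(\cE_n^\dagger) \ll \mu(\cI)$ is trivial; the localised Borel--Cantelli lemma then gives full measure directly, bypassing the delicate overlap analysis entirely. Your proposal needs this second branch.
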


Our method comes close to directly establishing Theorem \ref{thm: SpecialCase}. However, it misses a pathological case where an auxiliary function exceeds $1/2$ infinitely often, causing the relevant intervals to overlap and their union to have smaller measure. We are able to circumvent this by ad-hoc means, and provide the details in an appendix.

Inhomogeneous, non-monotonic diophantine approximation is also discussed in Harman's book \cite[Chapter 3]{Har1998}, as well as in recent work of Yu \cite{YuFourier, YuEV}.

Our method is robust 
with respect to the dimension $k$.
If $k\geq 3$, the combinatorics for controlling 
the overlap estimates is relatively similar 
to the planar case, $k=2$. The notable differences are that 
there are more dyadic ranges to sum over and the choice
of cutoff parameters needs to be adapted.

\subsubsection*{An inhomogeneous version of Gallagher's theorem on Liouville fibres}

For Theorem \ref{thm: Liouville}, the overall structure of our proof parallels that of the Duffin--Schaeffer theorem \cite[Theorem 2.3]{Har1998}, which is a special case of the Duffin--Schaeffer conjecture. This naturally leads us to count solutions to congruences in Bohr sets. In this setting, the latter are rather sparse in the set of positive integers, which presents new difficulties. The partial quotients, see Section \ref{Sec: Preliminaries},
grow extremely rapidly infinitely many times, and we use this together with 
classical continued fraction analysis to deal with the combinatorial aspects
of the overlap estimates.

\subsubsection*{Liouville fibres: sharpness}

The prove the sharpness result, Theorem \ref{thm: log square is sharp}, we construct pairs 
$(\alpha,\gamma) \in \cL \times \bR$ in such a way as to keep $\Vert n \alpha - \gam \Vert$ away from zero. We achieve this via the Ostrowski expansion \cite[Section 3]{BHV2016}, by choosing each Ostrowski coefficient of $\gam$ with respect to $\alp$ to be roughly half times the corresponding partial quotient of $\alp$, and by choosing $\alp$ to have extremely rapidly-growing partial quotients.

\subsection{Open problems}

We have already discussed a few open questions. Here
are some that we have yet to cover.

\subsubsection*{Relaxing the diophantine condition}

We expect that the assumption \eqref{eq: small mult. exponent}
in Theorem \ref{thm2} can be somewhat relaxed.
This likely requires different methods.

\subsubsection*{Convergence theory}

It would be desirable to advance the 
convergence theory complementing Theorem \ref{thm2}, especially for $k\geqslant 3$.
For $k=2$, progress has been made by Beresnevich, Haynes, and Velani \cite{BHV2016}.

By partial summation and the Borel--Cantelli lemma, 
proving the desired convergence statement can be reduced to showing that
$$
\sum_{n\leq N} \frac{1}{\Vert n \alpha_1 - \gamma_1 \Vert \cdots 
\Vert n \alpha_{k-1} - \gamma_{k-1} \Vert} \ll N (\log N)^{k-1}.
$$
In the case $k=2$ and $\gamma_1 = 0$, for a generic choice of 
$\alpha_1\in \bR$ this bound is false,
see \cite[Example 1.1]{BHV2016}. 
However, the logarithmically averaged sums 
$$
S_{\balp}^{\bgam}(N)= 
\sum_{n\leq N} \frac{1}{n\Vert n \alpha_1 - \gamma_1 \Vert \cdots 
\Vert n \alpha_{k-1} - \gamma_{k-1} \Vert},
\qquad \bgam = (\gam_1,\ldots,\gam_{k-1})
$$
could be better behaved, and accurately bounding these would lead to a similar outcome. 
On probabilistic grounds, we expect that 
\[
S_{\balp}^{\bgam}(N) \ll_{\balp,\bgam} (\log N)^{k}.
\]
Subject to a generic diophantine condition on $\balp$,
it is less difficult to prove matching lower bounds via dyadic pigeonholing and estimates for the cardinality of a Bohr set, see \cite[Lemma 3.1]{CT2019}. So the task is to determine the order of magnitude of $S_{\balp}^{\bgam}(N)$. 
Beresnevich, Haynes and Velani \cite[Theorem 1.4]{BHV2016}
showed that if $k=2$ and $\gam\in\bR$
then $S_{\alp}^{\gam}(N) \ll_{\alp,\gam} (\log N)^2$
for almost every $\alp\in\bR$.
With this in mind, we pose the following problem which, if resolved in a sufficiently positive manner,
would entail a coherent convergence theory. 
\begin{problem}
Let $k\geq 3$, and let $\mathcal{C}_k$ be the set
of $(\balp,\bgam)\in \bR^{k-1}\times \bR^{k-1}$ for which
$$
S_{\balp}^{\bgam}(N)\ll_{\balp,\bgam} (\log N)^{k}.
$$ 
Is the set $\mathcal{C}_k$ non-empty? What is its Lebesgue measure?
\end{problem} 

Empirical evidence mildly supports the assertion that $S_{\balp}^{\bzero}(N) \ll_{\balp} (\log N)^3$ holds for generic values of $\balp=(\alp_1,\alp_2)$. We randomly generated 
\[
\alp_1 \approx 0.957363115715396, \qquad
\alp_2 \approx 0.3049448415027476.
\]
With
\[
H = 10^6, \qquad c = \frac{S_{\balp}^{\bzero}(H)}{(\log H)^3} \approx 1.73475,
\]
we plotted $S_{\balp}^{\bzero}(N)$ and $c(\log N)^3$ against $N$ for $N = 2,3,\ldots,H$, see Figure \ref{LogAvg}. There are `jumps' when $\|n \alp_1 \| \cdot \| n \alp_2 \|$ is very small, but these do not appear to affect the order of magnitude of $S_{\balp}^{\bzero}(N)$. For further discussion, we refer the reader to the article by L\^{e} and Vaaler \cite{LV2015}.

\begin{figure}[!ht] 
    \centering
\includegraphics[height=7cm]{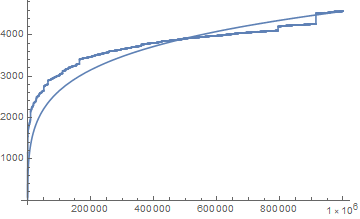}
\caption{$S^{\bzero}_{\balp}(N)$ and $c(\log N)^3$ against $N$}
\label{LogAvg}
\end{figure}

\subsubsection*{Dual approximation}
There are natural dual versions of Gallagher's theorem. 
Loosely speaking, in the dual framework one studies
how close a given vector is to a hyperplane of bounded height,
as the height bound increases.
In fact, we intend to address the next problem in a future work.

\begin{conj} Let $k\geq 2$, and
$(\alp_1, \ldots, \alp_{k-1}) \in \bR^{k-1}$.
Then for almost all $\alp_k \in \bR$ 
there exist infinitely many $(n_1,\ldots,n_k) \in \bZ^k$ such that
\begin{equation*}
 \| n_1 \alp_1 + \cdots + n_k \alp_k \| < 
\frac{1}{H(\bn) (\log H(\bn))^{k}},
\end{equation*}
where $H(\bn) = H(n_1,\ldots,n_k)=
n_1^+ \cdots n_k^+$ and $n^+ = \max(|n|, 2)$.
\end{conj}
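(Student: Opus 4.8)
The plan is to isolate the free variable $\alp_k$ and recast the conjecture as a non-monotonic inhomogeneous metric problem carrying a whole \emph{family} of shifts, and then to run the Bohr-set overlap machinery developed in this paper (rather than, say, Khintchine's transference principle, which would be too lossy at the level of logarithms). Write $\bn = (\bn', n_k)$ with $\bn' = (n_1,\ldots,n_{k-1}) \in \bZ^{k-1}$, and set $\beta(\bn') = n_1\alp_1 + \cdots + n_{k-1}\alp_{k-1}$ and $H(\bn') = n_1^+ \cdots n_{k-1}^+$. Since $\|\cdot\|$ is $1$-periodic and invariant under $n_k \mapsto -n_k$, it suffices to show that for almost all $\alp_k \in [0,1]$ there are infinitely many $\bn$ with $n_k \ge 1$ and
\[
\| n_k \alp_k + \beta(\bn') \| < \Phi(\bn) := \frac{1}{H(\bn)(\log H(\bn))^{k}}, \qquad H(\bn) = n_k^+ H(\bn').
\]
Writing $\cA_\bn \subseteq [0,1]$ for the set of such $\alp_k$, one has $\mu(\cA_\bn) \asymp \Phi(\bn)$. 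This is a weak inhomogeneous Duffin--Schaeffer statement for the multi-shift function above, in the spirit of Theorem \ref{thm: SpecialCaseWeak} but with the shift $\beta(\bn')$ itself ranging over a countable set. The first task is to verify divergence of the series of measures: from $\sum_{\bn': H(\bn') \le Q} H(\bn')^{-1} \asymp (\log Q)^{k-1}$ and partial summation one gets $\sum_{\bn'} \Phi(\bn',n_k) \asymp (n_k^+ \log n_k^+)^{-1}$, whence $\sum_{\bn} \mu(\cA_\bn) \asymp \sum_{n_k \ge 1}(n_k^+ \log n_k^+)^{-1} = \infty$ --- but only by a $\log\log$ margin. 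This already explains why the exponent $k$ is critical, consistent with the sharpness phenomenon behind Theorem \ref{thm: log square is sharp}.

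With divergence in hand, the substance is the overlap estimate: on each dyadic block of indices one needs $\sum_{\bn, \bm} \mu(\cA_\bn \cap \cA_\bm) \ll (\sum_{\bn}\mu(\cA_\bn))^2$, after which a quasi-independence argument together with a localisation step --- keeping a factor $\mu(\cI)$ throughout, since there is no zero--one law available in the inhomogeneous setting --- upgrades positive measure to full measure via the divergence Borel--Cantelli lemma in the style of \cite{BDV2006}. I would decompose dyadically in $n_k \asymp N$ and in $H(\bn') \asymp Q$, and further split the $\bn'$ according to the size of $\|\beta(\bn')\|$, so that each resulting block of $\bn'$ is (a union of translates of) a Bohr set of $\alp_1,\ldots,\alp_{k-1}$; the Bohr-set--to--generalised-arithmetic-progression correspondence from \cite{CT2019} then makes these blocks amenable to lattice-point counting. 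Two regimes appear. When $m_k = n_k$, the intervals constituting $\cA_{(\bn',n_k)}$ and $\cA_{(\bm',n_k)}$ can meet only if $\|\beta(\bn' - \bm')\|$ is small, i.e.\ $\bn' - \bm'$ lies in a Bohr set, and bounding the number of such pairs is precisely a lattice count in the associated progression. When $m_k \ne n_k$, the analysis is the familiar Duffin--Schaeffer one: the overlap decomposes into a product main term plus a contribution governed by $\gcd(m_k, n_k)$, whose control demands the weighting $\varphi(n_k)/n_k$ and, for small shifts, presumably the shift-reduced device of Definition \ref{defn: shift-reduced} applied to the residues $\beta(\bn') \bmod 1$. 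Summing over the dyadic blocks then yields the required averaged quasi-independence.

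The main obstacle, I expect, is to make all of these counts uniform over \emph{every} $\balp' = (\alp_1,\ldots,\alp_{k-1}) \in \bR^{k-1}$, with no diophantine hypothesis. It is convenient to dispose first of the degenerate cases in which $1, \alp_1,\ldots,\alp_{k-1}$ are linearly dependent over $\bQ$: if some nonzero integer combination of $\alp_1,\ldots,\alp_{k-1}$ is itself an integer, then along $\bn' = t(c_1,\ldots,c_{k-1})$, $n_k = 0$, the linear form vanishes identically and the conclusion is trivial; otherwise a rational relation lets one eliminate a variable and reduce to a lower-dimensional instance --- but with higher multiplicity in the number of $\bn'$ realising a given effective value, which must be tracked to recover divergence --- so the general case should follow by induction on $k$ from the non-degenerate core, with base case $k=2$ being the heart of the matter. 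In the non-degenerate case the delicate point is directly analogous to the pathological case the authors isolate in their appendix: when $\balp'$ is very well approximable the residues $\beta(\bn')$ can cluster, so that within a single $n_k$ the intervals forming $\bigcup_{\bn'} \cA_{(\bn',n_k)}$ overlap heavily and their union is much smaller than the sum of measures; absorbing this --- either by truncating the admissible range of $\bn'$ or by an ad hoc argument confined to the clustered regime, echoing the treatment of Liouville fibres behind Theorems \ref{thm: Liouville} and \ref{thm: definitive} --- is where the real work will lie. Once the overlap bound is secured with the $\mu(\cI)$-localisation intact, the full-measure conclusion follows.
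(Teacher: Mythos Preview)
This statement is a \emph{conjecture} in the paper, not a theorem; it appears in the ``Open problems'' subsection, where the authors write ``In fact, we intend to address the next problem in a future work'' and then state it without proof. There is therefore no proof in the paper to compare your proposal against.

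Your proposal is accordingly not a proof but a research plan, and you seem partly aware of this: you flag that uniformity over \emph{every} $\balp' \in \bR^{k-1}$ with no diophantine hypothesis is ``the main obstacle'', and you defer the clustered/Liouville-type regime to an unspecified ``ad hoc argument''. These are genuine gaps, not technicalities. In the paper's own results (Theorems \ref{thm2} and \ref{thm: SpecialCaseWeak}) the diophantine condition \eqref{eq: small mult. exponent} on $\balp$ is essential to the Bohr-set structure theory (Lemmata \ref{lem: inner} and \ref{lem: outer}), and removing it even in the simultaneous setting is listed separately as an open problem (``Relaxing the diophantine condition \ldots likely requires different methods''). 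The dual problem you are attacking compounds this: the shift $\beta(\bn')$ now ranges over a dense countable set rather than being fixed, so the shift-reduced mechanism of Definition \ref{defn: shift-reduced} --- which is calibrated to a single $\gam$ via its continued fraction convergents --- does not transfer in any obvious way. Your divergence calculation is correct and your overall architecture (dyadic localisation, Bohr-set decomposition, overlap estimates split by $\gcd$, localisation for full measure) is the natural one, but the proposal as written does not close the problem; it identifies where the difficulty lies without resolving it.
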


This is also \cite[Conjecture 1.9]{CT2019}, which comes with some further discussion.

The dual convergence theory is also of interest, that is, to show that the convergence of the series in
\eqref{eq: sum of measure diverges}
implies that for almost almost all $\alpha_k$ the inequality
\begin{equation*}
 \| n_1 \alp_1 + \cdots + n_k \alp_k \| < 
\psi(H(\bn))
\end{equation*}
holds at most finitely often.
As with the usual multiplicative approximation problems described above, 
the convergence theory in the dual setting is very much open. 
There is a discussion of the relevant sums, as well as a reference 
to a possible departure point, in the paragraphs surrounding Conjecture 1.1 
of Beresnevich, Haynes, and Velani \cite{BHV2016}.

\subsubsection{A Hausdorff dimension problem}

In Theorem \ref{thm: log square is sharp}, we did not estimate the Hausdorff dimension of the set of pairs $(\alp_1,\gam_1) \in \bR^2$ 
such that for any $\gamma_2\in\mathbb{R}$ and 
almost all $\alpha_2 \in \bR$ the inequality
\[
\| n \alp_1 - \gam_1\| \cdot
\| n \alpha_2 - \gam_2\|
< \psi_\xi(n)
\]
has at most finitely many solutions $n \in \bN$. It would be of interest to do so, for $\xi$ increasing slowly to infinity, or even for $\xi(n) = \log \log n$. This problem can be further simplified by fixing $\gam_2 = 0$.

\subsection{Organisation and notation} \label{OrgNot}

\subsubsection*{Organisation of the manuscript}

In Section \ref{Sec: Preliminaries}, we collect
tools and technical lemmata 
that would otherwise disrupt the course 
of the main arguments. In
Section \ref{Sec: Inhomog. Gallagher}, we prove Theorems \ref{thm2} and \ref{thm: SpecialCaseWeak} together, followed by the convergence part of Corollary \ref{FIG}.
Thereafter, we prove Theorems \ref{thm: Liouville}
and \ref{thm: log square is sharp} in Sections \ref{Sec: Gallagher on Liouville fibres} and \ref{Sec: log square sharp}, respectively. Appendix \ref{pathology} describes how the proof of Theorem \ref{thm: SpecialCaseWeak} can be adapted to give Theorem \ref{thm: SpecialCase}.

\subsubsection*{Notation}

We use the Vinogradov and Bachmann--Landau  notations: 
For functions $f$ and positive-valued functions $g$, we write $f \ll g$ 
or $f = O(g)$ if there exists a constant $C$ such that 
$|f(x)| \le C g(x)$ for all values of $x$ under consideration. We write $f \asymp g$ or $f = \Tet(g)$ if $f \ll g \ll f$.
Throughout this manuscript, the implied constants 
are allowed to depend on:
\begin{itemize}
\item The approximation function $\psi$
\item A fixed vector $\balp \in \bR^{k-1}$, and $\gam_1,\ldots,\gam_k \in \bR$
\item A constant $C \ge 2$, which 
in turn only needs to depend on $\balp,\gam_1,\ldots,\gam_k$,
specifying the ranges $[C^j,C^{j+1}]$
to which we localise various parameters
\item A small positive constant $\eps_0$, which only needs to depend on $\balp, \gam_1,\ldots,\gam_k$.
\end{itemize} 
These dependencies shall usually not be indicated by a subscript.
To be explicit, we do consider the dimension $k$ of the ambient 
space to be data of the vector $\balp$ and hence shall not indicate
its dependency.
If any other dependence occurs, we
record this using an appropriate subscript. 
If $\cS$ is a set, we denote the cardinality of $\cS$ by $|\cS|$ or $\# \cS$. 
The symbol $p$ is reserved for primes. The pronumeral $N$ 
denotes a positive integer, sufficiently large in terms of 
$\balp, \gam_1,\ldots,\gam_k$, the approximation function $\psi$, and a bounded interval $\cI$ that will arise in the course of some of the proofs. For a vector $\balp \in \bR^{k-1}$, we abbreviate 
the product of its coordinates to
\begin{equation} \label{eq: ProductNotation}
\Nm (\balp) = \alpha_1 \cdots \alpha_{k-1}.
\end{equation}
When $x \in \bR$, 
we write $\| x \|$ for the distance from $x$ to the nearest integer.
Furthermore, $\mu$ denotes one-dimensional Lebesgue measure.
Given  $\cS\subseteq\mathbb{R}$, we write $\cS_{\leq X}$ for $\{ x \in \cS: x \le X \}$.

Finally, 
we often have to deal with expressions such as
$$
\sum_{n\leq N} \frac{1}{(\log n) \log \log n},
$$
which are not always well-defined because of finitely many small 
$n \in \bN$. To deal with this, we write $\ln(x)$ for the natural logarithm of a positive real number $x$, and put $\log(x) =\max (\ln(x), 1)$ to ensure that these logarithms and their iterates are indeed well-defined and positive.

\addtocontents{toc}{\protect\setcounter{tocdepth}{1}}

\subsection*{Funding and acknowledgements}

SC was supported by EPSRC Fellowship Grant EP/S00226X/2.
NT was supported by the European Research Council (ERC) 
under the European Union’s Horizon 2020
research and innovation programme (Grant agreement No. 786758),
as well as by the Austrian Science Fund (FWF) grant J 4464-N,
and is grateful to Rajula Srivastava for her eternal encouragement.
We thank Sanju Velani for his enthusiasm towards this topic, Andy Pollington for many enthralling conversations, Victor Beresnevich for further encouragement, and Zeev Rudnick for comments on a draft. Last, but not least, we thank the referee for a thorough reading and for helpful remarks, including a nice proof Lemma \ref{detd} that we have now included.

\addtocontents{toc}{\protect\setcounter{tocdepth}{2}}

\section{Preliminaries}\label{Sec: Preliminaries}

In this subsection, we gather together a panoply of tools. We begin with the theory of continued fractions, before continuing to that of Bohr sets. Then we present some standard measure theory and real analysis. The latter will enable us to introduce the artificial divisors to which we alluded earlier, at essentially no cost. After that we discuss some estimates from the geometry of numbers, whose raison d'\^etre is to count solutions to congruences in generalised arithmetic progressions. We then review some basic prime number theory and sieve theory, culminating in the fundamental lemma of sieve theory, which we will later use to count shift-reduced fractions.

\subsection{Continued fractions}
\label{CFbit}

The material here is standard; see for instance \cite[Chapter 1, Section 2]{Bugeaud2004}.
For each irrational number $\alpha$ there exists a unique
sequence of integers $a_0,a_1,a_2, \ldots$, the {\em partial quotients
of $\alpha$}, such that $a_j\ge 1$ for $j \ge 1$ and
\[
\alp = a_0 + \dfrac{1}{a_1 + \dfrac{1}{a_2 +
\raisebox{-2ex}{$\ddots$}}}.
\]
For $j \ge 0$, let $[a_0; a_{1},\ldots,a_{j}]$ denote the truncation the above infinite continued
fraction at place $j$, and let $p_{j} \in \bZ$ and $q_{j} \in \bN$
be the coprime integers satisfying 
\[
[a_0; a_{1},\ldots,a_{j}]=\frac{p_{j}}{q_{j}}.
\]
This rational number is the called the {\em $j$-th convergent to $\alpha.$}
Continued fractions enjoy an impressive portfolio of beautiful properties. One particular
feature for which we have ample need is the following recursion:
For $j\geq 1$, we have 
\begin{equation}\label{eq: continued fraction recursion}
q_{j+1}=a_{j+1}q_{j}+q_{j-1}\quad \mathrm{and}\quad p_{j+1}=a_{j+1}p_{j}+p_{j-1}.
\end{equation}
The initial values are
\[
p_0 = a_0, \qquad q_0 = 1, \qquad p_1 = a_1 a_0 + 1, \qquad q_1 = a_1.
\]
For each $j \ge 0$, the quantity
\begin{equation} \label{def: Dk}
D_{j}=q_{j}\alpha-p_{j}
\end{equation}
satisfies the bound
\begin{equation}
\label{eq: standard estimate}
\frac{1}{2}\leq\left|D_{j}\right|q_{j+1}\leq1.
\end{equation}
Furthermore, it is well-known that the signs $D_{j}/\vert D_{j} \vert$
are alternating. 
For an irrational number $\alpha \in \bR$,
the {\em diophantine exponent} of $\alp$ is 
$$
\omega (\alp) = \sup\{w>0: \Vert q \alp \Vert < q^{-w}\, 
\mathrm{for\, infinitely\,many\,} q\geq 1\}.
$$
Note that $\ome(\alp) = \ome^\times(\alp)$.
We require the following well-known fact
characterising diophantine numbers in terms of the growth 
of the consecutive continued fraction denominators \cite[Lemma 1.1]{BHV2016}: 

\begin{lemma} \label{BHV11}
Let $\alpha \in \bR \setminus \bQ$,
and $(q_k)_k$ be the sequence 
of its continued fraction denominators. 
Then
\[
\omega (\alpha) = \limsup_{k\rightarrow \infty} 
\frac{\log q_{k+1}}{\log q_k}.
\]
In particular $\alpha \not\in {\set L}$ if and only if 
$\log q_{k+1} \ll \log q_k $.
\end{lemma}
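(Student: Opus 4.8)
The statement to prove is Lemma \ref{BHV11}, the standard characterisation of the diophantine exponent in terms of continued fraction denominator growth. Let me sketch a proof.

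\textbf{The plan.} The proof rests on the classical two-sided estimate \eqref{eq: standard estimate}, namely $\tfrac12 \le |D_j| q_{j+1} \le 1$ where $D_j = q_j \alp - p_j$, together with the fact that the best rational approximations to $\alp$ are precisely the convergents, so that $\|q_j \alp\| = |D_j|$ and moreover $\|q\alp\| \ge |D_j|$ for all $1 \le q < q_{j+1}$. The idea is to show that both $\ome(\alp)$ and $\limsup_k \tfrac{\log q_{k+1}}{\log q_k}$ equal $1 + \limsup_k \tfrac{\log q_{k+1}}{\log q_k} - 1$... more precisely, I will sandwich $\ome(\alp)$ between expressions built from consecutive denominators and take the limsup.

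\textbf{Upper bound $\ome(\alp) \le \limsup_k \frac{\log q_{k+1}}{\log q_k}$.} First I would observe that if $\|q\alp\| < q^{-w}$ for some $q \ge 1$ with $w > 0$, then taking $j$ maximal with $q_j \le q$ one has $q < q_{j+1}$, hence by the best-approximation property $\|q\alp\| \ge \|q_j \alp\| = |D_j| \ge \tfrac{1}{2 q_{j+1}}$. Combining, $\tfrac{1}{2 q_{j+1}} \le q^{-w}$, and since $q \ge q_j$ this gives $q_j^w \le 2 q_{j+1}$, i.e. $w \le \tfrac{\log(2 q_{j+1})}{\log q_j}$. Letting the infinitely many such $q$ run to infinity forces $j \to \infty$, and taking the limsup yields $w \le \limsup_k \tfrac{\log q_{k+1}}{\log q_k}$; taking the supremum over admissible $w$ gives the desired inequality.

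\textbf{Lower bound $\ome(\alp) \ge \limsup_k \frac{\log q_{k+1}}{\log q_k}$.} Conversely, for each $k$ the choice $q = q_k$ satisfies $\|q_k \alp\| = |D_k| \le \tfrac{1}{q_{k+1}}$ by \eqref{eq: standard estimate}. Writing $w_k = \tfrac{\log q_{k+1}}{\log q_k}$, so that $q_{k+1} = q_k^{w_k}$, we get $\|q_k \alp\| \le q_k^{-w_k}$. Hence for any $w < \limsup_k w_k$, there are infinitely many $k$ with $w_k > w$, and for those $k$ we have $\|q_k \alp\| \le q_k^{-w_k} < q_k^{-w}$; thus $w \le \ome(\alp)$, and letting $w \to \limsup_k w_k$ completes this direction. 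Combining the two bounds gives the displayed formula for $\ome(\alp)$. Finally, the ``in particular'' clause is immediate: $\alp \notin \cL$ means $\ome(\alp) < \infty$ (equivalently $\ome^\times(\alp) < \infty$), which by the formula is equivalent to $\limsup_k \tfrac{\log q_{k+1}}{\log q_k} < \infty$, i.e. $\log q_{k+1} \ll \log q_k$; one should note here that $q_k$ is increasing with $q_k \to \infty$, so finiteness of the limsup is genuinely equivalent to the stated Vinogradov bound. I do not expect any serious obstacle; the only mild subtlety is handling the best-approximation step cleanly (ensuring one invokes the correct classical fact that $\|q\alp\| \ge \|q_j \alp\|$ for $q < q_{j+1}$), and being careful that the limsup manipulations are valid because $q_k \to \infty$.
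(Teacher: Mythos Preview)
Your proof is correct and is the standard argument. The paper does not actually prove this lemma; it cites it as \cite[Lemma 1.1]{BHV2016}, so there is no approach to compare against.
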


\bigskip

Continued fractions have been used to prove a highly aesthetic result called the \emph{three gap theorem}. For $\alp \in \bR \setminus \bQ$ and $m \in \bN$, this asserts that there are at most three distinct gaps $d_{i+1} - d_i$, where
\[
\{ d_1, \ldots, d_m \} = \{ i \alp - \lfloor i \alp \rfloor: 1 \le i \le m \}
\]
and
\[
0 = d_0 < \cdots < d_{m+1} = 1.
\]
Many find this surprising at first. The sizes of the gaps can be computed and described in terms of the continued fraction expansion. We will need only the size of the largest gap. The following theorem combines parts of Theorem 1 and Corollary 1 of \cite{MK1998}, and adopts the typical convention that $q_{-1} = 0$.

\begin{thm} \label{LargestGap}
Let $\alp \in \bR \setminus \bQ$ and $m \in \bN$. Then:
\begin{enumerate}[(a)]
\item There is a unique representation
\[
m = r q_k + q_{k-1} + s,
\]
for some
\[
k \ge 0, \qquad 1 \le r \le a_{k+1}, \qquad 0 \le s \le q_k - 1.
\]
\item If $s < q_k - 1$ then
\[
\max \{ d_{i+1} - d_i: 0 \le i \le m \}
=
\begin{cases}
|D_{k+1}| + |D_k|, &\text{if } r = a_{k+1} \\
|D_{k+1}| + (a_{k+1} - r + 1) |D_k|, &\text{if } r < a_{k+1}.
\end{cases}
\]
If $s = q_k - 1$ then
\[
\max \{ d_{i+1} - d_i: 0 \le i \le m \}
=
\begin{cases}
|D_k|, &\text{if } r = a_{k+1} \\
|D_{k+1}| + (a_{k+1} - r) |D_k|, &\text{if } r < a_{k+1}.
\end{cases}
\]
\end{enumerate}
\end{thm}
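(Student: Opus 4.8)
This is a ``mixed‑radix'' uniqueness statement, and I would dispatch it in a couple of lines. With the conventions $q_{-1}=0,\ q_0=1$ and the recursion \eqref{eq: continued fraction recursion}, the quantity $rq_k+q_{k-1}+s$, as $(r,s)$ runs over $1\le r\le a_{k+1}$ and $0\le s\le q_k-1$, takes exactly the values in the integer interval $[\,q_k+q_{k-1},\,q_{k+1}+q_k-1\,]$; as $k=0,1,2,\dots$ these intervals tile $\{1,2,\dots\}$ with no gaps or overlaps, because $q_{k+1}+q_k=(q_{k+1}+q_{k})$ is exactly one more than $q_{k}+q_{k-1}-1+\bigl((a_{k+1}+1)q_k+q_{k-1}-q_k-q_{k-1}\bigr)$ — concretely, $q_{k+2}+q_{k+1}-1$ and $q_{k+1}+q_k-1$ line up consecutively. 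For a given $m$ in such an interval, $r$ is forced as the unique integer with $rq_k\in(m-q_{k-1}-q_k,\,m-q_{k-1}]$, and then $s=m-q_{k-1}-rq_k$.

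\textbf{Strategy for part (b).} I would pass to the circle $\bR/\bZ$ and study the orbit $\cO=\{\,i\alpha\bmod1:0\le i\le m\,\}$, whose $m+1$ complementary arcs have lengths $d_{i+1}-d_i$. The engine is the three gap theorem in Slater's quantitative form: if $p,q$ denote the indices of the two orbit points flanking $0$ (one on each side), then $1\le p,q\le m$, $\ p+q\ge m+1$, every arc has length $\|p\alpha\|$, $\|q\alpha\|$ or $\|p\alpha\|+\|q\alpha\|$, and the number of arcs of the largest length $\|p\alpha\|+\|q\alpha\|$ equals exactly $p+q-(m+1)$ (possibly $0$, in which case only two lengths occur). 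I would prove this by induction on $m$: the new point $(m{+}1)\alpha$ always falls inside one of the longest arcs and subdivides it into two arcs of the two shorter lengths, and a short bookkeeping of how the flanking indices and the arc multiplicities update propagates the statement; alternatively one simply invokes \cite{MK1998} or Slater's theorem.

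\textbf{Identification and assembly.} Next I would pin down $p,q$ using part (a). Write $D_j=q_j\alpha-p_j$ and recall the signs of the $D_j$ alternate and $\tfrac12\le|D_j|q_{j+1}\le1$. Since $m=rq_k+q_{k-1}+s$ with $1\le r\le a_{k+1}$, both $q_k$ and $q_{k-1}+rq_k=m-s$ lie in $\{1,\dots,m\}$, and modulo $1$ one has $q_k\alpha\equiv D_k$ and $(q_{k-1}+rq_k)\alpha\equiv D_{k-1}+rD_k$, two residues of opposite sign with moduli $|D_k|$ and $|D_{k-1}|-r|D_k|$ (here using $r\le a_{k+1}$ and the identity $|D_{k-1}|=a_{k+1}|D_k|+|D_{k+1}|$, a consequence of \eqref{eq: continued fraction recursion}). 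A standard continued‑fraction check — no index in $\{1,\dots,m\}$ yields a residue of smaller modulus on the relevant side, the semiconvergent indices $q_{k-1}+jq_k$ being the record holders there — shows these are precisely the points flanking $0$, so $\{p,q\}=\{q_k,\,q_{k-1}+rq_k\}$ and $\{\|p\alpha\|,\|q\alpha\|\}=\{\,|D_k|,\ |D_{k-1}|-r|D_k|\,\}$. Hence the number of long arcs is $p+q-(m+1)=q_k-1-s$, which is positive iff $s<q_k-1$; in that regime the largest arc has length
\[
\|p\alpha\|+\|q\alpha\|=|D_k|+|D_{k-1}|-r|D_k|=|D_{k+1}|+(a_{k+1}-r+1)|D_k|,
\]
specialising to $|D_{k+1}|+|D_k|$ when $r=a_{k+1}$. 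If $s=q_k-1$ there are no long arcs, so the largest arc is $\max(\|p\alpha\|,\|q\alpha\|)$; comparing $|D_k|$ with $|D_{k-1}|-r|D_k|=|D_{k+1}|+(a_{k+1}-r)|D_k|$ and using $0<|D_{k+1}|<|D_k|$ shows this maximum is $|D_k|$ exactly when $r=a_{k+1}$, and is $|D_{k+1}|+(a_{k+1}-r)|D_k|$ when $r<a_{k+1}$. These four outcomes are exactly the asserted formula, and the degenerate conventions at $k=0$ ($q_{-1}=0$, $|D_{-1}|=1$) cause no trouble.

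\textbf{Main obstacle.} Everything after the three gap structure is elementary manipulation of \eqref{eq: continued fraction recursion} and \eqref{eq: standard estimate}. The real work is the first step: establishing the \emph{quantitative} three gap theorem, with the exact multiplicity $p+q-(m+1)$ of long arcs and the precise identification of the flanking indices $p,q$. Carrying the multiplicity count cleanly through the point‑insertion induction — verifying that the inserted point always lands in a longest arc and splits it into the two shorter lengths, and that the updated flanking indices are again of the form $q_k$ and a semiconvergent $q_{k-1}+jq_k$ — is the delicate part; if one prefers not to reprove it, this is exactly the content of \cite{MK1998} and may be quoted wholesale.
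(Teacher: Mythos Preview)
Your proposal is correct as a proof sketch, and in fact goes well beyond what the paper does: the paper provides no proof of this theorem at all, merely quoting it as a combination of parts of Theorem~1 and Corollary~1 of \cite{MK1998}. Your identification of the flanking indices as $q_k$ and $q_{k-1}+rq_k$, the use of $|D_{k-1}|=a_{k+1}|D_k|+|D_{k+1}|$, and the multiplicity count $p+q-(m{+}1)=q_k-1-s$ are exactly the bookkeeping carried out in that reference, so you have essentially reconstructed the cited argument. The only caveat is that your exposition of part~(a) is slightly garbled (the parenthetical justification of the tiling is a tautology as written), but the underlying observation---that for fixed $k$ the map $(r,s)\mapsto rq_k+q_{k-1}+s$ is a bijection onto $[q_k+q_{k-1},\,q_{k+1}+q_k-1]$, and these intervals partition $\bN$---is correct and easy to make precise.
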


\bigskip

Rational numbers also have continued fraction expansions, however they are finite, taking the form
\[
a_0 + \dfrac{1}{a_1 + \dfrac{1}{a_2 +
\raisebox{-2ex}{$\ddots$}\raisebox{-4ex}{$+\dfrac1{a_t}$}}} \quad.
\]

The partial quotients $a_0 \in \bZ$ and $a_1,\ldots,a_t \in \bN$, as well as the convergents $p_j/q_j$ ($0 \le j \le t$), are defined in the same way is in the irrational case, except that we impose the additional constraint $a_t > 1$ to be sure that the expansion is unique.

\bigskip

In the next subsection, we fix an irrational number
$\alpha$ and describe an expansion, the \emph{Ostrowski expansion}, that
allows us to accurately read off for each $n\in\mathbb{N}$ the size of
$\left\Vert n\alpha\right\Vert $, and more generally $\| n \alp - \gam \|$ for $\gam \in \bR$.

\subsection{Ostrowski expansions}

Let $n\in\mathbb{N}$, and let $K \geq 0$ be such that 
\begin{equation}
q_{K}\leq n<q_{K+1}.\label{eq: n sandwiched}
\end{equation}
It is known \cite[p. 24]{RS1992} that there exists a uniquely determined
sequence of non-negative integers $c_{k}=c_{k}(n)$ such that
\[
n=\sum_{k\geq0}c_{k+1}q_{k},
\]
satisfying 
$$c_{k+1}=0\,\, \mathrm{for}\,\, \mathrm{all}\, k>K,
$$
as well as the following additional constraints:
\begin{align*}
0\leq c_{1}<a_{1},  
\qquad 
0\leq c_{k+1}\leq a_{k+1} \quad (k \in \bN),
\qquad
\text{if } c_{k+1} = a_{k+1} \text{ then } c_k = 0.
\end{align*}
The structure of the set of integers whose initial Ostrowski digits are prescribed is well-understood; this set is referred to as a \emph{cylinder set}. We require information concerning the size of the gap between consecutive elements, which we retrieve from \cite[Lemma 5.1]{BHV2016}. 

\begin{lemma} [Gaps lemma]
\label{lem: gaps}  Let $m \ge 0$, and let
${\set A}\left(d_{1},\ldots,d_{m+1}\right)$
denote the set of positive integers whose initial Ostrowski
digits are $d_{1},\ldots,d_{m+1}$. Let $n_{1}<n_{2}<\ldots$ be
the elements of the set 
${\set A}\left(d_{1},\ldots,d_{m+1}\right)$, and let $i \in \bN$.
If $d_{m+1}>0$ then 
\[
n_{i+1}-n_{i}\geq q_{m+1},
\]
and if $d_{m+1}=0$ then $n_{i+1}-n_{i}\in\left\{ q_{m+1},q_{m}\right\} $.
Further, if $d_{m+1}=0$ and $n_{i+1}-n_{i}=q_{m}$ then $c_{m+2}\left(n_{i}\right)=a_{m+2}$
and the gap $n_{i+1}-n_{i}$ is preceded by $a_{m+2}$ gaps of size
$q_{m+1}$.
\end{lemma}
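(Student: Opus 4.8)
The plan is to analyse the \emph{Ostrowski odometer}: the successor map $n\mapsto n^{+}$ on $\bN$ written in Ostrowski coordinates. Fix an element $n=n_i$ of $\mathcal{A}(d_1,\ldots,d_{m+1})$, with Ostrowski digits $(d_1,\ldots,d_{m+1},c_{m+2},c_{m+3},\ldots)$. Since the first $m+1$ digits are pinned to $d_1,\ldots,d_{m+1}$, the successor $n_{i+1}$ is obtained by replacing the tail $(c_{m+2},c_{m+3},\ldots)$ by the next admissible tail in value-increasing order, and the gap $n_{i+1}-n_i$ is the resulting change in value. The only admissibility constraint coupling the frozen prefix to the tail is that $c_{m+2}=a_{m+2}$ is permitted precisely when $d_{m+1}=0$; all other constraints are internal to the tail. (This lemma is, of course, \cite[Lemma 5.1]{BHV2016}; I am merely indicating how one would reconstruct it.)

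First I would dispose of the generic move: if $c_{m+2}<a_{m+2}$ and $c_{m+3}<a_{m+3}$, and either $c_{m+2}+1<a_{m+2}$ or $d_{m+1}=0$, then incrementing $c_{m+2}$ by one is admissible and changes the value by $q_{m+1}$, the place value of $c_{m+2}$. When this move is blocked, the odometer must \emph{carry}: increment the least position $m+1+j$ (with $j\ge 2$) that can legally be raised, and zero out positions $m+2,\ldots,m+j$. Using \eqref{eq: continued fraction recursion} in the shape $q_{m+j}-a_{m+j}q_{m+j-1}=q_{m+j-2}$, the elementary carry computations are: a carry out of a saturated position $m+2$ (so $c_{m+2}=a_{m+2}$, whence $d_{m+1}=0$) into position $m+3$ changes the value by $q_{m+2}-a_{m+2}q_{m+1}=q_m$; a carry forced by $c_{m+3}=a_{m+3}$ (which compels $c_{m+2}=0$) into position $m+4$ changes it by $q_{m+3}-a_{m+3}q_{m+2}=q_{m+1}$; and a carry out of position $m+2$ when $d_{m+1}>0$ and $c_{m+2}=a_{m+2}-1$ (so the value $a_{m+2}$ is forbidden) into position $m+3$ changes it by $q_{m+2}-(a_{m+2}-1)q_{m+1}=q_{m+1}+q_m$. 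Longer cascades only enlarge the gap, as a short induction using $q_{m+j}\ge q_{m+j-1}+q_{m+j-2}$ confirms.

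Collating the cases then yields the lemma. If $d_{m+1}>0$ then $c_{m+2}\le a_{m+2}-1$ throughout, and every move or carry above advances the value by at least $q_{m+1}$, so $n_{i+1}-n_i\ge q_{m+1}$. If $d_{m+1}=0$, the only way to obtain a gap below $q_{m+1}$ is a carry out of a saturated position $m+2$, i.e.\ $c_{m+2}(n_i)=a_{m+2}$, which produces the gap $q_m$; otherwise the gap is $q_{m+1}$. For the final assertion I would run the predecessor map from such an $n_i$: since $c_{m+2}(n_i)=a_{m+2}>0$, one may decrement $c_{m+2}$ by one (this is admissible and lowers the value by $q_{m+1}$), and repeat while $c_{m+2}$ runs down through $a_{m+2},a_{m+2}-1,\ldots,1,0$; this exhibits exactly $a_{m+2}$ consecutive gaps of size $q_{m+1}$ immediately preceding the gap of size $q_m$, with no boundary difficulty since the smallest element of the cylinder has $c_{m+2}=0$. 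The only genuine work is the carry bookkeeping — in particular checking that no cascade undercuts $q_{m+1}$ when $d_{m+1}>0$ — everything else being a finite mechanical verification. An alternative, slightly slicker route is dynamical: $\mathcal{A}(d_1,\ldots,d_{m+1})$ is the set of $n$ with $n\alpha \bmod 1$ lying in a single interval of the $m$-th Ostrowski partition of the circle, so the gaps are return times of the orbit of $0$ to that interval, governed by the three-gap theorem (cf.\ Theorem \ref{LargestGap}); but the prefix–tail coupling makes the odometer computation the more transparent of the two.
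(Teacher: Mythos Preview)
The paper does not prove this lemma; it simply quotes \cite[Lemma~5.1]{BHV2016}. Your odometer reconstruction is the natural approach and is essentially what one finds there, so in that sense there is nothing to compare.

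One point in your sketch does need tightening. For $d_{m+1}=0$ the lemma asserts that the gap is \emph{exactly} $q_{m+1}$ or $q_m$, not merely bounded below. Your clause ``longer cascades only enlarge the gap'' is pitched at the lower bound needed when $d_{m+1}>0$, and does not by itself close the $d_{m+1}=0$ case. Concretely, if $d_{m+1}=0$, $c_{m+2}=0$, $c_{m+3}=a_{m+3}$, $c_{m+4}=0$, $c_{m+5}=a_{m+5}$ (and higher digits zero), then your recipe ``increment position $m+4$ and zero out below'' produces an inadmissible tail (it would leave $c_{m+4}=1$ with $c_{m+5}=a_{m+5}$); the true successor carries all the way to position $m+6$, and the gap is nonetheless exactly $q_{m+1}$ via the telescoping identity
\[
q_{m+2j+1}-\sum_{\ell=1}^{j} a_{m+2\ell+1}\,q_{m+2\ell}=q_{m+1}.
\]
The analogous identity with the parities swapped shows that when $c_{m+2}(n_i)=a_{m+2}$ and the carry propagates through an alternating saturated block, the gap is still exactly $q_m$. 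So your conclusion is correct, but the ``short induction'' you allude to needs to establish these exact identities rather than a mere inequality. The predecessor argument for the final assertion is fine apart from the degenerate cylinder with all $d_j=0$, where the smallest positive element has $c_{m+2}=1$ rather than $0$; this is an edge case one can note and set aside.
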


With reference to (\ref{def: Dk}), 
we will apply the theory of this subsection to pairs $(\alp,\gam)$, where
\[
\gamma=\sum_{k\geq0}b_{k+1}D_{k},
\]
such that
\begin{equation} \label{eq: DandyAndy}
a_{k}\geq 64, \quad
\frac{a_k}{4} \leq b_{k} \leq \frac{a_k}{2} \quad (k \ge 1) , \qquad a_0 = 0.
\end{equation}
Before proceeding in earnest, we confirm some technical conditions that will put us into a standard setting.

\begin{lemma} \label{lem: DandyAndy}
If we have \eqref{eq: DandyAndy}, then 
\[
0 < \alp < \frac1{64}, 
\qquad 0 \le \gam < 1 - \alp,
\]
and
\begin{equation}
\left\Vert n\alpha-\gamma\right\Vert \neq 0 \qquad (n\in \bN)
\label{eq: non-vanishing}.
\end{equation}
\end{lemma}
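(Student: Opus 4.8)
The plan is to verify the three assertions in turn: the bound on $\alpha$ is immediate, the bound on $\gamma$ follows from a suitable grouping of the (essentially Ostrowski) series defining it, and the non-vanishing of $\|n\alpha-\gamma\|$ — the substantive part — comes from a tail estimate for that series combined with the best-approximation property of continued fraction convergents.

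Since $a_0=0$ and $a_1\ge 64$, the continued fraction expansion gives $0<\alpha<1/a_1\le 1/64$ at once. For $\gamma=\sum_{k\ge 0}b_{k+1}D_k$, I would first record that the series converges absolutely, because \eqref{eq: standard estimate} and \eqref{eq: continued fraction recursion} give $b_{k+1}|D_k|\le \tfrac{a_{k+1}}{2q_{k+1}}<\tfrac{1}{2q_k}$ while $q_k\ge 64\,q_{k-1}$. The decisive point is that consecutive terms are highly unbalanced: by \eqref{eq: DandyAndy} and \eqref{eq: standard estimate}, $|b_{k+1}D_k|\ge \tfrac{a_{k+1}}{4}|D_k|\ge 16|D_k|\ge \tfrac{8}{q_{k+1}}$, whereas $|b_{k+2}D_{k+1}|\le \tfrac{a_{k+2}}{2}|D_{k+1}|<\tfrac{1}{2q_{k+1}}$, so (recalling that the $D_k$ alternate in sign) $b_{k+1}D_k+b_{k+2}D_{k+1}$ has the sign of $D_k$. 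Grouping $\gamma=\sum_{j\ge 0}(b_{2j+1}D_{2j}+b_{2j+2}D_{2j+1})$ then gives $\gamma>0$, while grouping $\gamma=b_1D_0+\sum_{j\ge 1}(b_{2j}D_{2j-1}+b_{2j+1}D_{2j})$ gives $\gamma<b_1D_0=b_1\alpha\le \tfrac{a_1}{2}\alpha<\tfrac12$. Since $\alpha<\tfrac1{64}$, this yields $0<\gamma<\tfrac12<1-\alpha$.

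For \eqref{eq: non-vanishing} I would argue by contradiction: suppose $n\alpha-\gamma=m\in\bZ$ for some $n\in\bN$. Choose $K$ large enough that $q_K>n$ (possible since $q_K\to\infty$), and set $n_K=\sum_{k<K}b_{k+1}q_k$, $m_K=\sum_{k<K}b_{k+1}p_k$, $R_K=\sum_{k\ge K}b_{k+1}D_k$, so that $n_K\alpha-m_K=\gamma-R_K$. Using $b_{k+1}q_k<\tfrac12 q_{k+1}$ and $q_k\ge 64\,q_{k-1}$ one checks $n_K<q_K$, whence $\ell:=|n-n_K|<q_K$. The hypothesis rearranges to $(n-n_K)\alpha-(m-m_K)=R_K$. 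By the same sign analysis, $R_K$ is nonzero with $|R_K|\le b_{K+1}|D_K|\le \tfrac{a_{K+1}}{2}|D_K|$; and since \eqref{eq: continued fraction recursion} gives $|D_{K-1}|=|D_{K+1}|+a_{K+1}|D_K|>a_{K+1}|D_K|$, we obtain $0<|R_K|<\tfrac12|D_{K-1}|<\tfrac12$. In particular $\ell\ne 0$, so $1\le\ell<q_K$ and $\|\ell\alpha\|=|R_K|$. But the best-approximation property of convergents forces $\|\ell\alpha\|\ge\|q_{K-1}\alpha\|=|D_{K-1}|$ for every integer $1\le\ell<q_K$, contradicting $|R_K|<|D_{K-1}|$. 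Hence no such $n$ exists.

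The main obstacle is \eqref{eq: non-vanishing}; the first two bounds are routine once the grouping of the alternating series is in hand. Within \eqref{eq: non-vanishing} the crux is the matched pair of inequalities $|R_K|<|D_{K-1}|\le\|\ell\alpha\|$ valid for $1\le\ell<q_K$ — the left-hand one being the Ostrowski-tail estimate (resting on the alternation of signs and \eqref{eq: standard estimate}), the right-hand one classical. I anticipate no genuine difficulty beyond careful bookkeeping with \eqref{eq: continued fraction recursion} and \eqref{eq: standard estimate}, together with the elementary observation that $K$ may be chosen (regardless of parity) with $q_K>n$.
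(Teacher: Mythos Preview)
Your proof is correct. The arguments for $0<\alpha<1/64$ and $0\le\gamma<1-\alpha$ are essentially the same as the paper's: both use the alternating-sign grouping of the series $\sum b_{k+1}D_k$ to squeeze $\gamma$ between $0$ and $b_1D_0\le\tfrac12$.

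For \eqref{eq: non-vanishing} you take a genuinely different route. The paper introduces the Ostrowski expansion $n=\sum c_{k+1}q_k$, writes $n\alpha-\gamma\in\Sigma+\bZ$ with $\Sigma=\sum(c_{k+1}-b_{k+1})D_k$, and then bounds $|\Sigma|$ from above and below by explicit numerics to force $0<|\Sigma|<1$. You instead truncate only the $\gamma$-series at a level $K$ with $q_K>n$, reduce the hypothetical relation $n\alpha-\gamma\in\bZ$ to $\|\ell\alpha\|=|R_K|$ for some $1\le\ell<q_K$, and then appeal to the best-approximation law $\|\ell\alpha\|\ge|D_{K-1}|$ together with the tail estimate $|R_K|<\tfrac12|D_{K-1}|$. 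Your argument avoids the Ostrowski expansion of $n$ altogether and is arguably cleaner as a stand-alone proof; the paper's approach, on the other hand, sets up the quantity $\Sigma$ that is reused immediately afterwards in Lemma~\ref{lem: size of inhomogeneous distance}, so in context it is doing double duty. Both are valid.
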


\begin{proof}
The first inequality follows from $a_0 = 0$ and $a_1 \ge 64.$

We compute that 
$$
\vert b_{k+1} D_k\vert 
\leq \frac{a_{k+1}}{q_{k+1}}
\leq \frac{1}{q_{k}}
,\quad
\vert b_{k+1} D_k\vert 
\geq \frac{a_{k+1}/4}{2q_{k+1}}
\geq \frac{1}{16 q_{k}}
\qquad (k \ge 0).
$$
As $a_{k+1} \ge 64$ for all $k \ge 0$, we see from these inequalities 
that $\vert b_{k+1} D_k\vert$ is a monotonically decreasing 
sequence which converges to zero as $k\rightarrow \infty$.
Using that the signs of the $D_k$ alternate and that
$D_{0} = \{\alp\} > 0$, we conclude that $ b_{k+1} D_k +  b_{k+2} D_{k+1} \geq 0$ if $k$ is even and
$ b_{k+1} D_k +  b_{k+2} D_{k+1}\leq 0$ if $k$ is odd.
Therefore
$$
0 
\leq 
\sum_{k\geq 0} (b_{2k+1} D_{2k} +  b_{2k+2} D_{2k+1})
= \gam
$$
supplies the lower bound in the second inequality. For the upper bound, note that
$$
\gam = 
b_1 D_{0} +
\sum_{k\geq 0} (b_{2k+2} D_{2k+1}+ b_{2k+3} D_{2k+2})
\leq b_1 D_{0} \leq \frac{b_1}{a_1} \leq 1/2 < 1-\alp.
$$

Finally, we turn our attention towards \eqref{eq: non-vanishing}. Observe that
\[
n \alp - \gam = \sum_{k \ge 0} (c_{k+1} q_k \alp - b_{k+1} (q_k \alp - p_k)) \in \Sigma + \bZ,
\]
where 
\begin{equation} \label{SigDef}
\Sigma=\sum_{k\geq0}\delta_{k+1}D_{k},\qquad\delta_{k+1}=c_{k+1}-b_{k+1}\quad (k\geq0).
\end{equation}
Using \eqref{eq: standard estimate} and \eqref{eq: continued fraction recursion}, as well as \eqref{eq: DandyAndy}, we compute that
\[
|\Sigma| \ge |D_0| - \frac34 \sum_{k \ge 1} a_{k+1} |D_k| \ge \{ \alp \} - \frac34 \sum_{k \ge 1} \frac1{q_k},
\]
where $\{\alp\}$ denotes the fractional part of $\alp$, and
\[
\sum_{k \ge 1} \frac1{q_k}
\le \frac1{q_1} \sum_{r \ge 0} 64^{-r} = \frac{64}{63q_1}.
\]
Since
\[
\frac{1} {\{ \alp \}} < a_1 + 1 = q_1 + 1 \le \frac{65}{64} q_1,
\]
we have 
\[
|\Sig| > \left( \frac{64}{65} - \frac{16}{21} \right) q_1^{-1} > 0.
\]
Moreover
\[
|\Sigma| \le |D_0| + \sum_{k \ge 1} a_{k+1} |D_k| 
\le \{ \alp \} +  \sum_{k \ge 1} \frac1{q_k} \le \frac1{64} + \frac1{63} < 1.
\]
Verily we have \eqref{eq: non-vanishing}.
\end{proof}

It turns out that we can quantify
the size of $\left\Vert n\alpha-\gamma\right\Vert $ in terms of the
quantity $\Sigma$ from \eqref{SigDef},
as the next lemma details. The assumption \eqref{eq: DandyAndy} simplifies several technicalities. 
The following combines \cite[Lemmata 4.3, 4.4, and 4.5]{BHV2016}, in this special case.

\begin{lemma}
\label{lem: size of inhomogeneous distance}If we have \eqref{eq: DandyAndy},
then 
\[
\left\Vert n\alpha-\gamma\right\Vert =\left\Vert \Sigma\right\Vert =\min(\left|\Sigma\right|,1-\left|\Sigma\right|).
\]
Furthermore, let $m=m(n)$ be the smallest $i\geq 0$ such that $\delta_{i+1}\neq0$,
and let $K$ be as in (\ref{eq: n sandwiched}). Then we have the following estimates for
 $\left|\Sigma\right|$ and $1-\left|\Sigma\right|$.

\begin{enumerate} [(1)]
\item 
\[
\left|\Sigma\right|=(\left|\delta_{m+1}\right|-1)\left|D_{m}\right|+u_{m+2}\left|D_{m+1}\right|+u_{m+3}\left|D_{m+2}\right|+\varUpsilon,
\]
where $u_{m+2},u_{m+3},\varUpsilon$ are non-negative real numbers
constrained by 
\[
u_{m+2}\asymp a_{m+2},\qquad u_{m+3}\asymp a_{m+3},\qquad\varUpsilon\ll |D_{m+2}|.
\]
\item 
\[
1-\left|\Sigma\right|=u_{1}\left|D_{0}\right|+u_{2}\left|D_{1}\right|+\tilde{\varUpsilon},
\]
where $u_{1},u_{2},\tilde{\varUpsilon}$ are non-negative, and constrained
by 
\[
u_{1}\asymp a_{1},\qquad u_{2}\asymp a_{2},\qquad
\tilde \varUpsilon\ll |D_{1}|.
\]
\end{enumerate}
\end{lemma}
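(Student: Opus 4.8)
The plan is to deduce the lemma from Lemma~\ref{lem: DandyAndy} together with the corresponding assertions of \cite{BHV2016}, checking along the way that \eqref{eq: DandyAndy} places us squarely in their setting.

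\emph{The identity $\|n\alpha-\gamma\| = \|\Sigma\| = \min(|\Sigma|,1-|\Sigma|)$.} In the course of proving Lemma~\ref{lem: DandyAndy} we already saw that $n\alpha - \gamma \in \Sigma + \bZ$, with $\Sigma$ as in \eqref{SigDef}, and that $0 < |\Sigma| < 1$ (in fact $|\Sigma| < 1/30$). Consequently the nearest integer to $n\alpha - \gamma$ differs from $n\alpha-\gamma$ by either $|\Sigma|$ or $1 - |\Sigma|$, according to whether $0$ or $\sgn(\Sigma)$ is the integer closest to $\Sigma$, which gives the displayed identity. (Since moreover $|\Sigma| < 1/2$ we even have $\|n\alpha-\gamma\| = |\Sigma|$, but we retain the symmetric formulation for later bookkeeping.)

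\emph{The two expansions.} Here I would invoke \cite[Lemmata 4.3, 4.4, 4.5]{BHV2016}. The first step is to verify that $(\alp,\gam)$ satisfies their running hypotheses: $\alp$ is irrational with partial quotients $(a_k)$ and convergent denominators $(q_k)$; the coefficients $b_{k+1}$ in $\gamma = \sum_{k\ge 0} b_{k+1} D_k$ lie in the admissible range $0 \le b_{k+1} \le a_{k+1}$ — indeed $a_{k+1}/4 \le b_{k+1} \le a_{k+1}/2$ by \eqref{eq: DandyAndy} — and the constraint ``$b_{k+1} = a_{k+1} \implies b_k = 0$'' holds vacuously because $b_{k+1} < a_{k+1}$; and $n = \sum_{k\ge0} c_{k+1} q_k$ is the genuine Ostrowski expansion of $n$, so that $\delta_{k+1} = c_{k+1} - b_{k+1}$ and $m = m(n)$ are as in their statements. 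Granting this, \cite[Lemma 4.4]{BHV2016} yields the expansion in part~(1) and \cite[Lemma 4.5]{BHV2016} yields that in part~(2). The constraints $a_k \ge 64$ and $a_k/4 \le b_k \le a_k/2$ are exactly what is needed to promote the relevant coefficients to the genuine asymptotics $u_{m+2}\asymp a_{m+2}$, $u_{m+3}\asymp a_{m+3}$, $u_1 \asymp a_1$, $u_2 \asymp a_2$ (rather than mere upper bounds), and to guarantee $\varUpsilon \ll |D_{m+2}|$ and $\tilde{\varUpsilon} \ll |D_1|$. In practice one traces through the \cite{BHV2016} computation keeping track of: the alternation of signs of the $D_k$ with $D_0 = \{\alp\} > 0$; the recursion \eqref{eq: continued fraction recursion}; and the two-sided bound \eqref{eq: standard estimate}, which in the presence of $a_k \ge 64$ also supplies the geometric decay $|D_{k+1}| \le |D_k|/32$ and the comparison $a_{k+1}|D_k| \le 2|D_{k-1}|$ used to control tails.

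If instead one wanted a self-contained proof, the route is: write $\Sigma = \delta_{m+1} D_m + \delta_{m+2} D_{m+1} + \sum_{k\ge m+2}\delta_{k+1}D_k$, absorb the last sum into an error of size $\ll |D_{m+2}|$ via the geometric decay, and then analyse the surviving block $\delta_{m+1} D_m + \delta_{m+2} D_{m+1}$ together with a bounded ``carry'' from the tail, using alternation of signs to pin down the net contribution. The subtle point — and what I expect to be the main obstacle — is the carrying/borrowing phenomenon when $|\delta_{m+1}| = 1$: there the putative leading term $\delta_{m+1}D_m$ is, up to sign, of the same size as the correction produced by the next digits, and one must check that the net effect is precisely the $(|\delta_{m+1}|-1)|D_m|$ appearing in part~(1), i.e. that the leading unit cancels and the next-order terms take over with the asserted signs. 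The same borrowing analysis underlies part~(2), where one uses the identity $1 = a_1 D_0 - D_1$ (valid since $a_0 = 0$, $p_1 = 1$, $q_1 = a_1$) to rewrite $1 - |\Sigma|$ as a short Ostrowski-type sum in $D_0, D_1$ plus a controlled tail. Since \cite{BHV2016} already carries out exactly this bookkeeping, the efficient path is to quote their lemmas after the hypothesis check above.
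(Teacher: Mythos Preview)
Your proposal is correct and matches the paper's approach: the paper does not give a proof at all but simply records, in the sentence preceding the lemma, that it ``combines \cite[Lemmata 4.3, 4.4, and 4.5]{BHV2016}, in this special case.'' Your hypothesis check and the observation that \eqref{eq: DandyAndy} forces the vacuous satisfaction of the Ostrowski admissibility constraints are exactly the verification one needs to justify that citation.
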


\subsection{Bohr sets} 

For $\balp, \bgam \in \bR^{k-1}$, we have ample need for bounds on the cardinality of Bohr sets
\begin{equation} \label{BohrDef}
{\set B} = {\set B}_\balp^\bgam (N; \brho) := \{ n \in \bZ: |n| \le N, \| n \alp_i - \gam_i \| \le \rho_i \quad (1 \le i \le k-1) \},
\end{equation}
that are sharp up to multiplication by absolute constants.
Further, it turns out to be crucial 
to have precise control over the ranges in which the Bohr sets have a sufficiently nice structure. If the width parameters $\delta_i$ and the length parameters $N_i$
are in a suitable regime, then the Bohr set $\cB$
is enveloped---efficiently, as we detail soon---in a {\em $k$-dimensional generalised arithmetic progression}
\begin{equation}\label{def: generalised arithmetic progression}
{\set P}(b; A_1, \ldots, A_k; N_1, \ldots, N_k) = 
\{ b + A_1 n_1 + \cdots + A_k n_k: |n_i| \le N_i \},
\end{equation}
where $b, A_1, \ldots, A_k, N_1, \ldots, N_k \in \bN$. 
The thresholds for the admissible regimes depend
naturally on the diophantine properties of $\balp$.

Inside ${\set B}$, 
we will find a large (asymmetric) 
generalised arithmetic progression
\begin{equation} \label{def: Pplus}
{\set P}^+(b; A_1, \ldots, A_k; N_1, \ldots, N_k) = 
\{ b + A_1 n_1 + \cdots + A_k n_k: 1 \le n_i \le N_i \}.
\end{equation}
This is \emph{proper} 
if for each $n \in {\set P}^+(b, A_1, \ldots, A_k, N_1, \ldots, N_k)$ there is a unique vector $(n_1, \ldots, n_k) \in \bN^k$ 
for which
\[
n = b + A_1 n_1 + \cdots + A_k n_k, \qquad n_i \le N_i \quad (1 \le i \le k).
\]
Throughout this subsection we operate under the assumption 
\eqref{eq: small mult. exponent}, which in the case $k=2$
simply means that $\alpha$ is irrational and non-Liouville.
Set
$$
\eta(\bfalp) = \frac1{\ome^\times(\balp)} \: - \: \frac{k-2}{k-1} \in (0,1].
$$
In \cite[Section 3]{CT2019}, we exploited the strict positivity of $\eta(\balp)$ to describe regimes in which the Bohr sets \eqref{BohrDef}
contain and are contained in generalised arithmetic progressions 
of the expected size. We presently outline the key findings from that investigation, as far as they are needed here.

\begin{lemma} [Inner structure] \label{lem: inner} 
Let $\vartheta \geq 1$. 
Then there exists $\tilde{\eps} > 0$
with the following property.
If $\eps \in (0, \tilde{\eps}]$ is fixed, and $N$ is large in terms of $\vartheta, \eps, \balp$, and
\begin{equation} \label{eq: regime}
N^{-\eps} \le \rho_i \le 1 \quad (1 \le i \le k-1),
\end{equation}
then there exists a proper generalised arithmetic progression 
\[
{\set P} = {\set P}^+(b; A_1, \ldots, A_k; N_1, \ldots, N_k) 
\]
contained in ${\set  B}$, for which 
\[
|{\set P}| \gg \rho_1 \cdots \rho_{k-1} N, 
\qquad \min_{i\le k} N_i \ge N^{\vartheta\eps}, 
\qquad N^{\sqrt \eps} \le b \le \frac N{10},
\]
and
\begin{equation*}
\gcd(A_1,\ldots,A_k) = 1.
\end{equation*}
\end{lemma}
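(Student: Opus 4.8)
The goal is a "Minkowski-type" statement: inside the Bohr set $\cB = \cB_\balp^\bgam(N;\brho)$, under the regime \eqref{eq: regime}, we want to locate a \emph{proper} generalised arithmetic progression of rank $k$, of volume $\gg \rho_1\cdots\rho_{k-1}N$, with all side-lengths $N_i \ge N^{\vartheta\eps}$, base point $b$ in the window $[N^{\sqrt\eps}, N/10]$, and $\gcd(A_1,\ldots,A_k)=1$. The strategy is to invoke the structure theory of Bohr sets developed in \cite[Section 3]{CT2019}. First I would recall (or re-derive from that paper) that a Bohr set $\cB_\balp^\bzero(N;\brho)$ in $k-1$ "frequencies" is, up to absolute constants in the volume, contained in and contains a proper symmetric GAP of rank $k$: this is the standard Bohr-set/GAP correspondence, and the strict positivity of $\eta(\balp)$ — which is where the diophantine hypothesis \eqref{eq: small mult. exponent} enters — is exactly what guarantees the relevant parameters lie in an admissible regime when $N^{-\eps}\le\rho_i\le 1$ and $N$ is large in terms of $\vartheta,\eps,\balp$. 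The inhomogeneous shift $\bgam$ is handled by translating: $\cB_\balp^\bgam$ is either empty or a translate of a piece of the homogeneous Bohr set, and one keeps the shift on the base point $b$.

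Concretely, the steps would be: (i) produce the homogeneous symmetric GAP $\cP_0 = \cP(0; A_1,\ldots,A_k; M_1,\ldots,M_k)$ inside $\cB_\balp^\bzero(N/2; \brho/2)$ (say) with $|\cP_0|\gg \rho_1\cdots\rho_{k-1}N$ and $\min_i M_i \ge N^{\vartheta'\eps}$ for a suitable $\vartheta' > \vartheta$, directly citing the inner-structure results of \cite{CT2019}; (ii) divide out $g = \gcd(A_1,\ldots,A_k)$, replacing each $A_i$ by $A_i/g$ — this only shrinks the GAP and leaves it inside $\cB$, and now $\gcd=1$; (iii) pick any $n_0 \in \cB_\balp^\bgam(N/2; \brho/2)$ (nonempty: the inhomogeneous Bohr set with these parameters contains an integer in this range by a pigeonhole/three-distance argument, or one simply assumes $\cB$ itself is nonempty, which is the only case of interest), so that $n_0 + \cP_0 \subseteq \cB_\balp^\bgam(N;\brho)$ by the triangle inequality for $\|\cdot\|$; (iv) convert the symmetric GAP $n_0+\cP_0$ into a one-sided $\cP^+$ of the form \eqref{def: Pplus} by the usual change of variables $n_i \mapsto n_i + M_i + 1$, absorbing the resulting constant $n_0 - \sum_i A_i(M_i+1)/g$ into a new base point $b$ and halving each side-length (so $N_i := M_i \ge N^{\vartheta\eps}$ after adjusting $\vartheta'$); properness is inherited since $\cP_0$ was proper; (v) finally adjust $b$ into the window $[N^{\sqrt\eps}, N/10]$ — since $b$ is determined only up to adding elements of the GAP's span, and the side-lengths are at least $N^{\vartheta\eps}$ while $\vartheta\eps > \sqrt\eps$ for $\eps$ small, one can translate $b$ by a suitable lattice combination of the $A_i$ to land in this range while keeping $n_0+\cP_0 \subseteq \cB$ (using that $\cB$ has length $N$ and we built everything inside length $N/2$, leaving room). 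Shrinking $\tilde\eps$ as needed absorbs all the implied constants.

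The main obstacle — and the only genuinely delicate point — is step (i): extracting the homogeneous GAP with the \emph{correct} lower bound $N^{\vartheta\eps}$ on \emph{every} side-length simultaneously, uniformly over the whole admissible box \eqref{eq: regime} of radii, while keeping the volume loss to an absolute constant. This is precisely where the quantitative geometry-of-numbers input from \cite[Section 3]{CT2019} is essential: one applies Minkowski's second theorem to the lattice generated by $(1, n\alp_1,\ldots,n\alp_{k-1})$ and controls the successive minima via the diophantine exponent $\ome^\times(\balp)$; the gap $\eta(\balp) > 0$ forces no minimum to be too small or too large in the stated $N$-power range, which is what both makes the GAP proper and pins down the side-lengths. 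Everything after step (i) is bookkeeping: translation, rescaling to a one-sided progression, dividing out a gcd, and relocating the base point — routine manipulations that cost only absolute constants, which we absorb by taking $\tilde\eps$ small and $N$ large. I would therefore write the proof as a short deduction from the cited results of \cite{CT2019}, spelling out only the shift-handling and the base-point adjustment in detail.
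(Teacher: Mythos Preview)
Your plan is considerably more elaborate than what the paper does, and it contains two slips. The paper's proof is a two-line reduction: \cite[Lemma 3.1]{CT2019} is \emph{exactly} this statement but with $\vartheta = 1$ (it already includes the inhomogeneous shift, properness, the $\gcd$ condition, and the base-point window $N^{\sqrt\eps}\le b\le N/10$). The only thing to do is bootstrap to general $\vartheta\ge 1$: observe that the statement is monotone in $\eps$ (smaller $\eps$ gives a weaker conclusion and a stronger hypothesis), so if it holds at $\eps=\tilde\eps_1$ it holds for all smaller $\eps$; then replace $\tilde\eps_1$ by $\tilde\eps:=\tilde\eps_1/\vartheta$, so that for $\eps\le\tilde\eps$ one may apply the $\vartheta=1$ result with parameter $\eps':=\vartheta\eps\le\tilde\eps_1$, noting $N^{-\eps'}\le N^{-\eps}\le\rho_i$ and reaping $\min_i N_i\ge N^{\eps'}=N^{\vartheta\eps}$ and $b\ge N^{\sqrt{\eps'}}\ge N^{\sqrt\eps}$.

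Your steps (ii) and (v) are both problematic. In (ii), replacing each $A_i$ by $A_i/g$ does \emph{not} shrink the GAP: the new progression $\{n_1(A_1/g)+\cdots+n_k(A_k/g):|n_i|\le M_i\}$ is not a subset of the old one but rather $1/g$ times it, so it need not lie in $\cB$ at all. (In \cite{CT2019} the coprimality emerges from the lattice-basis construction, not from post-processing.) In (v), your claimed inequality $\vartheta\eps>\sqrt\eps$ is reversed for small $\eps$ unless $\vartheta$ is huge: e.g.\ for $\vartheta=1$ one has $\eps<\sqrt\eps$, so the side-length scale $N^{\vartheta\eps}$ is \emph{below} the target $N^{\sqrt\eps}$, and the translation argument as written does not place $b$ in the window. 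Since \cite[Lemma 3.1]{CT2019} already delivers all of this, you should cite it directly and focus only on the $\tilde\eps\mapsto\tilde\eps/\vartheta$ rescaling.
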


\begin{proof}
Observe that the statement gets weaker as $\eps$ decreases, in the sense that if it holds for $\eps = \tilde \eps$ then it holds for any $\eps \in (0 , \tilde \eps]$. It is almost identical to the statement in \cite[Lemma 3.1]{CT2019}, but there the variable $\vartheta$ was equal to $1$. By replacing $\tilde \eps$ by $\tilde \eps/\vartheta$, thereby shrinking the range of admissible values of $\eps,$ we obtain the statement here.
\end{proof}

Next, we quantify the range of the width vector $\brho$ for which the Bohr sets are efficiently contained in generalised arithmetic progressions. The outer structure lemma as stated in \cite[Lemma 3.2]{CT2019} 
is homogeneous and fails to record the feature 
that $\gcd(A_1,\ldots,A_k)=1$. Here we require an inhomogeneous version as well as the latter feature, and fortunately we can extract this 
additional information from the proof of \cite[Lemma 3.2]{CT2019}, as we now explain.

\begin{lemma} [Outer structure] \label{lem: outer} 
Let $\vartheta \geq 1$. 
Then there exists $\tilde{\eps} > 0$
with the following property. 
If $\eps \in (0, \tilde{\eps}]$ is fixed and $N$ is
sufficiently large in terms of $\eps, \vartheta$,
and we have \eqref{eq: regime}, then there exists a generalised arithmetic progression
\[
{\set P} = {\set P}(b; A_1, \ldots, A_k; N_1, \ldots, N_k),
\]
containing $B_\balp^\bgam(N;\brho)$,
for which
\[
\min_{i\le k} N_i \ge N^{\vartheta \eps}, 
\qquad |{\set P}| \ll N_1 \cdots N_k \ll \rho_1 \cdots \rho_{k-1}N
\]
and
\[
\gcd(A_1,\ldots,A_k)=1.
\]
\end{lemma}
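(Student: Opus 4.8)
The plan is to deduce the Outer Structure Lemma (Lemma~\ref{lem: outer}) by revisiting the proof of the homogeneous version \cite[Lemma 3.2]{CT2019} and making two modest upgrades: passing from the homogeneous setting $\bgam = \bzero$ to a general shift $\bgam$, and tracking the coprimality $\gcd(A_1,\ldots,A_k)=1$ of the difference vectors. First I would recall the structure of that argument. In \cite[Lemma 3.2]{CT2019}, one builds a generalised arithmetic progression $\cP$ enveloping the Bohr set $\cB_\balp^\bzero(N;\brho)$ by a Minkowski-type / successive-minima analysis of an auxiliary lattice associated to $(\balp, N, \brho)$, and the admissible regime \eqref{eq: regime} is exactly the one where the positivity $\eta(\balp) > 0$ forces the successive minima of this lattice to be comparable, so that no degeneracy occurs and the resulting $\cP$ has the claimed size $|\cP| \ll \rho_1 \cdots \rho_{k-1} N$ with all $N_i \ge N^{\vartheta \eps}$ (the $\vartheta$-dependence being handled exactly as in Lemma~\ref{lem: inner}, by shrinking $\tilde\eps$ to $\tilde\eps/\vartheta$).

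The inhomogeneous upgrade is essentially cost-free: if $\cB_\balp^\bgam(N;\brho)$ is nonempty, fix any $n_0 \in \cB_\balp^\bgam(N;\brho)$; then for any other $n \in \cB_\balp^\bgam(N;\brho)$ the difference $n - n_0$ satisfies $\|(n-n_0)\alp_i\| \le \|n\alp_i - \gam_i\| + \|n_0 \alp_i - \gam_i\| \le 2\rho_i$, so $n - n_0 \in \cB_\balp^\bzero(2N; 2\brho)$. Applying the homogeneous outer structure lemma (in the regime \eqref{eq: regime}, which is stable under replacing $N, \brho$ by $2N, 2\brho$ at the cost of harmless constants, absorbed into $\tilde\eps$ and the implied constants) gives a GAP $\cP_0 = \cP(b_0; A_1,\ldots,A_k; N_1,\ldots,N_k) \supseteq \cB_\balp^\bzero(2N;2\brho)$ with $|\cP_0| \ll \rho_1\cdots\rho_{k-1}N$ and $\min_i N_i \ge N^{\vartheta\eps}$; then $\cP := n_0 + \cP_0 = \cP(n_0 + b_0; A_1,\ldots,A_k; N_1,\ldots,N_k)$ contains $\cB_\balp^\bgam(N;\brho)$ and has the same size and the same difference vectors $A_i$. (If the Bohr set is empty there is nothing to prove: take any degenerate GAP, or note the inclusion is vacuous.)

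For the coprimality claim, the point is that the difference vectors $A_1, \ldots, A_k$ produced by the construction in \cite[Lemma 3.2]{CT2019} are the components of a generating set for the relevant sublattice (or can be read off from the reduced basis the proof produces), and if $d = \gcd(A_1,\ldots,A_k) > 1$ one can simply replace each $A_i$ by $A_i/d$ and each $N_i$ by $dN_i$: this only enlarges $\cP$, preserves the envelope property, keeps $\min_i N_i \ge N^{\vartheta\eps}$, and increases $|\cP|$ by a factor $d^k$. This last move would break the upper bound $|\cP| \ll \rho_1\cdots\rho_{k-1}N$ unless $d$ is bounded; so the substance is to check from the proof of \cite[Lemma 3.2]{CT2019} that $d$ is in fact $O(1)$ — equivalently, that the primitive envelope already has the right volume — which is where I would look most carefully. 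Concretely, the $A_i$ arise (up to the $\gcd$) from the shortest vectors of a lattice of determinant $\asymp (\rho_1\cdots\rho_{k-1}N)^{-1}$ in the regime \eqref{eq: regime}, and Minkowski's second theorem pins their product of lengths, hence the volume of the envelope, up to absolute constants; dividing out any common factor only decreases that volume, so the bound survives with $d = O(1)$, and then dividing out $d$ is legitimate. The main obstacle, then, is not conceptual but bookkeeping: re-deriving from the internals of \cite[Lemma 3.2]{CT2019} that the envelope can be taken primitive without spoiling the volume bound, i.e. confirming that the claimed additional structure is genuinely present in that proof rather than needing a fresh argument; once that is granted, the inhomogeneous translation is a one-line reduction as above.
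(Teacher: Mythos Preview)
Your approach is essentially the same as the paper's. The paper also fixes a point $b \in \cB_\balp^\bgam(N;\brho)$ (noting via Lemma~\ref{lem: inner} that the set is non-empty), observes that $n - b \in \cB_\balp^\bzero(N;2\brho)$ by the triangle inequality, and then invokes \cite[Lemma~3.2]{CT2019}; the $\vartheta$-upgrade is handled exactly as you suggest, by shrinking $\tilde\eps$ to $\tilde\eps/\vartheta$.

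The one place where the paper is cleaner than your proposal is the coprimality of the $A_i$. Rather than attempt the ``divide out $d$ and argue $d = O(1)$'' manoeuvre---which, as you yourself flag, threatens the volume bound and whose justification (``dividing out any common factor only decreases that volume'') is not correct as written, since replacing $(A_i, N_i)$ by $(A_i/d, dN_i)$ \emph{enlarges} the enveloping box---the paper simply records that $\gcd(A_1,\ldots,A_k)=1$ is already a feature of the construction in the proof of \cite[Lemma~3.2]{CT2019}, just not stated there because it was not needed. So your instinct at the end (``the main obstacle is bookkeeping: re-deriving from the internals of \cite[Lemma~3.2]{CT2019} that the envelope can be taken primitive'') is exactly right, and the detour through bounding $d$ is unnecessary.
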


\begin{proof}
Lemma \ref{lem: inner} implies that 
the Bohr set ${\set B}_\balp^\bgam(N;\brho)$ 
is non-empty, so choose 
$b \in {\set B}_\balp^\bgam(N;\brho)$ arbitrarily.
For any $n\in {\set B}_\balp^\bgam(N;\brho)$,
the triangle inequality yields 
$n-b \in {\set B}_\balp^\bzero(N;2\brho)$.
Now \cite[Lemma 3.2]{CT2019} assures us that
\[
n-b \in {\set P}(0; A_1, \ldots, A_k; N_1, \ldots, N_k).
\]
The coprimality property $\gcd(A_1,\ldots,A_k)=1$ 
comes out of the proof, but was not recorded in
\cite[Lemma 3.2]{CT2019} because it was not needed there. Similarly, the inequality $\min_{i \le k}N_i \ge N^\eps$ arises in the proof. Moreover, it is stated in \cite[Lemma 3.2]{CT2019} that $|\cP| \ll \rho_1 \cdots \rho_{k-1} N$, but its proof contains the more refined inequalities
\[
|{\set P}| \ll N_1 \cdots N_k \ll \rho_1 \cdots \rho_{k-1}N.
\]
By replacing $\tilde \eps$ by $\tilde \eps/\vartheta$, thereby shrinking the range of admissible values of $\eps$, we are able to bootstrap the inequality to $\min_{i \le k}N_i \ge N^{\vartheta \eps}$.
\end{proof}

Finally, recall that Beresnevich, Haynes, and Velani \cite[Lemma 6.1]{BHV2016} determined a convenient condition under which 
a rank $1$ Bohr set has the expected size.
\begin{lemma}\label{lem: BHV size bound for Bohr sets}
Let $\alpha \in \bR \setminus \bQ$, let $(q_k)_k$
be its sequence of denominators of convergents of continued fractions, 
and let
$ \del \in (0,\Vert q_2 \alpha \Vert /2 )$.
If there exists $\ell \in \bZ$ such that 
$$
\frac{1}{2\del} \leq q_\ell \leq N,
$$
then 
\[
\del N -1 \leq 
\# {\set B}_{\alpha}^{0}(N; \del) \leq 32 \del N.
\]
\end{lemma}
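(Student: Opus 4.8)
The plan is to estimate $\#\cB_{\alpha}^{0}(N;\del)$ by counting the integers $n$ with $|n|\le N$ and $\|n\alpha\|\le\del$ via the three gap theorem, using the hypothesis to pin down which continued fraction denominator governs the relevant gap structure. First I would observe that $n\in\cB_{\alpha}^{0}(N;\del)$ precisely when the point $\{n\alpha\}$ lies within $\del$ of $0$ (i.e. in $[0,\del]\cup[1-\del,1]$), so the count is essentially the number of $n\in\{1,\ldots,N\}$ (plus the trivial $n=0$, plus a symmetric negative range) for which $\{n\alpha\}$ falls in a length-$2\del$ window around the origin. Since the points $\{\alpha\},\{2\alpha\},\ldots,\{N\alpha\}$ partition the circle into arcs whose lengths take at most three values, and the largest of these is controlled by Theorem~\ref{LargestGap}, I would use the existence of $\ell$ with $\tfrac1{2\del}\le q_\ell\le N$ to guarantee that, among the first $N$ multiples, the gaps are all of size roughly $|D_{\ell-1}|$ or smaller, hence $\ll \del$ by \eqref{eq: standard estimate} together with $q_\ell \ge \tfrac1{2\del}$. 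This forces the window $[0,\del]$ to contain a number of points comparable to $\del N$.

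For the lower bound I would argue that, because every gap between consecutive points $\{i\alpha\}$ ($0\le i\le N$) has length at most the maximal gap, which is $\ll |D_{\ell-1}| \le 1/q_\ell \le 2\del$, any subinterval of the circle of length $\del$ must contain at least $\del N/(\text{max gap}) - O(1) \gg \del N - O(1)$ of these points; applying this to the arc $[0,\del]$ and being slightly careful at the endpoints gives $\#\cB_{\alpha}^{0}(N;\del)\ge \del N - 1$. For the upper bound I would run the complementary counting: the minimal gap among the first $N$ points is $\ge |D_K|$ for the appropriate $K$ (the one with $q_K\le N<q_{K+1}$), or more simply one uses that two points $\{i\alpha\},\{j\alpha\}$ with $|i-j|\le N$ can be within $\del$ of each other only if $\|(i-j)\alpha\|\le 2\del$, reducing everything to counting $m\in\{1,\ldots,2N\}$ with $\|m\alpha\|<2\del$ — but I would instead keep it cleaner by noting the points in $[0,\del]$ are $2\del$-separated up to the gap structure, giving at most $\del N/(\text{min relevant gap}) \le 32\del N$ after tracking the constants through \eqref{eq: standard estimate} and the hypothesis on $q_\ell$. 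The precise constant $32$ is just an artifact of the two factors of $2$ (the window is $2\del$ wide on the circle, and $\del<\|q_2\alpha\|/2$) combined with a factor of $8$ of slack.

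I expect the main obstacle to be bookkeeping the constants correctly and handling the transition between ``distance to nearest integer on $[0,N]$'' and ``gaps of $\{i\alpha\}$ on the circle for $0\le i\le m$'', since the three gap theorem is naturally stated for the partition induced by $\{i\alpha - \lfloor i\alpha\rfloor : 1\le i\le m\}$ rather than for a fixed target interval; one must be careful that the relevant $m$ (which is $\approx N$) and the relevant denominator $\ell$ line up so that Theorem~\ref{LargestGap} yields a maximal gap of size $\Theta(\del)$ rather than something larger. The condition $\del<\|q_2\alpha\|/2$ together with $q_\ell\ge\tfrac1{2\del}$ ensures $\ell\ge 3$ and that the $D$-values in play are small enough; I would simply cite \eqref{eq: standard estimate} to convert between $|D_j|$ and $1/q_{j+1}$ throughout, absorb all multiplicative losses into the stated constants $1$ and $32$, and not belabour the edge cases since the statement has generous room built into those constants. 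The whole argument is short once the three gap picture is set up, and since this is quoted verbatim from \cite[Lemma 6.1]{BHV2016} I would in practice just defer to that reference for the detailed constant-chasing.
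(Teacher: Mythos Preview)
The paper does not prove this lemma at all: it simply quotes it as \cite[Lemma 6.1]{BHV2016} and moves on. Your proposal ultimately does the same thing (deferring to that reference), with an added heuristic sketch via the three gap theorem; so your treatment is consistent with --- indeed more detailed than --- the paper's, and there is nothing to compare.
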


\subsection{Measure theory}

To bound from below the measure of the limit superior sets of interest, we deploy the `divergence' Borel--Cantelli lemma \cite[Lemma 2.3]{Har1998}.

\begin{lemma}\label{lem: Borel-Cantelli}
Let ${\set E}_1, {\set E}_2, \ldots$ be a sequence of Borel subsets of $[0,1]$ such that
\[
\sum_{n=1}^\infty \mu({\set E}_n) = \infty,
\]
and let ${\set E} = \displaystyle \limsup_{n \to \infty} 
{\set E}_n$. Then 
\begin{equation*}
\mu ({\set E}) \geq \limsup_{N \rightarrow \infty} 
\frac{\left(\displaystyle \sum_{n\leq N} \mu 
({\set E}_n)\right)^2}
{\displaystyle \sum_{n, m \leq N} \mu \left
({\set E}_n \cap {\set E}_m \right)}.
\end{equation*}
\end{lemma}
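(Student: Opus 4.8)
The plan is to establish this by the second moment method: apply the Cauchy--Schwarz inequality to the partial counting functions $\sum_n \mathbf 1_{\set E_n}$, but restricted to a \emph{tail} $n \ge M$ of the sequence, so that the resulting lower bound survives the passage to the $\limsup$ set. Write
\[
A_N = \sum_{n \le N} \mu(\set E_n), \qquad B_N = \sum_{n, m \le N} \mu(\set E_n \cap \set E_m),
\]
so that the quantity on the right-hand side of the claimed inequality is $L := \limsup_{N \to \infty} A_N^2 / B_N$ (any term with $B_N = 0$ also has $A_N = 0$ and may be ignored, and since $A_N \to \infty$ such terms are irrelevant to the $\limsup$ anyway).

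First I would fix integers $M \le N$ and consider the nonnegative function $g_{M,N} = \sum_{M \le n \le N} \mathbf 1_{\set E_n}$ on $[0,1]$, which is supported on $\set U_{M,N} := \bigcup_{M \le n \le N} \set E_n$. Since $\int g_{M,N} \, d\mu = \int_{\set U_{M,N}} g_{M,N} \, d\mu$, Cauchy--Schwarz gives
\[
\Bigl( \sum_{M \le n \le N} \mu(\set E_n) \Bigr)^2 = \Bigl( \int g_{M,N} \, d\mu \Bigr)^2 \le \mu(\set U_{M,N}) \int g_{M,N}^2 \, d\mu = \mu(\set U_{M,N}) \sum_{M \le n, m \le N} \mu(\set E_n \cap \set E_m),
\]
whence, writing $S_M := \sum_{n < M} \mu(\set E_n) \le M$ and using $\sum_{M \le n \le N} \mu(\set E_n) = A_N - S_M$ together with $\sum_{M \le n, m \le N} \mu(\set E_n \cap \set E_m) \le B_N$,
\[
\mu(\set U_{M,N}) \ge \frac{\bigl( \sum_{M \le n \le N} \mu(\set E_n) \bigr)^2}{\sum_{M \le n, m \le N} \mu(\set E_n \cap \set E_m)} \ge \frac{(A_N - S_M)^2}{B_N}.
\]

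Next I would send $N \to \infty$ with $M$ fixed. The divergence hypothesis forces $A_N \to \infty$, so $(A_N - S_M)^2 / A_N^2 \to 1$; since $A_N^2 / B_N \ge 0$, multiplying through by a factor tending to $1$ leaves the $\limsup$ unchanged, hence $\limsup_N (A_N - S_M)^2 / B_N = L$. As $\set U_{M,N}$ increases to $\bigcup_{n \ge M} \set E_n$, continuity of measure from below gives $\mu\bigl( \bigcup_{n \ge M} \set E_n \bigr) \ge L$ for \emph{every} $M$. Finally the sets $\bigcup_{n \ge M} \set E_n$ decrease, as $M \to \infty$, to $\set E = \limsup_n \set E_n$, and all sit inside $[0,1]$, which has finite measure; so continuity of measure from above yields $\mu(\set E) = \lim_{M \to \infty} \mu\bigl( \bigcup_{n \ge M} \set E_n \bigr) \ge L$, which is the assertion.

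There is no serious obstacle here — the result is classical — but the one point that genuinely needs care, and the reason a direct application of Cauchy--Schwarz is insufficient, is the truncation at $M$: bounding $\mu\bigl( \bigcup_{n \le N} \set E_n \bigr)$ from below says nothing about the $\limsup$ set, so one must discard the initial block $\set E_1, \dots, \set E_{M-1}$ first, and the divergence of $\sum_n \mu(\set E_n)$ is precisely what makes the lost mass $S_M$ negligible in the limit.
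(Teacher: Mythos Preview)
Your proof is correct and is the standard second-moment argument for this classical result. The paper does not actually prove this lemma; it simply cites it as \cite[Lemma 2.3]{Har1998}, so there is nothing substantive to compare.
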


In many applications in metric diophantine approximation, 
it suffices to establish that a limit superior set ${\set E}$
has positive measure in order to conclude that 
it has full measure: Often one of the classical
zero--one laws, such as Cassels' or Gallagher's 
\cite[Section 2.2]{Har1998},
rules out the possibility that $\mu({\set E})\in(0,1)$.  
However, for our purposes no such zero--one law is available.
To establish full measure, we turn instead to another device, which follows from Lebesgue's density theorem.
Intuitively, the next lemma---sometimes referred to as Knopp's lemma---states that any set whose local densities
are uniformly and positively bounded from below must have full measure. This is a special case of \cite[Proposition 1]{BDV2006}.

\begin{lemma}\label{lem: density}
If ${\set E} \subseteq [0,1]$ is Borel set and
$$
\mu({\set E} \cap {\set I}) \gg \mu ({\set I}) 
$$
for any interval ${\set I}\subseteq [0,1]$, then 
$\mu({\set E})=1$.
\end{lemma}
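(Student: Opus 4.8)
The plan is to argue by contradiction, reducing to a covering argument. Unwinding the Vinogradov notation, the hypothesis furnishes an absolute constant $c>0$ --- depending only on the data fixed in Section~\ref{OrgNot} --- such that
\[
\mu(\cE \cap \cI) \ge c\,\mu(\cI) \qquad \text{for every interval } \cI \subseteq [0,1].
\]
Suppose, towards a contradiction, that $\mu(\cE) < 1$. Then the Borel set $\cF := [0,1] \setminus \cE$ has $\mu(\cF) > 0$.

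Fix $\eps > 0$. By outer regularity of Lebesgue measure, choose an open set $U \supseteq \cF$ with $\mu(U) < \mu(\cF) + \eps$, and write $U$ as a countable disjoint union of open intervals. Intersecting each with $[0,1]$ and discarding empty pieces yields pairwise disjoint intervals $\cI_1, \cI_2, \ldots \subseteq [0,1]$ whose union contains $\cF$ and satisfies $\sum_j \mu(\cI_j) \le \mu(U) < \mu(\cF) + \eps$. Applying the hypothesis to each $\cI_j$ and summing,
\[
\sum_j \mu(\cE \cap \cI_j) \ge c \sum_j \mu(\cI_j) \ge c\,\mu(\cF),
\]
the last step because the $\cI_j$ cover $\cF$. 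On the other hand, since $\cE \subseteq [0,1]$ and $\cE \cap \cF = \emptyset$ we have $\cE \cap \bigcup_j \cI_j = \cE \cap U \subseteq U \setminus \cF$, so
\[
\sum_j \mu(\cE \cap \cI_j) = \mu\Bigl(\cE \cap \bigcup_j \cI_j\Bigr) \le \mu(U) - \mu(\cF) < \eps.
\]
Combining the two displays gives $c\,\mu(\cF) < \eps$; letting $\eps \to 0^+$ with $c$ fixed forces $\mu(\cF) = 0$, a contradiction. Hence $\mu(\cE) = 1$.

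There is no serious obstacle here: the only thing to be careful about is that the implied constant in $\mu(\cE \cap \cI) \gg \mu(\cI)$ is genuinely uniform over all subintervals $\cI \subseteq [0,1]$ --- but that is precisely what the notation encodes under the conventions of Section~\ref{OrgNot} --- together with the minor bookkeeping at the endpoints $0,1$ when truncating the intervals comprising $U$. One could equivalently phrase this via the Lebesgue density theorem: if $\mu(\cE) < 1$ then $\cF$ has a point of density $1$, say an interior point $x$, and taking $\cI_r = (x-r,x+r)$ gives $\mu(\cE \cap \cI_r)/\mu(\cI_r) \to 0$ as $r \to 0^+$, contradicting the lower bound. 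Since the statement is in any case a special case of \cite[Proposition~1]{BDV2006}, one could also simply cite that reference.
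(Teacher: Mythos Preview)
Your proof is correct. The paper does not actually prove this lemma: it merely remarks that the result ``follows from Lebesgue's density theorem'' and that it is a special case of \cite[Proposition~1]{BDV2006}. Your covering argument via outer regularity is a standard direct route, and you also outline the density-theorem alternative and note the citation, so your treatment is strictly more detailed than the paper's.
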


\subsection{Real analysis}

The next lemma will later enable us to eschew 
a certain small-GCD regime.
In the first instance, it asserts that if 
$\psi(n)\log n$ is not summable then neither is 
$\psi(an)\log n$ for any fixed $a\in\bN$. 
In fact, we can allow $a$ to increase very slowly with $n$.

\begin{lemma}\label{lem: Fourier}
Let $d\in \bN$. Let 
$\psi: \bN \rightarrow \bR_{>0}$ be non-increasing such that 
$$
\sum_{n\geq 1} \psi(n) (\log n)^d
$$
diverges. Then, there exists a strictly increasing sequence $(K_{i})_{i}$
of positive integers satisfying
$ K_i\geq  \exp(\exp i)$, for all $i\geq 2$, such that $K_1 = 1$, $K_i = o(K_{i+1})$, and:
\begin{enumerate}
\item The map $f$ defined by 
$$
f(n)=i \qquad (K_{i}\leq n<K_{i+1})
$$ satisfies $f(n)\ll \log \log n$.
\item If $\hat{\psi}(n)=\psi(4^{f(n)}n)$, for all $n$,
then the series 
$$ \sum_{n\geq1}\hat{\psi}(n)(\log n)^d
$$ diverges.
\end{enumerate}
\end{lemma}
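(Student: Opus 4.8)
The plan is to construct the sequence $(K_i)_{i\ge1}$ recursively, making each block $[K_i,K_{i+1})$ long enough that the mass of $\psi(\cdot)(\log\cdot)^d$ over it survives the substitution $\psi(n)\mapsto\psi(4^in)$, which costs a factor $4^i$. Put $K_1=1$. Given $K_1<\cdots<K_i$, the divergence of $\sum_{m\ge1}\psi(m)(\log m)^d$ — and hence of its tail $\sum_{m\ge 4^iK_i}\psi(m)(\log m)^d$ — lets us pick an integer $K_{i+1}$ with
\[
K_{i+1}\ge \max\bigl(iK_i,\ \exp(\exp(i+1))\bigr),\qquad \sum_{4^iK_i\le m<4^iK_{i+1}}\psi(m)(\log m)^d\ge 2^d4^i.
\]
Each clause is a lower bound on $K_{i+1}$, so they are jointly satisfiable. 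The resulting sequence is strictly increasing, satisfies $K_i\ge\exp(\exp i)$ for $i\ge2$, and has $K_i/K_{i+1}\le 1/i\to0$, i.e.\ $K_i=o(K_{i+1})$.

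For the first claim, let $n\in[K_i,K_{i+1})$. If $i\ge2$ then $n\ge\exp(\exp i)$, so $\log n=\ln n\ge\exp i$ and therefore $\log\log n=\ln\ln n\ge i=f(n)$; if $i=1$ then $f(n)=1\le\log\log n$ by the convention $\log(\cdot)=\max(\ln(\cdot),1)$. Thus $f(n)\le\log\log n$ for every $n$, which is stronger than $f(n)\ll\log\log n$.

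For the second claim, write $\sum_{n\ge1}\hat\psi(n)(\log n)^d=\sum_{i\ge1}\sum_{K_i\le n<K_{i+1}}\psi(4^in)(\log n)^d$ and estimate the $i$-th block from below for $i\ge2$. Fix such an $n$. By monotonicity of $\psi$, each of the $4^i$ integers $m\in[4^in,4^i(n+1))$ satisfies $\psi(m)(\log m)^d\le\psi(4^in)(\log(2\cdot4^in))^d$, and since $n\ge K_i$ forces $\log n\ge\exp i$ to dwarf the error $\log(2\cdot4^i)=O(i)$, we get $\log(2\cdot4^in)\le 2\log n$; hence $\sum_m\psi(m)(\log m)^d\le 2^d4^i\psi(4^in)(\log n)^d$ over this range of $m$. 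Summing over $n\in[K_i,K_{i+1})$, and noting that the intervals $[4^in,4^i(n+1))$ tile $[4^iK_i,4^iK_{i+1})$,
\[
\sum_{4^iK_i\le m<4^iK_{i+1}}\psi(m)(\log m)^d\le 2^d4^i\sum_{K_i\le n<K_{i+1}}\psi(4^in)(\log n)^d.
\]
By the choice of $K_{i+1}$ the left side is at least $2^d4^i$, so the $i$-th block is at least $1$; summing over $i\ge2$ gives $\sum_{n\ge1}\hat\psi(n)(\log n)^d=\infty$.

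The content of the argument is entirely as above: because the blocks grow doubly exponentially, the per-block loss of $4^i$ from restricting to multiples of $4^i$ is negligible against the available mass, while the doubly-exponential lower bound on $K_i$ is exactly what pins $f$ below $\log\log n$. The only genuine work is routine bookkeeping — the crude logarithm inequalities under the truncation convention, and the compatibility of the finitely many lower bounds defining each $K_{i+1}$ — so I do not anticipate a real obstacle.
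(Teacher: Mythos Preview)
Your proof is correct and follows essentially the same approach as the paper: both construct $(K_i)$ recursively, using the divergence of the tail together with the monotonicity estimate that passing from $\psi(m)$ to $\psi(4^in)$ over a block costs at most a factor $O(4^i)$. The paper phrases this through partial sums $S_\psi(aN)\ll a\sum_{j\le N}\psi(aj)(\log j)^d$ and arranges $S_{\hat\psi}(K_{i+1})\gg i$, whereas you work directly with block sums and arrange each block to contribute at least $1$; these are cosmetic differences only.
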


\begin{proof}
We construct $(K_{i})_{i}$ recursively. Write
\[
S_{\psi}(N)=\sum_{n\leq N}\psi(n) (\log n)^d,
\qquad
S_{\hat \psi}(N)=\sum_{n\leq N}\hat \psi(n) (\log n)^d.
\]
Set $K_1 = 1$, let $i \ge 1$, and suppose that $K_{i}$ has already been constructed.
If $a \in \bN$ and $N \ge N_a$, where $N_a = N(a,\psi)$ is large, then
\begin{align*}
S_{\psi}(aN) &=
S_\psi(a^2) + \sum_{a \leq j< N}\sum_{r \le a}
\psi\left(aj+r\right) (\log\left(aj+r\right))^d
\\
 & \ll S_\psi(a^2) + \sum_{a \leq j< N}\sum_{r \le a}\psi\left(aj\right)(\log j)^d
 \ll a\sum_{j\leq N}\psi\left(aj\right)(\log j)^d.
\end{align*}
Next, consider $a = 4^i$ and $N \ge K_i + N_a$, for some $i \in \bN$. Supposing we choose $K_{i+1} = N+1$, then as $i \ge f(j)$ for any $j \le N$, we have
\[
\psi(aj) = \psi(4^i j) \le \psi(4^{f(j)}j) = \hat \psi(j) \qquad (j \le N),
\]
and so
\[
S_{\hat \psi}(N) \gg a^{-1} S_\psi(aN) = 4^{-i} S_\psi(4^i N).
\]
We define $K_{i+1}$ to be one more than the smallest positive integer 
\[
N \geq \exp (\exp (2 K_{i} + N_{4^{i}}))
\]
for which $4^{-i} S_\psi(4^i N) \ge i$. 
By construction, we have 
\[
S_{\hat{\psi}}(K_{i+1}) \gg i,
\qquad K_{i+1} \geq \exp( \exp (i+1)).
\]
The latter implies that $f(n) \leq \log \log n$
for $n \ge K_2$, completing the proof.
\end{proof}

The following lemma helps us with dyadic pigeonholing.

\begin{lemma}\label{lem: truncation}
Let $h:\bN \rightarrow \bR_{>0}$ be non-increasing, and fix $C\geq 2$, $\kap > 0$, as well a positive integer $J_0 \ll 1$.
Then, for $N \in \bN$ and $J = \lfloor \log N / \log C \rfloor$, we have
\begin{equation}\label{eq: doubling on average}
\sum_{C^{J_0} \le n \le N} h(n) (\log n)^{\kappa}
\asymp \sum_{j=J_0}^{J} j^{\kappa}C^{j} h(C^j).
\end{equation}
\end{lemma}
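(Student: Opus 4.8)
The plan is to prove the dyadic comparison \eqref{eq: doubling on average} by splitting the range $[C^{J_0},N]$ into the consecutive dyadic-in-base-$C$ blocks $[C^j,C^{j+1})$ for $J_0 \le j \le J$, and on each block exploiting that both $h$ and $(\log n)^\kappa$ are nearly constant. First I would write
\[
\sum_{C^{J_0} \le n \le N} h(n)(\log n)^\kappa
= \sum_{j=J_0}^{J-1} \sum_{C^j \le n < C^{j+1}} h(n)(\log n)^\kappa
\; + \; \sum_{C^J \le n \le N} h(n)(\log n)^\kappa,
\]
and note that the last (incomplete) block is itself contained in $[C^J, C^{J+1})$, so it is enough to estimate each block $[C^j, C^{j+1})$ separately and sum. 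On such a block we have $C^j \le n < C^{j+1}$, so $\log n \asymp j \log C \asymp j$ (absorbing the constant $C$, which is among the allowed implied-constant dependencies), whence $(\log n)^\kappa \asymp j^\kappa$; this step uses $j \ge J_0 \gg 1$ to keep $\log n$ bounded away from zero and to make the $\max(\ln,1)$ convention harmless.

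Next I would use monotonicity of $h$: for $n \in [C^j, C^{j+1})$ we have $h(C^{j+1}) \le h(n) \le h(C^j)$, and the block contains $C^{j+1} - C^j = C^j(C-1) \asymp C^j$ integers. Hence
\[
\sum_{C^j \le n < C^{j+1}} h(n)(\log n)^\kappa \asymp j^\kappa \sum_{C^j \le n < C^{j+1}} h(n),
\]
and the inner sum satisfies $C^j(C-1) h(C^{j+1}) \le \sum_{C^j \le n < C^{j+1}} h(n) \le C^j(C-1) h(C^j)$, i.e. it is $\ll C^j h(C^j)$ and $\gg C^j h(C^{j+1})$. Summing the upper bounds over $j$ immediately gives $\sum_{C^{J_0}\le n\le N} h(n)(\log n)^\kappa \ll \sum_{j=J_0}^J j^\kappa C^j h(C^j)$. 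For the matching lower bound, summing the lower bounds produces $\sum_{j=J_0}^J j^\kappa C^j h(C^{j+1})$, which I would reconcile with the claimed right-hand side by a shift of index: $\sum_{j=J_0}^J j^\kappa C^j h(C^{j+1}) \asymp C^{-1}\sum_{j=J_0+1}^{J+1} (j-1)^\kappa C^{j} h(C^{j}) \asymp \sum_{j=J_0}^{J} j^\kappa C^j h(C^j)$, where the negligible boundary terms $j=J_0$ and $j=J+1$ are controlled by monotonicity (e.g. $C^{J+1}h(C^{J+1}) \ll C\cdot C^{J}h(C^{J})$, using $h(C^{J+1})\le h(C^J)$) and $(j-1)^\kappa \asymp j^\kappa$ for $j \gg 1$.

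The only mild subtlety — and the step I would be most careful about — is bookkeeping at the two ends: handling the incomplete top block $[C^J, N]$ (trivially absorbed into $[C^J, C^{J+1})$ by monotonicity and the trivial bound $N < C^{J+1}$), and the index shift in the lower bound, making sure the added/dropped terms $j = J_0$ and $j = J+1$ are each comparable to terms already present in the sum $\sum_{j=J_0}^J j^\kappa C^j h(C^j)$ via monotonicity of $h$ and the fact that $J_0 \ll 1$ (so $j^\kappa \asymp (j-1)^\kappa \asymp j_0^\kappa$ uniformly over the relevant range, and no term is anomalously large). None of this requires anything beyond monotonicity of $h$, the elementary estimate $\#([C^j,C^{j+1})\cap\bZ)\asymp C^j$, and $\log n \asymp j$ on the block; there is no real obstacle, just routine care with constants depending on $C$, $\kappa$, and $J_0$, all of which are permitted dependencies.
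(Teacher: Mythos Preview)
Your approach is the same as the paper's: split into $C$-adic blocks, use monotonicity of $h$ and $\log n \asymp j$ on each block, then sum. The upper bound is fine as you have it.

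There is one small gap in your lower-bound bookkeeping. After the index shift you obtain $\sum_{j=J_0+1}^{J+1} j^\kappa C^j h(C^j)$ (up to constants), and you then assert this is $\asymp \sum_{j=J_0}^{J} j^\kappa C^j h(C^j)$. But the $j=J_0$ term on the right can genuinely dominate the entire sum when $h$ decays rapidly (e.g.\ $h(n)=n^{-10}$, or $h$ dropping sharply just past $C^{J_0}$), and it is \emph{not} comparable to any term with $j \ge J_0+1$ via monotonicity alone. Your block lower bound $\sum_{C^j \le n < C^{j+1}} h(n)(\log n)^\kappa \gg j^\kappa C^j h(C^{j+1})$ only sees $h(C^{J_0+1})$ in the bottom block, never $h(C^{J_0})$.

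The fix is exactly what the paper does: keep the single summand $n=C^{J_0}$ separate on the left-hand side. It contributes $h(C^{J_0})(\log C^{J_0})^\kappa \asymp h(C^{J_0})$, and since $J_0 \ll 1$ and $C \asymp 1$ we have $J_0^\kappa C^{J_0} h(C^{J_0}) \asymp h(C^{J_0})$ as well, so this single term recovers the missing $j=J_0$ summand on the right. With that adjustment your argument is complete and matches the paper's.
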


\begin{proof}
Since $h$ is non-increasing, for $j =1,2,\ldots,J$ we have
\begin{align*}
\sum_{C^{j} < n \le C^{j+1}} h(n) (\log n)^\kappa
&\ge C^{j+1}  (1 - 1/2) h(C^{j+1}) (j \log C)^\kappa \\
&\gg C^{j+1} h(C^{j+1})
((j+1) \log C)^\kappa
\end{align*}
and
\begin{align*}
\sum_{C^{j} \leq n \le C^{j+1}} h(n) (\log n)^\kappa \le C^{j+1} h(C^{j}) ((j+1) \log C)^\kappa
\ll C^j h(C^j) (j \log C)^\kap.
\end{align*}
Therefore
\begin{align*}
\sum_{C^{J_0} \le n \le N} h(n) (\log n)^{\kappa}
&\gg h(C^{J_0}) (J_0 \log C)^\kap +  \sum_{j=J_0}^{J-1} C^{j+1} h(C^{j+1}) ( (j+1) \log C)^\kap \\
&\gg \sum_{j=J_0}^{J} C^{j} h(C^{j}) ( j \log C)^\kap
\end{align*}
and
\begin{align*}
\sum_{C^{J_0} \le n \le N} h(n) (\log n)^{\kappa}
&\ll \sum_{j=J_0}^{J}
C^{j} h(C^{j}) ( j \log C)^\kap.
\end{align*}
\end{proof}

\subsection{Geometry of numbers}

The following lattice point counting theorem originates from the work of Davenport \cite{D1951}, see also \cite{BW2014} and \cite[p. 244]{T1993}. 
Our precise statement follows from \cite[Lemmata 2.1 and 2.2]{BW2014}.

\begin{thm} [Davenport]\label{DavenportBound}

Let $d,h \in \bN$, and let ${\set S}$ be a compact subset of $\bR^{d}$.
Assume that the two following conditions are met:

\begin{enumerate}[(i)]
\item Any line intersects ${\set S}$ in a set of points which, if non-empty,
comprises at most $h$ intervals.
\item The first condition holds, with $j$ in place of $d$, for 
the projection of ${\set S}$ onto any $j$-dimensional subspace. 
\end{enumerate}

Let $\lam_1 \le \cdots \le \lam_d$ be the successive minima, 
with respect to the Euclidean unit ball, 
of a (full-rank) lattice $\Lambda$ in $\bR^{d}$. Then
\[
\biggl\vert \vert {\set S} \cap \Lambda \vert 
- \frac{\mathrm{vol (\cS)}}{\det \: \Lambda}
\biggr\vert 
\ll_{d,h} \sum_{j=0}^{d-1} \frac{V_{j}({\set S})}{\lam_1 \cdots \lam_{j}},
\]
where $V_{j}({\set S})$ is the supremum of the $j$-dimensional volumes of the projections 
of $ {\set S}$ onto any $j$-dimensional subspace.
We adopt the convention that $V_{0}({\set S})=1$.
\end{thm}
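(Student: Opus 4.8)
I would prove the bound by induction on the dimension $d$, following the strategy of Davenport \cite{D1951}; the precise form stated here is, as noted, also a consequence of \cite[Lemmata 2.1 and 2.2]{BW2014}, which one may simply quote. The base case $d=1$ is immediate: here $\mathcal{S}$ is a union of at most $h$ compact intervals, $\Lambda=\lambda_1\bZ$, and an interval of length $\ell$ contains $\ell/\lambda_1+O(1)$ lattice points, so summing over the intervals gives $\big|\,|\mathcal{S}\cap\Lambda|-\vol(\mathcal{S})/\lambda_1\,\big|\ll h=h\,V_0(\mathcal{S})$.

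\textbf{The inductive step: slicing.} For $d\ge 2$, fix a Minkowski-reduced basis $\mathbf{b}_1,\dots,\mathbf{b}_d$ of $\Lambda$; then $|\mathbf{b}_i|\asymp_d\lambda_i$ and the $\mathbf{b}_i$ are pairwise non-degenerate in angle up to constants depending only on $d$. Set $H=\mathrm{span}_\bR(\mathbf{b}_2,\dots,\mathbf{b}_d)$, $\Lambda_1=\bZ\mathbf{b}_2\oplus\cdots\oplus\bZ\mathbf{b}_d$, and let $\pi$ be the projection of $\bR^d$ along $\mathbf{b}_1$ onto $H$, so that $\pi(\Lambda)=\Lambda_1$. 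Decomposing $\mathbf{x}\in\Lambda$ as $\mathbf{x}=n\mathbf{b}_1+\mathbf{w}$ with $n\in\bZ$, $\mathbf{w}\in\Lambda_1$, and invoking condition~(i) on each line $\mathbf{w}+\bR\mathbf{b}_1$, the one-dimensional count yields
\[
|\mathcal{S}\cap\Lambda|\;=\;\sum_{\mathbf{w}\in\Lambda_1}g(\mathbf{w})\;+\;O\!\big(h\,N_1\big),
\]
where $g(\mathbf{w})$ is the total length of $\{r\in\bR:\mathbf{w}+r\mathbf{b}_1\in\mathcal{S}\}$ (a union of $\le h$ intervals) and $N_1=\#\{\mathbf{w}\in\Lambda_1:g(\mathbf{w})>0\}=\#\big(\pi(\mathcal{S})\cap\pi(\Lambda)\big)$. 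By Fubini and the change of variables $(\mathbf{w},r)\mapsto\mathbf{w}+r\mathbf{b}_1$ (Jacobian $\dist(\mathbf{b}_1,H)$, and $\det\Lambda=\dist(\mathbf{b}_1,H)\det\Lambda_1$), the sum $\sum_{\mathbf{w}\in\Lambda_1}g(\mathbf{w})$ is a $(d-1)$-dimensional Riemann sum for $\vol(\mathcal{S})/\det\Lambda$.

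\textbf{Feeding in the inductive hypothesis.} For the error term $O(hN_1)$: the compact set $\pi(\mathcal{S})$ inherits conditions~(i) and~(ii), with implied constant still depending only on $h$ and $d$, since a line (resp. a $j$-plane) in $H$ pulls back under $\pi$ to a $2$-plane (resp. a $(j+1)$-plane) meeting $\mathcal{S}$ in a set controlled by condition~(ii); the lattice $\pi(\Lambda)=\Lambda_1$ has successive minima $\asymp_d\lambda_2,\dots,\lambda_d$ by reduction theory and determinant $\asymp_d\lambda_2\cdots\lambda_d$ by Minkowski's second theorem; and $V_j(\pi\mathcal{S})\ll_d V_j(\mathcal{S})$ because $\pi$ has operator norm $\ll_d 1$ on a reduced basis. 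Plugging these into the inductive bound for $\#\big(\pi(\mathcal{S})\cap\pi(\Lambda)\big)$ and using $\lambda_1\le\lambda_{j+1}$ to pass from $\lambda_2\cdots\lambda_{j+1}$ to $\lambda_1\cdots\lambda_j$, the term $hN_1$ is absorbed into $\sum_{j=0}^{d-1}V_j(\mathcal{S})/(\lambda_1\cdots\lambda_j)$. For the Riemann-sum error I would write $g(\mathbf{w})=\int_0^\infty\mathbf{1}[g(\mathbf{w})>u]\,du$ so as to express the discrepancy $\big|\sum_{\mathbf{w}\in\Lambda_1}g(\mathbf{w})-\tfrac{1}{\det\Lambda_1}\int_H g\big|$ as $\int_0^\infty E\big(\{g>u\},\Lambda_1\big)\,du$, where $E$ denotes the dimension-$(d-1)$ discrepancy; each super-level set $\{g>u\}$ again satisfies conditions~(i) and~(ii) (by the same pull-back reasoning, using that the slice-length function of a planar set with the bounded-interval property has $O_h(1)$ monotone pieces), so the inductive hypothesis applies and the resulting estimates combine to $\ll\sum_{j=0}^{d-2}V_j(\mathcal{S})/(\lambda_1\cdots\lambda_j)$.

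\textbf{Main obstacle.} I expect the genuinely delicate part to be the weight bookkeeping in the error term, and especially the Riemann-sum error for the main term: one must verify that conditions~(i) and~(ii) descend with a comparable constant to every auxiliary set that arises (the projection $\pi(\mathcal{S})$ and the super-level sets $\{g>u\}$), that the projected lattice's successive minima really are comparable to $\lambda_2,\dots,\lambda_d$ with constants depending only on $d$, and, most subtly, that integrating $V_j(\{g>u\})$ against $du$ and reconciling the various projection-volume and determinant comparisons produces exactly the weights $(\lambda_1\cdots\lambda_j)^{-1}$ and nothing lossier. This is precisely the bookkeeping carried out in \cite{D1951} and repackaged in \cite{BW2014}, so for a clean exposition I would quote those results; the above is the blueprint for reconstructing the argument.
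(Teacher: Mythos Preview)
The paper does not prove this theorem: it simply states it and records that the precise statement follows from \cite[Lemmata 2.1 and 2.2]{BW2014}, with the result originating in Davenport \cite{D1951}. Your proposal is therefore more than the paper itself provides; your inductive slicing outline is the standard Davenport argument and is consistent with the cited sources, so for the purposes of this paper you may (and the authors do) just quote the result.
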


We have not encountered a reference for the following classical result, so we provide a proof.

\begin{lemma} \label{detd}
Let $k \in \bN$, and let $A_1, \ldots, A_k, d \in \bN$ with 
\[
\gcd(A_1,\ldots,A_k,d) = 1.
\]
Then the congruence
\begin{equation} \label{eq: LatticeCongruence}
A_1 n_1 + \cdots + A_k n_k \equiv 0 \mmod d
\end{equation}
defines a full-rank lattice $\Lam$ of determinant $d$.
\end{lemma}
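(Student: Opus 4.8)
The plan is to exhibit $\Lam = \{ \bn \in \bZ^k : A_1 n_1 + \cdots + A_k n_k \equiv 0 \mmod d \}$ as the kernel of a surjective group homomorphism and then compute its index. First I would define the homomorphism $\phi : \bZ^k \to \bZ/d\bZ$ by $\phi(\bn) = A_1 n_1 + \cdots + A_k n_k \mmod d$. This is clearly a homomorphism of abelian groups, and $\Lam = \Ker \phi$, so $\Lam$ is a subgroup of $\bZ^k$; since it contains $d\bZ^k$ (as $d \mid d n_i$ for each $i$), which is itself full-rank, $\Lam$ is a full-rank sublattice of $\bZ^k$. The determinant (covolume) of $\Lam$ equals the index $[\bZ^k : \Lam]$, which by the first isomorphism theorem equals $|\Im \phi|$.

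So the one substantive point is that $\phi$ is surjective, i.e. $\Im \phi = \bZ/d\bZ$. The image is the subgroup of $\bZ/d\bZ$ generated by the residues $A_1, \ldots, A_k \mmod d$, hence the cyclic subgroup generated by $g := \gcd(A_1, \ldots, A_k, d) \mmod d$. By hypothesis $g = 1$, so $\Im \phi = \bZ/d\bZ$, which has order $d$. Concretely, one can invoke B\'ezout: there exist integers $m_1, \ldots, m_k, m_0$ with $A_1 m_1 + \cdots + A_k m_k + d m_0 = 1$, whence $\phi(m_1, \ldots, m_k) = 1 \mmod d$ and $1$ lies in the image, forcing $\Im \phi = \bZ/d\bZ$. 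Therefore $[\bZ^k : \Lam] = d$ and $\det \Lam = d$, as claimed.

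I do not expect any real obstacle here — the statement is elementary and the whole argument is the first isomorphism theorem plus B\'ezout. If one prefers a basis-free verification that covolume equals index, one can note that $\Lam \supseteq d\bZ^k$ with $[\bZ^k : d\bZ^k] = d^k$ and $[\Lam : d\bZ^k] = d^{k-1}$ (the latter because $\Lam / d\bZ^k = \Ker(\bar\phi)$ where $\bar\phi : (\bZ/d\bZ)^k \to \bZ/d\bZ$ is surjective), giving $[\bZ^k : \Lam] = d^k / d^{k-1} = d$; combined with the general fact that $\det \Lam = [\bZ^k : \Lam] \cdot \det \bZ^k = [\bZ^k : \Lam]$, this yields $\det \Lam = d$. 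Either route is short and routine.
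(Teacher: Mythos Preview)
Your proof is correct and is more streamlined than the paper's. You argue non-constructively: $\Lam$ is the kernel of the homomorphism $\phi:\bZ^k\to\bZ/d\bZ$, $\bn\mapsto A_1n_1+\cdots+A_kn_k\mmod d$, which is surjective by B\'ezout since $\gcd(A_1,\ldots,A_k,d)=1$; the first isomorphism theorem then gives $[\bZ^k:\Lam]=d$, and the full-rank property follows from $d\bZ^k\subseteq\Lam$. The paper instead proceeds by induction on $k$, at each step writing $g=\gcd(A_k,d)$, invoking the inductive hypothesis for the congruence $A_1n_1+\cdots+A_{k-1}n_{k-1}\equiv 0\mmod g$ to obtain a basis of the corresponding sublattice of $\bZ^{k-1}$, and then explicitly extending this to a basis of $\Lam$ whose determinant visibly equals $g\cdot(d/g)=d$. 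Your route is quicker and conceptually cleaner; the paper's route has the minor advantage of producing an explicit lattice basis, though this basis is not used elsewhere in the paper.
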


\begin{proof} As $\gcd(A_1,\ldots,A_k,d) = 1$, there exist integers $m_1,\ldots,m_{k+1}$ such that
\[
A_1 m_1 + \cdots + A_k m_k + d m_{k+1} = 1,
\]
and particular
\[
A_1 m_1 + \cdots + A_k m_k \equiv 1 \mmod d.
\]
As $1$ generates the cyclic group $\bZ/d\bZ$, it follows that the homomorphism
\begin{align*}
\bZ^k &\to \bZ/d\bZ \\ (n_1,\ldots,n_k)  &\mapsto A_1 n_1 + \cdots + A_k n_k
\end{align*}
is surjective. Further, its kernel is $\Lambda \subseteq \bZ^k$. By the first isomorphism theorem, the group $\bZ^k/\Lam$ has finite order $d$, and consequently $\Lam$ has full rank. An application of \cite[Subsection I.2.2, Lemma 1]{Cas1997} completes the proof.
\end{proof}

We need to be able to count elements of a generalised arithmetic progression divisible by a given positive integer $d$. The previous two facts enable us to accurately do so, provided that $d$ is not too large.

\begin{lemma}\label{lem: divisible numbers in GAPS}
\:
\begin{enumerate}[(i)]
    \item 
Let $d \in \bN$, and let $ {\set P}$
be generalised arithmetic progression given by \eqref{def: generalised arithmetic progression}, where $\gcd(A_1,\ldots,A_k)=1$.
Then 
\begin{equation}\label{eq: divisibilty in Bohr sets}
\frac{\# \{ n\in {\set P}: d \mid n\}}{N_1\cdots N_k} \ll d^{-1} + (\min_{i \le k} N_i)^{-1}.
\end{equation}
\item
Let ${\set P}$ be a proper, asymmetric 
generalised arithmetic progression 
given by \eqref{def: Pplus}, 
where $\gcd(A_1,\ldots,A_k)=1$, and let $d \in \bN$. Then
\begin{equation} \label{eq: DivisibilityInBohrSets2}
\frac{\# \{ n\in {\set P}: d \mid n\}}{N_1\cdots N_k} 
= d^{-1} + O(1/\min_{i \le k} N_i).
\end{equation}
\end{enumerate}
\end{lemma}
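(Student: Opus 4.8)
The plan is to apply Davenport's theorem (Theorem \ref{DavenportBound}) to the lattice $\Lam$ defined by the congruence $A_1 n_1 + \cdots + A_k n_k \equiv 0 \pmod d$, whose determinant is $d$ by Lemma \ref{detd} (using $\gcd(A_1,\ldots,A_k)=1$, which certainly forces $\gcd(A_1,\ldots,A_k,d)=1$). First I would reduce the count of $n \in \cP$ with $d \mid n$ to a lattice-point count. Writing a generic element of $\cP$ as $b + A_1 n_1 + \cdots + A_k n_k$, the condition $d \mid n$ becomes an affine congruence $A_1 n_1 + \cdots + A_k n_k \equiv -b \pmod d$; this is either empty or a coset of $\Lam$, according to whether $\gcd(A_1,\ldots,A_k,d) = 1$ divides $b$ — and since that gcd is $1$, it is always a nonempty coset. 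So $\#\{n \in \cP : d \mid n\} = \#(\Lam' \cap \cS)$ where $\Lam'$ is a translate of $\Lam$ and $\cS = \{\bx \in \bR^k : |x_i| \le N_i\}$ (respectively $\cS = \{1 \le x_i \le N_i\}$ in part (ii)), a box of volume $\asymp N_1 \cdots N_k$ (exactly $N_1 \cdots N_k$ in part (ii)).

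Next I would check the geometric hypotheses of Davenport's theorem for the box $\cS$: any line meets a box in at most one interval, and the projection of a box onto any coordinate subspace is again a box, so the conditions hold with $h = 1$; moreover $V_j(\cS) \ll N_1 \cdots N_k / (\text{product of the } k-j \text{ largest } N_i) \le (\max_i N_i)^j \le $ — more usefully, one bounds $V_j(\cS)$ by the product of the $j$ largest $N_i$'s. Actually the clean bound is $V_j(\cS) \le \prod_{i} N_i \big/ \prod_{\text{smallest } k-j} N_i$, but for our purposes it suffices that $V_j(\cS) \ll (N_1 \cdots N_k)/(\min_{i \le k} N_i)^{k-j}$. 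Davenport's theorem then gives
\[
\Bigl| \#(\Lam' \cap \cS) - \frac{\vol(\cS)}{d} \Bigr| \ll \sum_{j=0}^{k-1} \frac{V_j(\cS)}{\lambda_1 \cdots \lambda_j},
\]
where I would use the translation-invariance of lattice-point counting (Davenport's theorem is usually stated for a lattice through the origin, but applies to any translate with the same main term and error, since one may translate $\cS$ instead). Dividing by $N_1 \cdots N_k$, the main term contributes $d^{-1}$ (exactly, with $\vol(\cS) = N_1 \cdots N_k$, in part (ii)), and in part (i) the slightly larger box of volume $2^k N_1 \cdots N_k$ still gives $\vol(\cS)/(d\, N_1\cdots N_k) = 2^k/d \ll d^{-1}$, which is why part (i) is only an upper bound.

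The remaining task, and the only real obstacle, is bounding the error sum. Using Minkowski's second theorem, $\lambda_1 \cdots \lambda_k \asymp \det \Lam = d$, and since $\Lam$ contains the sublattice $d\bZ^k$ we have $\lambda_k \le \sqrt{k}\, d$, hence trivially $\lambda_1 \cdots \lambda_j \ge \lambda_1 \cdots \lambda_k / \lambda_{j+1}^{k-j} \gg d / d^{\,k-j}$ — but this is too crude. The sharper route is to pair each term $V_j(\cS)/(\lambda_1 \cdots \lambda_j)$ against the previous one: since $V_j(\cS) \le V_{j+1}(\cS) \cdot (\text{something}) $, and more to the point, one shows termwise that $V_j(\cS)/(\lambda_1 \cdots \lambda_j) \ll (N_1 \cdots N_k)(d^{-1} + (\min_i N_i)^{-1})$ by a standard argument combining $\lambda_1 \cdots \lambda_j \ge \lambda_1^j \ge c^j$ (as $\Lam$ is integral, $\lambda_1 \ge 1$, so each $\lambda_i \gg 1$) with the estimate $V_j(\cS) \le (N_1 \cdots N_k)/(\min_{i\le k}N_i)^{k-j}$. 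Indeed, $V_j(\cS)/(\lambda_1\cdots\lambda_j) \ll (N_1\cdots N_k)/(\min_i N_i)^{k-j} \le (N_1 \cdots N_k)/\min_i N_i$ for $j \le k-1$, giving the $O((N_1\cdots N_k)/\min_{i\le k}N_i)$ error after summing the $O(1)$ many terms. Dividing through by $N_1 \cdots N_k$ yields exactly \eqref{eq: divisibilty in Bohr sets} and \eqref{eq: DivisibilityInBohrSets2}. I would present this error-term bookkeeping carefully, as it is the place where the stated shape of the error — the particular combination $d^{-1} + (\min_i N_i)^{-1}$ — actually emerges, and it is where a careless bound would lose the result.
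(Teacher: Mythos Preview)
Your approach is essentially the same as the paper's: reduce to counting integer points in a box satisfying an affine congruence, translate to a homogeneous lattice of determinant $d$ via Lemma~\ref{detd}, then apply Davenport's theorem (Theorem~\ref{DavenportBound}) using that $\Lam \subseteq \bZ^k$ forces $\lambda_1 \ge 1$, whence the error sum is $\sum_{j \le k-1} V_j(\cS) \ll N_1 \cdots N_k/\min_i N_i$.

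Two small points of imprecision to tidy up. First, in part~(i) the generalised arithmetic progression need not be proper, so the correspondence between $n \in \cP$ and parameter tuples $(n_1,\ldots,n_k)$ is only surjective, not bijective; hence you get $\#\{n \in \cP : d \mid n\} \le \#(\Lam' \cap \cS)$ rather than equality --- this is harmless since part~(i) only asserts an upper bound, but your sentence ``So $\#\{n \in \cP : d \mid n\} = \#(\Lam' \cap \cS)$'' overstates it. Second, in part~(ii) the box $\{1 \le x_i \le N_i\}$ has volume $(N_1-1)\cdots(N_k-1)$, not $N_1 \cdots N_k$; the discrepancy $(N_1 \cdots N_k - (N_1-1)\cdots(N_k-1))/d$ is $O(N_1 \cdots N_k/\min_i N_i)$ and is absorbed into the error, but you should say so.
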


\begin{proof}
\begin{enumerate}[(i)]
    \item The quantity $\# \{ n\in {\set P}: d \mid n\}$ is bounded above by the number of integer solutions 
    \[
    (n_1,\ldots,n_k) \in [-N_1,N_1] \times \cdots \times [-N_k, N_k]
    \] to
\[
b + A_1 n_1 + \cdots + A_k n_k \equiv 0 \mmod d.
\]
We may assume that this has a solution
\[
(n_1^*, \ldots, n_k^*) \in [-N_1,N_1] \times \cdots \times [-N_k, N_k],
\]
and then
\[
(n'_1, \ldots, n'_k)
= (n_1,\ldots,n_k)
- (n_1^*, \ldots, n_k^*)
\]
lies in $[-2N_1,2N_1] \times \cdots \times [-2N_k, 2N_k]$ and satisfies
\[
A_1 n'_1 + \cdots + A_k n'_k \equiv 0 \mmod d.
\]
By Lemma \ref{detd}, this defines a full-rank lattice of determinant $d$, and this is a sublattice of $\bZ^n$ so the successive minima are greater than or equal to $1$. By Theorem \ref{DavenportBound}, we now have
\[
\# \{ n\in {\set P}: d \mid n\}
\le  \frac{2^k N_1 \cdots N_k}{d} 
+ O_k(N_1 \cdots N_k / \min_{i \le k} N_i),
\]
giving \eqref{eq: divisibilty in Bohr sets}.
\item As ${\set P}$ is proper, 
the quantity $\# \{ n\in {\set P}: d \mid n\}$ 
counts integer solutions 
    \[
    (n_1,\ldots,n_k) \in [1,N_1] \times \cdots \times [1, N_k]
    \] to
\[
b + A_1 n_1 + \cdots + A_k n_k \equiv 0 \mmod d.
\]
Since $\gcd(A_1,\ldots,A_k) = 1$, there exist integers $n^*_1, \ldots, n^*_k$ such that
\[
b + A_1 n^*_1 + \cdots + A_k n^*_k = 0.
\]
Now
\[
(n'_1, \ldots, n'_k)
= (n_1,\ldots,n_k)
- (n_1^*, \ldots, n_k^*)
\]
lies in $[1-n^*_1, N_1-n^*_1] \times \cdots \times [1-n^*_1, N_k - n^*_1]$ and satisfies
\[
A_1 n'_1 + \cdots + A_k n'_k \equiv 0 \mmod d.
\]
By Lemma \ref{detd}, this defines a full-rank lattice of determinant $d$, and this is a sublattice of $\bZ^n$ so the successive minima are greater than or equal to $1$. By Theorem \ref{DavenportBound}, we now have
\[
\# \{ n\in {\set P}: d \mid n\}
= \frac{(N_1-1) \cdots (N_k-1)}{d} + O_k(N_1 \cdots N_k / \min_{i \le k} N_i),
\]
giving \eqref{eq: DivisibilityInBohrSets2}.
\end{enumerate}
\end{proof}

\subsection{Primes and sieves}

We require one of Mertens' three famous, classical estimates \cite[Theorem 3.4(c)]{DimitrisBook}.

\begin{thm}[Mertens' third theorem]\label{thm: Mertens}
For $x \ge 2$, we have
\[
\prod_{p \le x} 
\left( 1- \frac1p \right)
= \frac{e^{-\gam}}{\log x} \left(1 + O \left( \frac1{\log x} \right) \right),
\]
where $\gam$ is the Euler--Mascheroni constant.
\end{thm}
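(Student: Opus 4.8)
The plan is to reduce the claim to Mertens' first two theorems by taking logarithms, and then to pin down the multiplicative constant by comparing $\log\zeta(s)$ with $\sum_p p^{-s}$ as $s\to 1^+$; notably, none of this needs the prime number theorem. \textbf{Reduction.} First I would recall (or quickly reprove) Mertens' first theorem $\sum_{p\le x}\frac{\log p}{p} = \log x + O(1)$, which follows from evaluating $\log(\lfloor x\rfloor!)= \sum_{p^k\le x}\lfloor x/p^k\rfloor\log p$ by Stirling's formula together with the Chebyshev bound $\sum_{p\le x}\log p\ll x$. Partial summation then upgrades this to Mertens' second theorem $\sum_{p\le x}\frac1p = \log\log x + M + O(1/\log x)$ for some constant $M$, the error term being of the size of $\int_x^\infty\frac{dt}{t(\log t)^2}$. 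Writing $\log(1-\tfrac1p) = -\tfrac1p + \bigl(\log(1-\tfrac1p)+\tfrac1p\bigr)$ and observing that $C_0 := \sum_p\bigl(-\log(1-\tfrac1p)-\tfrac1p\bigr) = \sum_p\sum_{k\ge 2}\frac1{kp^k}$ converges absolutely with tail $O(1/x)$, one obtains
\[
-\log\prod_{p\le x}\Bigl(1-\frac1p\Bigr) = \sum_{p\le x}\frac1p + C_0 + O(1/x) = \log\log x + (M+C_0) + O(1/\log x),
\]
and exponentiating gives $\prod_{p\le x}(1-\tfrac1p) = e^{-(M+C_0)}(\log x)^{-1}\bigl(1+O(1/\log x)\bigr)$. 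It therefore remains only to prove the identity $M+C_0 = \gam$.

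\textbf{Identifying the constant.} For $s>1$ I would write $\log\zeta(s) = \sum_p p^{-s} + \sum_p\sum_{k\ge2}\frac1{kp^{ks}}$ and let $s\to1^+$ in two ways. On one hand, $(s-1)\zeta(s)\to1$ gives $\log\zeta(s) = -\log(s-1)+o(1)$, while the double sum tends to $C_0$ by monotone convergence, so $\sum_p p^{-s} = -\log(s-1) - C_0 + o(1)$. On the other hand, partial summation against Mertens' second theorem gives $\sum_p p^{-s} = (s-1)\int_2^\infty\bigl(\log\log t + M + O(1/\log t)\bigr)t^{-s}\,dt$; the substitution $t = \exp\bigl(v/(s-1)\bigr)$ converts the right-hand side into $\int_{(s-1)\log 2}^\infty\bigl(\log v - \log(s-1) + M + o(1)\bigr)e^{-v}\,dv$, which by the evaluation $\int_0^\infty(\log v)e^{-v}\,dv = \Gamma'(1) = -\gam$ equals $-\log(s-1) + M - \gam + o(1)$. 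Comparing the two expansions yields $M + C_0 = \gam$, and hence the constant $e^{-\gam}$.

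\textbf{Main obstacle.} Every step except the last is routine manipulation with partial summation and Chebyshev-type bounds; the crux is the evaluation of the constant as $e^{-\gam}$, for which the zeta-function comparison above seems the cleanest route. An apparently more elementary alternative would be to sandwich $\prod_{p\le x}(1-1/p)^{-1}$ between $\sum_{n\le x}\frac1n = \log x + \gam + O(1/x)$ and the full sum $\sum_{n:\,P^+(n)\le x}\frac1n$ over $x$-smooth integers, but this forces one to control the tail of the smooth-number sum with enough precision, which is itself delicate, so I would avoid it.
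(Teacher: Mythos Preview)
Your proof is correct and follows the classical route to Mertens' third theorem. The paper, however, does not prove this statement at all: it simply quotes it as \cite[Theorem~3.4(c)]{DimitrisBook} and uses it as a black box (to verify the dimension condition in the fundamental lemma of sieve theory, and in a couple of AM--GM averaging arguments). So there is nothing to compare against on the paper's side.

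For what it is worth, your argument is the standard one: deduce $\prod_{p\le x}(1-1/p)=e^{-(M+C_0)}(\log x)^{-1}(1+O(1/\log x))$ from Mertens' first and second theorems by partial summation and Taylor expansion of the logarithm, and then identify $M+C_0=\gam$ by computing $\sum_p p^{-s}$ two ways as $s\to 1^+$, once via $\log\zeta(s)$ and once via Abel summation against the second theorem, using $\int_0^\infty(\log v)e^{-v}\,dv=-\gam$. Each step checks out, including the $o(1)$ control of the error term $(s-1)\int_2^\infty O(1/\log t)\,t^{-s}\,dt$, which after your substitution becomes $O\bigl((s-1)\log\frac{1}{s-1}\bigr)$. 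Your remark that the alternative smooth-number sandwich is more delicate is also well taken.
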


\bigskip

Sieve theory is a powerful collection of techniques used to study prime numbers. Let $\cP$ be set of primes, and let 
\[
\cA = (a_n)_{1 \le n \le x}
\]
be a finite sequence of non-negative real numbers. The main object of interest is the \emph{sifting function}
\[
S({\set A}, z) = \sum_{(n,P(z))=1}a_n,
\]
where
\[
P(z)= \prod_{\substack{p\in {\set P} \\ p < z}} p,
\]
for a parameter $z \ge 2$. For example, if $\cJ \subseteq [1,x] \cap \bZ$ and $a_n = 1$ for $n \in \cJ$ and $a_n = 0$ for $n \notin \cJ$, then $S(\cA, z)$ counts elements of $\cJ$ not divisible by any prime $p \in \cP$ for which $p < z$.

In this subsection only, the letter $\mu$ denotes the M\"obius function, and not a Lebesgue measure. Evaluating
the sifting function
using the inclusion--exclusion principle yields
\[
S(\cA, z) = \sum_{d \mid P(z)} \mu(d) A_d(x),
\]
where
\[
A_d(x) = \sum_{\substack{n \le x \\ n \equiv 0 \mmod d}} a_n \qquad (d \in \bN).
\]
This gives
rise to 
the main term $XV(z)$,
where
\begin{itemize}
\item $X$ is typically chosen to approximate $A_1(x)$
\item The \emph{density function}, $g$, is a multiplicative arithmetic function satisfying
\[
0 \le g(p) < 1 \qquad (p \in \cP),
\]
and $g(p)$ is typically chosen to approximate $A_p(x)/X$
\item
\[
V(z) = \prod_{\substack{p\in {\set P} \\ p < z }} (1-g(p)).
\]
\end{itemize}

However, if $z$ is large then $P(z)$ could have many prime factors, and the error terms may combine to overwhelm the main term. This impasse stood for a long time before Viggo Brun was able to overcome it in many situations.  Brun's idea was to approximate 
$\mu$ by a function of smaller support, thereby reducing the number of error terms. One particularly useful outcome is the fundamental lemma, which involves the following additional objects:
\begin{itemize}
\item Remainders
\[
r_d = A_d(x) - g(d) X \qquad (d \in \bN)
\]
\item The \emph{dimension} $\kap \ge 0$, typically chosen to approximate a suitable average of $g(p)p$ over $p \in \cP$
\item The \emph{level} $D > z$,
and the \emph{sifting variable}
$s = \log D/ \log z$.
\end{itemize}
The result, for which the lower bound is stated below, is taken from Opera de Cribro \cite[Theorem 6.9]{opera}. 
In principle there is a great deal of flexibility, however in practice there is often a natural choice of parameters that reflects the nature of the problem, and any less principled choice tends to produce weaker information. 

\begin{thm}[Fundamental lemma of sieve theory] \label{thm: sieve bound}
Let $\kappa \geq 0$, $z\geq 2$, $D\geq z^{9\kappa +1}$,
$K > 1$, and assume that 
\begin{equation} \label{DimensionCondition}
    \prod_{w\leq p < z} (1-g(p))^{-1} 
    \leq K \Big(\frac{\log z}{\log w} \Big)^\kappa 
    \qquad (2 \leq w < z).
\end{equation}
Then 
$$
S({\set A}, z) \geq X V(z)(1-e^{9\kappa - s}K^{10})
- \sum_{\substack{d\mid P(z)\\ d< D}} |r_{d}|.
$$
\end{thm}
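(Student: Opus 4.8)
The plan is to deduce this from Brun's combinatorial sieve: realise the indicator of being coprime to $P(z)$ as a difference of truncated sieve weights of bounded support, sum against $\mathcal{A}$, and estimate the resulting main-term deviation using the dimension hypothesis \eqref{DimensionCondition}. Starting from Legendre's identity $S(\mathcal{A},z)=\sum_{d\mid P(z)}\mu(d)A_d(x)$ recorded above, substituting $A_d(x)=g(d)X+r_d$ gives $S(\mathcal{A},z)=XV(z)+\sum_{d\mid P(z)}\mu(d)r_d$; the difficulty is that the error sum ranges over all (up to $2^{\pi(z)}$) squarefree divisors of $P(z)$, so it must be truncated. Following Brun, I would construct weights $\lambda^{-}_d$ supported on squarefree $d\mid P(z)$ with $d<D$, obtained by cutting off the inclusion--exclusion expansion according to Brun's parity-and-size truncation rule (with the number of retained prime factors kept $\asymp s$), so that $\sum_{d\mid n}\lambda^{-}_d\le\mathbf{1}_{n=1}$ for every squarefree $n\mid P(z)$ and $|\lambda^{-}_d|\le 1$. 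Only a lower-bound sieve is needed here, since the theorem asserts only a lower bound for $S(\mathcal{A},z)$. Summing against $A_d$ then yields
\[
S(\mathcal{A},z)\ \ge\ \sum_{d\mid P(z)}\lambda^{-}_d A_d(x)\ =\ XV^{-}(z)+\sum_{\substack{d\mid P(z)\\ d<D}}\lambda^{-}_d r_d\ \ge\ XV^{-}(z)-\sum_{\substack{d\mid P(z)\\ d<D}}|r_d|,
\]
where $V^{-}(z)=\sum_{d\mid P(z)}\lambda^{-}_d g(d)$.

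It then remains to prove the main-term estimate $V^{-}(z)\ge V(z)\bigl(1-e^{9\kappa-s}K^{10}\bigr)$. Since $\lambda^{-}_d$ and $\mu(d)$ coincide on divisors with few prime factors, the deviation $V(z)-V^{-}(z)=\sum_{d\mid P(z)}(\mu(d)-\lambda^{-}_d)g(d)$ is supported on $d$ that either carry too many prime factors or satisfy $d\ge D$. Using the multiplicativity of $g$ and the Bonferroni structure of the truncation, this tail is controlled by a combination of an elementary-symmetric-function tail of the shape $\sum_{j>r}\frac1{j!}\bigl(\sum_{p<z}g(p)\bigr)^{j}$ and, for the size cutoff $d\ge D$, a Rankin factor $(D/d)^{-u}$ with a parameter $u\asymp 1/\log z$. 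Here \eqref{DimensionCondition} is the crucial input: iterating it controls $\sum_{p<z}g(p)$ and the ratios $V(w)/V(z)$ up to the constant $K$, so that an appropriate choice of the truncation parameters converts the tail into at most $V(z)\,e^{9\kappa-s}K^{10}$. Combining with the previous display gives
\[
S(\mathcal{A},z)\ \ge\ XV(z)\bigl(1-e^{9\kappa-s}K^{10}\bigr)-\sum_{\substack{d\mid P(z)\\ d<D}}|r_d|,
\]
as claimed.

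The main obstacle is exactly this combinatorial tail estimate: one must show that truncating inclusion--exclusion, and simultaneously discarding the divisors $d\ge D$, costs only the multiplicative factor $O(e^{-s})$ relative to $V(z)$, with all constants honestly tracked in terms of the dimension $\kappa$ and the regularity constant $K$. Balancing Brun's nested prime restrictions against the $\kappa$-dimensional regularity of $g$ is the delicate point; the complete argument, including the explicit constants in $D\ge z^{9\kappa+1}$, $e^{9\kappa-s}$, and $K^{10}$, is carried out in \cite[Chapter 6, Theorem 6.9]{opera}.
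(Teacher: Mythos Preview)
The paper does not prove this theorem at all: it is quoted verbatim as \cite[Theorem 6.9]{opera} and used as a black box. Your proposal likewise defers to the same reference for the complete argument, so in that sense you match the paper; the Brun-sieve sketch you add is a reasonable outline of how the proof in \cite{opera} proceeds, but no such sketch is required here.
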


\addtocontents{toc}{\protect\setcounter{tocdepth}{2}}

\section{A fully-inhomogeneous version of Gallagher's theorem}\label{Sec: Inhomog. Gallagher}

In this section, we prove Theorem \ref{thm2}, along the way establishing Theorem \ref{thm: SpecialCaseWeak}. At the end we prove the convergence part of Corollary \ref{FIG}. The reasoning presented here is sensitive to the diophantine nature of the shift $\gamma_k$. We begin by introducing some notation, and reducing the statements of Theorems \ref{thm2} and \ref{thm: SpecialCaseWeak}
to proving statistical properties of certain sets. 

\subsection{Notation and reduction steps}

For ease of exposition, we use the abbreviations
$$
\alpha=\alpha_k, \qquad \gamma=\gamma_k
$$ throughout the present section. 
With $f$ as in Lemma \ref{lem: Fourier}, specialising $d=k-1$ therein, we set
\begin{equation*}
\hat{\psi}(n) = \psi(\un),
\quad \mathrm{where} \quad \un = 4^{f(n)}n.
\end{equation*}
In light of \eqref{eq: sum of measure diverges}, we then have
\begin{equation} \label{eq: HatDivergence}
\sum_{n=1}^\infty \psi(\un) (\log n)^{k-1} = \infty.
\end{equation}
For $\eps_0 > 0$, we define
\begin{align*}
\hat  {\set G}_\diam &=  \{ h \in \bN: h^{-4 \eps_0} 
\le \| h \alp_i - \gam_i \| \le h^{-2 \eps_0}\quad (1\leq i\leq k-1)\}, \\
{\set G}_\diam &= \{ n \in \bN: \hat n \in \hat {\set G}_\diam \},
\end{align*}
and
\[
{\set G} = \left \{ n \in {\set G}_\diam: \psi(\hat n) \ge \frac1{n(\log n)^{k+1}}\right \}.
\]

\begin{remark}
\label{epsSmall}
The constant $\eps_0$ is sufficiently small depending on the diophantine nature of the fibre vector $\balp$ and the final shift $\gam$. Since, throughout this section,
we operate under the assumption 
\eqref{eq: small mult. exponent} for $k\geq 3$
and for $k=2$ that $\alp \notin (\cL \cup \bQ)$,
the constant $\eps_0$ will always be small enough so that 
the structural theory of Bohr sets applies. In particular, we will have $\eps_0 \le \tilde \eps$ when
Lemmata \ref{lem: inner} and \ref{lem: outer} are applied with $\vartheta = 20k$. We will also always assume that $\eps_0 < (99k)^{-1}$.
\end{remark}

Let us now also introduce a parameter $\eta = \eta(\gam) \in (0,1)$.
We shall in due course be more specific about $\eps_0$ and $\eta$, if $\gam$ is diophantine, rational, or Liouville. 
The reader seeking these details instantly may consult 
\eqref{def: eps}, \eqref{def: approximation sets rational case},
and \eqref{def: choice of eta}.

For $n \in \bN$, let
\[
\Psi(n) = \frac{ \psi(\hat n)}
{ \| \hat n \alp_1 - \gam_1 \| \cdots \| \hat n \alp_{k-1} - \gam_{k-1} \| } 1_{{\set G}}(n),
\]
where $1_{{\set G}}$ is the indicator function of 
${\set G}$. Fix a non-empty interval
${\set I}\subseteq [0,1]$ and $\gam \in \bR$, and for $n \in \bN$ let
\begin{equation}\label{def: approximation sets for diophantine shifts}
{\set E}_n^{{\set I}, \gam} := 
\displaystyle
\left \{ \alpha \in [0,1]: 
\exists a\in \bZ\,
\mathrm{s.t.} \quad
\displaystyle \substack{
\displaystyle
a+\gamma \in \hat n {\set I}, \\
\displaystyle
\vert \hat n \alpha - \gamma - a \vert < \Psi(n), \\
\displaystyle
(a,\un) \text{ is } (\gam,\eta)\text{-shift-reduced}} \right \}.
\end{equation}
We call these
\emph{(localised) approximation sets}.
Note that if $n$ is large in terms of $\cI$ then
\begin{equation} \label{eq: BoundMeasureOfApprox}
\mu(\cE_n^{\cI,\gam}) 
\ll \mu(\cI) \Psi(n).
\end{equation}
Observe that
$$
{\set G} = \{ n \in \bN: \mu({\set E}_n) >0 \}.
$$
We will often suppress the dependence on $\gam$ and ${\set I}$ in the notation by writing 
${\set E}_n$ in place of ${\set E}_n^{{\set I}, \gam}$.

We say that $\gam$ is {\em admissible}
if there exists $\eta \in (0,1)$ such that for any interval ${\set I} \subseteq [0,1]$
there are infinitely positive integers $X$ for which
the following two properties hold:
\begin{enumerate}
\item We have
\begin{equation}\label{eq: divergence of approx. localised sets}
\sum_{n \le X} \mu ({\set E}_{n}) 
\asymp \mu({\set I}) \sum_{n \le X} \psi(\un) (\log n)^{k-1}.
\end{equation}
\item The sets ${\set E}_n$ are quasi-independent on average 
for those $X$, i.e.
\begin{equation}\label{eq: indepen of localised sets}
\sum_{\substack{m,n \leq X}}  
\mu({\set E}_{n} \cap {\set E}_{m}) 
\ll \mu({\set I}) \Biggl(
\sum_{n \le X} \psi (\un) 
(\log n)^{k-1} \Biggl)^2.
\end{equation}
\end{enumerate} 

By \eqref{eq: HatDivergence}, the right hand side of \eqref{eq: divergence of approx. localised sets} is unbounded as a function of $X$.
It is worth stressing that
the implicit constants in 
\eqref{eq: divergence of approx. localised sets} and \eqref{eq: indepen of localised sets} 
are only allowed to depend on $\balp,\gam_1,\ldots,\gam_k$, and are uniform in ${\set I}$.

Now we show that Theorems \ref{thm: SpecialCaseWeak} and \ref{thm2} 
can be reduced to showing that every $\gam \in \bR$ is admissible. We assume throughout this section that \begin{equation} \label{eq: PsiSmall}
\Psi(n) \le 1/2 \qquad (n \text{ large}),
\end{equation}
as we may because otherwise these two theorems are trivial.

\begin{remark}
Unless otherwise specified, `(sufficiently) large' 
means large 
in terms of $\balp, \gam_1,\ldots,\gam_k, \psi, {\set I}$. Similarly, unless otherwise specified, a positive real number is `(sufficiently) small' if it is small in terms of $\balp, \gam_1,\ldots,\gam_k, \psi, {\set I}$.
\end{remark}

\begin{prop} \label{prop: pot}
If every $\gam \in \bR$ is admissible,
then Theorems \ref{thm: SpecialCaseWeak} and \ref{thm2} are true.
\end{prop}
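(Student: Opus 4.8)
The plan is to deduce Theorems \ref{thm: SpecialCaseWeak} and \ref{thm2} from the hypothesis that every $\gam \in \bR$ is admissible, by combining the divergence Borel--Cantelli lemma (Lemma \ref{lem: Borel-Cantelli}) with the density argument (Lemma \ref{lem: density}). First I would fix $\gam = \gam_k \in \bR$ and an arbitrary non-empty interval $\cI \subseteq [0,1]$. Admissibility supplies $\eta \in (0,1)$ and infinitely many $X$ for which \eqref{eq: divergence of approx. localised sets} and \eqref{eq: indepen of localised sets} hold. Feeding these into Lemma \ref{lem: Borel-Cantelli}, applied to the Borel sets $\cE_n = \cE_n^{\cI,\gam} \subseteq [0,1]$, we get
\[
\mu\Bigl(\limsup_{n\to\infty}\cE_n\Bigr) \;\ge\; \limsup_{X}\frac{\bigl(\sum_{n\le X}\mu(\cE_n)\bigr)^2}{\sum_{n,m\le X}\mu(\cE_n\cap\cE_m)} \;\gg\; \frac{\mu(\cI)^2\bigl(\sum_{n\le X}\psi(\un)(\log n)^{k-1}\bigr)^2}{\mu(\cI)\bigl(\sum_{n\le X}\psi(\un)(\log n)^{k-1}\bigr)^2} \;=\; \mu(\cI),
\]
using \eqref{eq: HatDivergence} to know the denominator-free sum tends to infinity so the $\limsup$ over the admissible $X$ is legitimate. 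Thus $\mu(\cE_\infty \cap \cI) \gg \mu(\cI)$, where $\cE_\infty := \limsup_n \cE_n^{[0,1],\gam}$, because $\cE_n^{\cI,\gam} \subseteq \cE_n^{[0,1],\gam}$ for $n$ large; the implied constant depends only on $\balp,\gam_1,\ldots,\gam_k$ and is uniform in $\cI$. Since $\cI$ was arbitrary, Lemma \ref{lem: density} gives $\mu(\cE_\infty) = 1$.

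Next I would unwind what membership in $\cE_\infty$ says. If $\alpha \in \cE_\infty$, then for infinitely many $n$ there is $a \in \bZ$ with $(a,\un)$ being $(\gam,\eta)$-shift-reduced and $|\un\alpha - \gam - a| < \Psi(n)$; in particular $\|\un\alpha - \gam\| < \Psi(n)$. Writing $m = \un = 4^{f(n)}n$, and recalling that on $\cG$ we have $\Psi(n) = \psi(\un)/\prod_{i\le k-1}\|\un\alp_i - \gam_i\|$, this reads
\[
\|m\alp_1 - \gam_1\|\cdots\|m\alp_{k-1}-\gam_{k-1}\|\cdot\|m\alpha - \gam\| < \psi(m),
\]
which happens for infinitely many $m \in \bN$ (distinct $n$ give distinct $m$ since $f$ is non-decreasing and $K_i = o(K_{i+1})$). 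This is precisely \eqref{eq: inhomog Gallagher ineq} with $\alpha_k = \alpha$ holding infinitely often, so Theorem \ref{thm2} follows for almost all $\alpha_k$. For Theorem \ref{thm: SpecialCaseWeak}, one observes that $\|m\alpha - \gam\| < \Psi(n) = \Phi(\un)$ together with the shift-reduced condition yields that $\|m\alpha-\gam\| < \Phi(m)$ with a coprimality constraint on a representing pair, which is a fortiori the (unconstrained) statement of Conjecture \ref{WeakInhomogeneous} for $\Psi = \Phi$ along the subsequence $m = \un$; one must check that the divergence of $\sum_n \varphi(\un)\Phi(\un)(\cdots)$ implied by admissibility is compatible with, indeed implies, the Duffin--Schaeffer-type divergence \eqref{eq: Duffin--Schaeffer condition} for $\Phi$, but this is built into the definition of admissibility via \eqref{eq: divergence of approx. localised sets}.

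The one genuine subtlety — and the step I expect to need the most care — is the bookkeeping between $n$ and $\un = 4^{f(n)}n$: the approximation sets $\cE_n$ are defined using the dilated denominator $\un$, whereas the conclusion of the theorems is about a generic denominator. One must verify that replacing $\psi$ by $\hat\psi$ and $n$ by $\un$ loses nothing, which is exactly the content of Lemma \ref{lem: Fourier}(2) (the series $\sum_n \hat\psi(n)(\log n)^{k-1}$ still diverges) together with the observation that solutions in the dilated variable are genuine solutions of the original inequality. A second, more routine point is the passage from positive measure on every sub-interval to full measure: this is a direct citation of Lemma \ref{lem: density}, but it forces us to have tracked the factor $\mu(\cI)$ cleanly through \eqref{eq: divergence of approx. localised sets} and \eqref{eq: indepen of localised sets}, which is why admissibility was phrased with that uniformity. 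Everything else is the standard second-moment (Chung--Erd\H{o}s / Borel--Cantelli) paradigm. Hence the reduction to proving that every real $\gam$ is admissible is complete, and the remainder of the section is devoted to establishing admissibility in the three cases: $\gam$ diophantine, $\gam$ rational, and $\gam$ Liouville.
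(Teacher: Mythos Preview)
Your approach is essentially the paper's: Borel--Cantelli plus the density lemma, tracking the factor $\mu(\cI)$ via admissibility, then unwinding to recover the original inequality along the subsequence $m = \hat n$. The $n \mapsto \hat n$ bookkeeping and the deduction of the two theorems are fine (your injectivity argument for $\hat n$ only needs that $f$ is non-decreasing; the $K_i = o(K_{i+1})$ is irrelevant here).

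There is, however, a small but genuine gap in your localisation step. You write $\mu(\cE_\infty \cap \cI) \gg \mu(\cI)$ ``because $\cE_n^{\cI,\gam} \subseteq \cE_n^{[0,1],\gam}$''. That inclusion only gives $\limsup_n \cE_n^{\cI,\gam} \subseteq \cE_\infty$, hence $\mu(\cE_\infty) \gg \mu(\cI)$, which is \emph{not} the same as $\mu(\cE_\infty \cap \cI) \gg \mu(\cI)$ and does not feed into Lemma~\ref{lem: density}. You also need $\limsup_n \cE_n^{\cI,\gam} \subseteq \overline{\cI}$, which follows because every $\alpha \in \cE_n^{\cI,\gam}$ lies within $\Psi(n)/\hat n$ of a point of $\cI$ and $\Psi(n)/\hat n \to 0$ by \eqref{eq: PsiSmall}. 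The paper sidesteps this by a clean dilation trick: fix the target interval $\cI'$, let $\cI$ be its concentric half, and use the triangle inequality to show $\cE_n^{\cI,\gam} \subseteq \cI'$ for $n$ large; then $\mu(\cE_\infty \cap \cI') \ge \mu(\limsup_n \cE_n^{\cI,\gam}) \gg \mu(\cI) \gg \mu(\cI')$. Either fix works; just make one of them explicit.

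One further remark: for Theorem~\ref{thm: SpecialCaseWeak} you do not need to separately verify the Duffin--Schaeffer divergence \eqref{eq: Duffin--Schaeffer condition} for $\Phi$. The conclusion $\| m\alpha - \gam\| < \Phi(m)$ for infinitely many $m$ (which you have, via $\Psi(n) \le \Phi(\hat n)$) is exactly what the theorem asserts; the paper dispatches this in a single line.
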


\begin{proof}
Let $\gam \in \bR$, and let $\eta$ be as in the definition of admissibility. We first show that
$
\cW(\Psi;\gam,\eta)
:=  \limsup_{n \to \infty} \cA_n
$
has full measure, where
\[
\cA_n = 
\left \{ \alpha \in [0,1]: 
\exists a\in \bZ\,
\mathrm{s.t.} \quad
\displaystyle \substack{
\displaystyle
\vert \hat n \alpha - \gamma - a \vert < \Psi(n), \\
\displaystyle
(a,\un) \text{ is } (\gam,\eta)\text{-shift-reduced}} \right \}.
\]
Fix a non-empty subinterval ${\set I}'$ of $[0,1]$, and let ${\set I}$ 
be its dilation by $1/2$ about its centre.
Observe using the triangle inequality that 
if $n \ge n_0({\set I})$ then 
$
{\set E}_n^{\cI, \gam} \subseteq \cI'.
$
Let
$
{\set R}^{\cI, \gam} = \displaystyle \limsup_{n \to \infty} 
{\set E}_n^{\cI, \gam}.
$
Inserting \eqref{eq: divergence of approx. localised sets} 
and \eqref{eq: indepen of localised sets} 
into Lemma \ref{lem: Borel-Cantelli},
we obtain
\[
\mu(\cW(\Psi;\gam,\eta) \cap \cI') \ge \mu({\set R}^{\cI, \gam}) \gg \mu({\set I}) \gg \mu(\cI'),
\]
where the implied constant is independent of ${\set I'}$.
Lemma \ref{lem: density} 
now grants us that $\cW(\Psi;\gam,\eta)$
has full measure in $[0,1]$.
As $\Psi(n) \le \Phi(\hat n)$ for all $n$, we obtain Theorem \ref{thm: SpecialCaseWeak}.
By $1$-periodicity of $\| \cdot \|$, 
we thereby obtain 
Theorem \ref{thm2}.
\end{proof}

For the remainder of this section, we establish that any 
$\gam \in \bR$ is admissible.
In fact, we will verify \emph{a fortiori} that
the properties \eqref{eq: divergence of approx. localised sets} 
and \eqref{eq: indepen of localised sets} 
of the approximation sets hold
for all sufficiently large values of $X$. Moreover, the first property will hold for all $\eta \in (0,1)$.

\subsection{Divergence of the series}

Let $\gam \in \bR$ and $\eta \in (0,1)$, and let $X \in \bN$ be large. Let $\eps_0$ be small, as in Remark \ref{epsSmall}. In this subsection, we
establish the property \eqref{eq: divergence of approx. localised sets}. We begin by estimating $\mu(\cE_n)$.

\begin{lemma}\label{lem: lower and upper bound on measure of approx sets for Liouville shifts}
Let $n \in \cG$ be large.
Then 
\begin{equation}
\label{eq: assumed measure bound approximation set}
 \frac{\varphi(\un)}{\un} \mu({\set I}) \Psi(n) 
 \ll \mu({\set E}_{n}) \ll
 \mu({\set I}) \Psi_{\even}(n).
\end{equation}
\end{lemma}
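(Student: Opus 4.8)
The plan is to estimate $\mu(\cE_n)$ by counting the admissible residues $a$ and multiplying by the length of each interval, then correcting for the shift-reduced constraint. Fix a large $n \in \cG$, write $\un = 4^{f(n)}n$, and abbreviate $\Psi = \Psi(n)$. For each $a \in \bZ$ with $a + \gam \in \un\cI$, the set of $\alp \in [0,1]$ with $|\un\alp - \gam - a| < \Psi$ is an interval of length $2\Psi/\un$ (truncated if it pokes outside $[0,1]$, but since $n$ is large relative to $\cI$ only a bounded number of boundary terms are affected, and \eqref{eq: PsiSmall} ensures these intervals are genuinely disjoint for distinct $a$). So $\mu(\cE_n)$ is, up to the harmless boundary corrections, equal to $(2\Psi/\un)$ times the number of integers $a$ in an interval of length $\un\mu(\cI)$ for which $(a,\un)$ is $(\gam,\eta)$-shift-reduced, i.e. for which $\gcd(\demgam a + \numgam, \un) = 1$.

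First I would translate the shift-reduced condition into a standard coprimality condition. Recall $\numgam/\demgam$ is the continued fraction convergent of $\gam$ with $\demgam$ maximal subject to $\demgam \le \un^\eta$; in particular $\gcd(\numgam, \demgam) = 1$. The condition $\gcd(\demgam a + \numgam, \un) = 1$ says that for every prime $p \mid \un$, we have $\demgam a + \numgam \not\equiv 0 \pmod p$. If $p \mid \demgam$ then, since $\gcd(\numgam,\demgam)=1$, automatically $p \nmid \demgam a + \numgam$, so such primes impose no condition. If $p \nmid \demgam$, then $\demgam a + \numgam \equiv 0 \pmod p$ has a unique solution $a \equiv -\numgam\overline{\demgam} \pmod p$, so the condition excludes exactly one residue class mod $p$. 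Hence the count of valid $a$ in an interval of length $L = \un\mu(\cI)$ is, by a standard sieve/Chinese-remainder-theorem argument (or by inclusion–exclusion with the divisor bound controlling the error),
\[
L \prod_{\substack{p \mid \un \\ p \nmid \demgam}} \Bigl(1 - \frac1p\Bigr) + O(2^{\omega(\un)}),
\]
where $\omega(\un)$ is the number of distinct prime factors of $\un$. Since $\demgam \le \un^\eta$ has at most $O(\log \un / \log\log \un)$ prime factors, removing those primes changes the product by at most a factor $\prod_{p \mid \demgam}(1-1/p)^{-1} \ll \log\log \un \ll \log\log n$, so the product is $\asymp \varphi(\un)/\un$ up to a factor that is harmless in the range under consideration; more carefully, it is $\ge \varphi(\un)/\un$, which suffices for the lower bound, and $\le 1$, which suffices for the upper bound after the $O(2^{\omega(\un)})$ error is absorbed.

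For the error term to be dominated by the main term I would use that $n \in \cG$ forces $\Psi(n) \ge \psi(\un)/(\text{the product of the }\|\un\alp_i - \gam_i\|) \cdot 1$, and $n \in \cG_\diam$ gives each $\|\un\alp_i - \gam_i\| \le \un^{-2\eps_0}$, hence the product in the denominator of $\Psi$ is at most $\un^{-2(k-1)\eps_0}$, while $\psi(\un) \ge (n (\log n)^{k+1})^{-1}$ by definition of $\cG$; combined with $\un \le n \cdot n^{o(1)}$ this makes $L\Psi/\un = \mu(\cI)\Psi$ comfortably larger than $(2\Psi/\un) \cdot 2^{\omega(\un)} \ll \mu(\cI)\Psi \cdot 2^{\omega(\un)}/\un^{1-o(1)}$, since $2^{\omega(\un)} = \un^{o(1)}$. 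Thus
\[
\mu(\cE_n) = \frac{2\Psi}{\un}\Bigl( L \prod_{\substack{p\mid\un\\ p\nmid\demgam}}(1-1/p) + O(\un^{o(1)})\Bigr) \asymp \mu(\cI)\,\Psi \cdot \frac{\un}{\un}\prod_{\substack{p\mid\un\\p\nmid\demgam}}(1-1/p),
\]
which gives the claimed two-sided bound once one notes $\varphi(\un)/\un \le \prod_{p\mid\un,\,p\nmid\demgam}(1-1/p) \le 1$.

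The main obstacle is bookkeeping the dependence on $\demgam$ (equivalently on $\eta$): one must check that excising the primes dividing $\demgam$ from the Euler product is harmless, which is exactly where the bound $\demgam \le \un^\eta$ and Mertens-type estimates enter, and one must confirm that this cost is absorbed into the implied constants uniformly in $\cI$ and in $\eta \in (0,1)$ — or, more precisely, that the lower bound $\varphi(\un)/\un$ and the upper bound $1$ bracket the product regardless. A secondary technical point is handling the truncation of intervals near $0$ and $1$ and the disjointness of the intervals, both of which follow from $n$ being large relative to $\cI$ and from \eqref{eq: PsiSmall}. Everything else is a routine application of inclusion–exclusion together with the divisor bound $2^{\omega(\un)} = \un^{o(1)}$.
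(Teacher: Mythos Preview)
Your approach is correct and in fact more elementary than the paper's. The paper establishes the lower bound $\varphi_0(\un) \gg \mu(\cI)\varphi(\un)$ by invoking the fundamental lemma of sieve theory (Theorem~\ref{thm: sieve bound}) with sifting level $z = (\log \un)^2$, then mops up primes $p \ge z$ via a union bound. You instead run inclusion--exclusion over \emph{all} primes $p \mid \un$ with $p \nmid q'_t$, accepting an error of $O(2^{\omega(\un)})$; since $2^{\omega(\un)} = \un^{o(1)}$ while the main term is $\mu(\cI)\,\un \prod_{p\mid\un,\,p\nmid q'_t}(1-1/p) \ge \mu(\cI)\varphi(\un) \gg \mu(\cI)\,\un/\log\log\un$, the error is negligible for $n$ large in terms of $\cI$, which is exactly the standing hypothesis. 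Your route avoids the sieve machinery entirely and is the quicker argument here; the paper's route has the advantage of being the template they reuse later, and gives a cleaner error $(\log n)^{O(1)}$ rather than $\un^{o(1)}$, but that extra precision is not needed for this lemma.

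Two minor expository points. First, your paragraph beginning ``For the error term to be dominated by the main term I would use that $n \in \cG$ \ldots'' is tangled: the comparison you need is simply $2^{\omega(\un)} = o(\mu(\cI)\varphi(\un))$, which holds because $\varphi(\un) \gg \un/\log\log\un$; the lower bound on $\Psi$ coming from $n \in \cG$ plays no role, since $\Psi$ cancels from both sides. Second, your remark that removing primes $p \mid q'_t$ costs a factor $\ll \log\log\un$ is unnecessary for the stated bounds: you already observe $\varphi(\un)/\un \le \prod_{p\mid\un,\,p\nmid q'_t}(1-1/p) \le 1$, and those bracketing inequalities are all that the lemma requires.
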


\begin{proof}
The enunciated upper bound follows at once by observing that 
${\set E_{n}}$ is contained in $O(\un \mu ({\set I}))$ 
many intervals of length $2\Psi_{\even}(n)/\un$.
On the other hand, it
contains 
\[
\varphi_0(\un) := \sum_{\substack{a + \gam \in \un {\set I} \\ 
(q'_t a + c_t, \un) = 1
}}1
\]
many open intervals of length $2\Psi_{\even}(n)/\un$ and, by \eqref{eq: PsiSmall}, these are disjoint.
We proceed to show that
\begin{equation}\label{eq: lower bound of fake phi}
    \varphi_0(\un) \gg \mu({\set I}) \varphi(\un),
\end{equation}
which would complete the proof. We will find that the implied constant in \eqref{eq: lower bound of fake phi} is absolute.

To infer \eqref{eq: lower bound of fake phi},
we apply Theorem \ref{thm: sieve bound}
with the following specifications.
The set $\cP$ of relevant primes is 
the set of primes that divide $\un$ but not $q'_t$, so that
\[
P(z) = \prod_{p \in \cP_{<z}} p,
\]
and the sifting sequence is
\[
a_m = 
\begin{cases}1, &\text{if } 
\exists a \in (\un {\set I} - \gam) \cap \bZ \quad q'_t a + c_t = m \\
0, &\text{otherwise}.
\end{cases}
\]
We choose
\[
x = q'_t(\un+1), \qquad w = 2,
\qquad z = (\log \un)^2.
\]
The other relevant data are
\[
g(p) = 1/p \quad (p \in \cP),
\qquad
\kap = 1,
\qquad X = \un \mu({\set I}) = O(1) + \sum_{m \le x} a_m,
\]
$K > 1$ is an absolute constant, and
\[
s = 10(1 + \log K), \qquad D = z^s.
\]
The dimension condition \eqref{DimensionCondition} follows from Mertens' third theorem (Theorem \ref{thm: Mertens}). With
\[
V(z) = \prod_{p \in \cP_{<z}} (1-p^{-1}),
\]
we obtain
\[
\sum_{(m,P(z))=1} a_m \ge (1 - e^{9-s}K^{10}) XV(z) - \sum_{\substack{d \mid P(z) \\d < D}} |r_d|, 
\]
where
\[
r_d + d^{-1}X = \sum_{m \equiv 0 \mmod d} a_m.
\]

For $d < D$ dividing $P(z)$, we have $(d,q'_t) = 1$, so
\[
\sum_{m \equiv 0 \mmod d} a_m
= d^{-1}X + O(1),
\]
ergo $r_d \ll 1$. Therefore
\[
\sum_{\substack{d \mid P(z) \\d < D}} |r_d|
\ll D = (\log n)^{O(1)},
\]
the upshot being that
\[
\sum_{(m,P(z))=1} a_m \ge
(1-e^{-1}) XV(z) + o(XV(z)) \gg XV(z).
\]
Finally, the union bound gives
\[
\sum_{(m,P(x)/P(z)) > 1} a_m
\ll X \sum_{\substack{p \mid n \\ p \ge z}} p^{-1}
\ll \frac{X \log n} {z \log z} = o(XV(z)),
\]
and so
\[
\varphi_0(\un) = \sum_{(m,P(x))=1} a_m
\gg XV(z) \gg \mu({\set I}) \varphi(\un).
\]
\end{proof}

Let $C\geq 4$ be an integer constant, large in terms of the implied constants in Lemmata \ref{lem: inner} and \ref{lem: outer}. We assume for a purely technical reason that $C$ is a perfect square, and let $N$ be large in terms of $C$ and the other constants. To estimate $\sum_{n \le X} \frac{\varphi(\un)}{\un} \mu({\set I}) \Psi(n)$, it will be useful to gather all $n$ on a scale $N$ 
such that, additionally, the $\| \hat n \alp_i - \gam_i \|$ 
are in prescribed $C$-adic ranges;
here and in the sequel, a $C$-adic range is a subinterval of $(0,\infty)$ whose right endpoint is $C$ times the left endpoint.

Define
\begin{align} \notag
\hat{\cB}_{\mathrm{loc}}(N; \brho)
&= \displaystyle \left \{ h \in \bN:
\begin{array}{ll}
     &  \hat N < h \leq \widehat{CN},\\
     & \rho_j < \| h \alp_j - \gam_j \| \le C \rho_j
\quad (1 \le j \le k-1) 
\end{array}
\right \} \\
\label{def: B loc hat}
&= \cB(\widehat{CN}; C\brho) \setminus \left(
\cB(\hat N; C\brho) \cup \bigcup_{i=1}^{k-1} 
\cB(\widehat{CN}; \brho_i) \right),
\end{align}
where $\brho_i= (\rho_{i,1},\ldots,\rho_{i,k-1})$, and 
$\rho_{i,j}$ is defined to be $\rho_j$ if $i=j$ and $C\rho_j$ otherwise.
For this section $\brho$
denotes a parameter
in the hyperrectangle
\begin{equation}\label{def: widened eps range}
\Rone := [\hat N^{-4.1 \eps_0}, \hat N^{-1.9 \eps_0}]^{k-1}.
\end{equation}
Similarly we will have a large parameter $M \le N$, and $\bdel$ will denote a parameter in $\cW(M)$. We also write
\begin{equation}\label{def: localised Bohr set}
{\set B}_{\mathrm{loc}}(N; \brho)
=\{n \in \bN: \hat{n}\in \hat{{\set B}}_{\mathrm{loc}}(N; \brho)\}.
\end{equation}
By the construction of $f$,
there is a uniquely determined integer $u=u(N)$
with 
\begin{equation}\label{eq: f is locally constant}
f(n)\in \{u,u+1\} \qquad (N \le n \le CN).
\end{equation}
Furthermore, since $f(n)\ll \log \log n$, we know that
if $n \asymp N$ then $\un/n = N^{o(1)}$.
We have ample use for this estimate,
and shall use it without further mention.

We proceed to study these localised Bohr sets, first deriving a cardinality estimate
in the range of interest.
Heuristically, one might expect 
that each of the $\Tet(\hat N)$ many integers
in the interval $[\hat N ,\widehat{CN}]$ lies in $\{ \hat n: n \in \cB_{\mathrm{loc}}(N; \brho) \}$ with probability roughly $\Nm(\brho)/4^{f(N)}$, recalling the notation \eqref{eq: ProductNotation}.
For the ranges occurring implicitly in the set ${\set G}$,
this heuristic correctly predicts the order of magnitude
of $\# \cB_{\mathrm{loc}}(N; \brho)$:

\begin{lemma}\label{lem: size of localised Bohr sets}
We have
\begin{equation*}
\# {\set B}_{\mathrm{loc}}(N; \brho) \asymp \Nm (\brho)  N,
\end{equation*}
uniformly for $\brho \in \Rone$. Here `uniformly' means that the implied constants are the same for all $\brho \in \Rone$.
\end{lemma}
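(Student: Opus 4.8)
The plan is to pass from $\mathcal{B}_{\mathrm{loc}}$ to a lattice-point count inside the generalised arithmetic progressions furnished by the structure Lemmata~\ref{lem: inner} and~\ref{lem: outer}, and then to extract the divisibility constraint ``$4^{f(n)}\mid \hat n$'' by means of Lemma~\ref{lem: divisible numbers in GAPS}. Throughout I take $\eps_0$ small enough (Remark~\ref{epsSmall}) that Lemmata~\ref{lem: inner} and~\ref{lem: outer} may be invoked, at any scale $\asymp \hat N$ and with $\vartheta = 20k$, for width vectors whose entries lie in $[\hat N^{-\eps_0},1]$; this is legitimate because for $\brho\in\Rone$ the entries of $C\brho$ and of each $\brho_i$ lie in $[\hat N^{-4.1\eps_0},C\hat N^{-1.9\eps_0}]$ (cf.~\eqref{def: widened eps range}), so one applies the lemmata with the admissible parameter $\eps\asymp\eps_0$. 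First I would unwind the definitions. Since $n\mapsto \hat n=4^{f(n)}n$ is strictly increasing, $\hat N<\hat n\le\widehat{CN}$ is equivalent to $N<n\le CN$, whence
\[
\#\mathcal{B}_{\mathrm{loc}}(N;\brho)=\#\bigl\{\,n\in(N,CN]:\ \rho_i<\|\hat n\alp_i-\gam_i\|\le C\rho_i\ (1\le i\le k-1)\,\bigr\}.
\]
By~\eqref{eq: f is locally constant}, $f$ takes only the two values $u=u(N)=f(N)$ and $u+1$ on $(N,CN]$; write $(N,CN]=I_u\sqcup I_{u+1}$ for the induced partition into intervals, so that $\hat n=4^un$ on $I_u$, $\hat n=4^{u+1}n$ on $I_{u+1}$, and $\hat N=4^uN$. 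As $|I_u|+|I_{u+1}|=(C-1)N$, some $I_{v_0}$ ($v_0\in\{u,u+1\}$) has length $\ge(C-1)N/2$. For $v\in\{u,u+1\}$ the contribution of $I_v$ equals the number of $h$ in the interval $4^vI_v$ (of length $\asymp 4^vN\asymp\hat N$, since $4^{v-u}\le 4$) that are divisible by $4^v$ and satisfy $\rho_i<\|h\alp_i-\gam_i\|\le C\rho_i$ for all $i$. Recentring $4^vI_v$ about a multiple of $4^v$ nearest its midpoint leaves this divisibility constraint unchanged and merely replaces the shift $\bgam$ by $\bgam-c\balp$ for some $c\in4^v\bZ$, to which Lemmata~\ref{lem: inner} and~\ref{lem: outer} still apply.

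\emph{Upper bound.} The set counted above is contained in $\mathcal{B}_\balp^\bgam(\widehat{CN};C\brho)$. By Lemma~\ref{lem: outer} (and its proof), there is a generalised arithmetic progression $\mathcal{P}\supseteq\mathcal{B}_\balp^\bgam(\widehat{CN};C\brho)$ with $\gcd(A_1,\dots,A_k)=1$, $N_1\cdots N_k\ll\Nm(C\brho)\widehat{CN}\asymp\Nm(\brho)\hat N$, and $\min_iN_i\ge N^{\eps_0}$. Since $f(n)\ll\log\log n$ forces $4^v\le(\log N)^{O(1)}\ll N^{\eps_0}$, Lemma~\ref{lem: divisible numbers in GAPS}(i) yields
\[
\#\{h\in\mathcal{P}:\ 4^v\mid h\}\ll (N_1\cdots N_k)\bigl(4^{-v}+(\textstyle\min_iN_i)^{-1}\bigr)\ll\Nm(\brho)\hat N\,4^{-u}+\Nm(\brho)\hat N\,N^{-\eps_0}\ll\Nm(\brho)N,
\]
using $\hat N\,4^{-u}=N$ and $\hat N\,N^{-\eps_0}\le 4^uN^{1-\eps_0}\ll N$. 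Summing over $v\in\{u,u+1\}$ gives $\#\mathcal{B}_{\mathrm{loc}}(N;\brho)\ll\Nm(\brho)N$.

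\emph{Lower bound.} It suffices to bound from below the contribution of $I_{v_0}$. Encoding the length constraint in $I_{v_0}$ and applying the set-difference description~\eqref{def: B loc hat} to the width conditions, that contribution is at least
\[
\#\{h\in 4^{v_0}I_{v_0}:\ 4^{v_0}\mid h,\ h\in\mathcal{B}_\balp^\bgam(Y;C\brho)\}-\sum_{i=1}^{k-1}\#\{h\in 4^{v_0}I_{v_0}:\ 4^{v_0}\mid h,\ h\in\mathcal{B}_\balp^\bgam(Y;\brho_i)\}
\]
for any $Y\ge 4^{v_0}\max I_{v_0}\asymp C\hat N$. For the main term, recentre $4^{v_0}I_{v_0}$ as above and apply Lemma~\ref{lem: inner} to produce a proper asymmetric progression $\mathcal{P}^+$ inside the recentred Bohr set with $\gcd(A_i)=1$, $\min_iN_i\ge N^{\eps_0}$, and $|\mathcal{P}^+|\gg\Nm(C\brho)\cdot 4^{v_0}|I_{v_0}|\gg C^{k}\Nm(\brho)\hat N$; then Lemma~\ref{lem: divisible numbers in GAPS}(ii) gives $\#\{h\in\mathcal{P}^+:4^{v_0}\mid h\}=|\mathcal{P}^+|/4^{v_0}+O(|\mathcal{P}^+|/\min_iN_i)\gg C^{k}\Nm(\brho)N$. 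For each error term, Lemmata~\ref{lem: outer} and~\ref{lem: divisible numbers in GAPS}(i) bound it by $\ll\Nm(\brho_i)\,C\hat N\,(4^{-v_0}+N^{-\eps_0})\ll C^{k-1}\Nm(\brho)N$, since $\Nm(\brho_i)=C^{k-2}\Nm(\brho)$. Hence the contribution of $I_{v_0}$ is
\[
\gg C^{k}\Nm(\brho)N-O\bigl((k-1)C^{k-1}\Nm(\brho)N\bigr)\gg\Nm(\brho)N
\]
provided $C$ exceeds a multiple of $k$ depending only on the implied constants of Lemmata~\ref{lem: inner} and~\ref{lem: outer}, which is guaranteed by the choice of $C$ preceding the lemma. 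Combined with the upper bound, and noting that all implied constants arise only from Lemmata~\ref{lem: inner}, \ref{lem: outer}, \ref{lem: divisible numbers in GAPS} (with $\vartheta,\eps_0,C$ fixed) and hence are uniform in $N$ and in $\brho\in\Rone$, this proves the claim.

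\emph{Main obstacle.} The crux is the bookkeeping around the dilation $4^{f(n)}$. Because $f$ can jump inside $(N,CN]$ one must split into the two pieces $I_u,I_{u+1}$ and exploit that one of them is long; and because the ambient Bohr sets $\mathcal{B}_\balp^\bgam(\widehat{CN};\,\cdot\,)$ overshoot the target $\Nm(\brho)N$ by the factor $4^u=\hat N/N$, one must recover exactly this factor—and no more—from the constraint $4^{v}\mid h$. This is precisely what Lemma~\ref{lem: divisible numbers in GAPS} supplies, the error there being negligible because $4^{v}\le(\log N)^{O(1)}$ is dwarfed by the side lengths $\min_iN_i\ge N^{\eps_0}$ of the progressions—which is the reason Lemmata~\ref{lem: inner} and~\ref{lem: outer} were equipped with the auxiliary parameter $\vartheta$ and are applied here with $\vartheta=20k$.
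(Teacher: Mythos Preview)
Your proof is correct and follows essentially the same strategy as the paper: decompose $(N,CN]$ according to the value of $f$, pass to the hat-variable, enclose in or envelope by generalised arithmetic progressions via Lemmata~\ref{lem: inner} and~\ref{lem: outer}, and recover the factor $4^{-f(N)}$ through the divisibility count of Lemma~\ref{lem: divisible numbers in GAPS}. The one genuine difference is the handling of the interval constraint for the lower bound. You recentre $4^{v_0}I_{v_0}$ about a nearby multiple of $4^{v_0}$, absorbing the shift into $\bgam$, and then invoke Lemma~\ref{lem: inner} on the resulting (approximately) symmetric Bohr set. The paper instead exploits that $C$ is a perfect square: it partitions $(N,CN]$ into $(N,\sqrt{C}N]$ and $(\sqrt{C}N,CN]$, observes that $f$ is constant on at least one of these, and then writes the relevant piece of $\hat{\mathcal{B}}_{\mathrm{loc}}$ directly as a difference of standard Bohr sets $\mathcal{B}(\widehat{\sqrt{C}N};C\brho)\setminus\bigl(\mathcal{B}(\hat N;C\brho)\cup\bigcup_i\mathcal{B}(\widehat{\sqrt{C}N};\brho_i)\bigr)$, so that Lemmata~\ref{lem: inner} and~\ref{lem: outer} apply without any recentring. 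Both devices work; the paper's is marginally cleaner because it avoids checking that the shifted interval is close enough to symmetric, while yours is perhaps more transparent about why a large subinterval with $f$ constant must exist.
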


\begin{proof}
In light of \eqref{eq: f is locally constant},
we have
\begin{equation} \label{eq: TwoValues}
4^{f(N)}, 4^{f(n)}\in \{2^\nu, 2^{\nu+2}\}
\qquad (n \in {\set B}_{\mathrm{loc}}( N;\brho)),
\end{equation}
for some positive integer $\nu$, where
\[
2^{\nu} \le 4^{O(\log \log N)} = (\log N)^{O(1)}.
\]

For the upper bound, each $n \in {\set B}_{\mathrm{loc}}(N;\brho)$ gives rise to a different element of
$\hat {\set B}_{\mathrm{loc}}(N;\brho) = \cB(\widehat{CN};C\brho)$ that is divisible by $2^\nu$. We can count the latter using Lemmata \ref{lem: outer} 
and \ref{lem: divisible numbers in GAPS} therein. Indeed, the former furnishes a generalised arithmetic progression
\[
\cP = \cP(b;A_1,\ldots,A_k; N_1, \ldots,N_k)
\]
containing $\cB(\widehat{CN};C\brho)$, where
$$
N_1\cdots N_k \ll \widehat{CN} \cdot \Nm (\brho) \ll \hat N \cdot \Nm (\brho), \qquad 
\min_{i \le k}N_i \ge N^{^{20k\eps_{_0}}},
$$
and $\gcd(A_1,\ldots,A_k) = 1$. Therefore 
\[
\# {\set B}_{\mathrm{loc}}(N;\brho) \le \# \{ h \in \cP: 2^\nu \mid h \},
\]
whereupon \eqref{eq: divisibilty in Bohr sets} yields
\[
\# {\set B}_{\mathrm{loc}}(N;\brho)
\ll \frac{N_1\cdots N_k}{2^\nu}
\ll \frac{\hat N \cdot \Pi(\brho)}{ 2^\nu}
\ll \frac{\hat N \cdot \Pi(\brho)} {4^{f(N)}} = N \cdot \Pi(\brho).
\]

For the lower bound, we divide the interval $(N,CN]$ 
into two subintervals $(N,\sqrt C N]$ and $(\sqrt CN, CN]$, 
and observe that $f$ must be constant on 
at least one of these subintervals. 
We assume that $4^{f(n)} = 2^\nu$ on $(N, \sqrt CN]$; 
the cases involving $(\sqrt C N, CN]$ and/or $2^{\nu+2}$ 
can be dealt with in the same manner. 
A lower bound is then given by the number 
of elements of \begin{align*}
&\hat {\set B}_{\mathrm{loc}}(N;\brho) 
\cap (2^\nu N, \sqrt C 2^\nu N]
\\
& \qquad = {\set B}(\widehat{\sqrt{C}N}; C\brho) \setminus
\left(
{\set B}(\hat N; C\brho) \cup \bigcup_{i=1}^{k-1} 
{\set B}(\widehat{\sqrt{C}N}; (\rho_{i,1},\ldots,\rho_{i,k-1})) \right)
\end{align*}
that are divisible by $2^\nu$, which we can estimate using Lemmata \ref{lem: inner}, \ref{lem: outer},
and \ref{lem: divisible numbers in GAPS}. The point is that $C$ is large, so the count for ${\set B}(\widehat{\sqrt{C}N}; C\brho)$ dominates, as we explain further in the next paragraph.

For the remainder of the proof, our implied constants do not depend on $C$. The argument that we used for the upper bound yields
\[
\# \{ h\in \cB(\hat N; C \brho): 2^\nu \mid h \} \ll C^{k-1} \Nm(\brho) N,  
\]
as well as
\[
\# \{ h \in {\set B}(\widehat{\sqrt{C}N}; (\rho_{i,1},\ldots,\rho_{i,k-1})): 2^\nu \mid h \} \ll \sqrt C N \prod_{j \le k-1} \rho_{i,j} = C^{k-3/2} \Nm(\brho) N
\]
for $i=1,2,\ldots,k-1$. On the other hand, Lemma \ref{lem: inner} furnishes a proper, asymmetric generalised arithmetic progression
\[
\cP' = \cP^+(b'; A'_1, \ldots, A'_k; N'_1, \ldots,N'_k)
\]
contained in ${\set B}(\widehat{\sqrt{C}N}; C\brho)$, where
\[
N'_1\cdots N'_k \gg \widehat{\sqrt C N} \cdot C^{k-1} \Nm (\brho) \gg \hat N \cdot C^{k-1/2} \Nm (\brho), \qquad 
\min_{i \le k}N'_i \gg  N^{^{20k\eps_{_0}}},
\]
and $\gcd(A'_1,\ldots,A'_k) = 1$. Thus, by \eqref{eq: DivisibilityInBohrSets2}, we have
\[
\# \{ h \in {\set B}(\widehat{\sqrt{C}N}; C\brho): 2^\nu \mid h \} \ge \# \{ h \in \cP': 2^\nu \mid h \} \gg  C^{k-1/2} \Nm(\brho) N.
\]
As $C$ is large, we conclude that
\[
\# \cB_{\mathrm{loc}}(N;\brho) \gg C^{k-1/2} \Nm(\brho) N,
\]
which completes the proof.
\end{proof}

Before we demonstrate
\eqref{eq: divergence of approx. localised sets}, we prepare one more lemma.
 Since \eqref{eq: divergence of approx. localised sets} 
requires us to determine the order of magnitude of 
$\sum_{n\leq N} \mu({\set E_{n}})$,
it is helpful to know that the totient weights on the left hand side of
\eqref{eq: assumed measure bound approximation set} `average well' in suitable $C$-adic ranges, i.e.
that for $n$ in a localised Bohr set 
${\set B}_{\mathrm{loc}}(N; \brho)$,
the average of $\varphi(\un)/\un$
has order of magnitude $1$.
\begin{remark}

A similar strategy for proving this point
was employed by the first named author in 
\cite[Lemma 3.1]{Cho2018},
and then by both authors in \cite[Lemma 4.1]{CT2019}.
The additional difficulty here is a mild one:
Roughly speaking, we need to average only over elements 
of the Bohr sets which are divisible by 
an appropriate power of four. 
\end{remark}

\begin{lemma} [Good averaging]\label{averagegood}
We have
\[
\sum_{n \in {\set B}_{\mathrm{loc}}(N;\brho) } 
\frac{\varphi(\un)}\un \gg \# {\set B}_{\mathrm{loc}}(N;\brho),
\]
uniformly for all $\brho \in \Rone$.
\end{lemma}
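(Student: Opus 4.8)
The plan is to show that the proportion of $n \in {\set B}_{\mathrm{loc}}(N;\brho)$ for which $\varphi(\un)/\un$ is small is at most a small constant. Since $f$ takes only the two values $u,u+1$ on $(N,CN]$, we have $\un = 4^u n$ or $\un = 4^{u+1}n$; in either case, the prime factors of $\un$ other than $2$ are exactly those of $n$, and $2 \mid \un$. Hence $\varphi(\un)/\un = \tfrac12 \prod_{p \mid n,\ p \text{ odd}}(1-1/p) \gg \prod_{p \mid n}(1-1/p)$, so it suffices to prove $\sum_{n \in {\set B}_{\mathrm{loc}}(N;\brho)} \prod_{p \mid n}(1-1/p) \gg \# {\set B}_{\mathrm{loc}}(N;\brho)$. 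Equivalently, I will bound from above the count of $n$ in the Bohr set for which $\prod_{p \mid n}(1-1/p) < \eps_1$ for a suitably small absolute constant $\eps_1$, using that small values of $\prod_{p \mid n}(1-1/p)$ force $n$ to be divisible by many small primes (by Mertens' third theorem, Theorem \ref{thm: Mertens}, $\prod_{p \le z}(1-1/p) \asymp 1/\log z$, so if the product over $p \mid n$ is below $\eps_1$ then $n$ has a divisor $P = \prod_{p \mid n,\ p \le z} p$ with $P$ large — more precisely $n$ is divisible by a squarefree $P \le n^{o(1)}$ composed of primes up to $z$ with $\prod_{p \mid P}(1-1/p)$ small).

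The key step is then a uniform divisor-counting estimate inside the localised Bohr set: for any $d \in \bN$,
\[
\# \{ n \in {\set B}_{\mathrm{loc}}(N;\brho): d \mid n \} \ll \frac{\Nm(\brho) N}{d} + \frac{\Nm(\brho)N}{N^{20k\eps_0}}.
\]
This follows by the same argument as Lemma \ref{lem: size of localised Bohr sets}: pass to $\hat {\set B}_{\mathrm{loc}}(N;\brho)$, where divisibility by $d$ becomes divisibility by $2^\nu d$ (for $d$ odd) or is handled by absorbing the power of two, envelope $\hat {\set B}_{\mathrm{loc}}(N;\brho)$ in a generalised arithmetic progression via Lemma \ref{lem: outer} with $\vartheta = 20k$, and apply Lemma \ref{lem: divisible numbers in GAPS}(i), using that $N_1 \cdots N_k \asymp \hat N \cdot \Nm(\brho)$ and $\min_i N_i \ge N^{20k\eps_0}$. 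Summing this over the relevant squarefree $d = P$ and using that $\sum_{P} 1/P$ over squarefree $P$ built from primes $p \le z$ with $\prod_{p\mid P}(1-1/p) < \eps_1$ can be made smaller than any prescribed constant by taking $\eps_1$ small (this is a standard Rankin-type / Mertens estimate: $\sum_{n: \prod_{p \mid n}(1-1/p) < \eps_1} 1/n$ restricted to $n$ with all prime factors $\le z$ tends to $0$ as $\eps_1 \to 0$, uniformly in $z$), we conclude that
\[
\# \{ n \in {\set B}_{\mathrm{loc}}(N;\brho): \varphi(\un)/\un < \eps_1 \} \le \tfrac12 \# {\set B}_{\mathrm{loc}}(N;\brho)
\]
once $\eps_1$ is small enough and $N$ large (the secondary error term $\Nm(\brho) N / N^{20k\eps_0}$ is negligible against $\# {\set B}_{\mathrm{loc}}(N;\brho) \asymp \Nm(\brho)N$ after summing over the $n^{o(1)}$-many admissible $d$, since $\brho \in \Rone$ forces $\Nm(\brho) N \ge N^{1 - 4.1(k-1)\eps_0} \gg N^{20k\eps_0}$-many values — here we use $\eps_0 < (99k)^{-1}$). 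For the complementary set, $\varphi(\un)/\un \ge \eps_1$, so
\[
\sum_{n \in {\set B}_{\mathrm{loc}}(N;\brho)} \frac{\varphi(\un)}{\un} \ge \eps_1 \cdot \tfrac12 \# {\set B}_{\mathrm{loc}}(N;\brho) \gg \# {\set B}_{\mathrm{loc}}(N;\brho),
\]
as required, with all implied constants absolute (in particular independent of $\brho \in \Rone$).

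The main obstacle is controlling the error terms uniformly in $\brho$: one must check that the number of admissible divisors $d = P$ (those with $P \le N^{o(1)}$, squarefree, supported on primes $\le z = (\log N)^2$ or so) times the secondary error $\Nm(\brho) N / N^{20k\eps_0}$ stays $o(\# {\set B}_{\mathrm{loc}}(N;\brho))$; this is where the choice $\vartheta = 20k$ in the application of the outer structure lemma (Remark \ref{epsSmall}) and the constraint $\eps_0 < (99k)^{-1}$ earn their keep, since $\Nm(\brho) N \gg N^{1 - 5k\eps_0}$ dwarfs $N^{20k\eps_0}$-sized errors only if $\eps_0$ is small relative to $1/k$. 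A minor technical point to dispatch cleanly is the bookkeeping for the prime $2$: because $\un = 4^{f(n)} n$ always carries a factor of $4$, one should phrase everything in terms of odd prime divisors of $n$ from the outset, which is harmless since it costs only the fixed factor $\tfrac12$ absorbed into the implied constant.
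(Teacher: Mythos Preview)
Your approach has a genuine gap: the claimed uniformity in $z$ of the Rankin-type estimate is false. Concretely, fix $\eps_1 > 0$ and let $P_0$ be the product of the first $k_0$ primes, with $k_0$ chosen so that $\prod_{p \mid P_0}(1-1/p) < \eps_1$. Then for every squarefree $m$ coprime to $P_0$ and supported on primes $\le z$, the integer $P = P_0 m$ satisfies $\prod_{p \mid P}(1-1/p) < \eps_1$, and
\[
\sum_{P} \frac{1}{P} \ge \frac{1}{P_0} \prod_{p_{k_0} < p \le z}\Bigl(1 + \frac1p\Bigr) \asymp_{\eps_1} \frac{\log z}{\log p_{k_0}}.
\]
With $z = (\log N)^2$ this is $\gg_{\eps_1} \log\log N$, so your main-term bound $\sum_P \Nm(\brho)N/P$ exceeds $\#{\set B}_{\mathrm{loc}}(N;\brho)$ by a factor of $\log\log N$; no fixed choice of $\eps_1$ makes it $\le \tfrac12 \#{\set B}_{\mathrm{loc}}(N;\brho)$ for all large $N$. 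The union bound over composite divisors $P$ simply loses too much.

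The paper avoids this by using the AM--GM inequality rather than a level-set decomposition: writing
\[
\frac{1}{\#{\set B}_{\mathrm{loc}}}\sum_{n}\frac{\varphi(\un)}{\un} \ge \prod_{p}\Bigl(1-\frac1p\Bigr)^{\tau_p}, \qquad \tau_p = \frac{\#\{n \in {\set B}_{\mathrm{loc}} : p \mid \un\}}{\#{\set B}_{\mathrm{loc}}},
\]
reduces the problem to showing $\sum_p \tau_p/p \ll 1$. Your divisor-counting input then gives $\tau_p \ll p^{-\eps_0}$ for odd $p$, so the sum over primes converges. The point is that AM--GM lets one work prime-by-prime and exploit the decay of $\tau_p$, whereas your union bound over squarefree $P$ forfeits exactly this decay.
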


\begin{proof} 
By the AM--GM inequality, we have
\[
\frac{1}{\# {\set B}_{\mathrm{loc}}(N;\brho)}
\sum_{n \in {\set B}_{\mathrm{loc}}(N;\brho) } 
\frac{\varphi(\un)}\un 
\geq
\left(
\prod_{n \in {\set P} } \frac{\varphi(\un)}\un
\right)^{\frac{1}{ \# {\set B}_{\mathrm{loc}}(N;\brho)}}
= \prod_{p \le \widehat{CN}} \left(1 - \frac1p\right)^{\tau_p},
\]
where 
\[
\tau_p = \frac{ \# \{ n \in 
{\set B}_{\mathrm{loc}}(N;\brho) : p \mid \un \}} {\# {\set B}_{\mathrm{loc}}(N;\brho) }.
\]
Now
\[
-\ln \left(
\frac{1}{\# {\set B}_{\mathrm{loc}}(N;\brho)}
\sum_{n \in {\set B}_{\mathrm{loc}}(N;\brho) } 
\frac{\varphi(\un)}\un \right) \ll
\sum_{p \le \widehat{CN}} \frac{\tau_p}{p},
\]
so as $\tau_2 \le 1$ it suffices to show that
\[
\tau_p \ll p^{-\eps_0} \qquad (3 \le p \le \widehat{CN}).
\]

Suppose $3 \le p \le \widehat{CN}$. By Lemma 
\ref{lem: size of localised Bohr sets}, we have
\[
\tau_p 
\ll \frac{\# \{ h \in \cB(\widehat{CN}; C\brho):  h \equiv 0 \mmod 2^\nu p \} }{\rho N},
\]
where $\nu$ is as in \eqref{eq: TwoValues} and $\rho = \Pi(\brho)$.
We can estimate the numerator via Lemmata \ref{lem: outer} and
\ref{lem: divisible numbers in GAPS}, noting that
$$
N_1\cdots N_k \asymp \widehat{CN} \cdot \rho, \qquad 
\min_{i \le k}N_i \ge N^{^{20k\eps_{_0}}}
$$
therein.
We obtain
\[
\tau_p \ll 2^\nu ((2^\nu p)^{-1} + N^{-20 k \eps_0}) \ll p^{-\eps_0}.
\]
\end{proof}

We can now estimate the normalised-totient-weighted
sums of measures, thereby attaining the main result of this subsection. For $j \in \bN$, define
\begin{equation}\label{def: Gj and dj} {\set D}_{j}=(C^j , C^{j+1}] \cap  {\set G}_\diam, \qquad 
\cG_j = (C^j , C^{j+1}] \cap  {\set G}.
\end{equation}

\begin{lemma}
\label{lem: divergence lemma}
The sets $\cE_n$
satisfy 
\eqref{eq: divergence of approx. localised sets} for all sufficiently large $X$. 
\end{lemma}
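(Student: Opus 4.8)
The plan is to derive \eqref{eq: divergence of approx. localised sets} by sandwiching $\sum_{n\le X}\mu(\cE_n)$ between the two sums appearing in \eqref{eq: assumed measure bound approximation set}. Since $\mu(\cE_n)=0$ for $n\notin\cG$, and the finitely many small $n$ for which Lemma \ref{lem: lower and upper bound on measure of approx sets for Liouville shifts} does not apply contribute $O_\cI(1)$ — negligible because the right-hand side of \eqref{eq: divergence of approx. localised sets} diverges by \eqref{eq: HatDivergence} — it suffices to prove
\[
\sum_{\substack{n\le X\\ n\in\cG}}\Psi(n)\ \ll\ \sum_{n\le X}\psi(\un)(\log n)^{k-1}\ \ll\ \sum_{\substack{n\le X\\ n\in\cG}}\frac{\varphi(\un)}{\un}\,\Psi(n),
\]
with implied constants depending only on $\balp,\gam_1,\dots,\gam_k$; applying \eqref{eq: assumed measure bound approximation set} then gives the claim for all large $X$.

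First I would invoke Lemma \ref{lem: truncation} with $\kappa=k-1$ and $h(n)=\psi(\un)$ — which is non-increasing since $f$ is non-decreasing, hence $n\mapsto\un$ is increasing — to record that $\sum_{n\le X}\psi(\un)(\log n)^{k-1}\asymp\sum_{j\le J}j^{k-1}C^j\psi(\widehat{C^j})$, where $J=\lfloor\log X/\log C\rfloor$. Next I would cut $\cG$ into the dyadic pieces $\cG_j$ of \eqref{def: Gj and dj}, and within each group $n$ according to the $C$-adic sizes of $\|\un\alp_i-\gam_i\|$, i.e.\ according to which localised Bohr set $\cB_{\mathrm{loc}}(C^j;\brho)$ from \eqref{def: B loc hat} contains $n$, with $\brho$ running over the $C$-adic points of $\Rone$. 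Because $f(n)\ll\log\log n$, so that $\un=n^{1+o(1)}$, the buffer built into the exponents $-4.1\eps_0,-1.9\eps_0$ of \eqref{def: widened eps range} (as opposed to the defining exponents $-4\eps_0,-2\eps_0$ of $\cG_\diam$) absorbs both the factors $C^{\pm1}$ and the discrepancy between $\un$ and $\widehat{C^j}$, so that $\cG_\diam\cap(C^j,C^{j+1}]$ is covered by these $O(j^{k-1})$ Bohr sets. On $\cB_{\mathrm{loc}}(C^j;\brho)$ one has $\prod_i\|\un\alp_i-\gam_i\|>\Nm(\brho)$ and $\psi(\un)\le\psi(\widehat{C^j})$, and $\#\cB_{\mathrm{loc}}(C^j;\brho)\asymp\Nm(\brho)C^j$ by Lemma \ref{lem: size of localised Bohr sets}; thus each Bohr set contributes $\ll\psi(\widehat{C^j})C^j$, giving $\sum_{n\in\cG_j}\Psi(n)\ll j^{k-1}C^j\psi(\widehat{C^j})$ and, summing over $j$, the upper bound.

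For the lower bound I would instead use $\brho$ in the slightly shrunk $C$-adic grid whose coordinates lie in $[\widehat{C^j}^{-3.9\eps_0},\widehat{C^j}^{-2.1\eps_0}]$; again using $\un=n^{1+o(1)}$, this forces $\cB_{\mathrm{loc}}(C^j;\brho)\subseteq\cG_\diam$ and these $\asymp j^{k-1}$ sets are pairwise disjoint. To upgrade the containment to $\cB_{\mathrm{loc}}(C^j;\brho)\subseteq\cG$ I would restrict attention to the set $\mathcal J$ of indices $j$ with $\psi(\widehat{C^{j+1}})\ge\bigl(C^{j+1}((j+1)\log C)^{k+1}\bigr)^{-1}$; for $j\notin\mathcal J$ the quantity $j^{k-1}C^j\psi(\widehat{C^{j+1}})$ is $\ll j^{-2}$, so discarding those $j$ costs only $O(1)$. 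For $j\in\mathcal J$ and admissible $\brho$, Lemma \ref{averagegood} together with Lemma \ref{lem: size of localised Bohr sets} gives $\sum_{n\in\cB_{\mathrm{loc}}(C^j;\brho)}\varphi(\un)/\un\gg\#\cB_{\mathrm{loc}}(C^j;\brho)\asymp\Nm(\brho)C^j$, while on this set $\prod_i\|\un\alp_i-\gam_i\|\le C^{k-1}\Nm(\brho)$ and $\psi(\un)\ge\psi(\widehat{C^{j+1}})$. Summing the disjoint contributions over $\brho$ yields $\sum_{n\in\cG_j}\tfrac{\varphi(\un)}{\un}\Psi(n)\gg j^{k-1}C^j\psi(\widehat{C^{j+1}})$; summing over $j\in\mathcal J$, using $j^{k-1}\asymp(j+1)^{k-1}$ and the fact (visible from its proof) that Lemma \ref{lem: truncation} controls each dyadic block by the value of $h$ at the \emph{top} of that block, recovers $\gg\sum_{n\le X}\psi(\un)(\log n)^{k-1}$.

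The main obstacle is bookkeeping rather than conceptual: one must check carefully that the exponent buffer in \eqref{def: widened eps range} combined with $\un=n^{1+o(1)}$ makes both the covering (for the upper bound) and the containment (for the lower bound) of the $\cG_\diam$-pieces by localised Bohr sets uniform in $j$, and that the truncation defining $\cG$ removes only a convergent tail, so that restricting the divergent series $\sum_{n\le X}\psi(\un)(\log n)^{k-1}$ to indices in $\cG$ changes nothing asymptotically. Everything else is an assembly of Lemmata \ref{lem: lower and upper bound on measure of approx sets for Liouville shifts}, \ref{lem: size of localised Bohr sets}, \ref{averagegood} and \ref{lem: truncation}.
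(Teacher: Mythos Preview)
Your proposal is correct and follows essentially the same route as the paper: decompose into $C$-adic scales, cover/fill $\cG_\diam\cap(C^j,C^{j+1}]$ by localised Bohr sets, and combine Lemmata \ref{lem: lower and upper bound on measure of approx sets for Liouville shifts}, \ref{lem: size of localised Bohr sets}, \ref{averagegood}, \ref{lem: truncation}. The only substantive difference is how you handle the passage from $\cG_\diam$ to $\cG$: the paper proves the lower bound over all of $\cG_\diam$ and then shows separately that $\sum_{n\in\cG_\diam\setminus\cG}\frac{\varphi(\un)}{\un}\Psi(n)\ll1$, whereas you restrict to scales $j\in\mathcal J$ on which the $\cG$-condition holds uniformly and discard the rest. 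Both work.

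One small bookkeeping slip: your threshold for $\mathcal J$ should be $\psi(\widehat{C^{j+1}})\ge\bigl(C^{j}(j\log C)^{k+1}\bigr)^{-1}$ rather than with $C^{j+1}$ and $j+1$, since for $n$ just above $C^j$ one has $\tfrac{1}{n(\log n)^{k+1}}$ as large as $\tfrac{1}{C^j(j\log C)^{k+1}}$, and your weaker inequality does not force $\psi(\un)\ge\tfrac{1}{n(\log n)^{k+1}}$. This is harmless: with the corrected threshold the discarded terms still satisfy $j^{k-1}C^j\psi(\widehat{C^{j+1}})\ll j^{-2}$, so your argument goes through unchanged.
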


\begin{proof}
For $j$ large, observe that $\cD_j$ is contained in a union of $O(j^{k-1})$ 
sets
\[
{\set B}_{\mathrm{loc}}(C^j; (C^{-t_1}, \ldots , C^{-t_{k-1}}))
\]
satisfying the hypotheses of 
Lemma \ref{lem: size of localised Bohr sets}, and so
\begin{equation}\label{eq: size of recip sum}
T_j := \sum_{n \in {\set D}_j}  
\frac{1}{ \| \hat{n} \alp_1 - \gam_1 \| 
\cdots \| \hat{n} \alp_{k-1} - \gam_{k-1} \|  } 
\ll j^{k-1} C^j.
\end{equation}
Let $J$ be the largest integer $j$ such that $C^j \leq X$. 
By \eqref{eq: BoundMeasureOfApprox},
we have
$$
\sum_{C^j < n\leq C^{j+1}} \mu({\set E}_n) \ll 
\sum_{n\in {\set D}_{j}}  \mu(\cI) \Psi(n)
\le
\mu(\cI) \psi(\widehat{C^{j}}) T_j
$$
for $j$ large.
Applying \eqref{eq: size of recip sum} and summing over $j$ yields
\begin{align*}
\sum_{j\leq J}  \psi(\widehat{C^{j}}) T_j
\ll 1 +
\sum_{j\leq J}  j^{k-1} C^j \psi(\widehat{C^{j}}).
\end{align*}
By \eqref{eq: doubling on average},
we now have
\begin{equation*}
\mu(\cI)^{-1} \sum_{n\leq X} \mu({\set E}_n) 
\ll
1 +
\sum_{j\leq J}  j^{k-1} C^j \psi(\widehat{C^{j}})
\ll \sum_{n \le X} \psi(\un) (\log n)^{k-1},
\end{equation*}
recalling that $X$ is large and recalling from
\eqref{eq: HatDivergence} that the right hand side diverges 
as $X \to \infty$. We have established the upper bound in \eqref{eq: divergence of approx. localised sets}.

For the lower bound, we begin by applying \eqref{eq: assumed measure bound approximation set} to give
\[
\sum_{n\leq X} \mu({\set E}_n)
\gg
\mu(\cI) \sum_{C_1 < n\leq X} \frac{\varphi(\un)}{\un} \Psi(n),
\]
where $C_1$ is a large, positive constant. Our strategy is to show that
\begin{equation} \label{eq: show1}
\sum_{\substack{n \in \cG_\diam \\ n \le X}}
\frac{\varphi(\un)}{\un} \:
\frac{ \psi(\hat n)}
{ \| \hat n \alp_1 - \gam_1 \| \cdots \| \hat n \alp_{k-1} - \gam_{k-1} \| } \gg \sum_{n\leq X} \psi(\un) (\log n)^{k-1}
\end{equation}
and
\begin{equation} \label{eq: show2}
\sum_{n \in \cG_\diam \setminus \cG}
\frac{\varphi(\un)}{\un} \:
\frac{ \psi(\hat n)}
{ \| \hat n \alp_1 - \gam_1 \| \cdots \| \hat n \alp_{k-1} - \gam_{k-1} \| } \ll 1.
\end{equation}
By \eqref{eq: HatDivergence}, the right hand size of \eqref{eq: show1} diverges as $X \to \infty$, and so these two bounds will suffice.

We proceed to bound from below the contribution from $n\in {\set D}_j$, for $j$ large. To this end, consider the localised Bohr sets
$
{\set B}_{\mathrm{loc}}(C^j; \brho(\bt)),
$
where 
\[
\brho(\bt) = (C^{-t_1}, \ldots , C^{-t_{k-1}}).
\]
For $n \in {\set B}_{\mathrm{loc}}(C^j;  \brho(\bt))$,
where
$\bt=(t_1,\ldots,t_{k-1})$ satisfies
\[
2.1 \eps_0 (j+1) \le  t_r \le 3.9 \eps_0 j \quad (1\leq r \leq k-1),
\] 
we deduce that
\[
\hat n^{-4 \eps_0} \le C^{-t_r} \le C^{1-t_r}
\le \hat n^{-2 \eps_0} \qquad (1 \le r \le k-1).
\]
Hence $n \in {\set D}_j$, and so we see that
\[
{\set B}_{\mathrm{loc}}(C^j; \brho(\bt)) \subseteq {\set D}_j.
\]

Next, observe that 
$$
\sum_{n\in{\set D}_j} \frac{\varphi(\un)}{\un} \:
\frac{ \psi(\hat n)}
{ \| \hat n \alp_1 - \gam_1 \| \cdots \| \hat n \alp_{k-1} - \gam_{k-1} \| }
\gg
\psi(\widehat{C^{j+1}})
\sum_{\bt}
\sum_{n\in {\set B}_{\mathrm{loc}}(C^{j}; \brho(\bt))}
\frac{\varphi (\un)}{\un \rho(\bt)},
$$
where
$\bt$ runs through all the integer vectors as above
and $\rho(\bt)= \Nm(\brho(\bt))$.
Lemma \ref{lem: size of localised Bohr sets} 
implies
$\# {\set B}_{\mathrm{loc}}(C^j; \brho(\bt)) \gg C^{j} \rho(\bt)$, and so Lemma \ref{averagegood} assures us that 
$$
\sum_{n\in {\set B}_{\mathrm{loc}}(C^j; \brho(\bt))}
\frac{\varphi (\un)}{\un \rho(\bt)} \gg C^{j},
$$
uniformly for each of the $\Tet(j^{k-1})$ 
many choices of $\bt$.
Whence
\begin{equation} \label{DjLow}
\sum_{n\in{\set D}_j} \frac{\varphi(\un)}{\un} \:
\frac{ \psi(\hat n)}
{ \| \hat n \alp_1 - \gam_1 \| \cdots \| \hat n \alp_{k-1} - \gam_{k-1} \| }
\gg 
\psi(\widehat{C^{j+1}}) C^{j+1} (j+1)^{k-1}.
\end{equation}
Let $J_0$ be large, positive constant. Summing \eqref{DjLow} over the range
\[
J_0 \le j \leq \frac{\log X}{\log C}
\]
yields
\[
\sum_{\substack{n \in \cG_\diam \\ n \le X}}
\frac{\varphi(\un)}{\un} \:
\frac{ \psi(\hat n)}
{ \| \hat n \alp_1 - \gam_1 \| \cdots \| \hat n \alp_{k-1} - \gam_{k-1} \| } \gg 
\sum_{J_0 \le j \le \frac{\log X}{\log C}} \psi(\widehat{C^{j+1}}) C^{j+1} (j+1)^{k-1}.
\]
Lemma \ref{lem: truncation} now
delivers the inequality \eqref{eq: show1}.

Finally, by \eqref{eq: size of recip sum} we have
\begin{align*}
&\sum_{n \in \cG_\diam \setminus \cG}
\frac{\varphi(\un)}{\un} \:
\frac{ \psi(\hat n)}
{ \| \hat n \alp_1 - \gam_1 \| \cdots \| \hat n \alp_{k-1} - \gam_{k-1} \| } \\
&\le 
\sum_{j=1}^\infty
\sum_{n \in {\set D}_j \setminus {\set G}}
\frac{\psi(\un)}{ \| \hat{n} \alp_1 - \gam_1 \| 
\cdots \| \hat{n} \alp_{k-1} - \gam_{k-1} \|  } 
\\ &\ll 1+ \sum_{j=1}^\infty \frac{j^{k-1} C^j} 
{C^j (j \log C)^{k+1}}
\ll 1+ \sum_{j=1}^\infty j^{-2} \ll 1,
\end{align*}
which is \eqref{eq: show2}.
\end{proof}

Having established \eqref{eq: divergence of approx. localised sets}, our final task for this section is to prove \eqref{eq: indepen of localised sets} for some $\eps_0 > 0$ and $\eta \in (0,1)$ depending on $\gam$.

\subsection{Overlap estimates, localised Bohr sets, and the small-GCD regime}

In the present subsection, we reduce the task of proving
\eqref{eq: indepen of localised sets} to 
demonstrating a uniform estimate in localised Bohr sets, 
see Lemma \ref{lem: bound on dyadic overlaps}. 
Thereafter, we recast the desired estimate 
in terms of counting solutions to diophantine inequalities in localised Bohr sets, see Lemma \ref{lem: overlaps and inequalities}. 
At the end of this subsection, we establish such a counting result
in a regime where the arising GCDs 
are relatively small, see Proposition \ref{prop: overlap lemma}.

Let $N \geq M$, where $M$ is large. 
The next lemma asserts that if we have a good bound on
\begin{equation*}
R^{{\set I}, \gam}(M,N; \brho, \bdel) :=
\sum_{\substack{ 
n\in {\set B}_{\mathrm{loc}}(N;\brho)\\
m\in {\set B}_{\mathrm{loc}}(M; \bdel)\\
n\neq m}}
\mu({\set E}_n^{{\set I}, \gam} \cap {\set E}_m^{{\set I}, \gam}),
\end{equation*}
sufficiently uniform in  $\brho$ and $\bdel$,
then we can deduce
\eqref{eq: indepen of localised sets}. 
As before, we shall drop the dependency on $\gam$ and ${\set I}$
for most of the time, and 
simply write $R(M,N; \brho, \bdel) $ instead.

\begin{lemma} \label{lem: bound on dyadic overlaps}
If 
\begin{equation}\label{eq: bound on dyadic overlaps}
R(M,N; \brho, \bdel) \ll \mu({\set I}) MN \psi(\uM)\psi(\uN), 
\end{equation}
uniformly for $M$ large and $N \ge M$, $\brho \in {\set W}(N)$, and $\bdel \in {\set W}(M)$,
then \eqref{eq: indepen of localised sets} holds.
\end{lemma}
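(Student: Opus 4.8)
The plan is to split the double sum in \eqref{eq: indepen of localised sets} into its diagonal and off-diagonal parts, decompose the off-diagonal part into $C$-adic blocks and further into localised Bohr sets, bound each resulting piece by the hypothesis \eqref{eq: bound on dyadic overlaps}, and then reassemble everything via Lemma \ref{lem: truncation}. Since $\mathcal{E}_n = \emptyset$ unless $n \in \mathcal{G} \subseteq \mathcal{G}_\diam$, we may restrict both summation variables to $\mathcal{G}_\diam$. Write $S = S(X) := \sum_{n \le X}\psi(\un)(\log n)^{k-1}$; by \eqref{eq: HatDivergence} this tends to infinity, so for $X$ large we have $S \ge 1$, and it suffices to prove the bound $\ll \mu(\mathcal{I})S^2$. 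The diagonal is handled by Lemma \ref{lem: divergence lemma}: the upper bound in \eqref{eq: divergence of approx. localised sets} gives $\sum_{n \le X}\mu(\mathcal{E}_n) \ll \mu(\mathcal{I})S \le \mu(\mathcal{I})S^2$. Moreover, the pairs $(m,n)$ with $\min(m,n) = O(1)$ contribute $\ll \sum_{m \le X}\mu(\mathcal{E}_m) \ll \mu(\mathcal{I})S^2$ as well, so henceforth we may assume $m,n$ are both large.

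For the off-diagonal part, put $J = \lfloor \log X/\log C\rfloor$. Each large $n \le X$ lies in a unique block $(C^j, C^{j+1}]$ with $j \le J$, and since $f$ is non-decreasing we have $\widehat{C^j} < \hat n \le \widehat{C^{j+1}}$; within the block, $n$ is assigned the unique tuple $\bt = (t_1,\ldots,t_{k-1})$ for which $\hat n \in \hat{\mathcal{B}}_{\mathrm{loc}}(C^j;\brho(\bt))$, where $\brho(\bt) = (C^{-t_1},\ldots,C^{-t_{k-1}})$ and $\hat{\mathcal{B}}_{\mathrm{loc}}$ is as in \eqref{def: B loc hat}. The membership $n \in \mathcal{G}_\diam$ forces $\hat n^{-4\eps_0} \le \|\hat n\alp_i - \gam_i\| \le \hat n^{-2\eps_0}$ for $1 \le i \le k-1$, and combining this with $\un = n^{1+o(1)}$ and $\widehat{C^{j+1}} \le 4C\,\widehat{C^j}$ pins each $t_i$ into an interval of length $O(j)$; moreover, for $j$ large, the slack between the exponents $4.1, 1.9$ in \eqref{def: widened eps range} and $4, 2$ in the definition of $\mathcal{G}_\diam$ guarantees $\brho(\bt) \in \mathcal{W}(C^j)$. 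Hence there are $O(j^{k-1})$ relevant tuples at scale $j$, each such $\brho(\bt)$ is admissible in \eqref{eq: bound on dyadic overlaps}, and every large $n \le X$ lying in $\mathcal{G}_\diam$ is caught in exactly one set $\mathcal{B}_{\mathrm{loc}}(C^j;\brho(\bt))$. Performing the analogous sorting for $m$ with scale $C^i$ and tuple $\bs$, using the symmetry of the sum to arrange $i \le j$, and invoking \eqref{eq: bound on dyadic overlaps} with $(M,N) = (C^i, C^j)$ — so that $\uM = \widehat{C^i}$ and $\uN = \widehat{C^j}$ — we obtain
\[
\sum_{\substack{m,n \le X\\ m \ne n}}\mu(\mathcal{E}_n\cap\mathcal{E}_m)
\ll \mu(\mathcal{I})\sum_{0 \le i \le j \le J} i^{k-1}j^{k-1}\, C^i C^j\, \psi(\widehat{C^i})\,\psi(\widehat{C^j}).
\]

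It remains to bound the last sum by $S^2$. It is at most $\big(\sum_{j=0}^J j^{k-1}C^j\psi(\widehat{C^j})\big)^2$, and since $\psi(\widehat{C^j}) = \hat\psi(C^j)$ with $\hat\psi$ non-increasing (because $\un$ is non-decreasing in $n$), Lemma \ref{lem: truncation} applied with $h = \hat\psi$ and $\kappa = k-1$ yields $\sum_{j=0}^J j^{k-1}C^j\psi(\widehat{C^j}) \ll 1 + \sum_{n \le X}\psi(\un)(\log n)^{k-1} \ll S$, the last step using \eqref{eq: HatDivergence}. Together with the diagonal estimate, this gives \eqref{eq: indepen of localised sets}. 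The argument is essentially bookkeeping: the only points that require care are the symmetrisation and, above all, the verification that the $C$-adic cells arising from $\mathcal{G}_\diam$ genuinely lie in the admissible ranges $\mathcal{W}(C^j)$ on which \eqref{eq: bound on dyadic overlaps} is hypothesised to hold. There is no substantive obstacle at this stage, since all the difficulty of the overlap estimate has been isolated into the hypothesis \eqref{eq: bound on dyadic overlaps}, whose proof occupies the remainder of the section.
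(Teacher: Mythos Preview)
Your argument is correct and follows essentially the same route as the paper: decompose into $C$-adic scales, cover each scale by $O(j^{k-1})$ localised Bohr cells indexed by $\bt$, apply the hypothesis \eqref{eq: bound on dyadic overlaps} cell-by-cell, and reassemble via Lemma \ref{lem: truncation}. Your treatment is in fact slightly more explicit than the paper's in two places --- you verify that the tuples $\brho(\bt)$ arising from $\cG_\diam$ genuinely lie in $\cW(C^j)$ (using the slack between the exponents $4.1,1.9$ and $4,2$), and you note that $\hat\psi$ is non-increasing before invoking Lemma \ref{lem: truncation} --- but the structure of the proof is the same.
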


\begin{proof}
Let $J$ be the least integer $j$ such that $C^j \geq N $, and recall \eqref{def: Gj and dj}. 
Then
$$
\sum_{\substack{m,n \leq X}}  
\mu({\set E}_{n} \cap {\set E}_{m}) 
\ll 1 +
\sum_{\substack{j \le i \leq J}}  
\sum_{\substack{n\in {\set G}_i\\
m\in {\set G}_j}}
\mu({\set E}_{n} \cap {\set E}_{m}).
$$
Let $X_0 \in \bN$ be a large constant. 
By Lemma \ref{lem: divergence lemma}, 
the contribution from $j < X_0$ is bounded by a constant times
\[
\sum_{n \le X} \mu({\set E}_n) \ll 
 \mu({\set I}) \sum_{n \le X} \psi(\un) (\log n)^{k-1}.
\]
Thus, recalling from \eqref{eq: HatDivergence} that the completed series diverges, it remains to show that
\begin{equation} \label{OverlapGoal}
\sum_{X_0 \le j \le i \le J} \sum_{\substack{n\in {\set G}_i\\
m\in {\set G}_j}} 
\mu({\set E}_{n} \cap {\set E}_{m})
\ll \mu({\set I}) \left(\sum_{n \le X} 
\psi(\un) (\log n)^{k-1} \right)^2.
\end{equation}

Presently, we fix $i,j$ with $X_0 \le j \le i \le J$. Consider the vectors
\begin{equation} \label{eq: RhoDelShape}
\brho (\bt) =(C^{-t_1},\ldots,C^{-t_{k-1}}), \quad
\bdel (\boldsymbol \ell)= (C^{-\ell_1},\ldots,C^{-\ell_{k-1}}),
\end{equation}
wherein ranges for the
exponents will be prescribed shortly.
Let us account for the contribution of the diagonal first. 
By \eqref{eq: HatDivergence}, the summands for which $n=m$ contribute 
$$
\sum_{X_0 \le i \le J} \sum_{n\in {\set G}_i} 
\mu({\set E}_{n})
\ll 
\mu({\set I}) \sum_{n \le X} 
\psi(\un) (\log n)^{k-1}
\ll \mu({\set I}) \left(\sum_{n \le X} 
\psi(\un) (\log n)^{k-1} \right)^2.
$$

Next, we account for the off-diagonal contribution.
To this end, we approximately decompose
$$
S(i,j) := \sum_{\substack{ 
n \in {\set G}_i \\ 
m \in {\set G}_j\\
n\neq m}}
\mu({\set E}_n\cap {\set E}_m)
$$
into sums of the shape 
$$
S(i,j;\bt, \boldsymbol \ell) := \sum_{\substack{ 
n\in {\set B}_{\mathrm{loc}}(C^i;\brho(\bt) )\\
m\in {\set B}_{\mathrm{loc}}(C^j; \bdel(\bl))\\
n\neq m}} 
\mu({\set E}_n\cap {\set E}_m).
$$
Specifically, the sum $S(i,j)$ is bounded above by the sum of 
$S(i,j; \bt, \boldsymbol \ell)$ over the integer vectors $\bt$, $\bl$ 
within the ranges
\[
1.9 \eps_0 i \le t_1, \ldots, t_{k-1} \le 4.1 \eps_0 i,
\qquad
1.9 \eps_0 j \le \ell_1, \ldots, \ell_{k-1} \le 4.1 \eps_0 j.
\]
By the assumed estimate \eqref{eq: bound on dyadic overlaps},
we have
$$
S(i,j;\bt, \bl) \ll 
\mu({\set I}) C^{i}C^{j} \hat{\psi}(C^i) \hat{\psi}(C^j),
$$
uniformly in $\bt$ and $\bl$ as above.
Summing over the $O(i^{k-1})$ many choices for $\bt$
and the $O(j^{k-1})$ many choices for $\bl$,
we see that 
$$
S(i,j) \ll
\mu({\set I}) i^{k-1}C^{i}j^{k-1}C^{j} 
\hat{\psi}(C^i) \hat{\psi}(C^j).
$$
Summing over $i$ and $j$ gives
$$
\sum_{X_0 \le j \le i \le J} \sum_{\substack{n\in {\set G}_i\\
m\in {\set G}_j}} 
\mu({\set E}_{n} \cap {\set E}_{m})
\ll 
\mu({\set I})
\bigg( \sum_{\substack{X_0 \le j \leq J}}  
 j^{k-1} C^{j} \hat{\psi}(C^j)
\bigg)^2.
$$
Now Lemma \ref{lem: truncation} 
delivers \eqref{OverlapGoal}, completing the proof.
\end{proof}

Denote by $N_1,\ldots, N_{k}$ the length parameters,
arising from Lemma \ref{lem: outer}, 
associated to the outer structure of 
$\hat {\set B}_{\mathrm{loc}}(N;\brho)$. 
Specifically, we apply the lemma to 
${\set B}(\widehat{CN};C\brho)$, 
with $\eps = \eps_0$ and $\vartheta = 20k$. 
By symmetry, we may assume that
\begin{equation} \label{eq: N1min}
N_1 = \min\{ N_i: 1 \le i \le k\},
\end{equation}
and we do so in order to simplify notation. Note that
\[
N_1 \ge N^{20 k \eps_0}.
\]
Let $M$ be large, let $N \ge M$, and let
\begin{equation}\label{def: Delta}
\Del =  \widehat{CM} \frac{\psi(\uN)}{\Nm(\brho)} + 
\widehat{CN} \frac{\psi(\uM)}{\Nm (\bdel)}.
\end{equation}
Importantly, if there exists $m\in \cB_{\mathrm{loc}}(M;\bdel) \cap \cG$ then we have a lower bound 
on $\Del$ of the strength
\begin{align*}
\Del &\geq \widehat{CN} \frac{\psi(\uM)}{\Nm(\bdel)}
\ge \widehat{CN} \frac{\psi(\hat m)}{\Nm(\bdel)}
    \geq \frac{\widehat{CN}} {\Nm(\bdel) m( \log m)^{k+1}}\\&
    \geq \frac{\widehat{CN} }{CM}  \: 
    \frac{1}{\Nm(\bdel) (\log (CM))^{k+1}}
    \geq \frac{1}{\Nm (\bdel) (\log (CM))^{k+1}}.
\end{align*}
For $\bdel \in {\set W}(M)$, 
recalling \eqref{def: widened eps range},
we find that for all sufficiently large $M$ we have
\begin{equation}\label{eq: lower bound on Delta}
\Del > M^{(k-1)\eps_0} >1.
\end{equation}

Let $(q'_\ell)_\ell$ be the sequence of continued fraction denominators of $\gam$.
We estimate the contribution to the quantity 
$R(M,N; \brho, \bdel)$
from the small-GCD regime (including the diagonal), i.e. 
\begin{equation}\label{eq: R small gcd}
R_{\gcd \leq }(M,N; \brho, \bdel) := 
\sum_{\substack{ 
n\in {\set B}_{\mathrm{loc}}(N;\brho)\\
m\in {\set B}_{\mathrm{loc}}(M; \bdel)\\
\gcd(\un,\um)\leq \max\{3\Del/\|q'_2 \gam \|, \, N_1\} }}
\mu({\set E}_n \cap {\set E}_m),
\end{equation}
and the contribution from large-GCD regime, i.e. 
\begin{equation}\label{eq: R large gcd}
R_{\gcd >}(M,N; \brho, \bdel) := 
\sum_{\substack{ 
n\in {\set B}_{\mathrm{loc}}(N;\brho)\\
m\in {\set B}_{\mathrm{loc}}(M; \bdel)\\
\gcd(\un,\um)> \max\{3\Del/\|q'_2 \gam \|, \, N_1\}\\
n\neq m}}
\mu({\set E}_n \cap {\set E}_m),
\end{equation}
separately. When bounding these quantities, we may assume that $\cB_{\mathrm{loc}}(M;\bdel)$ intersects $\cG$, and so we have \eqref{eq: lower bound on Delta}.

Let us write 
\[
\rho = \Pi (\brho),
\qquad
\del = \Pi (\bdel).
\]
As a first step towards estimating
$R_{\gcd \leq }(M,N; \brho, \bdel)$,
we record a useful
relation between the size of 
$R_{\gcd \leq }(M,N; \brho, \bdel)$
and the number of solutions to 
a diophantine inequality with various
constraints.

\begin{lemma}\label{lem: overlaps and inequalities}
Let $M$ be large, let $N \ge M$, and let $\brho \in \cW(N)$, and $\bdel \in \cW(M)$.
Denote by $D_{\gcd \leq}$ the number of quadruples
$(n,m,a,b) \in \bN^2 \times \bZ^2$
for which 
\begin{equation}\label{eq: constraints numer in overlap est}
\frac{a+\gam}{\un}, \frac{b+\gam}{\um}\in {\set I},
\quad
n \in {\set B}_{\mathrm{loc}}(N; \brho) \cap {\set G}, 
\quad
m\in {\set B}_{\mathrm{loc}}(M; \bdel) \cap {\set G},
\end{equation}
as well as 
\begin{equation}\label{eq: gcd small}
\gcd(\un,\um) \le \max (3\Del/\|q'_2 \gam \|, N_1)
\end{equation}
and
\begin{equation}\label{eq: overlap inequality}
\vert (\un-\um)\gam - (\um a-\un b) \vert  \le \Del,
\end{equation} 
and finally
\begin{equation} \label{eq: shifty}
(a, \hat n), (b, \hat m)  \text{ are } (\gam,\eta)\text{-shift-reduced}.
\end{equation}
Furthermore, let $D_{\gcd >}$ be the number of quadruples 
$(n,m,a,b) \in \bN^2 \times \bZ^2$ which satisfy 
\eqref{eq: constraints numer in overlap est} 
, \eqref{eq: overlap inequality}, and \eqref{eq: shifty}, but instead of 
\eqref{eq: gcd small} the reversed inequality 
$\gcd(\un,\um) > \max (3\Del/\|q'_2 \gam \|, N_1)$
and the constraint $n\neq m$.
If 
\begin{equation}\label{eq: bound on DNM}
D_{\gcd \leq } \: = O(\mu({\set I})\rho N \del M \Del),
\end{equation}
then
$ R_{\gcd \leq }(M,N; \brho, \bdel)  \ll 
\mu({\set I}) MN \psi(\uM)\psi(\uN)$.
Moreover, if we have 
\begin{equation}\label{eq: gcd large}
D_{\gcd > } \: = O(\mu({\set I})\rho N \del M \Del),
\end{equation}
then $R_{\gcd > }(M,N; \brho, \bdel)  \ll 
\mu({\set I}) MN \psi(\uM)\psi(\uN)$.
\end{lemma}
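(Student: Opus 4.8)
The statement reduces the overlap bound for $R_{\gcd\le}$ and $R_{\gcd>}$ to the counting bounds \eqref{eq: bound on DNM} and \eqref{eq: gcd large}, so the task here is purely to unwind definitions and check that a quadruple-count controls the measure of the pairwise intersections $\mu(\cE_n\cap\cE_m)$. The plan is first to fix $n\in\cB_{\mathrm{loc}}(N;\brho)\cap\cG$ and $m\in\cB_{\mathrm{loc}}(M;\bdel)\cap\cG$, and to understand $\cE_n\cap\cE_m$. By \eqref{def: approximation sets for diophantine shifts}, a point $\alp$ lies in $\cE_n$ iff there is $a\in\bZ$ with $a+\gam\in\hat n\cI$, with $(a,\hat n)$ being $(\gam,\eta)$-shift-reduced, and with $|\hat n\alp-\gam-a|<\Psi(n)$; similarly for $\cE_m$ with some $b$. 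So $\cE_n\cap\cE_m$ is covered by the union, over admissible pairs $(a,b)$, of the intersections of the two intervals
\[
I_a=\Bigl(\tfrac{\gam+a-\Psi(n)}{\hat n},\tfrac{\gam+a+\Psi(n)}{\hat n}\Bigr),\qquad
J_b=\Bigl(\tfrac{\gam+b-\Psi(m)}{\hat m},\tfrac{\gam+b+\Psi(m)}{\hat m}\Bigr).
\]
Each such intersection is non-empty only if the centres are within $\Psi(n)/\hat n+\Psi(m)/\hat m$ of each other, i.e.
\[
\Bigl|\tfrac{\gam+a}{\hat n}-\tfrac{\gam+b}{\hat m}\Bigr|\le\tfrac{\Psi(n)}{\hat n}+\tfrac{\Psi(m)}{\hat m},
\]
which upon clearing denominators and using $\Psi(n)=\psi(\hat n)/\Pi(\brho)$ on $\cB_{\mathrm{loc}}(N;\brho)$ (up to the $C$-adic constants) and $\Psi(m)=\psi(\hat m)/\Pi(\bdel)$ becomes exactly the inequality \eqref{eq: overlap inequality} with $\Del$ as in \eqref{def: Delta}: $|(\hat n-\hat m)\gam-(\hat m a-\hat n b)|\le\Del$ (after absorbing the bounded factor $CM$, $CN$ and the $C$-adic constants into the definition of $\Del$ — this is precisely why $\Del$ is defined the way it is, with $\widehat{CM}$ and $\widehat{CN}$). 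The shift-reduced constraint on $(a,\hat n),(b,\hat m)$ is \eqref{eq: shifty}, and the membership and range constraints are \eqref{eq: constraints numer in overlap est}.

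Next I would bound the measure of a single $I_a\cap J_b$ by $\min(|I_a|,|J_b|)=\min(2\Psi(n)/\hat n,2\Psi(m)/\hat m)\le 2\Psi(m)/\hat m$ (recall $N\ge M$ and the width parameters, so $\Psi(m)/\hat m$ is the smaller length up to constants — or, more robustly, just use $2\Psi(n)/\hat n$ when $\hat n$ is larger). Summing over all quadruples $(n,m,a,b)$ satisfying \eqref{eq: constraints numer in overlap est}, \eqref{eq: gcd small}, \eqref{eq: overlap inequality}, \eqref{eq: shifty}, we get
\[
R_{\gcd\le}(M,N;\brho,\bdel)\ll D_{\gcd\le}\cdot\frac{\Psi(m)}{\hat m}\asymp D_{\gcd\le}\cdot\frac{\psi(\uM)}{\uM\,\del}.
\]
Invoking the hypothesis \eqref{eq: bound on DNM}, $D_{\gcd\le}=O(\mu(\cI)\rho N\del M\Del)$, this gives
\[
R_{\gcd\le}\ll\mu(\cI)\rho N\del M\Del\cdot\frac{\psi(\uM)}{\uM\del}\ll\mu(\cI)\,\rho N\,M\,\frac{\psi(\uM)}{\uM}\,\Del,
\]
and then substituting $\Del=\widehat{CM}\psi(\uN)/\rho+\widehat{CN}\psi(\uM)/\del$ and using $\uM\asymp M$, $\uN\asymp N$ (up to $N^{o(1)}$, which is harmless here since the final bound has room — actually one must be slightly careful, but the factors $\hat n/n=N^{o(1)}$ only enter multiplicatively and are absorbed) one checks the two resulting terms are each $\ll\mu(\cI)MN\psi(\uM)\psi(\uN)$. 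The $\gcd>$ case is identical, with \eqref{eq: gcd large} in place of \eqref{eq: bound on DNM} and the extra constraint $n\ne m$ (which only removes terms), yielding $R_{\gcd>}\ll\mu(\cI)MN\psi(\uM)\psi(\uN)$.

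The only subtlety — and the main thing to be careful about rather than the main obstacle — is the bookkeeping of the $C$-adic and $N^{o(1)}$ factors: on $\cB_{\mathrm{loc}}(N;\brho)$ one has $\rho<\|\hat n\alp_i-\gam_i\|\cdots\le C^{k-1}\rho$, so $\Psi(n)\asymp\psi(\hat n)/\rho$ only up to a constant depending on $C$ and $k$, which is permitted; likewise $\hat n\asymp N$ up to constants and $\uN/N=N^{o(1)}$. All of these are absorbed into the implied constants (which may depend on $C$, $\balp$, $\gam_1,\dots,\gam_k$) and on the explicit shape of $\Del$, which was tailored in \eqref{def: Delta} precisely so that the clearing-denominators step produces \eqref{eq: overlap inequality} on the nose. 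There is no genuine obstacle in this lemma; it is a clean reduction, and the real work is deferred to establishing \eqref{eq: bound on DNM} and \eqref{eq: gcd large}, which are handled subsequently (via the geometry-of-numbers counting in generalised arithmetic progressions for the small-GCD case in Proposition \ref{prop: overlap lemma}, and via the diophantine repulsion / shift-reduced argument for the large-GCD case).
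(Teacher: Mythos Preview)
Your reduction to counting quadruples is exactly the paper's approach, and the derivation of \eqref{eq: overlap inequality} from the interval-overlap condition is correct. The gap is in the final bookkeeping: you replace $\min\bigl(\Psi(n)/\hat n,\Psi(m)/\hat m\bigr)$ by the single term $\Psi(m)/\hat m\asymp\psi(\uM)/(\del\,\uM)$, asserting it is the smaller one. It is not, in general: since $\brho\in\cW(N)$ and $\bdel\in\cW(M)$ are independent parameters, there is no fixed ordering between $\psi(\uN)/(\rho\,\uN)$ and $\psi(\uM)/(\del\,\uM)$. With your choice, the term of $\Del$ equal to $\widehat{CN}\,\psi(\uM)/\del$ produces
\[
\mu(\cI)\,\rho N\,\del M\,\Del\cdot\frac{\psi(\uM)}{\del\,\uM}\;\supset\;\mu(\cI)\,\frac{\rho}{\del}\,NM\,\psi(\uM)^2\,\frac{\widehat{CN}}{\uM},
\]
which would require $(\rho/\del)\,\psi(\uM)\,\hat N/\hat M\ll\psi(\uN)$, and this fails (e.g.\ when $N\gg M$ and $\rho/\del$ is not small). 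Choosing the other term $\Psi(n)/\hat n$ instead fails symmetrically on the first summand of $\Del$.

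The paper keeps the honest minimum, observes that
\[
\Del\le 2\,\widehat{CM}\,\widehat{CN}\,\max\Bigl(\frac{\psi(\uN)}{\rho\,\widehat{CN}},\,\frac{\psi(\uM)}{\del\,\widehat{CM}}\Bigr),
\]
and then uses the identity $\min(x,y)\cdot\max(x,y)=xy$ (together with $\uN\asymp\widehat{CN}$, $\uM\asymp\widehat{CM}$) to collapse everything to $\mu(\cI)\,MN\,\psi(\uM)\psi(\uN)$. That one line is what makes the cancellation clean; once you insert it, your argument is complete and matches the paper. (Minor aside: your ``$\uM\asymp M$'' is not true up to constants, only up to $M^{o(1)}$; what you actually need, and what holds, is $\uM\asymp\widehat{CM}$, which follows from \eqref{eq: f is locally constant}.)
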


\begin{proof}
We detail the proof only in the case of $D_{\gcd \leq  }$
since the case $D_{\gcd >}$ can be dealt with in the same way.
We begin by observing that each ${\set E}_n$ (resp. ${\set E}_m$) 
is contained in a union of finitely many intervals 
\[
{\set I}_{n,a} = \left(
\frac{ a + \gam - \Psi(n) } \un,
\frac{ a + \gam + \Psi(n) } \un \right)
\]
(resp. ${\set I}_{m,b}$), and so
$$
\mu({\set E}_n \cap {\set E}_m) \leq \max\{
\min(\mu({\set I}_{n,a}),
\mu({\set I}_{m,b})) : a,b \in \bZ \} \cdot \#\{(a,b):
{\set I}_{n,a}\cap {\set I}_{m,b} \neq \emptyset \}.
$$
The first factor is bounded above by
$$2\min \Big( \frac{\Psi(n)}{\un}, \frac{\Psi(m)}{\um} \Big) 
\le
2\min \Big( \frac{\psi(\uN)}{\rho \uN},
\dfrac{\psi(\uM)}{\del \uM} \Big). $$
To bound the second factor, 
we have to count how often the centre
$(a+\gam)/\un$ of 
${\set I}_{n,a}$ is `sufficiently close' to the centre $(b+\gam)/\um$ of an interval ${\set I}_{m,b}$.
Here sufficiently close means that the distance
of the centres is less than sum of the radii of the intervals, 
i.e.
$$   
\Big\vert \frac{b+\gam}{\um}
-  \frac{a+\gam}{\un} \Big\vert 
\le
\frac{\Psi(n)}{\un} + \frac{\Psi(m)}{\um}.
$$
Multiplying by $\um \un$, we see that 
\[
\vert \un (b+\gam)
-  \um (a+\gam) \vert 
\le
\widehat{CM} \Psi(n) + \widehat{CN} \Psi(m)
\leq \Del,
\]
and the number of integer solutions $(n,m,a,b)$ to the above inequality, subject to our constraints, is at most $D_{\gcd \leq}$.
Thus, if \eqref{eq: bound on DNM} holds then
\begin{align*}
R_{\gcd \leq }(M, N; \brho, \bdel) \ll 
\min \Big( \frac{\psi(\uN)}{\rho \uN}, 
\frac{\psi(\uM)}{\del\uM} \Big)
\mu({\set I})\rho N \del M \Del.
\end{align*}

Next, observe that
$$
\Del \leq 2 \widehat{CM} \widehat{CN} 
\max \Big(\frac{\psi(\uN)}{\rho \widehat{CN}}, 
\frac{\psi(\uM)}{\del\widehat{CM}}\Big).
$$
It follows from \eqref{eq: f is locally constant} that
$\widehat{N} \asymp \widehat{CN}$
and $\widehat{M} \asymp \widehat{CM}$,
and so
\[
\min \Bigl( \frac{\psi(\uN)}{\rho \uN}, 
\frac{\psi(\uM)}{\del\uM} \Bigr) 
\ll 
\min \Big( \frac{\psi(\uN)}{\rho \widehat{CN}}, 
\frac{\psi(\uM)}{\del \widehat{CM}} \Big).
\]
The upshot is that 
$ R_{\gcd \leq }(M, N; \brho, \bdel)$
is at most a constant times
\begin{equation}\label{eq: overlap inequalities count not simplified}
\min \Big( \frac{\psi(\uN)}{\rho \widehat{CN}}, 
\frac{\psi(\uM)}{\del \widehat{CM}} \Big)
\rho N \del M \mu({\set I}) \widehat{CM} \widehat{CN} 
\max \Big(\frac{\psi(\uN)}{\rho \widehat{CN}}, 
\frac{\psi(\uM)}{\del\widehat{CM}}\Big).
\end{equation}
Applying the simple identity
\[
\min \Big( \frac{\psi(\uN)}{\rho \widehat{CN}}, 
\frac{\psi(\uM)}{\del \widehat{CM}} \Big)
\max \Big(\frac{\psi(\uN)}{\rho \widehat{CN}}, 
\frac{\psi(\uM)}{\del\widehat{CM}}\Big)
=
\frac{\psi(\uN)}{\rho \widehat{CN}}
\frac{\psi(\uM)}{\del\widehat{CM}},
\]
we see that 
\eqref{eq: overlap inequalities count not simplified}
simplifies to $\mu({\set I}) M N \psi(\uM) \psi(\uN)$.
\end{proof}

Next, we establish overlap estimates
in the regime in which the GCDs are relatively small.
Here congruence considerations and the structural theory of Bohr
sets will be decisive; a welcome circumstance
is that the bound works for any final shift $\gam$,
irrespective of its diophantine nature. 

\begin{prop}\label{prop: overlap lemma}
Let $M$ be large, and let $N \ge M$. Then
\begin{equation*} 
R_{\gcd \leq }(M,N; \brho, \bdel)  \ll 
\mu({\set I}) MN \psi(\uM)\psi(\uN),
\end{equation*}
uniformly for $\brho \in {\set W}(N)$
and $\bdel \in {\set W}(M)$.
\end{prop}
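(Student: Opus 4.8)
By Lemma \ref{lem: overlaps and inequalities} it suffices to prove the counting estimate \eqref{eq: bound on DNM}, that is, $D_{\gcd\leq} = O(\mu(\cI)\rho N\del M\Del)$ with $\rho = \Pi(\brho)$ and $\del = \Pi(\bdel)$. Throughout I would freely discard the shift-reducedness constraint \eqref{eq: shifty} and the conditions $n,m\in\cG$ appearing in \eqref{eq: constraints numer in overlap est}, since they only restrict the count. I may also assume that $\cB_{\mathrm{loc}}(M;\bdel)$ meets $\cG$, so that \eqref{eq: lower bound on Delta} holds; in fact, combining \eqref{def: Delta} with $m\in\cG$ (which keeps $\psi(\hat m)$ from being too small) and with $\bdel\in\cW(M)$ shows that $\Del$ is bounded below by a fixed positive power of $N$, a fact I would use repeatedly.

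First I would count, for each fixed admissible pair $(n,m)$, the number of admissible $(a,b)\in\bZ^2$. Writing $d = \gcd(\un,\um)$ and $\un = du$, $\um = dv$ with $\gcd(u,v)=1$, the admissible $(a,b)$ are exactly the lattice points of $\bZ^2$ lying in $B\cap S$, where $B$ is the box defined by the membership conditions $\tfrac{a+\gam}{\un},\tfrac{b+\gam}{\um}\in\cI$ (side lengths $\asymp\mu(\cI)\un$ and $\asymp\mu(\cI)\um$) and $S$ is the slab $|\um a - \un b - (\un-\um)\gam|\le\Del$ from \eqref{eq: overlap inequality}. Since $\um a - \un b = d(va-ub)$, each line $\{va - ub = c\}$ ($c\in\bZ$) lies entirely on one side of $S$ or the other, and it carries a rank-one sublattice of $\bZ^2$ generated by the primitive vector $(u,v)$, of length $\asymp\un/d$. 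Hence the number of such lines meeting $B\cap S$ is $O(1 + d^{-1}\min(\Del,\mu(\cI)\un\um))$, on each of which $B$ contains $O(1+\mu(\cI)d)$ lattice points, and so (invoking Theorem \ref{DavenportBound} if one wants a clean statement) the number of admissible $(a,b)$ is
\[
O\!\left(\Big(1 + \frac{\min(\Del,\,\mu(\cI)\un\um)}{d}\Big)\big(1 + \mu(\cI)d\big)\right).
\]

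Then I would sum this over $n\in\cB_{\mathrm{loc}}(N;\brho)$, $m\in\cB_{\mathrm{loc}}(M;\bdel)$ with $d = \gcd(\un,\um)\le D_0:=\max(3\Del/\|q'_2\gam\|,N_1)$, grouped by the value of $d$. Expanding the product, the leading term $\mu(\cI)\min(\Del,\mu(\cI)\un\um)\le\mu(\cI)\Del$ is handled at once by the trivial bound $\#\{(n,m)\}\ll\rho N\del M$ from Lemma \ref{lem: size of localised Bohr sets}. For the remaining, $d$-dependent terms I would count $\#\{n\in\cB_{\mathrm{loc}}(N;\brho):d\mid\un\}$ and $\#\{m\in\cB_{\mathrm{loc}}(M;\bdel):d\mid\um\}$ by passing, via the outer structure Lemma \ref{lem: outer}, to generalised arithmetic progressions with coprime steps and applying Lemma \ref{lem: divisible numbers in GAPS}. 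Two features make the bookkeeping work: the artificial divisors $4^{f(\cdot)}$ force $d\gg 4^{f(M)}$, which controls the $\sum_d d^{-1}$- and $\sum_d d^{-2}$-type sums (the attendant factors $4^{O(f(\cdot))}$ being only $N^{o(1)}$), and the lower bound on $\Del$ --- a fixed positive power of $N$ --- absorbs the logarithmic and $N^{-\Omega(\eps_0)}$-sized secondary terms. Assembling everything yields \eqref{eq: bound on DNM}, and the proposition follows from Lemma \ref{lem: overlaps and inequalities}.

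The main obstacle is the portion of the range where $d$ is comparatively large. If $D_0 = 3\Del/\|q'_2\gam\|\asymp\Del$, the crude bound $\sum_{n,m}d\le D_0\,\#\{(n,m)\}\ll\Del\rho N\del M$ already suffices for the $\mu(\cI)d$-term; but if $D_0 = N_1$, which may be as large as $N^{1/k+o(1)}$, while $\Del$ is only known to be a small positive power of $N$, then this is too weak, and one must genuinely extract the $d^{-2}$-saving from the joint divisibility count and bound $\sum_{d\le N_1}d\,\#\{(n,m):\gcd(\un,\um)=d\}$. Whether this succeeds comes down to the interplay between $N_1\ll N^{1/k+o(1)}$, the power saving $N^{-\Omega(\eps_0)}$ coming from $\min_i N_i$ in Lemma \ref{lem: divisible numbers in GAPS}, and the lower bound on $\Del$; getting this balance right is the crux of the argument and is what constrains the admissible range of $\eps_0$.
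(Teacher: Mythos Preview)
Your reduction to \eqref{eq: bound on DNM} via Lemma~\ref{lem: overlaps and inequalities}, your count of pairs $(a,b)$ for fixed $(n,m)$, and your treatment of the range $d\ll\Del$ are all essentially the paper's Case~1. Two points, however, need correction.

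First, a small imprecision: the lower bound \eqref{eq: lower bound on Delta} gives $\Del > M^{(k-1)\eps_0}$, a fixed power of $M$, not of $N$. (There is a separate lower bound $\Del \gg \widehat{CN}\psi(\uM)/\del$ that is large when $N\gg M$, but neither bound on its own is ``a fixed power of $N$''.)

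Second, and this is the genuine gap: your plan for the range $3\Del/\|q'_2\gam\|\le d\le N_1$ --- to ``extract the $d^{-2}$ saving from the joint divisibility count'' --- does not succeed. Lemma~\ref{lem: divisible numbers in GAPS} applied to the $M$-side outer structure gives
\[
\#\{m\in\cB_{\mathrm{loc}}(M;\bdel):d\mid\um\}\ll\del\uM\bigl(d^{-1}+M_1^{-1}\bigr),
\]
where $M_1:=\min_i M_i\ge M^{20k\eps_0}$. Once $d>M_1$ this yields only $\del\uM/M_1$, not $\del\uM/d$, and since $N_1$ can be as large as $(\rho\uN)^{1/k}$ while $M_1$ can be as small as $M^{20k\eps_0}$, the contribution $\sum_{M_1<d\le N_1}d\cdot(\rho\uN/d)(\del\uM/M_1)\asymp\rho\del\uN\uM\,N_1/M_1$ is \emph{not} bounded by $\rho\del NM\Del$ for any admissible $\eps_0$ (for instance when $N\asymp M$ and $k=2$ one would need $N^{1/2-40\eps_0}\ll N^{\eps_0}$, which fails since $\eps_0<1/198$). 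No rebalancing of $\eps_0$ fixes this.

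The paper handles this range by an asymmetric trick you do not mention: rather than summing over $d$, one first chooses $m\in\cB_{\mathrm{loc}}(M;\bdel)$ in $O(\del M)$ ways, then observes that $d$ must divide $\um$, so by the divisor bound there are only $\tau(\um)=M^{o(1)}$ choices for $d$. For each such $d\le N_1$ one then counts $n$ with $d\mid\un$ via Lemma~\ref{lem: divisible numbers in GAPS}, getting $O(\rho\uN/d)$, and the factors of $d$ cancel against the $(a,b)$-count $O(d\mu(\cI))$. The resulting bound $\del M\cdot M^{o(1)}\cdot\rho\uN\mu(\cI)$ is absorbed by $\Del$ using both of its lower bounds. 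This avoids ever needing a divisibility saving on the $m$-side.
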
 

\begin{proof}
By Lemma \ref{lem: overlaps and inequalities},
it remains to verify \eqref{eq: bound on DNM}. 
We distinguish several cases according to the size of
the of the greatest common divisor $d :=(\un,\um)$. Put
\[
\qquad \un = dx \quad \mathrm{and} \quad \um = dy.
\]
As $f$ is non-decreasing, we have
$$ 
d = (\un,\um) = (4^{f(n)}n,4^{f(m)}m) \geqslant 4^{f(M)}
\geqslant \mu({\set I})^{-1},
$$
where the last inequality comes from $M$ 
being large in terms of ${\set  I}$. 
We rewrite the overlap 
inequality \eqref{eq: overlap inequality} 
as
\begin{equation}\label{eq: third Bohr set}
|(x-y)\gam - (ya - xb)| \le \Del/d.
\end{equation}

Recall that $(q'_\ell)_\ell$ is
the sequence of continued fraction denominators of $\gam$.

\underline{Case 1: $\mu({\set I})^{-1}\le d \le 3\Del/\|q'_2 \gam \|$}

By Lemma \ref{lem: size of localised Bohr sets}, there are $O(\rho N \del M)$ possible choices of 
$n \in {\set B}_{\mathrm{loc}}(N; \brho)$
and $m\in {\set B}_{\mathrm{loc}}(M; \bdel)$.
Now suppose we are given $n$ and $m$, which then uniquely determine $d,x,y$. 

\underline{Case 1a: $\Del < \un/4$}

Since $x$ and $y$ are coprime, 
the inequality \eqref{eq: third Bohr set} 
restricts the integer $a$ to one of at most 
$2\Del/d + 1$ residue classes modulo $x$. 
Furthermore, the constraint $a+\gam \in \hat n {\set I}$ 
places the integer $a$ in an interval 
of length $dx \mu({\set I})$. Within this interval, 
and given $r \in \bZ / x \bZ$, there can be at most
$d \mu({\set I}) +1$ integers $a \equiv r \mmod x$. 
Thus, given $n$ and $m$, there are
at most 
\[
(d\mu({\set I})+1)(2\Del/d +1) \ll \mu({\set I}) \Del
\]
possibilities for $a$. Finally, the 
inequality \eqref{eq: third Bohr set} 
constrains the integer $b$ to an interval 
of length at most $2\Del/\hat n < 1/2$, so $b$ 
(if it exists at all) is determined by the other variables.

\underline{Case 1b: $\Del \ge \un/4$}

We may suppose that \eqref{eq: overlap inequality} has a solution $(a,b) = (a_0, b_0)$ with $a_0 + \gam \in n \cI$. By the triangle inequality, any solution to \eqref{eq: overlap inequality} for which $a+\gam \in n \cI$ satisfies
\begin{equation} \label{eq: region}
| \hat m a' - \hat n b'| \le 2\Del, \qquad a' \in \un (\cI - \cI),
\end{equation}
where 
\[
a' = a - a_0, \qquad b' = b-b_0,
\qquad \cI - \cI = \{ r_1 - r_2: r_1, r_2 \in \cI \}.
\]
Denote by $\cR$ the closure of the set of $(a', b') \in \bR^2$ satisfying \eqref{eq: region}. We apply Theorem \ref{DavenportBound} to the region $\cR$ and the lattice $\bZ^2$. Observe using the triangle inequality that $\cR$ is contained in the rectangle
\[
\left \{ (a',b') \in \bR^2: a' \in \un(\cI - \cI),
\quad |b'| \le \frac{2\Del}\un + 2\mu(\cI) \um
\right \},
\]
and in particular the projection of $\cR$ onto any line has length at most
\[
2 \mu(\cI) \un + \frac{4\Del}\un + 4\mu(\cI) \um \ll \un \mu(\cI),
\]
where for the final inequality we have used that $\psi(\hat N), \psi(\hat M) \le \psi(1) \ll 1$, as well as that $\brho \in \cW(N)$ and $\bdel \in \cW(M)$.
The area of $\cR$ is $O(\Del \mu(\cI))$, and we glean
that the number of solutions is \[
O(\Del \mu(\cI) + \un \mu(\cI) + 1) = O(\Del \mu(\cI)).
\]

We conclude that there are 
$O(\mu({\set I}) \rho N \del M \Del)$ 
solutions in total coming from Case 1,
uniformly in $\brho \in \Rone$
and $\bdel \in {\set W}(M)$.

\underline{Case 2: $3\Del/\|q'_2 \gam \| \le d \le N_1$}

By Lemma \ref{lem: size of localised Bohr sets}, there are $O(\del M)$ possibilities for 
$m \in {\set B}_{\mathrm{loc}}(M; \bdel)$. 
Next, we choose $d \in [\Del, N_1]$ dividing $m$, in one of $M^{o(1)}$ ways. By Lemma \ref{lem: outer}, we have
\[
dx = \un = A_1 n_1 + \cdots + A_k n_k + s,
\]
where $|n_i| \le N_i$ ($1 \le i \le k$) and
\[
N_1 \cdots N_k \asymp \rho \uN.
\]
As $d \le N_1$, wherein we recall \eqref{eq: N1min}, Lemma \ref{lem: divisible numbers in GAPS}
assures us that the congruence
\[
A_1 n_1 + \cdots + A_k n_k + s \equiv 0 \mmod d
\]
has $O(\rho \hat N / d)$ solutions $n_1,\ldots,n_k$, and these variables determine $x$.

Now suppose that we are given $d,x,y \in \bN$ with $d \ge \Del$ and $\gcd(x,y) = 1$. Then, as $d \ge \Del$, by the same argument as in Case 1a there are at most
\[
(d \mu(\cI) + 1)(2\Del/d + 1) \ll d \mu(\cI)
\]
many choices for the pair $(a,b) \in \bZ^2$ subject to \eqref{eq: third Bohr set} as well as  $a+\gam \in \hat n \cI$. In view of \eqref{eq: lower bound on Delta}, the total is again $O(\mu({\set I})\rho N \del M \Del)$. 

We have established \eqref{eq: bound on DNM}, completing the proof.
\end{proof}

\subsection{Large GCDs}

Let $M$ be large, $N \ge M$, $\brho \in \cW(N)$, and $\bdel \in \cW(M)$.
On account of the preceding subsections, our final sub-task for this section is to establish
\eqref{eq: gcd large}. We will choose $\eps_0$ and $\eta$ according to how well $\gam$ can be approximated by rationals.

\subsubsection{Diophantine final shift}\label{subsection: Diophantine shifts}

Throughout the present subsubsection, the final shift $\gamma \in \bR$ is diophantine,
i.e. there exists $\lam \ge 2$ such that 
\begin{equation}\label{eq: non-Liouville gamma}
\| n \gamma \| \gg n^{-\lam/2} \qquad (n \in \bN).
\end{equation}
This is equivalent to $\gam$ being irrational and non-Liouville. We fix such a value of $\lam$ throughout the current subsubsection.

The approximation sets are as in 
\eqref{def: approximation sets for diophantine shifts},
however now we are more specific about $\eps_0$ and $\eta$. Let $\tilde \eps$ be the minimum of its values when Lemmata \ref{lem: inner} and \ref{lem: outer} are applied with $\vartheta = 20k$, and fix
a small positive real number $\eps_{0}$ such that
\begin{equation}\label{def: eps}
1000 k^2 \eps_{0} < \min\{ \lam^{-2}, \tilde{\eps}\}.
\end{equation}
In this diophantine case, the value of $\eta$ is of no importance, and we arbitrarily choose $\eta = 1/2$.\\

Recall that $D_{\gcd >}$ denotes the number of quadruples 
$(m,n,a,b) \in \bN^4$ satisfying
\[
\frac{a+\gam}{\un}, \frac{b+\gam}{\um}\in {\set  I},
\quad
n \in {\set B}_{\mathrm{loc}}(N; \brho) \cap {\set G}, 
\quad
m\in {\set B}_{\mathrm{loc}}(M; \bdel) 
\cap {\set G} \setminus \{ n \},
\]
as well as $d:= \gcd(\un,\um) > \max \{ 3\Del/\|q'_2 \gam \|, N_1 \}$, \eqref{eq: overlap inequality}, and \eqref{eq: shifty}.
Put $\un = dx$, $\um = dy$, and note that 
\eqref{eq: overlap inequality} entails 
\eqref{eq: third Bohr set} and hence
\begin{equation} \label{ReducedBohr}
\| (x-y) \gam \| < \Del/d \quad 
\mathrm{and} \quad |x-y| < \widehat{CN}/d.
\end{equation}

Combining the 
diophantine assumption \eqref{eq: non-Liouville gamma}
with \eqref{ReducedBohr} yields
\[
\Del / d \gg \vert x-y\vert^{-\lam} \gg (\uN/d)^{-\lam},
\]
and hence 
\begin{equation} \label{eq: d_upper} d \le \Del^{1/(1+\lam)} N^{\lam/(1+\lam)+o(1)}.
\end{equation}
We enlarge the Bohr set implicit in \eqref{ReducedBohr} to
\[
{\set B}'= \Big\{ u \le \frac{\widehat{CN}}{d}:  \| u\gam \| \le
\Big(\frac{\uN}{d}\Big)^{-1/\lam}
+ \frac{\Delta}{d} 
\Big\},
\]
observing that $u := |x-y| \in \cB'$. 

We claim that
\begin{equation} \label{eq: glean}
\# {\set B}' \le N^{o(1)}\Big( \Big(\frac{N}{d}\Big)^{1- \frac{1}{ \lam}} 
+ \frac{\Delta N}{d^2}\Big).
\end{equation}
This is clearly true if $d \gg \hat N$, so let us now assume that $d \le c \hat N$ for some small constant $c > 0$. By \eqref{eq: non-Liouville gamma}, we have
\[
\ome(\gam) \le \lam/2,
\]
so by Lemma \ref{BHV11} there exists $\ell \in \bN$ such that
\[
(\hat N/d)^{1/\lam} \le q'_\ell \le \hat N/d.
\]
Thus, we may apply Lemma \ref{lem: BHV size bound for Bohr sets} to the enlarged Bohr set $\cB'$, reaping \eqref{eq: glean}.

We begin by choosing $m \in {\set B}_{\mathrm{loc}}(M;\bdel)$, and by Lemma \ref{lem: size of localised Bohr sets} there are at most $O(\del M)$ choices. Next, we choose $d \mid m$ with $d > \max \{ 3\Del/\|q'_2 \gam \|, N_1\}$, and by the standard divisor function bound there are $M^{o(1)}$ possible choices of $d$.

Given an element $u$ from the Bohr set $\cB'$
as well as a choice of $y$, 
the value of $x$ is then determined in at most two ways.
Furthermore, we claim that for each choice
of $x,y$ the number of possible choices 
of $a,b$ is $O(d \mu (\cI))$. To see this, let $v$ be the integer closest to
$(x-y)\gam$, and note from \eqref{eq: third Bohr set} that $a,b$ must satisfy $ya-xb = v$.
All integer solutions to this linear diophantine equation have the form 
\[
a= a_0 + tx, \qquad b=b_0 +ty,
\]
for a specific solution $a_0,b_0$ and a parameter $t\in \bZ$. 
Therefore the number of such $a,b$ constrained by
$a+\gam \in \un \cI$ is 
\[
O\left(
\frac{\un\mu(\cI)}{x} + 1 \right) = O(d \mu (\cI)).
\]
Hence, by \eqref{eq: glean}, 
the number of choices for $u,a,b$ is
at most 
\begin{align*}
N^{o(1)} \mu({\set I})
(  d^{1/\lam} N^{1-1/\lam} 
+ \Del N /d )
\end{align*}
which, by \eqref{eq: d_upper} and the inequality $d > N_1$, is at most
\begin{align} \label{eq: bound on Bohr set in high gcd case}
N^{o(1)} \mu({\set  I})
(\Del^{1/(\lam + \lam^2)} 
N^{1/(1+\lam) +1 - 1/\lam}
+ \Del N/N_1).
\end{align} 

The contribution to $D_{\gcd >}$ coming from the second term
in \eqref{eq: bound on Bohr set in high gcd case} is at most
\begin{align*}
\del MN^{o(1)}\mu(\cI)  \Del N/ N_1
& \le \mu(\cI) \del MN^{1+o(1)-20k \eps_0} \Del
\leq \mu({\set  I}) \rho \del MN \Del.
\end{align*}
Finally, the contribution to $D_{\gcd >}$ corresponding 
to the first term on the right hand side of \eqref{eq: bound on Bohr set in high gcd case} 
is at most
$$
\del 
M \mu({\set I}) \Big(\frac{\Del}{N}\Big)^{\frac{1}{ \lam +  \lam^2}} N^{1+o(1)}.
$$ 
This quantity being at most $ \mu({\set I}) \rho \del MN \Del$ is equivalent to
\begin{equation}\label{eq: alternative inequality to verify for large gcd}
\rho \Del^{1- 1/( \lam +  \lam^2)}
N^{1/( \lam +  \lam^2)} \ge N^{o(1)}.
\end{equation}
To confirm \eqref{eq: alternative inequality to verify for large gcd}, we first recall from
 \eqref{eq: lower bound on Delta} that
$\Del > 1$. Next, the fact that  $\brho \in {\set W}(N)$, together with the bound \eqref{def: eps} on $\eps_0$, give
\[
\rho \ge N^{-5k \eps_0} \geq N^{-1/(9\lam^2)}.
\]
These considerations bestow \eqref{eq: alternative inequality to verify for large gcd} and thence \eqref{eq: gcd large}.\\

We now prove Theorems \ref{thm2} and \ref{thm: SpecialCaseWeak} in the remaining situation $\gam \in \bQ \cup {\set L}$, recalling that
it suffices to establish
\eqref{eq: gcd large}. The underpinning mechanisms are easier to grasp 
when $\gam$ is rational, so we begin with this case.

\subsubsection{Rational final shift}\label{subsection: rational shift Gallagher}

Throughout this subsubsection we assume that $\gam$ 
is a rational number, given in lowest terms by
$\gam = c_0/d_0$. We continue to use the notation introduced 
in the previous subsubsection,
and alert the reader to any deviation that we make.
Let $\tilde \eps$ be the minimum of its values when Lemmata \ref{lem: inner} and \ref{lem: outer} are applied with $\vartheta = 20k$, and fix
a small positive real number $\eps_{0} < \min \{ \tilde \eps, (99k)^{-1} \}$. 

In this case $\eta$ is again of little importance, and so we once more choose $\eta = 1/2$ arbitrarily. Observe that if $n$ is large then $c_t = c_0$ and $q'_t = d_0$, whereupon
${\set E}_n = {\set E}_n^{{\set I},\gam}$ is the set 
of $\alpha \in [0,1]$ for which
there is an integer $a$ satisfying
\begin{equation}\label{def: approximation sets rational case}
\vert \hat n \alpha - \gamma - a \vert < \Psi(n),
\quad 
\frac{a+\gamma}{\hat n}\in {\set I},
\quad 
\mathrm{and} 
\quad
(d_0a + c_0, \hat n) = 1.
\end{equation}

The overlap inequality \eqref{eq: overlap inequality} is equivalent to 
$$
\vert c_0 (\un - \um ) - d_0 (\um a - \un b)\vert \le d_0 \Del. 
$$
Dividing the above inequality by $d$ gives
\[
\vert c_0 (x - y ) - d_0 (y a - x b)\vert \le \frac{\Del d_0}{d}.
\]

Assume for a contradiction that $c_0 (x - y ) = d_0 (y a - x b)$. Then
\[
x(c_0 + d_0 b) = y (c_0 + d_0 a).
\]
As $n \ne m$, we have $\max \{x,y\} \ge 2$. Let us assume for simplicity that $x \ge 2$; a similar argument handles the case $y \ge 2$. Let $p$ be a prime divisor of $x$.
By Euclid's lemma $p$ divides $y$ or $c_0 + d_0 a$. 
The former is excluded by the coprimality of $x$ and $y$, and the 
latter by the shift-reduction. This contradiction means that
\[
1 \le \vert c_0 (x - y ) - d_0 (y a - x b)\vert \le \frac{\Del d_0}{d},
\]
so $d \le d_0 \Del$.

On the other hand, we have $d> \Del$ by assumption. So let us fix a non-zero integer $h\in[-d_0, d_0]$ and count integer
quadruples $(n,m,a,b)$ with $n\neq m$ satisfying  
\begin{equation}\label{eq: overlap rational shift fixed integ}
c_0 (x - y ) - d_0 (y a - x b) = h
\end{equation}
as well as the 
constraints \eqref{eq: constraints numer in overlap est}.
There are $O(\rho N \del M)$
many viable choices for the two integers $n\neq m$.
Then by \eqref{eq: overlap rational shift fixed integ}
there are $O(\mu({\set I})d)=O(\Del \mu(\cI))$ many choices
for the numerators $a,b$. Summing this bound over the $O(1)$ many choices for $h$
verifies \eqref{eq: gcd large}.\\

It remains to consider the case in which $\gam$
is a Liouville number. Informally,
this is a careful interpolation
between the diophantine 
and rational cases.

\subsubsection{Liouville final shift}\label{subsection: Liouville shift Gallagher}

Throughout the present subsubsection, we 
fix a Liouville number $\gam$. In a nutshell,
we need to find an way to balance between the regimes in which $\gam$ 
behaves like a rational number, and those in which $\gam$ 
behaves like a diophantine number.
This is incarnated in the definition
of the approximation sets, and shift-reduced fractions play a crucial role. 

Fix a non-empty interval ${\set I}$ in $[0,1]$, and a large positive constant
$C$. 
Further, let 
\begin{equation}\label{def: choice of eta}
\eta = 9(k-1)\eps_0.
\end{equation}
Here, as before, the positive constant $\eps_0$ is at most the lower value of $\tilde \eps$ when Lemmata \ref{lem: inner} and \ref{lem: outer} 
are applied with $\vartheta = 20k$, and moreover $\eps < (99k)^{-1}$. Recall that the approximation set
${\set E}_n = {\set E}_n^{{\set I},\gam}$
is the set of
$\alpha \in [0,1]$ for which
there exists $a \in \bZ$ satisfying 
\begin{equation}
\label{eq: nga}
\vert \hat n \alpha - \gamma - a \vert < \Psi(n)
\end{equation}
and
\begin{equation}\label{def: approximation sets Liouville case}
\frac{a+\gamma}{\hat n}\in {\set I},
\,\,
\mathrm{and\, the\, pair} 
\,\,
(a,\un)\,\, \mathrm{is}\,\,
(\gam,\eta)-\text{shift-reduced}.
\end{equation}
As in the previous subsubsections, we write $d= \gcd (\un, \um)$, $dx = \hat n$, $dy = \hat m$. 

\bigskip

Recall that $q'_t$ is the greatest continued fraction denominator of $\gam$ not exceeding $n^\eta$, and that $c_t$ is the corresponding numerator. We separate our argument according to the size of the subsequent denominator $q'_{t+1}$.

\underline{Case 1: $q'_{t+1}\geq 10\widehat{CN}/d$}

This inequality and the continued fraction approximation
\eqref{eq: standard estimate} yield 
$$
\left \vert \gam - \frac{\numgam}{\demgam} \right \vert
\leq \frac{d}{10 \demgam \widehat{CN}}.
$$
Moreover, the inequality \eqref{eq: overlap inequality}
can be written in the form 
\[
\left \vert  \un (b+ \gam) 
- \um (a+\gam)
\right \vert \leq \Del,
\]
or equivalently
$$
\vert  x (\demgam b+ \demgam\gam)
- y (\demgam a+\demgam \gam) \vert 
\leq \frac{\demgam \Del}{d}. 
$$
Note that 
\[
x \vert \demgam  \gam  -  \numgam \vert 
\leq x \frac d{10 \widehat{CN}} \leq \frac1{10},
\qquad
y \vert \demgam \gam - \numgam  \vert 
\leq 
\frac{\widehat{CM}}{d} \: \frac d{10 \widehat{CN}} \le \frac1{10},
\]
wherefore
\[
\vert x (\demgam b + \numgam)  - y (\demgam a + \numgam ) \vert 
\leq \frac{\demgam \Del}{d} + \frac{1}{5}.
\]

Recall from \eqref{eq: shifty} that the pairs $(a,\un)$ and $(b,\um)$ are $(\gam,\eta)$-shift-reduced, from which we now deduce that
$x (\demgam b + \numgam) - y (\demgam a + \numgam )$ is a non-zero integer.
Indeed, suppose for a contradiction that $x (\demgam b + \numgam) = y (\demgam a + \numgam )$. As $n \ne m$, we have $\max \{x,y\} \ge 2$. Let us assume for simplicity that $x \ge 2$; a similar argument handles the case $y \ge 2$. Let $p$ be a prime divisor of $x$.
Then $p$ divides $y$ or $q_t'a + c_t$. The former is excluded by the coprimality of $x$ and $y$, and the 
latter by the shift-reduction. This contradiction means that
\[
1\leq 
\vert x (\demgam b + \numgam) - y (\demgam a + \numgam ) \vert 
\leq \frac{\demgam \Del}{d} + \frac{1}{5},
\]
and consequently $d\leq 2 \demgam \Del$.
The upshot is that we are in the rather special scenario that 
$\Del \leq d \leq 2 \demgam \Del$. 

Setting
$h=  x (\demgam b + \numgam) - y (\demgam a + \numgam )$,
we see that there 
are $O(\demgam)$ many realisable values for the integer $h$. 
Now we are in the position to derive an acceptable
bound on the contribution from this case to $D_{\gcd >}$. There are $O(\del M)$
many options for $m$ and at most
$N^{o(1)}$ many ways to choose $d >N_1$ dividing $\um$.
It follows from
Lemmata \ref{lem: outer} and \ref{lem: divisible numbers in GAPS} 
that there are
$$ 
O\Big( \rho \widehat{CN} \Big( \frac{1}{d} + \frac{1}{N_1} \Big) \Big)
\le \frac{\rho N^{1+o(1)}}{N_1}  
$$ many choices for $n \in \cB_{\mathrm {loc}}(N;\brho)$ divisible by $d$.
For each choice of $h \ll q'_t$,
the parameters $a,b$ are determined up to 
$O(\mu({\set I})d)= O(\mu({\set I}) \demgam \Del)$ 
many possibilities. 
By summing over all 
choices of $h$, we obtain
\[
D_{\gcd >} = O\left(\del M  \frac{\rho N^{1+o(1)}}{N_1} (\demgam)^2 \mu({\set I}) \Del \right).
\]
Since $(\demgam)^2 N^{o(1)} \leq N^{2\eta + o(1)} < N^{20 (k-1) \eps_0}\leq N_1$,
this bound is acceptable.

\bigskip

\underline{Case 2: $q'_{t+1} < 10\widehat{CN}/d$}

By definition, we have
$q_{t+1}' > N^{\eta}$, and therefore
\begin{equation}\label{eq: range for d Liouville case}
d< 10 \widehat{CN} N^{-\eta}.
\end{equation}
Now, akin to the proof from Subsubsection \ref{subsection: Diophantine shifts},
we work with an enlarged Bohr set, namely
$$
{\set B} := \{u\leq 10 \widehat{C N}/d:
\Vert u \gam \Vert \le L \},
$$
where $L = L(d) = \max(\Del/d, N^{-\eta})$.

Note that $q'_{t+1} \in [1/(2L), 10 \widehat{CN}/d]$, where the lower bound comes from maximality in the definition of shift-reduction. The existence of a continued fraction denominator in this range is a key technical ingredient.
As $M$ is large and
$d > 3\Del/ \| q'_2 \gam \|$, we have
$L < \| q'_2 \gam \|/2$.
Therefore
Lemma \ref{lem: BHV size bound for Bohr sets} 
is applicable, and hence 
$$
\# {\set B} \ll \frac{N^{1+o(1)}}{d} L(d).
$$

First choose $d > \max(3\Del/ \| q'_2 \gam \|, N_1)$.
There are $O(\hat{M}/d)$ many conceivable options for $m$ divisible by $d$. Then $u = |x-y|$ lies in $\cB$, and therefore admits at most $\# \cB$ possibilities, and then
$x$ is determined up at most two choices. The number of viable choices 
of $a,b$ is then $O(d \mu ({\set I}))$. So, 
for a fixed choice of $d$, the number of valid
possibilities for $(m,n,a,b)$ is
at most
$$
\frac{M}{d} N^{o(1)}\frac{N}{d} L(d) d \mu ({\set I})
= N^{o(1)} \mu ({\set I}) MN \frac{L(d)}{d}.
$$
Upon summing over $d$, we infer that
$D_{\gcd > }$ is at most
\[
N^{o(1)} \mu ({\set I}) M N \sum_{\max(\Del, N_1)< d \le \widehat{CN}}\frac{L(d)}{d} .
\]

To conclude, it suffices to show that
for any $d$ in the range of interest we have
\begin{equation} \label{eq: WinnerWinnerChickenDinner}
N^{o(1)}L(d) \leq \Del \del \rho.
\end{equation}
If $L(d) = N^{-\eta}$ then
$$
\rho \del \geq N^{-4.2(k-1)\eps_0} N^{-4.2(k-1)\eps_0} 
= N^{-8.4 (k-1)\eps_0}  > N^{o(1)-\eta} = N^{o(1)} L(d).
$$
Since $\Del > 1$, from \eqref{eq: lower bound on Delta},
we conclude that $N^{o(1)}L(d) \leq \del \rho \Del$.
If on the other hand $L(d) = \Del/d$,
then
\[
d > N_1 \geq N^{20 k \eps_0}
\]
and therefore 
\[
L(d) N^{o(1)}/\Del = d^{-1}N^{o(1)} \leq N^{o(1) - 20 k \eps_{0}} \leq  \del \rho.
\]
We have \eqref{eq: WinnerWinnerChickenDinner} in both cases, completing the proofs of Theorems  \ref{thm2} and \ref{thm: SpecialCaseWeak}.

\subsection{A convergence statement}
\label{ConvergencePart}

In this subsection, we prove the convergence side of Corollary \ref{FIG}. Assume that
\begin{equation} \label{ConvergenceAssumption}
\sum_{n=1}^\infty \psi(n) (\log n)^{k-1} < \infty.
\end{equation}
Replacing $\psi(n)$ by $\max(\psi(n), n^{-2})$, we may suppose that 
\[
\psi(n) \ge n^{-2} \qquad (n \in \bN).
\]
We wish to show that $\mu_k(\cW^\times) = 0$. By 1-periodicity, we may assume that
\[
-1 < \gam_1,\ldots,\gam_k \le 0.
\]

We abbreviate $\tilde \balp = (\alp_1, \ldots, \alp_k)$ and $\balp = (\alp_1, \ldots, \alp_{k-1})$ for the remainder of this section.
Observe that $\cW^\times = \displaystyle \limsup_{n \to \infty} \cA_n$, where
\[
\cA_n = \{ \tilde \balp \in [0,1]^k:
\| n \alp_1 - \gam_1\| \cdots \| n\alp_k - \gam_k \| < \psi(n) \} \qquad (n \in \bN).
\]
For $\ba = (a_1,\ldots, a_k) \in \bZ^k$, let
\[
\cA_{n,\ba} = \left \{ \tilde \balp \in [0,1]^k:
| n \alp_k - \gam_k - a_k|
< \min (1, Q(\balp)) \right \},
\]
where
\[
Q(\balp) = Q(\balp; a_1,\ldots,a_{k-1})
= \frac{\psi(n)}
{\displaystyle \prod_{i \le k-1} | n \alp_i - \gam_i - a_i|}.
\]

\begin{lemma}
For $n \in \bN$, we have 
\[
\cA_n = \bigcup_{0 \le a_1, \ldots, a_k \le n+1}
\cA_{n,\ba}.
\]
\end{lemma}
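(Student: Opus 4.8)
The statement is an elementary set-theoretic identity, and I would prove it by double inclusion, the content being simply that one can partition the approximation condition according to which integers are the nearest integers to $n\alp_i - \gam_i$.

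First, for the inclusion $\cA_n \subseteq \bigcup \cA_{n,\ba}$: suppose $\tilde\balp \in \cA_n$, so $\| n\alp_1 - \gam_1 \| \cdots \| n\alp_k - \gam_k \| < \psi(n)$. For each $i \le k$, pick $a_i \in \bZ$ with $|n\alp_i - \gam_i - a_i| = \| n\alp_i - \gam_i \|$. Since $\alp_i \in [0,1]$ and $-1 < \gam_i \le 0$, we have $0 \le n\alp_i - \gam_i \le n+1$, so $a_i$ can be taken in $\{0,1,\ldots,n+1\}$ (the nearest integer to a real number in $[0,n+1]$ lies in that range). With these choices, $\prod_{i\le k-1}|n\alp_i - \gam_i - a_i| = \prod_{i \le k-1}\| n\alp_i - \gam_i\|$, and hence
\[
|n\alp_k - \gam_k - a_k| = \| n\alp_k - \gam_k\| = \frac{\prod_{i \le k}\| n\alp_i - \gam_i \|}{\prod_{i \le k-1}\| n\alp_i - \gam_i \|} < \frac{\psi(n)}{\prod_{i \le k-1}|n\alp_i - \gam_i - a_i|} = Q(\balp).
\]
Also $|n\alp_k - \gam_k - a_k| = \|n\alp_k - \gam_k\| \le 1/2 < 1$, so $\tilde\balp \in \cA_{n,\ba}$ for this choice of $\ba$. (One should note the harmless degenerate case where some $\|n\alp_i - \gam_i\| = 0$ for $i \le k-1$, which forces $\psi(n) > 0 = $ the left side, so $\tilde\balp$ lies in $\cA_n$ vacuously and also in $\cA_{n,\ba}$ since then $Q(\balp) = +\infty$ and the $\cA_{n,\ba}$ constraint reads $|n\alp_k-\gam_k-a_k| < 1$, satisfiable by choosing $a_k$ nearest.)

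For the reverse inclusion $\bigcup \cA_{n,\ba} \subseteq \cA_n$: suppose $\tilde\balp \in \cA_{n,\ba}$ for some $\ba$ with $0 \le a_i \le n+1$. Then $|n\alp_k - \gam_k - a_k| < \min(1, Q(\balp))$. Since this is $< 1$, we get $\|n\alp_k - \gam_k\| \le |n\alp_k - \gam_k - a_k| < Q(\balp)$, whence
\[
\|n\alp_1 - \gam_1\| \cdots \|n\alp_k - \gam_k\| \le \Big(\prod_{i \le k-1}|n\alp_i - \gam_i - a_i|\Big) \cdot |n\alp_k - \gam_k - a_k| < \Big(\prod_{i \le k-1}|n\alp_i - \gam_i - a_i|\Big) \cdot Q(\balp) = \psi(n),
\]
using $\|n\alp_i - \gam_i\| \le |n\alp_i - \gam_i - a_i|$ for each $i \le k-1$. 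Thus $\tilde\balp \in \cA_n$, and the two inclusions together give the claimed equality.

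\textbf{Main obstacle.} There is no real obstacle here; the only points requiring a moment's care are the bookkeeping with the shift range (verifying that the nearest-integer choices land in $\{0,\ldots,n+1\}$, which is exactly why the normalisation $-1 < \gam_i \le 0$ was imposed) and the truncation by $\min(1, Q(\balp))$, which is present precisely so that the coordinate $a_k$ genuinely realises the distance $\|n\alp_k - \gam_k\|$ in the forward direction while the bound $Q(\balp)$ does the work in the reverse direction. The degenerate vanishing-distance case should be mentioned but is immediate.
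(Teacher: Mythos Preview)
Your proof is correct and follows essentially the same double-inclusion argument as the paper: both directions proceed by choosing the nearest integers $a_i$, verifying they lie in $\{0,\ldots,n+1\}$ via the normalisation $-1<\gam_i\le 0$, and using $\|n\alp_i-\gam_i\|\le |n\alp_i-\gam_i-a_i|$ together with the definition of $Q(\balp)$. Your explicit treatment of the degenerate case where some $\|n\alp_i-\gam_i\|=0$ is a helpful addition that the paper leaves implicit.
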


\begin{proof} First suppose $\tilde \balp \in \cA_{n,\ba}$, for some $\ba \in \{0,1,\ldots,n+1\}^k$. Then
\[
\prod_{i \le k} \| n \alp_i - \gam_i \| \le \prod_{i \le k} | n \alp_i - \gam_i - a_i| < \psi(n),
\]
and so $\tilde \balp \in \cA_n$.
Therefore $\displaystyle \bigcup_{0 \le a_1, \ldots, a_k \le n+1}
\cA_{n,\ba} \subseteq \cA_n$.

Next, suppose $\tilde \balp \in \cA_n$, and for $i = 1,2,\ldots,k$ let $a_i$ be an integer for which
$\| n \alp_i - \gam_i \| = |n \alp_i - \gam_i - a_i|$. Now $|n \alp_k - \gam_k - a_k| < 1$ and $|n \alp_k - \gam_k - a_k| < Q(\balp)$. Moreover, the triangle inequality yields
\[
-1/2 \le -\gam_i - 1/2 \le a_i \le n - \gam_i + 1/2 \le n + 3/2 \qquad (1 \le i \le k),
\]
and therefore
$\cA_n \subseteq \displaystyle \bigcup_{0 \le a_1, \ldots, a_k \le n+1}
\cA_{n,\ba}$.
\end{proof}

By the union bound, if $n \in \bN$ then
\[
\mu_k(\cA_n) \le \sum_{a_1,\ldots,a_k = 0}^{n+1} \mu_k(\cA_{n,\ba}).
\]
Further, if $n \in \bN$ and $0 \le a_1,\ldots,a_k \le n+1$ then
\begin{align*}
\mu_k(\cA_{n,\ba})
&\ll n^{-1} 
\int_{[0,1]^{k-1}} \min (1, Q(\balp)) \d \balp \le n^{-1} (I_1 + I_2),
\end{align*}
where
\[
I_1 = \mu_{k-1}
(\cR), \qquad
\cR = \{ \balp \in [0,1]^{k-1}:
\displaystyle \min_{1\le i \le k-1} |n \alp_i - \gam_i - a_i| \le n^{-k} \},
\]
and
\[
I_2 =
\int_{ [0,1]^{k-1} \setminus \cR} Q(\balp) \d \balp.
\]
Here $\mu_{k-1}$ denotes $(k-1)$-dimensional Lebesgue measure. 

We have
$I_1 \ll n^{-1-k}$ by inspection. 
To estimate $I_2$, we cover
$
[0,1]^{k-1} \setminus \cR 
$
by $O((\log n)^{k-1})$ dyadically-restricted regions
\[
\{ \balp \in [0,1]^{k-1}:
 \del_i < |n \alp_i - \gam_i - a_i| \le 2 \del_i \: (1 \le i \le k-1) \}.
\]
The integral of $Q$ over such a region is $O(\psi(n)/n^{k-1})$, and so
\[
I_2 \ll \frac{\psi(n) (\log n)^{k-1}}{n^{k-1}}.
\]

Recalling that $\psi(n) \ge n^{-2}$, we therefore have
\[
\mu_k(\cA_{n,\ba}) \ll \frac{I_1 + I_2}n \ll \frac{\psi(n) (\log n)^{k-1}}{n^k}
\qquad (0 \le a_1,\ldots,a_k \le n+1),
\]
and finally
\[
\mu_k(\cA_n) \ll \psi(n) (\log n)^{k-1}.
\]
In view of \eqref{ConvergenceAssumption}, we now have
\[
\sum_{n=1}^\infty \mu_k(\cA_n) < \infty,
\]
and the first Borel--Cantelli lemma completes the proof that $\mu_k(\cW^\times) = 0$.

\section{Liouville fibres}\label{Sec: Gallagher on Liouville fibres}

In this section, we establish Theorem \ref{thm: Liouville}.

\subsection{A special case}

For expository purposes, we begin with the `partially homogeneous' case $\gam_2 =0$. Here it is simplest to invoke the resolution of the Duffin--Schaeffer conjecture by Koukoulopoulos and Maynard, though this is by no means an essential ingredient. By Theorem \ref{DS}, it suffices to show that
\[
\sum_{n =1}^\infty \frac{\varphi(n)}n \: \cdot \: \frac1 {n (\log n)^2 \| n \alp - \gam \|} = \infty,
\]
where $\alp = \alp_1$ and $\gam = \gam_1$.

By Lemma \ref{BHV11}, there are infinitely many positive integers such that $q_t > q_{t-1}^{9}$, where the $q_\ell$ are continued fraction denominators of $\alp$. Let $\cT$ be a sparse, infinite set of integers $t \ge 9$ for which $q_t > q_{t-1}^{9}$. 

Given $t \in \cT$, we begin by using Theorem \ref{LargestGap}, with 
\[
m = q_t = a_t q_{t-1} + q_{t-2} + 0,
\]
to find a small positive integer $b_t$ such that $\| b_t \alp - \gam \|$ is small. Recall that this concerns the gaps $d_{i+1} - d_i$, where
\[
\{ d_1, \ldots, d_m \} = \{ i \alp - \lfloor i \alp \rfloor: 0 \le i \le m \}
, \qquad
0 = d_0 < \cdots < d_{m+1} = 1.
\]
By \eqref{eq: standard estimate}, Theorem \ref{LargestGap} tells us that
\[
\max \{ d_{i+1} - d_i: 0 \le i \le m \} \le  |D_t| + |D_{t-1}| \le 2/q_t,
\]
where we have employed the standard notation \eqref{def: Dk}. For some $i \in \{ 0,1,\ldots,m \}$, the fractional part of $\gam$ lies in $[d_i, d_{i+1}]$, and for some $b_t \in \{1,2,\ldots,m \}$ the fractional part of $b_t \alp$ is either $d_i$ or $d_{i+1}$. The upshot is that we have a positive integer $b_t \le q_t$ such that $\| b_t \alp - \gam \| \le 2/q_t$.

For $t \in \cT$, denote by $\cD_t$ the set of integers of the form
\[
n = b_t + q_{t-1}x + q_t y,
\]
where $x,y \in \bZ$ satisfy 
\[
q_t^{1/4} \le y \le q_t^{1/3},
\qquad
q_t^{2/3} \le x \le q_t^{3/4}.
\]
If $n$ is as above then
\[
n \asymp q_t y, \qquad \log n \asymp \log q_t, \qquad \| n \alp - \gam \| \asymp \frac {x}{q_t}.
\]
Indeed, for the final estimate, observe using \eqref{eq: standard estimate} that
\[
\| b_t \alp - \gam \| \le \frac{2}{q_t},
\qquad 
\| x q_{t-1} \alp \| \asymp \frac x {q_t}, 
\qquad 
\| y q_t \alp \| \ll \frac {y}{q_t},
\]
and apply the triangle inequality. The sets $\cD_t$ ($t \in \cT$) are disjoint, since $\cT$ is sparse. Moreover, for $n \in \cT$ the representation above is unique, since $q_{t-1} q_t^{3/4} < q_t$. Fix $t \in \cT$, and let
\[
S_t = \sum_{n \in \cD_t} \frac{\varphi(n)}{n} v_n,
\]
where $v_n^{-1} =  n(\log n)^2 \| n \alp - \gam \|$. As $\cT$ is infinite, it suffices to prove that $S_t \gg 1$. Note that in this section our implicit constants do not depend on $t$.

As a first step, we compute that
\[
\sum_{n \in \cD_t} v_n \gg (\log q_t)^{-2} \sum_{\substack{q_t^{1/4} \le y \le q_t^{1/3} \\
q_t^{2/3} \le x \le q_t^{3/4}}}  (xy)^{-1} \gg 1.
\]
Let $C_t = \left( \sum_{n \in \cD_t} v_n \right)^{-1}$, and for $n \in \cD_t$ let $w_n = C_t v_n \ll v_n$, so that we have $\sum_{n \in \cD_t} w_n = 1$. The weighted AM--GM inequality gives
\[
S_t \gg \sum_{n \in \cD_t} w_n \varphi(n)/n \ge \prod_p (1-1/p)^{\tau_p},
\]
where
\[
\tau_p = \sum_{\substack{n \in \cD_t \\ n\equiv 0 \mmod p}} w_n.
\]
Now
\[
-\ln S_t \le O(1) - \sum_p \tau_p \ln(1-1/p) \ll 1 + \sum_p \tau_p/p, 
\]
so it remains to show that
\begin{equation} \label{eq: SumOverPrimes}
\sum_p \tau_p/p < \infty.
\end{equation}

Let $p$ be prime, and let $X,Y$ be parameters in the ranges
\[
q_t^{1/4} \le Y \le q_t^{1/3}/2,
\qquad
q_t^{2/3} \le X \le q_t^{3/4}/2.
\]
Denote by $N_p(X,Y)$ the number of integer solutions $(x,y) \in [X,2X] \times [Y,2Y]$ to
\[
b + q_{t-1}x + q_t y \equiv 0 \mmod p.
\]
Let us assume that this congruence has a solution in $[X,2X] \times [Y,2Y]$, forcing $p \le q_t^2$. Then, by Lemma \ref{lem: divisible numbers in GAPS}, we have
\begin{equation} \label{eq: NpBound}
N_p(X,Y) \ll XY/p + X + Y \ll XY p^{-1/8}.
\end{equation}
If 
\[
X \le x \le 2X, 
\qquad Y \le y \le 2Y,
\qquad n = b + q_{t-1} x + q_t y
\]
then $w_n^{-1} \asymp XY (\log q_t)^2$. Using \eqref{eq: NpBound}, and summing over $X,Y$ that are powers of $2$ or the endpoints of their allowed ranges, gives
\[
\tau_p \ll p^{-1/8},
\]
and in particular \eqref{eq: SumOverPrimes}.

\subsection{Diophantine second shift}

In this subsection, we prove Theorem \ref{thm: Liouville} in the case that $\gam_2$ is diophantine. Let $\lam \ge 2$ satisfy
\begin{equation} \label{eq: non-Liouville gamma2}
\| n \gam_2 \| \gg n^{-\lam/2} \qquad (n \in \bN).
\end{equation}
Then $\lam \ge 2 \ome(\gam_2)$, where
\[
\ome(\gam_2) = \sup \{ w > 0: \exists^\infty q \in \bN \: \: \| q \gam_2 \| < q^{-w} \}.
\]
Fix a constant $c_1 > 0$, and for $n \in \bN$ let 
\[
\Psi(n) = \frac{c_1}{n (\log n)^2 \| n \alp_1 - \gam_1 \|} \in (0, +\infty],
\]
recalling our notational conventions from Section \ref{OrgNot}. By $1$-periodicity of $\| \cdot \|$, our task is to show that for almost all $\alp_2 \in [0,1]$ the inequality
\[
\| n \alp_2 - \gam_2 \| < \Psi(n)
\]
has infinitely many solutions $n \in \bN$. Let $\cI$ be a non-empty subinterval of $[0,1]$. Let $\tet_3,\tet_4$ be constants satisfying
\[
\tet_3 = 1 - \lam^{-3} < \tet_4 < 1.
\]
Let $T_0$ be large in terms of $\cI$, and let $\cT$ be a sparse, infinite set of integers $t \ge T_0$ for which $q_t^{1-\tet_4} > q_{t-1}$.

For $t \in \cT$, let $\cD_t$ be the set of integers of the form
\begin{equation} \label{Repn}
n = b_t + q_{t-1} x_1 + q_t y_1
\end{equation}
with
\[
q_t^{1/4} \le y_1 \le q_t^{1/3}, \qquad q_t^{\tet_3} \le x_1 \le q_t^{\tet_4},
\]
where $b_t$, $q_{t-1}$, and $q_t$ are as in the previous subsection.
Let $t \in \cT$, and let $n \in \cD_t$. The representation above is unique, since $q_t > q_t^{\tet_4} q_{t-1}$.
Moreover, we have
\[
\| n \alp_1 - \gam_1\| \ge \| x_1 q_{t-1} \alp_1 \|- \frac{2}{q_t} - y_1 \| q_t \alp_1 \|
\ge \frac{x_1}{2q_{t}} - \frac2{q_t} - \frac{y_1}{q_{t+1}} \ge \frac{x_1}{3q_t},
\]
so
\[
n \asymp q_t y_1, \qquad \log n \asymp \log q_t,
\qquad \| n \alpha_1 - \gamma_1 \| \asymp x_1/q_t,
\]
and in particular
\[
\Psi(n)  \asymp  \frac{q_t}{x_1} (q_t y_1)^{-1} (\log q_t)^{-2} \asymp \frac1{x_1 y_1 (\log q_t)^2}.
\]

For $n \in \bN$ and $a \in \bZ$, define
\[
\cA_{n,a} = \{ \bet \in \cI: | n \bet - \gam_2 -a| < \Psi(n) \}, \qquad \cA_n = \bigcup_{a \in \bZ} \cA_{n,a},
\]
and observe that if $n \mu(\cI) \ge 1$ then
\[
\mu(\cA_n) \asymp \mu(\cI) \Psi(n).
\]
Let $F(1), F(2), \ldots$ be a sequence of powers of 4, non-increasing, satisfying $F(t) \le \log \log t$ for all $t$, and such that
\[
\sum_{t \in \cT} F(t)^{-1} = \infty.
\]

For $t \in \cT$, let
\[
\cG_t = \{ n \in \cD_t: n \equiv 0 \mmod F(t) \}.
\]
We will show that if $t,s \in \cT$ then
\begin{equation} \label{eq: LN1}
\sum_{n \in \cG_t} \mu(\cA_n) \gg \frac{\mu(\cI)}{F(t)} 
\end{equation}
and
\begin{equation} \label{eq: LN2}
\sum_{n \in \cG_t} \sum_{\substack{m \in \cG_s \\ m < n}} \mu(\cA_n \cap \cA_m) \ll \frac{\mu(\cI)}{F(t)F(s)}.
\end{equation}
Then, with
\[
\cX:= \bigcup_{t \in \cT} \cG_t, \qquad \cX_N := \bigcup_{\substack{t \in \cT \\ t \le N}} \cG_t,
\]
we would have
\[
\sum_{n \in \cX} \mu(\cA_n) = \infty
\]
and
\begin{align*}
\mu(\cI) \sum_{n,m \in \cX_N} \mu(\cA_n \cap \cA_m) &\ll \left(
\sum_{n \in \cX_N} \mu(\cA_n) \right)^2 + \sum_{n \in \cX_N} \mu(\cA_n) \\
&\ll
\left(\sum_{n \in \cX_N} \mu(\cA_n)\right)^2.
\end{align*}
At that stage
Lemmata \ref{lem: Borel-Cantelli} and \ref{lem: density} would give 
\[
\mu(\limsup_{n \in \cX} \{ \bet \in [0,1]: \| n \bet - \gam_2 \| < \Psi(n) \}) = 1,
\]
which would finish the proof. The upshot is that it remains to establish \eqref{eq: LN1} and \eqref{eq: LN2}.

Let $t \in \cT$, and let $X$ and $Y$ be parameters in the ranges
\begin{equation} \label{XYranges}
q_t^{\tet_3} \le X \le q_t^{\tet_4}/2,
\qquad q_t^{1/4} \le Y \le q_t^{1/3}/2.
\end{equation}
Then
\begin{align*}
\mu(\cI)^{-1} \sum_{\substack{n = b_t + q_{t-1}x_1 + q_t y_1 \\ n \equiv 0 \mmod F(t) \\ X < x_1 \le 2X \\ Y < y_1 \le 2Y}} \mu(\cA_n) 
&\gg (\log q_t)^{-2} (XY)^{-1} \sum_{\substack{
X < x_1 \le 2X\\
Y < y_1 \le 2Y
\\ b_t + q_{t-1}x_1 + q_ty_1 \equiv 0 \mmod F(t)}} 1 \\
& \gg (\log q_t)^{-2} F(t)^{-1},
\end{align*}
by Lemma \ref{lem: divisible numbers in GAPS}. Summing over $X$ and $Y$ that are powers of $2$ in the above ranges, we obtain
\eqref{eq: LN1}.\\

It remains to prove \eqref{eq: LN2}. Writing 
\begin{equation} \label{2d}
n = b_t + q_{t-1} x_1 + q_t y_1, \qquad m =  b_s + q_{s-1} x_2 + q_s y_2,
\end{equation}
where
\begin{equation} \label{xyranges}
q_t^{1/4} \le y_1 \le q_t^{1/3}, \quad 
q_t^{\tet_3} \le x_1 \le q_t^{\tet_4},
\quad
q_s^{1/4} \le y_2 \le q_s^{1/3},
\quad
q_s^{\tet_3} \le x_2 \le q_s^{\tet_4},
\end{equation}
we have
\begin{align*}
\mu(\cA_{n,a} \cap \cA_{m,b}) &\ll \min \left\{ \frac{\Psi(n)}n, \frac{\Psi(m)}m \right\} \\
&\asymp \min \left\{ \frac{q_t}{ x_1 (q_t y_1 \log q_t)^2}, \frac{q_s}{ x_2 (q_s y_2 \log q_s)^2} \right\}
\end{align*}
for $a,b \in \bZ$.

Let $X_1, Y_1, X_2, Y_2$ be parameters in the ranges
\begin{align*}
&q_t^{1/4} \le Y_1 \le q_t^{1/3}/2, \qquad
&q_t^{\tet_3} \le X_1 \le q_t^{\tet_4}/2, \\
 &q_s^{1/4} \le Y_2 \le q_s^{1/3}/2, \qquad
&q_s^{\tet_3} \le X_2 \le q_s^{\tet_4}/2.
\end{align*}
Suppose $\cA_{n,a} \cap \cA_{m,b}$ is non-empty, and that we have \eqref{2d} and
\begin{equation} \label{eq: dyadic12}
X_i \le x_i \le 2X_i, \quad Y_i \le y_i \le 2Y_i \qquad (i=1,2).
\end{equation}
Then
\begin{equation} \label{d1}
\dist(a+\gam_2, n \cI) < 1/2, \qquad \dist(b + \gam_2, m \cI) < 1/2,
\end{equation}
and there exists $\bet \in \cA_{n,a} \cap \cA_{m,b}$, so that
\[
| n \bet - \gam_2 - a| < \Psi(n), \qquad | m \bet - \gam_2 - b| < \Psi(m).
\]
The triangle inequality gives
\begin{equation} \label{d2}
|(n-m) \gam_2 - (ma - nb)| < m \Psi(n) + n \Psi(m) < \Del,
\end{equation}
where
\[
\Del \asymp \frac{q_s Y_2 }{X_1 Y_1 (\log q_t)^2}
+ \frac{q_t Y_1 }{X_2 Y_2 (\log q_s)^2}.
\]
Note that
\begin{equation} \label{BoundOnDelta}
\Del^2 \gg \frac{q_t q_s} {X_1 X_2 (\log q_t)^2 (\log q_s)^2}
\ge q_t^{1-\tet_4 - o(1)} q_s^{1-\tet_4-o(1)}.
\end{equation}

Given $t,s \in \cT$, denote by $N(t,s)$ the number of solutions 
\[
(n,m,a,b) \in \cG_t \times \cG_s \times \bZ^2
\]
to the diophantine system given by \eqref{d1}, \eqref{d2} and $n > m$ for which we have \eqref{2d} for some $x_1,x_2,y_1,y_2$ in the ranges \eqref{eq: dyadic12}. Our goal is to show that 
\begin{equation} \label{eq: Nts}
N(t,s) \ll X_1 X_2 Y_1 Y_2 \Del \mu(\cI) F(t)^{-1} F(s)^{-1}.
\end{equation}
Note that the number of solutions with $n=m$ is at most $X_1Y_1$, which is negligible. Assuming for the time being that we can achieve \eqref{eq: Nts}, the contribution to $\sum_{n \in \cD_t} \sum_{m \in \cD_s} \mu(\cA_n \cap \cA_m)$ from $x_1, x_2, y_1, y_2$ in these dyadic ranges is 
\begin{align*}
O\left(\frac{X_1 X_2 Y_1 Y_2 \Del \mu(\cI)}{ F(t)F(s)}
 \min \left\{ \frac{q_t}{X_1 (q_t Y_1 \log q_t)^2}, \frac{q_s}{X_2 (q_s Y_2 \log q_s)^2} \right\} \right),
\end{align*}
which is 
\[
O\Bigl( \frac{\mu (\cI)}{F(t)F(s) (\log q_t)^2 (\log q_s)^2} \Bigr).
\]
Summing over $X_1, X_2, Y_1, Y_2$ that are powers of $2$ or endpoints of the prescribed ranges, we would thereby deduce \eqref{eq: LN2}. The upshot is that it remains to prove \eqref{eq: Nts}.

We partition our solutions according to the value of $d = \gcd(m,n)$, and write $n=dx$, $m = dy$, so that $\gcd(x,y) = 1$ and $x > y$. Then \eqref{d2} becomes
\begin{equation} \label{d3}
|(x-y) \gam_2 - (ya - xb)| < \Del/d.
\end{equation}
Note that $d \ge \min \{F(t), F(s) \} > \mu(\cI)^{-1}$.
Let $(q'_\ell)_\ell$ be the sequence of continued fraction denominators of $\gam_2$.

\underline{Case 1: $\mu(\cI)^{-1} < d < 3 \Del / \| q'_2 \gam_2 \|$}

Choose $m,n$ with $\gcd(m,n) < 3 \Del / \| q'_2 \gam_2 \|$ in $O(F(t)^{-1} F(s)^{-1} X_1 X_2 Y_1 Y_2)$ ways, via Lemma \ref{lem: divisible numbers in GAPS}. Then there are $O(\Del/d)$ possibilities for
$h = ya - xb$, by \eqref{d3}. Given $h$, the value of $a$ is determined modulo $x$, and lies in an interval of length $O(dx \mu(\cI))$, so there are $O(d \mu(\cI) + 1)$ possibilities for $a$, whereupon $b$ is determined. As $d > \mu(\cI)^{-1}$, the contribution to $N(t,s)$ from this case is $O(X_1 X_2 Y_1 Y_2 \Del \mu(\cI) F(t)^{-1} F(s)^{-1})$.

\underline{Case 2: $3 \Del / \| q'_2 \gam_2 \| \le d < Y_1$} 

Choose $m$ in $O(X_2Y_2)$ ways, and then choose $d \mid m$ with $d \in [3 \Del / \| q'_2 \gam_2 \|, Y_1)$ in $Y_2^{o(1)}$ ways. Then, by Lemma \ref{lem: divisible numbers in GAPS},  choose $n \in \cD_t$ such that
\[
n \equiv 0 \mmod d
\]
in $O(X_1 Y_1 /d)$ ways. Finally there are $d \mu (\cI)$ possibilities for $a$ and $b$ so, using \eqref{BoundOnDelta}, the contribution to $N(t,s)$ from this case is bounded above by
\[
X_2 Y_2^{1+o(1)} X_1 Y_1 \mu(\cI) \ll X_1 Y_1 X_2 Y_2 \Del \mu (\cI) F(t)^{-1} F(s)^{-1}.
\]

\underline{Case 3: $d \ge \max\{ Y_1, 3 \Del / \| q'_2 \gam_2 \| \}$}

Choose $m$ in $O(X_2Y_2)$ ways and $d \ge \max\{ Y_1, 3 \Del / \| q'_2 \gam_2 \| \}$ dividing $m$ in $q_t^{o(1)}$ ways. Put
\[
N = q_t Y_1.
\]
Set $x-y = u \in \bN$ and $ya - xb = v \in \bZ$, so that
\begin{equation} \label{uv}
u \le 3N/d, \qquad | u \gamma_2 - v| < \Del/d.
\end{equation}
From our choice of $\lam$, we have
\[
\Del/d \gg (N/d)^{-\lam},
\]
so $d \ll (\Del N^{\lam})^{1/(1+\lam)}$. We relax the inequalities above, giving
\begin{equation} \label{relax}
u \le CN/d, \qquad \| u \gamma_2 \| \le (N/d)^{-1/\lam} + \Del/d,
\end{equation}
where $C$ is a large, positive constant.

We claim that
\eqref{relax} has
$O((N/d)((N/d)^{-1/\lam}+\Del/d))$ solutions $u \in \bN$. This is clearly true if $d \gg N$, so let us now assume that $d \le c N$ for some small constant $c > 0$. By \eqref{eq: non-Liouville gamma2}, we have $\ome(\gam_2) \le \lam/2$,
so by Lemma \ref{BHV11} there exists $\ell \in \bN$ such that
\[
(N/d)^{1/\lam} \le q'_\ell \le CN/d.
\]
Thus, we may apply Lemma \ref{lem: BHV size bound for Bohr sets} to the Bohr set defined by \eqref{relax},
establishing the claim.

Consequently, there are $O((N/d)((N/d)^{-1/\lam} + \Del/d))$ pairs $(u,v) \in \bN \times \bZ$ satisfying \eqref{uv}. Hence, given $m$ and $d$ as above, the number of possibilities for $u,v,a,b$ is at most a constant times
\begin{align*}
(N/d)((N/d)^{-1/\lam} + \Del/d) d 
&= N^{1-(1/\lam)} d^{1/\lam}  + N \Del/d \\
&\ll \Del^{1/(\lam+\lam^2)} N^{1-(1/\lam) + 1/(1+\lam)} + N \Del / Y_1.
\end{align*}
Recalling that 
\[
N = q_t Y_1, \qquad X_1 \ge q_t^{\tet_3} = q_t^{1-\lam^{-3}},
\qquad Y_1 \ge q_t^{1/4},
\]
we find that the contribution to $N(t,s)$ from this case is $O(N_1(t,s) + N_2(t,s))$, where
\begin{align*}
N_1(t,s) &\ll  X_2 Y_2 q_t^{o(1)} (\Del / N)^{1/(\lam + \lam^2)} N
\ll X_2 Y_2 \Del q_t^{1 - 1/(\lam+\lam^2) + o(1)} Y_1 \\
& \ll X_1 Y_1 X_2 Y_2 \Del \mu (\cI) F(t)^{-1} F(s)^{-1}
\end{align*}
and
\begin{align*}
N_2(t,s) &\ll X_2 Y_2 q_t^{o(1)} N \Del / Y_1 = X_2 Y_2 q_t^{1+o(1)}  \Del  \\
&\ll X_1 Y_1 X_2 Y_2 \Del \mu (\cI) F(t)^{-1} F(s)^{-1}.
\end{align*}

We have considered all possible size ranges for $d$, confirming \eqref{eq: Nts}. We conclude that Theorem \ref{thm: Liouville} holds in the case that $\gam_2$ is diophantine.

\subsection{Liouville second shift}

Next, we prove Theorem \ref{thm: Liouville} in the case that $\gam_2 \in \cL$. This is a more sophisticated variant of the proof given in the previous subsection that works whenever $\gam_2 \in \bR \setminus \bQ$. As we discuss in the next subsection, a simpler version of it works when $\gam_2 \in \bQ.$

Fix a constant $c_1 > 0$, and for $n \in \bN$ let 
\[
\Psi(n) = \frac{c_1}{n (\log n)^2 \| n \alp_1 - \gam_1 \|} \in (0, +\infty].
\]
By $1$-periodicity of $\| \cdot \|$, it suffices to show that for almost all $\alp_2 \in [0,1]$ the inequality
\[
\| n \alp_2 - \gam_2 \| < \Psi(n)
\]
has infinitely many solutions $n \in \bN$. Indeed, the latter would imply that
\[
\liminf_{n \to \infty} n(\log n)^2 \| n \alp_1 - \gam_1 \| \cdot \| n \alp_2 - \gam_2 \| \le c_1,
\]
and $c_1 > 0$ is arbitrary.

Let $\cI$ be a non-empty subinterval of $[0,1]$, let $T_0$ be large in terms of $\cI$, and let $\cT$ be a sparse, infinite set of integers $t \ge T_0$ for which $q_t > q_{t-1}^9$, where again $q_1, q_2,\ldots$ are the denominators of the continued fraction convergents to the Liouville number $\alp_1$. 

Fix $\lam \ge 2$. For $t \in \cT$, we define $\cD_t$ as in the previous subsection. The sets $\cD_t$ ($t \in \cT$) are disjoint, because $\cT$ is sparse.
For $t \in \cT$ and $n \in \cD_t$, the representation \eqref{Repn} is unique, and moreover
\[
n \asymp q_t y_1, \qquad \log n \asymp \log q_t,
\qquad \| n \alpha_1 - \gamma_1 \| \asymp \frac{x_1}{q_t}, \qquad 
\Psi(n)  \asymp  \frac1{x_1 y_1 (\log q_t)^2}.
\]
We also define $\cG_t$ via $\cD_t$ as in the previous subsection, for $t \in \cT$, using the arithmetic function $F$.

For $t \in \cT$, let $c_{t'}/q'_{t'}$ be the continued fraction convergent to $\gam_2$ for which $q'_{t'} < q_t$ is maximal.  For $n \in \cD_t$ and $a \in \bZ$, we again define
\[
\cA_{n,a} =
\{ \bet \in \cI: | n \bet - \gam_2 - a| < \Psi(n) \},
\]
but now we let $\cA_n$ be the union of $\cA_{n,a}$ over integers $a$ for which 
\[
(q'_{t'} a + c_{t'}, n) = 1.
\]
As in the previous subsection, it remains to establish \eqref{eq: LN1} and \eqref{eq: LN2}.

\begin{lemma} Let $n \in \cD_t$. Then
\[
\frac{\varphi(n)}n \mu(\cI) \Psi(n) \ll
\mu(\cA_n) \ll \mu(\cI) \Psi(n).
\]
\end{lemma}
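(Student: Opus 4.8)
The plan is to reprise the argument of Lemma~\ref{lem: lower and upper bound on measure of approx sets for Liouville shifts}, with $\un$ there replaced by $n$ and the convergent $c_t/q'_t$ replaced by $c_{t'}/q'_{t'}$. Write $\cA_n$ as the union of the intervals $\cA_{n,a}$ over those $a \in \bZ$ with $(q'_{t'} a + c_{t'}, n) = 1$; each $\cA_{n,a}$ has length at most $2\Psi(n)/n$. For $n \in \cD_t$ the size estimates already in hand --- $n \asymp q_t y_1$, $\| n\alp_1-\gam_1\| \asymp x_1/q_t$, hence $\Psi(n) \asymp (x_1 y_1 (\log q_t)^2)^{-1}$ --- show that $\Psi(n) \le 1/2$ once $t$ is large, so the $\cA_{n,a}$ are pairwise disjoint; and $n\mu(\cI) \gg 1$ since $T_0$ is large in terms of $\cI$. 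The upper bound is then immediate: only $O(1 + n\mu(\cI)) = O(n\mu(\cI))$ values of $a$ contribute, so $\mu(\cA_n) \ll n\mu(\cI)\cdot\Psi(n)/n = \mu(\cI)\Psi(n)$.

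For the lower bound, set
\[
\varphi_0(n) = \#\{a \in \bZ : a + \gam_2 \in n\cI, \ (q'_{t'} a + c_{t'}, n) = 1\},
\]
so that disjointness gives $\mu(\cA_n) \ge 2\varphi_0(n)\Psi(n)/n$, and it suffices to prove $\varphi_0(n) \gg \mu(\cI)\varphi(n)$. I would establish this exactly as in the proof of Lemma~\ref{lem: lower and upper bound on measure of approx sets for Liouville shifts}: apply the fundamental lemma of sieve theory (Theorem~\ref{thm: sieve bound}) with $\cP$ the set of primes dividing $n$ but not $q'_{t'}$, sifting sequence $a_m = 1$ if $q'_{t'} a + c_{t'} = m$ for some $a \in (n\cI - \gam_2)\cap\bZ$ and $a_m = 0$ otherwise, and parameters $x = q'_{t'}(n+1)$, $w = 2$, $z = (\log n)^2$, $g(p) = 1/p$, $\kap = 1$, $X = n\mu(\cI)$, $K$ an absolute constant, $s = 10(1+\log K)$, $D = z^s$. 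The dimension condition follows from Mertens' third theorem (Theorem~\ref{thm: Mertens}); for $d \mid P(z)$ one has $(d, q'_{t'}) = 1$, so each remainder is $O(1)$ and the total is $O(D) = (\log n)^{O(1)} = o(XV(z))$, while the elements $m$ divisible by some prime $p \mid n$ with $p \ge z$ contribute $\ll X(\log n)/(z\log z) = o(XV(z))$. Hence $\varphi_0(n) \gg XV(z)$; since $V(z) = \prod_{p \in \cP,\, p < z}(1-1/p) \ge \prod_{p\mid n}(1-1/p) = \varphi(n)/n$, this yields $\varphi_0(n) \gg \mu(\cI)\varphi(n)$ and therefore $\mu(\cA_n) \gg \frac{\varphi(n)}{n}\mu(\cI)\Psi(n)$.

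There is no real obstacle: the lemma is a localised, shift-reduced version of the standard lower bound for the measure of an inhomogeneous approximation set, and the only points needing (routine) attention are checking $\Psi(n) \le 1/2$ on $\cD_t$ for large $t$ --- which makes the defining intervals disjoint --- and $n\mu(\cI) \gg 1$ --- which makes boundary effects negligible --- both immediate from the size estimates for $n \in \cD_t$ together with the choice of $T_0$.
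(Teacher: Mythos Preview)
Your proposal is correct and follows essentially the same approach as the paper. The paper's proof is terser---it simply notes that the upper bound is immediate from counting intervals and that the lower bound ``follows routinely from the fundamental lemma of sieve theory, in the same way as \eqref{eq: lower bound of fake phi}''---whereas you spell out the sieve application in full and explicitly verify the side conditions $\Psi(n) \le 1/2$ (for disjointness) and $n\mu(\cI) \gg 1$ (for boundary effects).
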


\begin{proof} For the upper bound, observe that there are $O(n \mu(\cI))$ integers $a$ for which $\cA_{n,a}$ is non-empty, and for each of these $\mu(\cA_{n,a}) \ll \Psi(n)/n$. 
For the lower bound, 
it suffices to show that 
\[
\# \{ a \in n \cI: (q'_{t'} a + c_{t'}, n) = 1 \} \gg \frac{\varphi(n)}n \mu(\cI).
\]
This follows routinely from the fundamental lemma of sieve theory, in the same way as \eqref{eq: lower bound of fake phi}.
\end{proof}

Let $t \in \cT$, and let $X$ and $Y$ be parameters in the ranges
\eqref{XYranges}.
Then
\begin{align*}
&\mu(\cI)^{-1} \sum_{\substack{n = b_t + q_{t-1}x_1 + q_t y_1 \\ n \equiv 0 \mmod F(t) \\ X < x_1 \le 2X \\ Y < y_1 \le 2X}} \mu(\cA_n) 
\\
&\gg (\log q_t)^{-2} (XY)^{-1} \sum_{\substack{
X < x_1 \le 2X\\
Y < y_1 \le 2Y
\\ b_t + q_{t-1}x_1 + q_t y_1 \equiv 0 \mmod F(t)}} 
\frac{\varphi(b_t + q_{t-1}x_1 + q_t y_1)} {b_t + q_{t-1}x_1 + q_t y_1}.
\end{align*}

We claim that
\[
S :=
\sum_{\substack{
X < x_1 \le 2X\\
Y < y_1 \le 2Y
\\ b_t + q_{t-1}x_1 + q_ty_1 \equiv 0 \mmod F(t)}} 
\frac{\varphi(b_t + q_{t-1}x_1 + q_t y_1)} {b_t + q_{t-1}x_1 + q_t y_1} \gg XY/F(t).
\]
To show this, we write
\[
\cU = 
\{ n = b_t + q_{t-1}x_1 + q_t y: X < x_1 \le 2X, \: Y < y_1 \le 2Y, \: n \equiv 0 \mmod F(t) \},
\]
and apply the AM--GM inequality to give
\[
S \ge |\cU| \left(\prod_{n \in \cU} \frac{\varphi(n)}n \right)^{1/|\cU|}
= |\cU| \prod_p \prod_{\substack{n \in \cU \\ n \equiv 0\mmod p}} (1-1/p)^{1/|\cU|}.
\]
By Lemma \ref{lem: divisible numbers in GAPS}, we have
\[
|\cU| \gg XY/F(t),
\]
so for the claim it suffices to show that
\[
S' := \prod_p 
\prod_{\substack{n \in \cU \\ n \equiv 0\mmod p}}
(1-1/p)^{1/|\cU|} \gg 1.
\]
Next, observe that
\[
-\ln(S') = -\sum_p \tau_p \ln(1-1/p) \ll \sum_p \tau_p/p,
\]
where
\[
\tau_p = |\cU|^{-1} \# \{ n \in \cU: n \equiv 0 \mmod p \},
\]
so for the claim it remains to prove that
\begin{equation} \label{LLdensity}
\sum_{p \ge 3} \tau_p/p \ll 1.
\end{equation}
Let $p \ge 3$, and note that $p \nmid F(t)$ because $F(t)$ is a power of $4$. As $\tau_p = 0$ for $p > q_t^2$, let us also assume that $p \le q_t^2$. By Lemma \ref{lem: divisible numbers in GAPS}, we have
\[
\tau_p \ll F(t) \left(\frac1{pF(t)} + \frac1X + \frac1Y\right) \ll \frac1p + Y^{-1/2}
\ll p^{-1/16},
\]
giving \eqref{LLdensity}.

Thus, we have the claim, and so
\[
\mu(\cI)^{-1} \sum_{\substack{n = b_t + q_{t-1}x_1 + q_t y_1 \\ n \equiv 0 \mmod F(t) \\ X < x_1 \le 2X \\ Y < y_1 \le 2X}} \mu(\cA_n) \gg (\log q_t)^{-2} F(t)^{-1}.
\]
Summing over $X$ and $Y$ that are powers of $2$ in the ranges \eqref{XYranges}, we obtain
\eqref{eq: LN1}.

It remains to prove \eqref{eq: LN2}. To this end, it again suffices to prove \eqref{eq: Nts}, where $X_1, Y_1, X_2, Y_2$ are parameters in the ranges
\begin{align*}
&q_t^{1/4} \le Y_1 \le q_t^{1/3}/2, \qquad
&q_t^{\tet_3} \le X_1 \le q_t^{\tet_4}/2, \\
 &q_s^{1/4} \le Y_2 \le q_s^{1/3}/2, \qquad
&q_s^{\tet_3} \le X_2 \le q_s^{\tet_4}/2,
\end{align*}
but now in the count $N(t,s)$ we impose the additional restrictions
\begin{equation} \label{eq: thrust}
(q'_{t'} a + c_{t'}, n) = 1, \qquad (q'_{s'} b + c_{s'}, m) = 1.
\end{equation}
Cases 1 and 2 from the previous subsection are unaffected, so our task is to count solutions for which
\[
d = (m,n) \ge \max\{ Y_1, 3 \Del / \| q'_2 \gam_2 \| \}
\qquad (\text{Case }3).
\]
As $\cT$ is sparse, this is only possible if
\[
s=t.
\]
Put $N = q_t Y_1.$ Let us again write $n = dx$ and $m=dy$, so that $x > y$ and $(x,y)=1.$ Let $C$ be a large, positive constant.

\underline{Case 3a: $\gam_2$ has a continued fraction denominator in $[(N/d)^{1/\lam}, CN/d]$, or $d \ge N$} 

In the case the proof from the previous subsection carries through, for in this case we may apply Lemma \ref{lem: BHV size bound for Bohr sets} therein.

\underline{Case 3b: $\gam_2$ has no continued fraction denominator in $[(N/d)^{1/\lam}, CN/d]$, and $d < N$}

In this case, as
\[
q'_{t'+1} \ge q_t \ge q_t Y_1/d = N/d > (N/d)^{1/\lam},
\]
we must have
\[
q'_{t'+1} > CN/d,
\]
where $q'_{t'+1}$ is the continued fraction denominator of $\gam_2$ subsequent to $q'_{t'}$. Therefore
\[
|q'_{t'} \gam_2 - c_{t'}| < (q'_{t'+1})^{-1} < d/(CN).
\]
As
\[
|(n-m) q'_{t'} \gam_2 - q'_{t'} (ma-nb)| < q'_{t'} \Del,
\]
the triangle inequality now confers
\[
|(n-m) c_{t'} - q'_{t'} (ma - nb)| < q'_{t'} \Del + d/2.
\]
We thus have
\[
1 \le |x(q'_{t'}b + c_{t'}) - y(q'_{t'} a + c_{t'})| < q'_{t'} \Del/d + 1/2,
\]
owing to the coprimality restrictions \eqref{eq: thrust}
that we have thrust upon the problem. Whence
\begin{equation} \label{CongruenceConstraint}
d < 2q'_{t'} \Del, 
\qquad
c_{t'}(x-y) \equiv O(q'_{t'} \Del/d) \mmod q'_{t'}.
\end{equation}

Recall that in this case we have $s=t.$
We begin our count by choosing $d$ so that
\[
\max\{ Y_1, 3 \Del / \| q'_2 \gam_2 \| \} \le d < 2q'_{t'} \Del.
\]
Next, we choose $x$ in $O(q_t Y_1 /d)$ ways. Then
\[
y \ll q_t Y_2/d
\]
lies in one of $O(q'_{t'} \Del/d)$ residue classes modulo $q'_{t'}$, according to \eqref{CongruenceConstraint}, so there are
\[
O \left(
\frac{q'_{t'} \Del}d \left(\frac{q_t Y_2}{d q'_{t'}} + 1 \right)
\right)
\]
possibilities for $y$. After that, the variable $a$ is then determined modulo $x$ from \eqref{d3}, and so there are at most $d \mu(\cI) \le d$ possibilities for $a$ and then finally $b$ is uniquely determined. 
Recalling that $X_1, X_2 \ge q_t^{\tet_3} = q_t^{1- \lam^{-3}}$ and $Y_1, Y_2 \ge q_t^{1/4}$, our total count from this case is at most a constant times
\[
\sum_{\max\{ Y_1, 3 \Del / \| q'_2 \gam_2 \| \} \le d < 2q'_{t'} \Del} \frac{q_t Y_1 q'_{t'} \Del }{d}\left(\frac{q_t Y_2}{d q'_{t'}} + 1 \right) \le N_1(t,s) + N_2(t,s),
\]
where
\[
N_1(t,s) = \sum_{d > Y_1} \frac{q_t^2 Y_1 Y_2  \Del }{d^2} \ll q_t^2  \Del Y_2 \ll
\frac{X_1 Y_1 X_2 Y_2 \Del \mu (\cI)}{F(t) F(s)}
\]
and
\begin{align*}
N_2(t,s) &=
\sum_{d < 2 q'_{t'} \Del} \frac{q_t Y_1 q'_{t'}\Del }{d}
\ll q_t Y_1 q'_{t'} (\log q'_{t'}) \Del \le q_t^{2+o(1)} Y_1 \Del \\
&\ll
\frac{X_1 Y_1 X_2 Y_2 \Del \mu (\cI)}{F(t) F(s)}.
\end{align*}

We have considered all cases, confirming \eqref{eq: Nts}. We conclude that Theorem \ref{thm: Liouville} holds in the case that $\gam_2 \in \cL$.

\subsection{Rational second shift}

Finally, we prove Theorem \ref{thm: Liouville} in the case that $\gam_2 \in \bQ$. Let $\gam_2 = c_0/d_0$, where $c_0 \in \bZ$ and $d_0 \in \bN$ are fixed and coprime. We follow the previous subsection, but this time we replace $q'_{t'}$ and $c_{t'}$ by $d_0$ and $c_0$, respectively, for all $t \in \cT$, and the proof carries through.

We have covered all possibilities for $\gam_2$, completing the proof of Theorem \ref{thm: Liouville}.

\section{Obstructions on Liouville fibres}\label{Sec: log square sharp}

Our proof of Theorem \ref{thm: log square is sharp}, via an Ostrowski expansion construction, rests upon the following technical lemma. In the following lemma and its proof, the implied constants are allowed to depend on $\alp$.

\begin{lemma}
\label{lem: Lioville log square sharp} Suppose $\alp,\gam$ satisfy \eqref{eq: DandyAndy}.
Let $m(n)$
denote the least $i \geq 0$ such that $\delta_{i+1}(n)\neq0$.
Define
\[
\W_{u,d}=\left\{ n\in\mathbb{N}:\,m(n)=u,\,\left|\delta_{u+1}(n)\right|=d\right\},
\]
whenever $1 \le d \le a_{u+1}-b_{u+1}$, as well as
\begin{align*}
S_{u,d} & =
\sum_{n\in\W_{u,d}}\frac{1}{n(\log n)^{2}\left\Vert n\alpha-\gamma\right\Vert }.
\end{align*}
Then $\min \W_{u,d} \gg q_{u}$, uniformly in $d$. 
Moreover, we have
\[
S_{u,d}
\ll \begin{cases}
\frac{1}{d\log q_{u+1}}, &\text{if } d > b_{u+1} \\
\frac1{\log q_u}, &\text{if }d = b_{u+1}\\
\frac{q_{u+1}}{\left(b_{u+1}-d\right)q_{u}(\log(\left(b_{u+1}-d\right)q_{u}))^{2}d}+\frac{1}{d\log q_{u+1}}, &\text{if } d < b_{u+1}.
\end{cases}
\]
\end{lemma}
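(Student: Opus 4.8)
The strategy is to use the structural description from Lemma~\ref{lem: size of inhomogeneous distance} to pin down both $\|n\alpha-\gamma\|$ and the size of $n$ on the set $\W_{u,d}$, then estimate the sum $S_{u,d}$ by summing a geometric-type series over the Ostrowski digits beyond place $u+1$. First I would record the basic consequences of $n\in\W_{u,d}$: by definition $\delta_1(n)=\dots=\delta_u(n)=0$ and $|\delta_{u+1}(n)|=d$, so $\delta_{j+1}(n)=c_{j+1}(n)$ for $j\le u-1$ forces $c_1(n)=\dots=c_u(n)=b_{k}$-type constraints---more precisely $c_{j+1}(n)=b_{j+1}$ for $j<u$ and $c_{u+1}(n)=b_{u+1}\pm d$. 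Since $c_{u+1}(n)q_u$ is then a genuine summand of $n=\sum_k c_{k+1}(n)q_k$ and all digits are non-negative, one gets $n\ge c_{u+1}(n)q_u\ge q_u$ (using $b_{u+1}\ge a_{u+1}/4\ge 16$ and $d\ge 1$), giving the claimed $\min\W_{u,d}\gg q_u$; in fact $n\asymp \delta$-dependent multiples of $q_u$ and $\log n\asymp \log q_u$ unless the higher digits push $n$ up to the scale of $q_{u+1}$ or beyond, which is exactly the case distinction in the statement.

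Next I would invoke Lemma~\ref{lem: size of inhomogeneous distance}(1): with $m=m(n)=u$ we have $\|n\alpha-\gamma\| = (d-1)|D_u| + u_{u+2}|D_{u+1}| + u_{u+3}|D_{u+2}| + \Upsilon$ with $u_{u+2}\asymp a_{u+2}$, $u_{u+3}\asymp a_{u+3}$, $\Upsilon\ll|D_{u+2}|$. Using \eqref{eq: standard estimate} this gives $\|n\alpha-\gamma\|\asymp (d-1)|D_u| + a_{u+2}|D_{u+1}| + \dots \asymp \frac{d-1}{q_{u+1}} + \frac{1}{q_{u+1}}\cdot\frac{a_{u+2}}{a_{u+2}} + \cdots$. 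The clean upshot is: if $d>b_{u+1}$ then $d-1\ge b_{u+1}\gg a_{u+1}$, so $\|n\alpha-\gamma\|\gg d/q_{u+1}$ and also $n\asymp d q_u$ (the digit $c_{u+1}=b_{u+1}+d\asymp d$ dominates), while $\log n\asymp\log q_{u+1}$ since $dq_u$ can be as large as $a_{u+1}q_u\asymp q_{u+1}$; plugging in, each term is $\ll \frac{1}{dq_u\,(\log q_{u+1})^2\,(d/q_{u+1})}=\frac{q_{u+1}}{d^2 q_u (\log q_{u+1})^2}$, and summing over the $O(a_{u+1})$ admissible further-digit configurations---or rather noting $\W_{u,d}$ for fixed $d$ is a union of Ostrowski cylinder sets whose gap structure is controlled by Lemma~\ref{lem: gaps}---collapses to $\ll \frac{1}{d\log q_{u+1}}$. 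For $d=b_{u+1}$, here $c_{u+1}(n)=b_{u+1}+d=2b_{u+1}$ or $c_{u+1}(n)=0$; the relevant (hard) sub-case is $c_{u+1}(n)=0$, where $n$ can be as small as $\asymp q_{u-1}$-scale but $\|n\alpha-\gamma\|\asymp (b_{u+1}-1)|D_u|\asymp b_{u+1}/q_{u+1}\asymp 1/q_u$, and the count of such $n$ with $\log n\asymp\log q_u$ is controlled, yielding $S_{u,b_{u+1}}\ll 1/\log q_u$.

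The genuinely delicate case is $d<b_{u+1}$, i.e.\ $c_{u+1}(n)=b_{u+1}-d\ge 1$. Now the main term in $\|n\alpha-\gamma\|$ from Lemma~\ref{lem: size of inhomogeneous distance}(1) is $(d-1)|D_u|$ plus the $a_{u+2}|D_{u+1}|$ term, so $\|n\alpha-\gamma\|\asymp \frac{d-1}{q_{u+1}}+\frac{1}{q_{u+1}}$ roughly $\asymp \max(d,1)/q_{u+1}$---but one must be careful when $d=1$, where the leading $(d-1)|D_u|$ vanishes and the size is governed by the next digits; this is the source of the $(b_{u+1}-d)$ factor. Here $n$ has a genuine $c_{u+1}(n)=b_{u+1}-d$ contribution so $n\gg (b_{u+1}-d)q_u$, and correspondingly $\log n\gg \log((b_{u+1}-d)q_u)$; pairing the smallest-$n$ elements (where $n\asymp (b_{u+1}-d)q_u$, contributing the first term $\frac{q_{u+1}}{(b_{u+1}-d)q_u(\log((b_{u+1}-d)q_u))^2 d}$) against the tail where $n$ grows up to $q_{u+1}$ scale (contributing $\frac{1}{d\log q_{u+1}}$ as in the $d>b_{u+1}$ analysis) gives exactly the stated bound. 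Concretely I would split $\W_{u,d}=\W_{u,d}^{\mathrm{small}}\sqcup\W_{u,d}^{\mathrm{large}}$ according to whether $c_{u+2}(n)=0$ (so $n\asymp (b_{u+1}-d)q_u$, few elements, handled by a direct count via Lemma~\ref{lem: gaps}) or $c_{u+2}(n)>0$ (so $n\gg q_{u+1}$, and one sums a convergent series in the digits $c_{u+2},c_{u+3},\dots$ using $\|n\alpha-\gamma\|\asymp c_{u+2}(n)/q_{u+2}+(d-1)/q_{u+1}$ and the gap lemma to count). \textbf{The main obstacle} is bookkeeping the interplay between how large $n$ is (which depends on \emph{which} higher Ostrowski digits are nonzero) and how small $\|n\alpha-\gamma\|$ is (which by Lemma~\ref{lem: size of inhomogeneous distance} depends on the \emph{same} digits $c_{u+2}(n), c_{u+3}(n)$), so that the weight $\frac{1}{n(\log n)^2\|n\alpha-\gamma\|}$ must be summed over a tree of digit-configurations without losing the sharp constant; the gap lemma (Lemma~\ref{lem: gaps}) is the tool that makes the counting tractable, by telling us that within a cylinder the elements are $q_\bullet$-separated, so cardinalities of the relevant pieces of $\W_{u,d}$ are $O(q_{u+1}/q_\bullet)$ and the sums become explicit geometric series.
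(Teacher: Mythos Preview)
Your high-level strategy (Lemma~\ref{lem: size of inhomogeneous distance} plus Lemma~\ref{lem: gaps}) matches the paper's, but your execution plan contains a misreading of Lemma~\ref{lem: size of inhomogeneous distance} that leads you to overcomplicate the argument. You write ``$\|n\alpha-\gamma\|\asymp c_{u+2}(n)/q_{u+2}+(d-1)/q_{u+1}$'' and then propose to split by the value of $c_{u+2}(n)$ and sum over a ``tree of digit-configurations''. But Lemma~\ref{lem: size of inhomogeneous distance}(1) says $u_{m+2}\asymp a_{m+2}$ \emph{regardless} of the higher Ostrowski digits of $n$; combining this with \eqref{eq: standard estimate} and \eqref{eq: continued fraction recursion} gives the uniform bound
\[
\|n\alpha-\gamma\|\gg (d-1)|D_u|+a_{u+2}|D_{u+1}|\gg \frac{d-1}{q_{u+1}}+\frac{1}{q_{u+1}}\gg\frac{d}{q_{u+1}}
\]
for every $n\in\W_{u,d}$ and every $d\ge 1$ (your worry about $d=1$ is unfounded: the $a_{u+2}|D_{u+1}|$ term always contributes $\asymp 1/q_{u+1}$). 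Once this uniform bound is in hand, the sum $S_{u,d}$ reduces to $\frac{q_{u+1}}{d}\sum_{n\in\W_{u,d}} \frac{1}{n(\log n)^2}$, and one never needs to track higher digits at all. The paper simply writes $\W_{u,d}=\cA(b_1,\dots,b_u,b_{u+1}+d)\cup\cA(b_1,\dots,b_u,b_{u+1}-d)$, applies Lemma~\ref{lem: gaps} to each cylinder to get $n_r\gg rq_{u+1}+(\text{base term})$, and sums $\sum_r 1/(n_r(\log n_r)^2)$ directly.

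Two smaller slips: (i) your phrase ``summing over the $O(a_{u+1})$ admissible further-digit configurations'' in the $d>b_{u+1}$ case is wrong --- $\W_{u,d}$ is infinite, and it is precisely the gap lemma (giving $n_r\gg rq_{u+1}$) that makes the tail sum converge; (ii) your argument for $\min\W_{u,d}\gg q_u$ via $n\ge c_{u+1}(n)q_u$ fails when $d=b_{u+1}$ and $c_{u+1}(n)=0$ (you even write ``$n$ can be as small as $\asymp q_{u-1}$-scale'', which would contradict the claim). The fix is $n\ge \sum_{k<u} b_{k+1}q_k\ge b_u q_{u-1}\gg a_u q_{u-1}\gg q_u$, using $b_u\ge a_u/4$ from \eqref{eq: DandyAndy}.
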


\begin{proof}
For $n \in \cW_{u,d}$, Lemma \ref{lem: size of inhomogeneous distance} implies that
\begin{align*}
\left\Vert n\alpha-\gamma\right\Vert &\gg
\min\left(\left(d-1\right)\left|D_{u}\right|+a_{u+2}\left|D_{u+1}\right|,a_{1}\left|D_{0}\right| + a_2 |D_1| \right) \\
&= \left(d-1\right)\left|D_{u}\right|+a_{u+2}\left|D_{u+1}\right|.
\end{align*}
By (\ref{eq: standard estimate}), we have
\[
\left(d-1\right)\left|D_{u}\right|\gg\frac{d-1}{q_{u+1}}
\]
and
\[
a_{u+2}\left|D_{u+1}\right|\gg \frac{a_{u+2}}{q_{u+2}}=\frac{a_{u+2}}{a_{u+2}q_{u+1}+q_{u}} \gg \frac{1}{q_{u+1}}.
\]
Therefore
\begin{equation}
\left\Vert n\alpha-\gamma\right\Vert \gg\frac{d}{q_{u+1}}\label{eq: lower bound on recipical} \qquad (n\in\W_{u,d}).
\end{equation}
Using the notation of Lemma \ref{lem: gaps}, observe that
\[
\W_{u,d}=\A\left(b_{1},\ldots,b_{u},b_{u+1}+d\right)\cup\A\left(b_{1},\ldots,b_{u},b_{u+1}-d\right),
\]
where $\A\left(b_{1},\ldots,b_{u},b_{u+1}+d\right)$ is understood to be empty if $b_{u+1}+d>a_{u+1}$,
and $\A\left(b_{1},\ldots,b_{u},b_{u+1}-d\right)$ is empty if $b_{u+1}-d<0$.  

Observe that
\[
S_{u,d} = S^+_{u,d} + S^-_{u,d},
\]
where
\[
S^{\pm}_{u,d} = \sum_{n \in \cA(b_1,\ldots,b_u, b_{u+1} \pm d)} \frac1{n(\log n)^2 \| n \alp - \gam\|}.
\]

\underline{Case 1: $n\in\A(b_{1},\ldots,b_{u},b_{u+1}+d)$}

Then
\[
n\geq\sum_{0\leq k<u}b_{k+1}q_{k}+\left(b_{u+1}+d\right)q_{u}+\sum_{k>u}c_{k+1}q_{k}\geq b_{u+1}q_{u}\gg q_{u+1}.
\]
From Lemma \ref{lem: gaps}, applied with $m = u$ and $d_{u+1}=b_{u+1}+d>0$, any two distinct elements of $\A\left(b_{1},\ldots,b_{u},b_{u+1}+d\right)$
differ by at least $q_{u+1}$. So the $r^{th}$ smallest element $n_{r}$
of $\A\left(b_{1},\ldots,b_{u},b_{u+1}+d\right)$ satisfies 
\[
n_{r}\geq\min\A\left(b_{1},\ldots,b_{u},b_{u+1}+d\right)+(r-1)q_{u+1}\gg rq_{u+1},
\]
uniformly in $d$. 
Combining this with (\ref{eq: lower bound on recipical}), we deduce that
\begin{align*}
S_{u,d}^+ & \ll\sum_{r\geq1}\frac{1}{rq_{u+1}(\log(rq_{u+1}))^{2}\frac{d}{q_{u+1}}}
\\&=
\frac{1}{d}\left(\sum_{r<q_{u+1}}\frac{1}{r(\log(rq_{u+1}))^{2}}+\sum_{r\geq q_{u+1}}\frac{1}{r(\log(rq_{u+1}))^{2}}\right)\\
 & \leq \frac{1}{d}\left(\sum_{r<q_{u+1}}\frac{1}{r(\log q_{u+1})^{2}}+\sum_{r\geq q_{u+1}}\frac{1}{r(\log r)^{2}}\right).
\end{align*}
The first sum is $O(1/\log q_{u+1})$, and the second
sum is bounded by a constant times
\[
\sum_{j\geq\log q_{u+1}}e^{j}\frac{1}{e^{j}j^{2}}\ll\int_{\log q_{u+1}}^{\infty}\frac{\mathrm{d}x}{x^{2}}\ll\frac{1}{\log q_{u+1}}.
\]
Therefore
\[
S^+_{u,d}\ll\frac{1}{d\log q_{u+1}}.
\]

\underline{Case 2: 
$n\in\A(b_{1},\ldots,b_{u},b_{u+1}-d)$}

For the aforementioned set to be non-empty, following our earlier convention, we must have $1\leq d\leq b_{u+1}$. We distinguish two sub-cases.

\underline{Case 2a: $1\leq d<b_{u+1}$} 

Note that
\begin{align*}
n & \geq\sum_{0\leq k<u}b_{k+1}q_{k}+\left(b_{u+1}-d\right)q_{u}+\sum_{k>u}c_{k+1}q_{k}\geq\left(b_{u+1}-d\right)q_{u}.
\end{align*}
Let $n_0 = \min \cW_{u,d}$, and for $r \ge 1$ denote by $n_{r}$ the $r^{th}$ smallest element of $\W_{u,d} \setminus \{ n_0 \}$.
It follows from Lemma \ref{lem: gaps} that 
\[
n_{r}\geq rq_{u+1}+\left(b_{u+1}-d\right)q_{u} \qquad (r \ge 0).
\]
Together with \eqref{eq: lower bound on recipical}, this gives
\begin{align*}
S_{u,d}^- & \ll \sum_{r\geq0}\frac{1}{(rq_{u+1}+\left(b_{u+1}-d\right)q_{u})(\log(rq_{u+1}+\left(b_{u+1}-d\right)q_{u}))^{2}\frac{d}{q_{u+1}}}\\
 & \le \frac{q_{u+1}}{\left(b_{u+1}-d\right)q_{u}(\log(\left(b_{u+1}-d\right)q_{u}))^{2}d}
+\sum_{r\geq1}\frac{1}{rq_{u+1}(\log(rq_{u+1}))^{2}\frac{d}{q_{u+1}}}.
\end{align*}
As in Case 1, we have
\[
\sum_{r\geq1}\frac{1}{rq_{u+1}(\log(rq_{u+1}))^{2}\frac{d}{q_{u+1}}}\ll\frac{1}{d\log q_{u+1}},
\]
and so
\begin{align*}
S_{u,d}^-
\ll
\frac{q_{u+1}}{\left(b_{u+1}-d\right)q_{u}(\log(\left(b_{u+1}-d\right)q_{u}))^{2}d} 
+
\frac{1}{d\log q_{u+1}}.
\end{align*}

\underline{Case 2b: $d=b_{u+1}$}

Note that
\[
n\geq\sum_{0\leq k<u}b_{k+1}q_{k}\gg a_{u}q_{u-1}\gg q_{u}.
\]
Let $n_0 = \min \cW_{u,d}$, and for $r \ge 1$ denote by $n_{r}$ the $r^{th}$ smallest element of $\W_{u,d} \setminus \{ n_0 \}$. By \eqref{eq: lower bound on recipical}, we have
\begin{align*}
S_{u,d}^-
\ll T_1 + T_2,
\end{align*}
where
\[
T_j = \sum_{r \ge 0}
\frac1{n_{j+2r} (\log n_{j+2r})^2 \: \frac{b_{u+1}}{q_{u+1}}} \qquad (j=1,2).
\]
Let $j \in \{1,2\}$. We infer from Lemma \ref{lem: gaps} that if $r \ge 0$ then $n_{j+2r} \gg rq_{u+1} + q_u$. Whence
\begin{align*}
T_{j} & \ll\sum_{r\geq0}\frac{1}{(rq_{u+1}+q_{u})(\log(rq_{u+1}+q_{u}))^{2} \: \frac{b_{u+1}}{q_{u+1}}}\\
 & \ll\frac{1}{q_{u}(\log q_{u})^{2}\frac{b_{u+1}}{q_{u+1}}}+\sum_{r\geq1}\frac{1}{r(\log(rq_{u+1}))^{2}b_{u+1}}\\
 & \ll\frac{1}{(\log q_{u})^{2}}+\frac{1}{\log q_{u+1}} \ll \frac1{\log q_u}.
\end{align*}
\end{proof}

We are now in the position to prove the main result of this section.

\begin{proof}[Proof of Theorem \ref{thm: log square is sharp}]

Let $A=(a_{n})_{n=1}^\infty$ 
be a sequence in $64\mathbb{N}$, sufficiently rapidly-increasing that
\begin{itemize}
    \item The sequence defined by
    \[
    q_0 = 1,
    \qquad q_1 = a_1,
    \qquad 
    q_{u+1}=a_{u+1} q_u + q_{u-1}  \quad (u\geq 1)
    \]
    satisfies
\begin{equation} \label{eq: SuperConvergence}
\sum_{u\geq0}\left(\frac{1}{\log q_{u}}+\frac{1}{\grow(q_{u})}\right)<\infty
\end{equation}
\item $a_{u+1} \ge q_u!$ ($u \ge 0$).
\end{itemize} 
Let $\cV$ be the collection of all such sequences $A=(a_i)_i$.
For $A\in \cV$, define 
$\alpha(A) := \left[0;a_{1},a_{2},\ldots\right]$.
For $\sigma=\left(\sigma_{i}\right)_{i}
\in\left\{ 0,1\right\} ^{\mathbb{N}}$ and $A= (a_i)_i \in \cV$, 
we define a sequence $(b_{i}(A, \sigma))_i$ by
\[
b_{i}(A, \sigma)=\frac{a_{i}}{2^{1+\sigma_{i}}} \qquad (i \in \bN),
\]
as well as a real number 
\[
\gam(A,\sigma) := \sum_{k\geq 0} b_{k+1}(A, \sigma) D_{k}(A),
\]
where $D_k(A) = q_{k}\alp(A) -p_k$ and $p_k/q_k$ is the $k^{th}$
convergent to $\alp(A)$.
Note that we have \eqref{eq: DandyAndy}, so by Lemma \ref{lem: DandyAndy} we have
$\gam(A,\sig) \in [0,1-\alp(A))$ and
\[
\| n \alp(A) - \gam(A,\sig)\| \ne 0 \qquad (n \in \bN).
\]
There are continuum many $\alpha(A)$, and they are Liouville by Lemma \ref{BHV11}.
For the rest of the proof, we fix a pair $(\alpha(A), \gam(A,\sigma))$, where $A \in \cV$ and $\sig \in \{0,1\}^\bN$,
and abbreviate
$\alpha=\alpha(A)$, $\gam=\gam(A,\sigma)$.

Fix $\gam_2 \in\mathbb{R}$, and consider $\alp_1 = \alp$ and $\gam_1 = \gam$. By the Borel--Cantelli lemma,
it remains to prove that 
\begin{equation}
\sum_{n\geq1}\frac{\psio(n)}{\left\Vert n\alpha-\gamma\right\Vert }<\infty \label{eq: convergence},
\end{equation} 
where $\psio$ is as in \eqref{def: psi_1}. 
To this end, observe that 
\begin{align*}
\sum_{n\geq1}
\frac{\psio(n)}{\left\Vert n\alpha-\gamma\right\Vert } & 
\le\sum_{u\geq0}\sum_{d\neq b_{u+1}}\max_{n\in\W_{u,d}}
\frac{1}{\grow(n)}\,S_{u,d}
+\sum_{u\geq0}\max_{n\in\W_{u,b_{u+1}}}
\frac{1}{\grow(n)}\,S_{u,b_{u+1}},
\end{align*}
where here and henceforth $d\neq b_{u+1}$ means that 
\[
d \in \{1,\ldots,a_{u+1}-b_{u+1}\}\setminus \{b_{u+1}\}.
\]
Since $\grow$ non-decreasing and unbounded, we infer from Lemma \ref{lem: Lioville log square sharp}
that
$$
\max_{n\in\W_{u,d}}
\frac{1}{\grow(n)}= 
\frac{1}{\grow (\min \W_{u,d} )}
\ll \frac{1}{\grow(q_{u-1})},
$$
where here $q_{-1} = 1$. By Lemma \ref{lem: Lioville log square sharp}, we now have
$$
\sum_{n\geq 1}
\frac{\psio(n)}{\left\Vert n\alpha-\gamma\right\Vert }
\ll T_1 + T_2 + T_3,
$$
where
\begin{align*}
T_1 &= \sum_{u\geq 0}
\frac{1}{\grow(q_{u-1})}
\: \frac{1}{\log q_{u}},
\\
T_2 &= \sum_{u\geq 0} \frac{1}{\grow(q_{u-1})} \sum_{d\neq b_{u+1}} \frac{1}{d\log q_{u+1}},
\\
T_3 &=
\sum_{u\geq 0} \frac{1}{\grow(q_{u-1})} \sum_{d < b_{u+1}} \frac{q_{u+1}}{\left(b_{u+1}-d\right)
q_{u}(\log(\left(b_{u+1}-d\right)q_{u}))^{2}d}.
\end{align*}

We see from \eqref{eq: SuperConvergence} that $T_1 < \infty$. The convergence of $T_2$ also follows straightforwardly from \eqref{eq: SuperConvergence}, since
\[
T_2 \ll \sum_{u \ge 0}
\frac1{\xi(q_{u-1})} \: \frac{\log a_{u+1}}{\log q_{u+1}} \le \sum_{u \ge 0}
\frac1{\xi(q_{u-1})} < \infty.
\]
Our final task is to establish the convergence of the series defining $T_3.$ We begin with the observation that
\begin{align*}
T_3
&\le \sum_{u \ge 0}
\frac{q_{u+1}}{\xi(q_{u-1}) q_u } \sum_{d < b_{u+1}}
\frac1{d(b_{u+1} - d) (\log (b_{u+1}-d))^2} \\
&\ll
\sum_{u \ge 0}
\frac{b_{u+1}}{\xi(q_{u-1})}
\sum_{d < b_{u+1}}
\frac1{d(b_{u+1} - d) (\log (b_{u+1}-d))^2}.
\end{align*}
The inner sum is at most $X_u + Y_u$, where
\[
X_u = 
\sum_{d \le b_{u+1}/2} \frac1{d(b_{u+1} - d) (\log (b_{u+1}-d))^2} 
\]
and
\[
Y_u =
\sum_{t \le b_{u+1}/2}
\frac1{t(b_{u+1} - t) (\log t)^2} \ge X_u.
\]
Finally, we have
\begin{align*}
T_3 \ll \sum_{u \ge 0}
\frac{b_{u+1}}{\xi(q_{u-1})} Y_u \ll \sum_{u \ge 0}
\frac{1}{\xi(q_{u-1})}
\sum_{t \ge 1}
\frac1{t (\log t)^2} \ll \sum_{u \ge 0}
\frac{1}{\xi(q_{u-1})},
\end{align*}
which converges.
\end{proof}

\appendix

\section{Pathology} \label{pathology}

In this appendix, we establish Theorem \ref{thm: SpecialCase}. If we have \eqref{eq: PsiSmall} then the proof of Theorem \ref{thm: SpecialCaseWeak} prevails. We also need to consider the pathological situation in which $\Psi(n) > 1/2$ for infinitely many $n \in \bN$. We will slightly alter this dichotomy.

\bigskip

Let $c^*$ be a small, positive constant. Our implicit constants will not depend on $c^*$ unless otherwise stated. The first idea is to restrict the support of $\Psi$ to
\[
\cG^* = \left \{n \in \cG: 
\frac{\varphi(\un)}{\un} \ge c^* \right \},
\]
where $\cG$ and $\eta = \eta(\gam)$ are as in Section \ref{Sec: Inhomog. Gallagher}. 
That is, we introduce $\Psi^* = \Psi 1_{\cG^*}$
and
\[
\cE_n^* = \displaystyle
\left \{ \alpha \in [0,1]: 
\exists a\in \bZ\,
\mathrm{s.t.} \quad
\displaystyle \substack{
\displaystyle
a+\gamma \in \hat n {\set I}, \\
\displaystyle
\vert \hat n \alpha - \gamma - a \vert < \Psi^*(n), \\
\displaystyle
(a,\un) \text{ is } (\gam,\eta)\text{-shift-reduced}} \right \}.
\]

\underline{Case: $\Psi^*(n) \le 1/2$ for large $n$}

We commence by discussing \eqref{eq: Positive2}. Our modification can only reduce the left hand side of \eqref{eq: indepen of localised sets} so, by the reasoning of Proposition \ref{prop: pot}, it remains to justify
\eqref{eq: divergence of approx. localised sets} with $\cE_n^*$ in place of $\cE_n$. The upper bound is immediate from the inequality $\mu(\cE_n^*) \le \mu(\cE_n)$, leaving us to deal with the lower bound. By \eqref{eq: assumed measure bound approximation set}, we have
\begin{equation} \label{StarMeasure}
\sum_{n \le X} \mu(\cE_n^*) \gg \mu(\cI)
\sum_{\substack{n \in \cG^* \\ C_1 < n \le X}}
\frac{\varphi(\un)}{\un} \:
\frac{ \psi(\hat n)}
{ \| \hat n \alp_1 - \gam_1 \| \cdots \| \hat n \alp_{k-1} - \gam_{k-1} \| },
\end{equation}
where $C_1$ is a large, positive constant. We compute that
\begin{align*}
&\sum_{\substack{n \in \cG \setminus \cG^* \\ n \le X}}
\frac{\varphi(\un)}{\un} \:
\frac{ \psi(\hat n)}
{ \| \hat n \alp_1 - \gam_1 \| \cdots \| \hat n \alp_{k-1} - \gam_{k-1} \| }
\\
&< c^* \sum_{\substack{n \in \cG \\ n \le X}}
\frac{ \psi(\hat n)}
{ \| \hat n \alp_1 - \gam_1 \| \cdots \| \hat n \alp_{k-1} - \gam_{k-1} \| }
\\
&\ll c^* \sum_{n\leq X} \psi(\un) (\log n)^{k-1},
\end{align*}
where the final inequality follows from the calculations within the proof of Lemma~\ref{lem: divergence lemma}. As $c^*$ is small, combining this with \eqref{eq: show1} and \eqref{eq: show2} yields
\[
\sum_{\substack{n \in \cG^* \\ n \le X}}
\frac{\varphi(\un)}{\un} \:
\frac{ \psi(\hat n)}
{ \| \hat n \alp_1 - \gam_1 \| \cdots \| \hat n \alp_{k-1} - \gam_{k-1} \| } \gg \sum_{n\leq X} \psi(\un) (\log n)^{k-1}.
\]
Substituting this into \eqref{StarMeasure} gives
\[
\sum_{n \le X} \mu(\cE_n^*)
\gg \mu(\cI) \sum_{n \le X} \psi(\un) (\log n)^{k-1}.
\]
The upshot is that we have \eqref{eq: divergence of approx. localised sets} with $\cE_n^*$ in place of $\cE_n$, which is the last remaining ingredient needed for \eqref{eq: Positive2}. 

Let $N$ be large, and let us now specialise $\cI = [0,1]$. By \eqref{eq: HatDivergence} and the above, we have
\[
\sum_{n=1}^\infty \mu(\cE_n^*) = \infty.
\]
By \eqref{eq: lower bound of fake phi}, we have
\[
\frac{\varphi_{\gam,\eta}(\hat n)}{\hat n} \gg
\frac{\varphi(\hat n)}{\hat n} 
\ge c^* 
\qquad (n \in \cG^*, \quad n > N),
\]
and note also that
\[
\mu(\cE_n^*) \le \mu(\cE_n) \ll \Psi(n)
\le \Phi(\un)
\qquad (n > N).
\]
Therefore 
\[
\sum_{n=1}^\infty \frac{\varphi_{\gam,\eta}(\un)}{\un} \Phi(\un)
\gg c^* \sum_{\substack{n \in \cG^*\\n > N}} \mu(\cE_n^*) = c^* \sum_{n > N} \mu(\cE_n^*) = \infty,
\]
which implies \eqref{eq: Positive1}. Having established \eqref{eq: Positive1} and \eqref{eq: Positive2}, we have completed the proof of the theorem in this case.

\underline{Case: $\Psi^*(n) > 1/2$ infinitely often}

Let $N$ be large, and put 
\[
\cS = \{ n \in \bN: \Psi^*(n) > 1/2, \quad n > N \}.
\]
The inequalities
\[
\frac{\varphi_{\gam,\eta}(\un)}{\un} \gg c^*,
\quad
\Phi(\un) \ge \Psi^*(n) > 1/2
\qquad (n \in \cS),
\]
together with the infinitude of $\cS$, yield 
\[
\sum_{n \in \cS} \frac{\varphi_{\gam,\eta}(\un)}{\un} \Phi(\un) = \infty,
\]
which implies \eqref{eq: Positive1}.

For $n \in \bN$, define
\[
\Psi^\dagger(n)
= \begin{cases}
1/2, &\text{if } n \in \cS \\
0, &\text{if } n \notin \cS.
\end{cases}
\]
and
\[
\cE_n^\dagger = \displaystyle
\left \{ \alpha \in [0,1]: 
\exists a\in \bZ\,
\mathrm{s.t.} \quad
\displaystyle \substack{
\displaystyle
a+\gamma \in \hat n {\set I}, \\
\displaystyle
\vert \hat n \alpha - \gamma - a \vert < \Psi^\dagger(n), \\
\displaystyle
(a,\un) \text{ is } (\gam,\eta)\text{-shift-reduced}} \right \}.
\]
Let $\cI$ be a non-empty subinterval of $[0,1]$. Henceforth, our implied constants will be allowed to depend on $c^*$ but not $\cI$. By \eqref{eq: lower bound of fake phi}, we have
\[
\mu(\cE_n^\dagger) \gg \frac{\varphi_0(\un)}{\un} \gg \mu(\cI)
\qquad (n \in \cS),
\]
and clearly
\[
\mu(\cE_n^\dagger \cap \cE_m^\dagger) 
\le \mu(\cE_n^\dagger) \ll \mu(\cI) \qquad (n,m \in \cS).
\]
Applying Lemmata  \ref{lem: Borel-Cantelli} and
\ref{lem: density}, as in Proposition \ref{prop: pot}, furnishes 
\[
\mu \left(\limsup_{n \to \infty} \cA_n \right) = 1,
\]
where
\[
\cA_n = 
\left \{ \alpha \in [0,1]: 
\exists a\in \bZ\,
\mathrm{s.t.} \quad
\displaystyle \substack{
\displaystyle
\vert \hat n \alpha - \gamma - a \vert < \Psi^\dagger(n), \\
\displaystyle
(a,\un) \text{ is } (\gam,\eta)\text{-shift-reduced}} \right \}.
\]
This implies \eqref{eq: Positive2}, since $\Phi(\un) \ge \Psi^{\dagger}(n)$ for all $n$, and completes the proof of Theorem \ref{thm: SpecialCase}.

\begin{remark}
In the context of Remark \ref{rem: QuestionRemark}, our proof works for any sufficiently small $\eta$, by scaling $\eps_0$ accordingly.
\end{remark}

\providecommand{\bysame}{\leavevmode\hbox to3em{\hrulefill}\thinspace}

\end{document}